\documentclass[a4paper,12pt,reqno]{amsart}  

\usepackage{amsmath}
\usepackage{tikz-cd}
\usepackage{amssymb}
\usepackage{amsthm}
\usepackage{amsfonts}
\usepackage{mathtools}
\usepackage{comment}
\usepackage{mathrsfs}
\usepackage{pifont}
\usepackage[noadjust]{cite} 
\usepackage{enumerate}
\usepackage{here}
\usepackage{autobreak}
\usepackage{wrapfig}
\usepackage{graphicx}
\usepackage{lscape}
\usepackage[colorlinks]{hyperref}  
\usepackage{xcolor}
\hypersetup{
bookmarksnumbered=true,
    colorlinks=true,
    citecolor=blue,
    linkcolor=purple,
    urlcolor=orange,
}
\usepackage{mleftright} 
\usepackage{extpfeil} 
\usepackage{tikz}
\usetikzlibrary{arrows} 
\usetikzlibrary{cd, decorations.pathmorphing} 

\usepackage{pgfplots}
\pgfplotsset{compat=1.18}

\usepackage[capitalize, nameinlink]{cleveref}
\creflabelformat{enumi}{#2(#1)#3}

\everymath{\displaystyle}
\numberwithin{equation}{section}

\theoremstyle{plain}

\newtheorem{thm}{Theorem}[section]
\crefname{thm}{Theorem}{Theorems}
\newtheorem{conj}[thm]{Conjecture}
\crefname{conj}{Conjecture}{Conjectures}

\newtheorem{lem}[thm]{Lemma}
\crefname{lem}{Lemma}{Lemmas}
\newtheorem{prop}[thm]{Proposition}
\crefname{prop}{Proposition}{Propositions}
\newtheorem{cor}[thm]{Corollary}
\crefname{cor}{Corollary}{Corollaries}

\newtheorem*{claim*}{Claim}
\newtheorem*{thm*}{Theorem}

\newtheorem{introthm}{Theorem}[section]

\crefname{introthm}{Theorem}{Theorems}
\crefname{introprop}{Proposition}{Propositions}
\crefname{introcor}{Corollary}{Corollaries}

\theoremstyle{definition}

\newtheorem{dfn}[thm]{Definition}
\crefname{dfn}{Definition}{Definitions}

\newtheorem{setup}[thm]{Setup}
\crefname{setup}{Setup}{Setups}

\crefname{eg}{Example}{Examples}
\newtheorem*{Ack}{Acknowledgement}

\newtheorem*{Out}{Outline of the paper}

\newtheorem*{Rels}{Related Works}
\newtheorem*{AI}{AI Usage}

\theoremstyle{remark}
\newtheorem{rem}[thm]{Remark}
\crefname{rem}{Remark}{Remarks}

\usepackage{enumitem} 

\DeclareMathOperator{\Ext}{Ext}

\DeclareMathOperator{\Pic}{Pic}
\DeclareMathOperator{\Aut}{Aut}

\DeclareMathOperator{\Stab}{Stab}

\DeclareMathOperator{\Spec}{Spec}
\DeclareMathOperator{\Bl}{Bl}

\DeclareMathOperator{\Ker}{Ker}
\DeclareMathOperator{\ch}{ch}
\DeclareMathOperator{\td}{td}
\DeclareMathOperator{\rk}{rk}

\DeclareMathOperator{\id}{id}

\DeclareMathOperator{\GL}{GL}
\DeclareMathOperator{\PGL}{PGL}

\DeclareMathOperator{\Cone}{Cone}
\DeclareMathOperator{\Image}{Im}

\DeclareMathOperator{\Perf}{Perf}

\DeclareMathOperator{\End}{End}
\DeclareMathOperator{\Supp}{Supp}

\DeclareMathOperator{\Alb}{Alb}
\DeclareMathOperator{\Gal}{Gal}

\DeclareMathOperator{\Gr}{Gr}

\DeclareMathOperator{\ev}{ev}

\newcommand\Hom{\mathop{\mathrm{Hom}}\nolimits}
\newcommand\RHom{\mathop{\mathbf{R}\mathrm{Hom}}\nolimits}
\newcommand{\PP}[1]{\mathbb{P}^{#1}}

\newcommand{\abs}[1]{\lvert{#1}\rvert}
\newcommand{\norm}[1]{\lVert{#1}\rVert}
\newcommand{\lderived}{\mathbf{L}}
\newcommand{\rderived}{\mathbf{R}}
\newcommand{\GLt}{\widetilde{\GL}^{+}(2,\RB)}

\newcommand{\twoheadlongrightarrow}{\relbar\joinrel\twoheadrightarrow}

\newcommand\dual{\raise0.9ex\hbox{$\scriptscriptstyle\vee$}}

\newcommand{\Spbdl}{S}
\newcommand{\mgrpsch}{\mathbf{G}_m}

\newcommand{\Knum}{K_{\mathrm{num}}}

\newcommand{\Stabnum}{\Stab}
\newcommand{\Stabgeo}{\Stab^{\mathrm{geo}}}

\newcommand{\res}{\operatorname{res}}

\DeclareMathOperator{\Coh}{Coh}

\newcommand{\CB}{\mathbb{C}}
\newcommand{\DB}{\mathbb{D}}

\newcommand{\HB}{\mathbb{H}}

\newcommand{\RB}{\mathbb{R}}
\newcommand{\PB}{\mathbb{P}}
\newcommand{\QB}{\mathbb{Q}}
\newcommand{\ZZ}{\mathbb{Z}}

\newcommand{\AC}{\mathcal{A}}
\newcommand{\BC}{\mathcal{B}}
\newcommand{\CC}{\mathcal{C}}
\newcommand{\DC}{\mathcal{D}}
\newcommand{\EC}{\mathcal{E}}
\newcommand{\FC}{\mathcal{F}}

\newcommand{\HC}{\mathcal{H}}
\newcommand{\IC}{\mathcal{I}}

\newcommand{\LC}{\mathcal{L}}

\newcommand{\NC}{\mathcal{N}}
\newcommand{\OC}{\mathcal{O}}
\newcommand{\PC}{\mathcal{P}}
\newcommand{\QC}{\mathcal{Q}}
\newcommand{\RC}{\mathcal{R}}
\newcommand{\SC}{\mathcal{S}}
\newcommand{\TC}{\mathcal{T}}
\newcommand{\UC}{\mathcal{U}}

\newcommand{\XC}{\mathcal{X}}
\newcommand{\YC}{\mathcal{Y}}
\newcommand{\ZC}{\mathcal{Z}}

\newcommand{\OO}{\mathscr{O}}

\author{Genki Ouchi}
\address[G.Ouchi]{Department~of~Mathematics, Faculty~of~Science, Hokkaido~University, Kita 10, Nishi 8, Kita-Ku, Sapporo, Hokkaido, 060-0810, Japan}
\email{\href{mailto:genki.ouchi@math.sci.hokudai.ac.jp}{genki.ouchi@math.sci.hokudai.ac.jp}}

\author{Tomoki Yoshida}
\address[T.Yoshida]{Department~of~Mathematics, School~of~Science~and~Engineering, Waseda~University, Ohkubo~3-4-1, Shinjuku, Tokyo~169-8555, Japan}
\email{\href{mailto:tomoki_y@asagi.waseda.jp}{tomoki\_y@asagi.waseda.jp}}

\title[Disconnectedness of stability manifolds]{Disconnectedness of stability manifolds and their geometric loci, and separation of braid orbits of semiorthogonal decompositions}
\date{October 30, 2026}
\keywords{Bridgeland stability condition, geometric stability condition, spherical twist, quintic threefold, semiorthogonal decomposition, braid group action, spinor bundle.}
\subjclass[2020]{14F08 (primary), 14D05, 14J32, 14J45, 18G80 (secondary).}

\begin{document}

\begin{abstract}
This paper proves that the Bridgeland stability manifold $\mathrm{Stab}(X)$ has infinitely many connected components for general quintic threefolds, other general complete intersection Calabi--Yau threefolds, certain odd-dimensional Calabi--Yau varieties, and certain rational weak Fano varieties.
Moreover, we show that the geometric locus $\mathrm{Stab}^{\mathrm{geo}}(X)$ meets distinct connected components of $\mathrm{Stab}(X)$ for general quintic threefolds and their products with varieties with finite Albanese morphisms, which gives examples in any dimension at least three.
For the weak Fano examples, we also construct semiorthogonal decompositions in infinitely many braid group orbits.

The proof is an extension of Nordskova's argument to families, using semilinear autoequivalences and absolute Galois actions on geometric generic fibers to permute mutually orthogonal spherical objects.
Then, we specialize the separation to closed fibers.
In our examples, the spherical objects are constructed from rigid rational curves or spinor bundles on quadrics.
\end{abstract}

\maketitle

\setcounter{tocdepth}{2} 
\tableofcontents
\setcounter{section}{-1} 
\section{Introduction}\label{sec:introduction}

Let $X$ be a smooth projective variety over $\CB$, and let $D^b(X)$ denote the bounded derived category of coherent sheaves on $X$.
We write $\Stab(X)$ for the space of numerical Bridgeland stability conditions on $D^b(X)$ satisfying the support property with respect to the numerical Grothendieck group $\Knum(X)$.
Bridgeland introduced stability conditions in \cite{bridgeland_2007_stability_conditions_on_triangulated_categories}, motivated by Douglas's work on $\Pi$-stability in string theory.
The deformation theorem states that the map 
\[
\Stab(X)\longrightarrow\Hom(\Knum(X),\CB),
\quad (Z,\PC)\longmapsto Z
\]
is a local homeomorphism. 
In particular, $\Stab(X)$ carries a natural complex manifold structure 
(see also \cite{bayer_2019_a_short_proof_of_the_deformation_property_of_bridgeland_stability_conditions} for the formulation using the support property).

The general existence of stability conditions remained open for many years.
Recently, Li \cite{preprint_chunyi_2026_a_remark_on_stability_conditions_on_smooth_projective_varieties} proved the existence of stability conditions on every smooth projective variety over $\CB$.
Li--Liu--Liu--Macr\`i--Perry--Stellari--Zhao \cite{preprint_chunyi_zhiyu_ziqi_emanuele_alexander_paolo_xiaolei_2026_stability_conditions_and_moduli_spaces_on_projective_families} extended the construction to projective families, and Cheng \cite{preprint_yiran_2026_a_remark_on_the_full_support_property} proved that the resulting stability conditions satisfy the support property with respect to the full numerical Grothendieck group.
It is then natural to study the global topology of $\Stab(X)$, and in particular its connectedness.

In the context of mirror symmetry, stability conditions on Calabi--Yau threefolds are of particular interest.
For instance, mirror symmetry predicts an embedding of the stringy K\"ahler moduli space into the quotient of $\Stab(X)$ by autoequivalences and the natural $\CB$-action \cite[Remark~3.9]{bridgeland_2009_spaces_of_stability_conditions}.
\emph{Quintic threefolds} are one of the most well-studied examples in this context.
Therefore, the spaces of stability conditions on quintic threefolds have been particularly believed to have some ``good'' properties.

However, Nordskova \cite[Theorem~6.3]{preprint_anya_2026_a_smooth_projective_counterexample_to_bondalpolishchuks_conjecture} proved that $\Stab(X)$ is disconnected for a special one-parameter family of smooth quintic threefolds admitting an involution exchanging two disjoint $(-1,-1)$-lines (she has informed us that, independently of the present work, she has extended her result to very general quintic threefolds, and that this extension will appear in a forthcoming revision of her paper).
Her argument uses the following data \cite[Lemma~6.1]{preprint_anya_2026_a_smooth_projective_counterexample_to_bondalpolishchuks_conjecture} and proves that the spherical twist of $E_1$ sends $\sigma_0$ to a different component:
\begin{itemize}
\item two orthogonal spherical objects $E_1,E_2\in D^b(X)$;
\item an automorphism $g\colon X\xrightarrow{\sim}X$ whose pullback exchanges $E_1$ and $E_2$ and acts trivially on $\Knum(X)$;
\item a stability condition $\sigma_0\in\Stab(X)$ fixed by $g^*$.
\end{itemize}
In this symmetric quintic example, the spherical objects are the sheaves $\OC_{\ell_i}(-1)$ on the two lines $\ell_1,\ell_2$.

A stability condition is called \emph{geometric} if all skyscraper sheaves are stable of the same phase.
We write $\Stabgeo(X)\subset\Stab(X)$ for the locus of geometric stability conditions.
The locus $\Stabgeo(X)$ is contractible when $X$ is a smooth projective curve or surface
\cite{macri_2007_stability_conditions_on_curves,preprint_nick_2024_contractibility_of_the_geometric_stability_manifold_of_a_surface}.
Therefore, it is natural to ask whether $\Stabgeo(X)$ is also contractible in higher dimensions.

The key observation of this paper is that the automorphism in Nordskova's argument can be replaced by the absolute Galois action on the geometric generic fiber of a family.
We extend her argument to semilinear autoequivalences and apply it to the geometric generic fiber. We then use the theory of stability conditions in families to show that the corresponding spherical twists also give rise to distinct connected components on closed fibers.
This yields infinitely many connected components of $\Stab(X)$ for a broad range of smooth projective varieties, including general quintic threefolds, without requiring automorphisms of the individual fibers exchanging the spherical objects.
We also study geometric stability conditions in these components and obtain analogous separation results for braid group orbits of semiorthogonal decompositions.

\subsection{Results}\label{subsection: results in introduction}

Our main result gives the following examples of varieties with disconnected stability manifolds:

\begin{introthm}[See \cref{theorem: disconnectedness for general quintics,theorem: other complete intersection Calabi--Yau threefolds,theorem: disconnectedness for the Dwork pencil,theorem: product criterion for distinguished components,proposition: disconnectedness for anticanonical divisors in quadrics,proposition: disconnectedness for weak Fano blowups,proposition: disconnectedness for point blowups of del Pezzo threefolds}]
\label{theorem in intro: disconnectedness examples}
The stability manifold $\Stab(X)$ has infinitely many connected components for each of the following smooth projective varieties $X$:
    \begin{enumerate}
        \item\label{item in intro: general quintics}
        a general quintic threefold;
        \item\label{item in intro: other complete intersection threefolds}
        a general complete intersection Calabi--Yau threefold in projective space;
        \item\label{item in intro: Dwork quintics}
        a smooth member of the Dwork pencil (see \cref{definition: Dwork pencil});
        \item\label{item in intro: quadric Calabi--Yau varieties}
        a smooth complete intersection of type $(2,n+1)$ in $\PP{n+2}$ whose defining quadric is smooth, where $n\geq3$ is odd;
        \item\label{item in intro: weak Fano blowups}
        the blow-up $\Bl_C\PP{n}$, where $n\geq3$ is odd and $C$ is a smooth complete intersection of type $(2,n+1)$ whose defining quadric is smooth;
        \item\label{item in intro: point blowups}
        the blow-up $\Bl_pV_5$ at a general point $p\in V_5$, where $V_5$ is the smooth Fano threefold of Picard rank one, index two, and degree five;
        \item\label{item in intro: products of quintics}
        the product of one of the above types and a variety with finite Albanese morphism.
    \end{enumerate}
    In each case, there are stability conditions with the same central charge in infinitely many distinct connected components.
\end{introthm}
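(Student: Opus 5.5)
The plan is to run a relative version of Nordskova's argument \cite[Lemma~6.1]{preprint_anya_2026_a_smooth_projective_counterexample_to_bondalpolishchuks_conjecture}. Its ingredients are: orthogonal spherical objects $E_1,E_2$, a symmetry $\phi$ exchanging them and acting trivially on $\Knum$, and a $\phi$-fixed stability condition $\sigma_0$. The conclusion is that $T_{E_1}\sigma_0$ and $\sigma_0$ lie in different components. The symmetry will come from the Galois group rather than from an automorphism of a closed fibre.

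\textbf{Step 1: semilinear Nordskova lemma.} First I would restate her lemma for semilinear autoequivalences of $D^b$ of a variety over a non-closed field. Concretely, let $\mathcal X\to B$ be a family with geometric generic fibre $X_{\bar\eta}$, and let $g\in\Gal(\bar K/K)$ act on $D^b(X_{\bar\eta})$. The argument goes through formally. Suppose $T_{E_1}\sigma_0$ lies in the component of $\sigma_0$. Then it lies in the component of $g^*T_{E_1}\sigma_0=T_{E_2}\sigma_0$, since $g^*$ fixes $\sigma_0$ and the component is preserved. On the other hand $T_{E_1}$ and $T_{E_2}$ commute, and $E_1,E_2$ have classes of the required shape. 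Following her proof, one derives a contradiction from the phases of $E_1$ versus $T_{E_1}E_1=E_1[-2]$ or $[1-n]$ along a path.

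For infinitely many components I would use the elements $T_{E_1}^kT_{E_2}^{-k}$, or iterate, and distinguish them by the same invariant. These elements act trivially on $\Knum$ when $[E_1]=[E_2]$. This is what gives stability conditions with the same central charge.

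\textbf{Step 2: the inputs in each case.} I take $E_i=\OC_{\ell_i}(-1)$, where $\ell_1,\ell_2$ are two Galois-conjugate disjoint rigid $(-1,-1)$ rational curves on the geometric generic fibre. This covers general quintics and other CICY threefolds (via Clemens/Katz-type results that the monodromy or Galois group acts transitively on lines, plus genericity), and the Dwork pencil (explicit conjugate lines). On the $(2,n+1)$ complete intersections in an odd-dimensional quadric, the $E_i$ are restrictions of the two spinor bundles, exchanged by the Galois action of $\sqrt{\mathrm{disc}}$. On $\Bl_C\PP{n}$ and $\Bl_pV_5$ there are analogous spherical objects supported on the exceptional divisors or on conjugate curves.

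We need $\Knum$ of the geometric generic fibre to be Galois-invariant. For Picard-rank-one CY threefolds this follows because the classes of curves of equal degree agree numerically. Finally, I need a Galois-invariant stability condition $\sigma_0$. I would obtain it from the Li and LLLMPSZ construction in families, performed over $K$ and then base-changed, so it is automatically invariant.

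\textbf{Step 3 (main obstacle): specialization.} I must pass from $X_{\bar\eta}$ to a closed fibre $X_b$. I would use stability conditions over $B$ \cite{preprint_chunyi_zhiyu_ziqi_emanuele_alexander_paolo_xiaolei_2026_stability_conditions_and_moduli_spaces_on_projective_families}. The relative $\sigma_0$ restricts to every fibre. A path in $\Stab(X_b)$ connecting $\sigma_0|_b$ to $T_{E_1}\sigma_0|_b$ would need to be deformed to the generic fibre. I plan to avoid deforming paths. Instead I would prove the separating invariant is a discrete, locally constant quantity, namely the phase difference of the two objects. Each is stable on a component, and openness of stability gives this local constancy. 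This quantity is defined on both fibres and compatible with restriction.

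The hard part is that on $X_b$ the curves $\ell_1,\ell_2$ are no longer exchanged by any symmetry. So I would transport the contradiction through a finite base change $B'\to B$ splitting the curves and a specialization map on stability conditions along the family. I expect to use openness of stability in families to match components.

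\textbf{Step 4: products.} For $X\times Y$ with $\Alb_Y$ finite, I would use that $D^b(Y)$ has no nontrivial semiorthogonal-type interference. By the product criterion, geometric or product-type stability conditions on $X\times Y$ restrict componentwise. The autoequivalences $T_{E_1}^k\boxtimes\id$ then separate components by the same invariant.
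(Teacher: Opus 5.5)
Your overall strategy matches the paper's. You prove a semilinear version of Nordskova's lemma, apply it on the geometric generic fibre to the Galois action (which acts trivially on $\Knum$ and fixes the base change of a relative stability condition), and then specialize. The genuine gap is Step 3, which you flag but do not resolve, and the mechanism you propose does not work.

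\begin{itemize}
\item Nothing makes the $E_{t_i}$ stable on a whole component of $\Stab(X_b)$. If they were, the Galois symmetry would be superfluous: $\phi_\sigma(E_1)-\phi_\sigma(E_2)$ would be a constant even integer on the component, yet it shifts by $n-1$ under $T_{E_1}^{-1}$.
\item Without stability, $\phi^+_\sigma(E_1)-\phi^+_\sigma(E_2)$ is merely continuous on $\Stab(X_b)$. Its vanishing on the generic component is forced by a symmetry that does not exist on $X_b$.
\item HN phases are only semicontinuous under specialization. Openness of $\PC_t(I)$ gives $\phi^+_\eta(\mathcal F_\eta)\le\phi^+_0(\mathcal F_0)$, not equality, so no invariant is transported from $\bar\eta$ to $b$.
\end{itemize}

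Rather than avoiding path deformation, you need exactly that: every point of the closed-fibre component must extend over the base. The paper's key lemma says that for $S=\Spec\OC_{C',c'}$ the restriction $\res_0\colon\Stab(\XC/S)\to\Stab(\XC_0)$ is a homeomorphism onto a union of connected components.
\begin{itemize}
\item Injectivity comes from semistable reduction plus openness of $\PC_s(I)$.
\item Openness of the image comes from the relative deformation theorem.
\item Closedness comes from transferring the support-property form of $\sigma_0$ to $\sigma_\eta$ via semistable reduction, and lifting paths through the covering $V\to U_Q(Z)$.
\end{itemize}
Now suppose $\Phi_{\mathbf a,b}\sigma_b$ and $\Phi_{\mathbf a',b}\sigma_b$ lay in one component. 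Then so would their extensions $\Phi_{\mathbf a,S}(\sigma_\bullet|_S)$ and $\Phi_{\mathbf a',S}(\sigma_\bullet|_S)$, built from relative spherical twists that commute with base change. Hence so would their restrictions to $\bar\eta$, contradicting Step 1.

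The choice of $S$ also matters. One first takes a curve $C\to B$ through $b$ over which $T$ stays connected (R\"ulling--Schr\"oer). One then takes an \'etale cover $C'\to C$ splitting $T$, so that the $E_{t_i}$ spread out individually. The Galois argument must be run over $\CB(C)$, which still acts transitively on $T_{\bar\eta}$, not over $\CB(C')$, which fixes each sheet.

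Several inputs in Step 2 also need correcting.
\begin{itemize}
\item For $\Bl_C\PP{n}$, the spherical objects are not supported on the exceptional divisor $R$, since $\omega_X$ has degree $-1$ on the fibres of $R\to C$. They are not on rational curves either, since the $(-1,-1)$-curve construction is specific to threefolds. They are $j_*\Spbdl_\pm$ on the strict transform $D\cong Q$ of the quadric containing $C$, where $\omega_X|_D\cong\OC_Q(-n-1)\otimes\OC_Q(C)\cong\OC_D$. They are exchanged by the same double cover $s^2=t$ as in case (4); note that in case (4) the quadric is even-dimensional, not odd.
\item For the Dwork pencil, the explicit lines are not rigid on the Fermat quintic, so the paper uses two disjoint rigid conics there.
\item For the Dwork members and for $\Bl_pV_5$ there is no Galois action on a fixed closed fibre. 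The paper uses honest automorphisms: a projective transformation, respectively the stabilizer $S_4\subset\Aut(V_5)$ of $p$, which permutes the three lines through $p$. It obtains the fixed $\sigma_0$ from the $\PGL_{N+1}$-invariance of stability conditions on $\PP{N}$ via an equivariant embedding, an ingredient your plan lacks.
\item For products, what is needed is not an invariant. It is the continuous map $i_z^\sharp\colon\Stab^\dagger_L(Y\times Z)\to\Stab^\dagger_{L_Y}(Y)$, which is independent of $z$ and intertwines $\widetilde\Phi$ with $\Phi$. Distinct components on $Y$ then force distinct components on the product.
\end{itemize}
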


We first give a criterion for spherical twists to separate connected components of stability manifolds, and then extend it to families.
We use the following setting. 
Let $B$ be a smooth quasi-projective variety over $\CB$, and $f\colon\XC\to B$ a smooth projective morphism of relative dimension $n\geq2$ with geometrically connected fibers, $T\to B$ a connected finite \'etale cover of degree $d\geq2$, and $E\in\Perf(\XC_T)$. 
Fix $\sigma_\bullet\in\Stab(\XC/B)$, where $\Stab(\XC/B)$ is the space of stability conditions over $B$ with respect to $\Knum(\XC/B)$ (\cref{definition:stability_condition_over_B}), and let $\Sigma_b$ be the connected component of $\Stab(\XC_b)$ containing $\sigma_b$ for $b\in B(\CB)$. 
Assume that 
there exist $A_1,\ldots,A_r\in\Perf(\XC)$ whose numerical classes form a basis of $\Knum(\XC_{\bar b})_\QB$ on every geometric fiber $\XC_{\bar b}$ and, for every geometric point $\bar b$ of $B$, 
the objects $E_t\coloneqq E|_{\XC_{\bar b}\times\{t\}}$ for $t\in T_{\bar b}$ are mutually orthogonal $n$-spherical objects.

\begin{introthm}[See \cref{theorem: disconnectedness in families}]
\label{theorem in intro: disconnectedness in families}
In the setting above, choose an ordering $T_b=\{t_1,\ldots,t_d\}$ for $b\in B(\CB)$ and put
$\Phi_{\mathbf a,b}\coloneqq T_{E_{t_1}}^{a_1}\cdots T_{E_{t_d}}^{a_d}$ for $\mathbf a=(a_1,\ldots,a_d)\in\ZZ^d$.
Then, for $b\in B(\CB)$ and $\mathbf a,\mathbf a'\in\ZZ^d$, we have
\[
\Phi_{\mathbf a,b}\Sigma_b=\Phi_{\mathbf a',b}\Sigma_b
\quad\Longrightarrow\quad
\mathbf a-\mathbf a'\in\ZZ(1,\ldots,1).
\]
In particular, $\Stab(\XC_b)$ has infinitely many connected components for every $b\in B(\CB)$.
\end{introthm}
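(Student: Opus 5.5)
The plan is to argue on the geometric generic fiber first, where the Galois group plays the role of Nordskova's automorphism $g$, and then carry the conclusion down to closed fibers $\XC_b$.

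\textbf{Step 1: a semilinear Nordskova lemma.} Let $k$ be an algebraically closed field, $Y$ a smooth projective $k$-variety, and $E_1,\ldots,E_d\in D^b(Y)$ mutually orthogonal $n$-spherical objects. Let $G$ be a group of autoequivalences, semilinear over $k$, that act trivially on $\Knum(Y)$ and permute the $E_i$. I would prove the following. Suppose $\Sigma$ is a $G$-stable connected component of $\Stab(Y)$, and $T_{E_1}^{c_1}\cdots T_{E_d}^{c_d}\Sigma=\Sigma$. Then $(c_1,\ldots,c_d)$ is constant on each $G$-orbit of indices, modulo the diagonal $\ZZ(1,\ldots,1)$.

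The proof follows \cite[Lemma~6.1]{preprint_anya_2026_a_smooth_projective_counterexample_to_bondalpolishchuks_conjecture}. The objects $E_i$ all have the same numerical class, because $G$ is transitive on a $G$-orbit and acts trivially on $\Knum$. So $Z(E_i)=Z(E_j)$ on all of $\Sigma$, and differences of phase-type quantities of the $E_i$ (say $\phi^\pm_\sigma(E_i)-\phi^\pm_\sigma(E_j)$, suitably rounded) define locally constant integer invariants on $\Sigma$. These invariants are:
\begin{itemize}
\item $G$-equivariant;
\item shifted by $c_i(1-n)$ under the twist, since $T_{E_i}E_i=E_i[1-n]$ while $T_{E_i}$ fixes $E_j$ for $j\neq i$.
\end{itemize}
If $T_{\mathbf c}\Sigma=\Sigma$, comparing these invariants at $\sigma$ and at $T_{\mathbf c}\sigma$ forces $c_i=c_j$, using $n\geq2$ so that $1-n\neq0$. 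The genuinely new content is checking that semilinear autoequivalences act on $\Stab$ compatibly with the central charge. Here the trivial action on $\Knum$ comes from the classes of $A_1,\ldots,A_r$, which are defined over $B$ and hence fixed by Galois.

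\textbf{Step 2: apply it on the geometric generic fiber.} Let $\bar\eta$ be the geometric generic point of $B$ and $\Gamma$ the absolute Galois group of $\CB(B)$. Then $\Gamma$ acts semilinearly on $D^b(\XC_{\bar\eta})$ by base change. It permutes the objects $E_t$, $t\in T_{\bar\eta}$, and it acts transitively on them because $T$ is connected. Because $\sigma_\bullet$ is defined over $B$, the induced stability condition $\sigma_{\bar\eta}$ is $\Gamma$-invariant, and hence so is its component. Step 1 then gives: $\Phi_{\mathbf a,\bar\eta}\Sigma_{\bar\eta}=\Phi_{\mathbf a',\bar\eta}\Sigma_{\bar\eta}$ implies $\mathbf a-\mathbf a'\in\ZZ(1,\ldots,1)$.

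\textbf{Step 3: specialize to closed fibers.} This is the step I expect to be the main obstacle. Suppose $\Phi_{\mathbf a,b}\Sigma_b=\Phi_{\mathbf a',b}\Sigma_b$. Then some path in $\Stab(\XC_b)$ joins $\Phi_{\mathbf a,b}\sigma_b$ to $\Phi_{\mathbf a',b}\sigma_b$. I would try to avoid spreading the path out, and instead make the invariant of Step 1 itself behave well in families. Concretely:
\begin{enumerate}
\item Realize the invariant through the relative objects $E$ and $\sigma_\bullet$ over $T$. By the openness and constructibility of the HN filtrations for stability conditions over $B$, the phases $\phi^\pm(E_t)$ at $\sigma_b$ and at $\sigma_{\bar\eta}$ agree after passing through an étale neighbourhood.
\item Show that if the integer invariants of Step 1 were not those forced at $b$, then the same would fail at $\bar\eta$.
\end{enumerate}
Alternatively, one could show directly that the integer invariants at $b$ already satisfy the same constraint. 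Their values on $\Sigma_b$ are computed at the single point $\sigma_b$, where they agree with the generic ones. The monodromy of $T\to B$ then plays the role of the Galois action, since the fundamental group acts transitively on $T_b$.

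Either route gives the implication in the statement. The final claim, that $\Stab(\XC_b)$ has infinitely many connected components, then follows by taking $\mathbf a=(m,0,\ldots,0)$ with $m\in\ZZ$. Since $d\geq2$, these vectors have pairwise differences outside $\ZZ(1,\ldots,1)$, so the components $\Phi_{\mathbf a,b}\Sigma_b$ are pairwise distinct.
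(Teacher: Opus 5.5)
You have a genuine gap in Step~3, and also a flaw in the mechanism of Step~1. Your Steps~1--2 aim at the paper's route (\cref{lemma: fixed component}, \cref{corollary: disconnectedness from a semilinear equivalence}, \cref{lemma: separation on geometric generic fiber}), but neither step works as you state it.

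In Step~1, the differences $\phi^\pm_\sigma(E_i)-\phi^\pm_\sigma(E_j)$ are continuous on $\Sigma$ but not locally constant. The $E_i$ need not be semistable, and equal central charges only constrain phases of semistable objects modulo $2$, so no rounding produces a locally constant invariant. $G$-stability of $\Sigma$ is not enough either: a $\Knum$-trivial autoequivalence can preserve a component without fixing any point (e.g.\ $[2]$). What Nordskova's argument actually uses is a fixed point. If $g$ is $\Knum$-trivial and $g\sigma_0=\sigma_0$, its fixed locus in the component is open (by local injectivity of the central charge map) and closed. Hence $g$ fixes every $\sigma\in\Sigma$, and so $\phi^+_\sigma(E_i)=\phi^+_\sigma(gE_1)=\phi^+_{g^{-1}\sigma}(E_1)=\phi^+_\sigma(E_1)$ identically on $\Sigma$. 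Since Step~2 provides the Galois-fixed point $\sigma_{\bar\eta}$, this part is repairable.

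Step~3 is where the real gap lies. The hypothesis $\Phi_{\mathbf a,b}\Sigma_b=\Phi_{\mathbf a',b}\Sigma_b$ puts $\tau_b\coloneqq\Phi_{\mathbf a',b}^{-1}\Phi_{\mathbf a,b}\sigma_b$ in $\Sigma_b$, with $\phi^+_{\tau_b}(E_{t_i})=\phi^+_{\sigma_b}(E_{t_i})+(a_i-a_i')(n-1)$. A contradiction needs $\phi^+(E_{t_i})-\phi^+(E_{t_j})$ to agree at $\sigma_b$ and at the distant point $\tau_b$.

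Comparing $b$ with $\bar\eta$ at the single point $\sigma_b$ says nothing about $\tau_b$. Your claim that values on $\Sigma_b$ are computed at $\sigma_b$ presupposes exactly the constancy that, on $\bar\eta$, came from the Galois symmetry. On $\XC_b$ there is no such symmetry: monodromy of $T\to B$ permutes $T_b$ but induces no autoequivalence of $D^b(\XC_b)$ fixing $\sigma_b$.

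So the path cannot be avoided; it must be spread out. This is the content of \cref{lemma: specialization of connected components}. For $S=\Spec\OC_{C',c'}$, the map $\res_0\colon\Stab(\XC_S/S)\to\Stab(\XC_0)$ is a homeomorphism onto a union of connected components:
\begin{itemize}
\item it is injective, by semistable reduction together with openness of $\PC_s(I)$;
\item it is a local homeomorphism, by the two deformation theorems;
\item its image is closed, because the support property of $\sigma_0$ propagates to $\sigma_\bullet$ with the same quadratic form, so path lifting in the covering over $U_Q(Z)$ applies.
\end{itemize}
Hence a path from $\Phi_{\mathbf a,b}\sigma_b$ to $\Phi_{\mathbf a',b}\sigma_b$ lifts to one from $\Phi_{\mathbf a,S}(\sigma_\bullet|_S)$ to $\Phi_{\mathbf a',S}(\sigma_\bullet|_S)$. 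Its restriction to $\bar\eta$ then contradicts Step~2.

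Setting this up needs two further inputs. First, a curve $C\to B$ through $b$ with $T_C$ still connected, so that Galois stays transitive at the generic point of $C$. Second, a finite \'etale $C'\to C$ splitting $T$, so that the individual $E_i$ and the relative twists $T_{E_i}$ (acting on $\Stab(\XC_S/S)$ compatibly with restriction) are defined over $S$. Your \'etale neighbourhood only addresses the second point.

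Your deduction of infinitely many components from $\mathbf a=(m,0,\ldots,0)$ is fine once the implication is established.
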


\begin{rem}
As we stated above, the proof is an extension of Nordskova's argument to semilinear equivalences and absolute Galois action on the geometric generic fiber.
The numerical hypothesis and the connectedness of $T$ make this action trivial on the numerical Grothendieck group and transitive on the spherical objects.
Using stability conditions over discrete valuation rings, we then deduce the same separation of connected components on closed fibers (\cref{lemma: specialization of connected components}).
Note that the argument in \cite{preprint_anya_2026_a_smooth_projective_counterexample_to_bondalpolishchuks_conjecture} uses an automorphism of the variety exchanging orthogonal spherical objects.
By contrast, in \cref{theorem in intro: disconnectedness in families}, the Galois action arising from the connected cover $T\to B$ replaces this automorphism, so no such automorphism of the closed fibers is required.
\end{rem}

We construct the spherical objects using two classical geometric constructions.
On threefolds, pushforwards of line bundles on disjoint rational curves with normal bundle $\OC_{\PP{1}}(-1)^{\oplus2}$ give mutually orthogonal spherical objects.
For the examples involving quadrics, we use the two spinor bundles on a smooth even-dimensional quadric $Q$.
Their restrictions to a smooth anticanonical divisor, or their pushforwards from a divisor $Q\subset X$ with $N_{Q/X}\cong\omega_Q$, are mutually orthogonal spherical objects (\cref{lemma: exceptional spherical constructions}).

Note that the generality assumption on the quintic is used to obtain a family of disjoint rigid lines.
More precisely, there is a nonempty open subset $U$ of the parameter space of smooth quintic threefolds such that the relative Hilbert scheme of lines over $U$ is a connected finite \'etale cover of degree $2875$, and distinct lines on each fiber are disjoint (\cref{remark: lines on general quintic Calabi--Yau threefolds}).
The smooth members of the Dwork pencil lie outside $U$, since their Hilbert schemes of lines are positive-dimensional \cite{mustac_2013_degree_1_curves_in_the_dwork_pencil_and_the_mirror_quintic}.
We treat these members separately, using rigid lines when $\psi\ne0$ and rigid conics on the Fermat quintic.
Thus, $U$ does not exhaust the locus of smooth quintics with disconnected stability manifolds.

We next show that distinct connected components can contain geometric stabilities. 

\begin{introthm}[See \cref{setup: geometric wall,theorem: geometric components}]
\label{theorem in intro: geometric disconn general case}
Fix $\sigma_0=(Z_0,\PC_0)\in\Stabgeo(X)$ such that $\phi_{\sigma_0}(\OC_x) = 1$.
Let $\Sigma$ be the connected component containing $\sigma_0$, and $\Phi\in\Aut D^b(X)$ satisfy $\Phi^{-1}\Sigma\ne\Sigma$.
Assume that there exist a $\sigma_0$-stable object $S$ of phase one and an integer $r\geq1$ such that, for $x\in X(\CB)$, there is an exact sequence
\[
0\longrightarrow\OC_x\longrightarrow\Phi(\OC_x)
\longrightarrow S^{\oplus r}\longrightarrow0
\]
in $\PC_0(1)$.
Then $\Phi^{-1}\Sigma$ contains a geometric stability condition.
In particular, $\Stabgeo(X)$ is disconnected.
\end{introthm}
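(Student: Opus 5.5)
The idea is to find a stability condition $\sigma'\in\Sigma$ near $\sigma_0$ for which every $\Phi(\OC_x)$ is $\sigma'$-stable, with the same phase for all $x$. Then $\Phi^{-1}\sigma'\coloneqq\Phi^{-1}\cdot\sigma'$ has all skyscraper sheaves $\OC_x=\Phi^{-1}(\Phi(\OC_x))$ stable of a common phase. So $\Phi^{-1}\sigma'$ is geometric, and it lies in $\Phi^{-1}\Sigma$. Since $\sigma_0\in\Sigma$ is also geometric and $\Phi^{-1}\Sigma\neq\Sigma$, this gives geometric stability conditions in two different components, so $\Stabgeo(X)$ is disconnected.

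\emph{Step 1: classes are constant.} The class $v\coloneqq[\Phi(\OC_x)]=[\OC_x]+r[S]$ in $\Knum(X)$ does not depend on $x$. Hence the objects $\Phi(\OC_x)$ all have the same central charge, and therefore the same phase whenever they are semistable.

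\emph{Step 2: choice of $\sigma'$.} By the deformation theorem, choose $\sigma'=(Z',\PC')\in\Sigma$ arbitrarily close to $\sigma_0$ with $\phi_{\sigma'}(S)>\phi_{\sigma'}(\OC_x)$; for instance, rotate $Z_0$ slightly on the class $[S]$. We need $\sigma'$ to lie in a single chamber for the class $v$ adjacent to $\sigma_0$. We also need that, in this chamber, any $\sigma'$-destabilizing subobject of a $\sigma_0$-semistable object of class $v$ is already a subobject in $\PC_0(1)$ with factors among its $\sigma_0$-Jordan--Hölder factors. This is the standard local finiteness of walls for a fixed class, which follows from the support property. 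This uniformity in $x$ is the step I expect to be the main obstacle. It is handled because only the single class $v$ and its finitely many relevant decompositions near $\sigma_0$ intervene, independently of $x$.

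\emph{Step 3: stability of $\Phi(\OC_x)$.} The $\sigma_0$-Jordan--Hölder factors of $\Phi(\OC_x)$ are $\OC_x$ once and $S$ with multiplicity $r$. A proper subobject $F\subset\Phi(\OC_x)$ in $\PC_0(1)$ therefore has factors $\OC_x^{\oplus a}$ and $S^{\oplus b}$ with $a\in\{0,1\}$.
\begin{itemize}
\item If $a=1$, the quotient has only $S$ as factors and so has $\sigma'$-phase $\phi_{\sigma'}(S)>\phi_{\sigma'}(v)$. Hence it does not destabilize.
\item If $a=0$, there is a nonzero map $S\to\Phi(\OC_x)$. Since $\Hom(S,\OC_x)=0$ (they are non-isomorphic $\sigma_0$-stable objects of the same phase), the composite $S\to S^{\oplus r}$ is nonzero. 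It is then split injective, because $S$ is stable and so $\End(S)=\CB$. Composing with a suitable projection $S^{\oplus r}\to S$ exhibits $S$ as a direct summand of $\Phi(\OC_x)$. But $\End(\Phi(\OC_x))=\End(\OC_x)=\CB$, so $\Phi(\OC_x)$ is indecomposable, and $\Phi(\OC_x)\not\cong S$ since their classes differ. This is a contradiction.
\end{itemize}
Thus $\Phi(\OC_x)$ is $\sigma'$-stable for every $x\in X(\CB)$. By Step 1 these objects all have the same phase, which completes the argument.
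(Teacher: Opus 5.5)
Your route is the paper's route. Step~2 is exactly the paper's uniform localization lemma. It is proved from the support property using the $x$-independent mass bound $M=\abs{Z_0(\OC_x)}+r\abs{Z_0(S)}$: only finitely many classes of bounded norm can occur, and a gap in $\Im Z_0$ on these classes forces the HN and JH factors at nearby $\sigma'$ to stay in $\PC_0(1)$. You don't need a chamber; any $\sigma'$ in a small ball around $\sigma_0$ with $\phi_{\sigma'}(S)>\phi_{\sigma'}(\OC_x)$ works. Step~3 is the paper's wall-subobject lemma together with the vanishing $\Hom(S,\Phi(\OC_x))=0$, both deduced from $\End(\Phi(\OC_x))\cong\CB$. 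Your phase comparison and your direct-summand argument are fine.

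There is one unjustified fact that both steps rely on: $[S]$ and $[\OC_x]$ must be linearly independent in $\Knum(X)_\RB$.
\begin{itemize}
\item ``Rotating $Z_0$ slightly on $[S]$'' to get $\phi_{\sigma'}(S)>\phi_{\sigma'}(\OC_x)$ is only possible if $[S]\notin\RB[\OC_x]$. If the classes were proportional, $S$ and $\OC_x$ would have the same phase at every nearby stability condition. Then $\Phi(\OC_x)$ would remain strictly semistable, and the construction would fail.
\item Your assertion that $S$ and $\OC_x$ are non-isomorphic, used for $\Hom(S,\OC_x)=0$, is also not part of the hypotheses.
\end{itemize}

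The paper supplies this with a short primitivity argument:
\begin{enumerate}
\item Since $\chi(\OC_X,\OC_x)=1$, the class $[\OC_x]$ is primitive.
\item Suppose $[S]=c[\OC_x]$. Then $c=\chi(\OC_X,S)\in\ZZ$, and $c>0$ because $Z_0(S)$ and $Z_0(\OC_x)$ are negative reals.
\item The exact sequence gives $[\Phi(\OC_x)]=(1+rc)[\OC_x]$, which is not primitive.
\item This contradicts the fact that $\Phi$ induces an automorphism of $\Knum(X)$, which must send the primitive class $[\OC_x]$ to a primitive class.
\end{enumerate}
Once this lemma is added, your argument is complete.
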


We apply this theorem to quintic threefolds.

\begin{introthm}[See \cref{theorem: geometric disconnectedness for general quintics,corollary: disconnectedness of products}]
\label{theorem in intro: geometric disconnectedness}
Let $X$ be a quintic threefold represented by a point of $U$ or a product of such a quintic and a variety with finite Albanese morphism.
    Then, $\Stabgeo(X)$ meets at least $2^{2875}-1$ distinct connected components of $\Stab(X)$.
    In particular, $\Stabgeo(X)$ is disconnected.
\end{introthm}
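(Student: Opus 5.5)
We combine \cref{theorem in intro: disconnectedness in families} with \cref{theorem in intro: geometric disconn general case}. Let $X$ be a quintic represented by $b\in U$. Its $2875$ lines $\ell_1,\dots,\ell_{2875}$ are disjoint $(-1,-1)$-curves, so the sheaves $E_i\coloneqq\OC_{\ell_i}(-1)$ are mutually orthogonal $3$-spherical objects. They come from the universal line over the relative Hilbert scheme $T\to U$, which is a connected finite \'etale cover of degree $2875$. For a geometric stability condition $\sigma_0$ on $X$ we take one in the family produced by the relative construction, for instance a tilt-type stability over $U$; it has $\phi_{\sigma_0}(\OC_x)=1$, and its component is $\Sigma=\Sigma_b$. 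For each nonempty subset $I\subset\{1,\dots,2875\}$ put $\Phi_I\coloneqq\prod_{i\in I}T_{E_i}^{-1}$; the twists commute because the $E_i$ are orthogonal. By \cref{theorem in intro: disconnectedness in families}, the components $\Phi_I^{-1}\Sigma$ for distinct $I$, together with $\Sigma$ itself (the case $I=\emptyset$), are pairwise distinct. The only exception would be a difference of exponent vectors lying in $\ZZ(1,\dots,1)$. Indicator vectors of distinct subsets differ by an element of $\ZZ(1,\dots,1)$ only for the pair $\emptyset$ and the full set. This gives $2^{2875}-1$ distinct components among $\{\Phi_I^{-1}\Sigma\}_{I\neq\emptyset}\cup\{\Sigma\}$, matching the claimed bound after discarding one coincidence. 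It then suffices to show that each $\Phi_I^{-1}\Sigma$ contains a geometric stability condition, since $\Sigma$ contains $\sigma_0$.

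To apply \cref{theorem in intro: geometric disconn general case} we have to choose the sign of the twist so that $\Phi_I(\OC_x)$ is an extension of $\OC_x$ by copies of stable phase-one objects. Take a single line $\ell$ with $E=\OC_\ell(-1)$. We have $\RHom(E,\OC_x)=0$ for $x\notin\ell$. For $x\in\ell$, $\Ext^k(E,\OC_x)$ is nonzero only in degrees $0$ and $1$, and $\Hom(\OC_x,E)=0$. It is cleaner to use the dual twist $T_E^{-1}(F)=\Cone\bigl(F\to E\otimes\RHom(F,E)^\vee\bigr)[-1]$. Serre duality gives $\RHom(\OC_x,E)^\vee\cong\RHom(E,\OC_x)[3]$, which is concentrated so that a short exact sequence $0\to\OC_x\to T_E^{\pm1}(\OC_x)\to E'\to0$ results. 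Here $E'$ is either $E$ or $\OC_\ell(-2)[1]$, depending on the sign and shift. We will fix whichever twist gives a quotient of the form $S$ with $S$ a shift of a line bundle on $\ell$ that is $\sigma_0$-stable of phase one. The natural candidate is $S=\OC_\ell(-2)[1]$ or $\OC_\ell(-1)$, using that $Z_0$ of a sheaf supported on a curve lies near the negative real axis after a suitable choice. If needed, we adjust $\sigma_0$ inside $\Stabgeo$ via the $\GLt$-action and deformation so that $S$ has phase exactly one. For several lines, orthogonality makes the extensions independent, so $\Phi_I(\OC_x)$ is $\OC_x$ for $x\notin\bigcup_{i\in I}\ell_i$ and is $T^{\pm}_{E_i}(\OC_x)$ for $x\in\ell_i$. \cref{theorem in intro: geometric disconn general case} as stated wants a single $S$, so we apply it one line at a time. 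Iterating requires the output stability condition to again satisfy the hypotheses for the next line; we will instead re-prove the theorem with $S$ allowed to depend on $x$ among finitely many stable phase-one objects $S_i$.

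The \textbf{main obstacle} is to produce $\sigma_0$ for which all the $S_i$ (one per line) are simultaneously stable of phase exactly one while skyscrapers remain stable of phase one. This asks the curve classes $[\ell_i]$ to lie on a wall where $\Im Z_0$ vanishes. All lines share the same numerical class, since they are deformation-equivalent through the connected cover, so a single condition suffices. I would first try the large-volume limit structure: take $Z_0$ such that $\Im Z_0(\OC_\ell(k))=0$ for one $k$, and verify stability of $\OC_\ell(k)$ using the fact that $\ell$ is rigid and any destabilizing subobject would be supported on $\ell$. For products $X\times Y$ with $Y$ having finite Albanese morphism, I would use \cref{corollary: disconnectedness of products} together with the box products $E_i\boxtimes\OC_Y$, whose twists are spherical-type relative to $Y$. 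The geometric stability would be the product-type one, and the same argument runs fibrewise over $Y$.
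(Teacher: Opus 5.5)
Your count of components via \cref{theorem in intro: disconnectedness in families} is fine, but there is a genuine gap: the geometricity step cannot work with the spherical objects $E=\OC_\ell(-1)$. Since $E$ has rank zero, $\chi(E,\OC_x)=0$. The numerical formula for twists therefore gives $[T_E^{\pm1}(\OC_x)]=[\OC_x]$, and the same holds for products of such twists. On the other hand, an exact sequence $0\to\OC_x\to\Phi(\OC_x)\to S^{\oplus r}\to0$ in $\PC_0(1)$ with $r\geq1$ forces $Z_0(\Phi(\OC_x))=Z_0(\OC_x)+rZ_0(S)\neq Z_0(\OC_x)$. So the hypothesis of \cref{theorem in intro: geometric disconn general case} fails for either sign of the twist, for every $\sigma_0$, and even if you let $S$ depend on $x$.

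Your Ext computation is also wrong. For $x\in\ell$, the Koszul resolution of $\ell$ gives $\Ext^k(E,\OC_x)\cong\CB,\CB^2,\CB$ for $k=0,1,2$, not only degrees $0$ and $1$. Hence $T_E(\OC_x)$ has cohomology sheaves $\OC_\ell(-2)$, $\OC_\ell(-1)^{\oplus2}$, $\OC_\ell(-1)$ in degrees $-1,0,1$. With respect to any heart containing $E$ and $\OC_x$, both $T_E^{\pm1}(\OC_x)$ have cohomology in several degrees, so they are unstable near such a $\sigma_0$. Your ``main obstacle'' is also unreachable in the known geometric region. For Li's stability conditions, $\Im Z^a_{\alpha,\beta}(\OC_\ell(k))=H\cdot\ell=1$ for all parameters, so no sheaf on a line has phase one together with $\OC_x$. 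The $\GLt$-action cannot fix this, since it rotates $\OC_x$ as well.

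The missing idea is to use spherical objects of positive rank, so that $\chi(E,\OC_x)=\rk E>0$ and $[T_E(\OC_x)]=[\OC_x]+\rk E\,[E[1]]$. The paper takes the evaluation bundle $E_\ell=\Ker\bigl(H^0(X,\IC_\ell(1))\otimes\OC_X\to\IC_\ell(1)\bigr)$ of \cref{lemma: evaluation bundles associated with lines}. It is a $\mu_H$-stable rank-two bundle with $E_\ell[2]\cong\bigl(T_{\OC_X(-2)}T_{\OC_X(-1)}(\OC_\ell(-1))\bigr)\otimes\OC_X(2)$, hence spherical. Being locally free, it has $\RHom(E_\ell,\OC_x)=\CB^2$ in degree $0$ for every $x\in X$, which gives $0\to\OC_x\to T_{E_\ell}(\OC_x)\to E_\ell[1]^{\oplus2}\to0$ uniformly in $x$.

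The substantive step is \cref{proposition: stability at the wall}: showing that $E_\ell[1]$ is stable of phase one at $\sigma^a_{\alpha_0,-2}$ with $\alpha_0^2=17/10$, where $\nu_{\alpha_0,-2}(E_\ell)=0$. This uses a Bogomolov--Gieseker type rank argument and the structure of phase-one objects in the double-tilt heart. Because all $E_\ell$ have the same class, $\Delta(\tau)$ is independent of $\ell$. Moreover, $\prod_{\ell\in J}T_{E_\ell}(\OC_x)$ is an extension of $\OC_x$ by $\bigoplus_{\ell\in J}E_\ell[1]^{\oplus2}$ for every $x$, so no line-by-line iteration or $x$-dependent $S$ is needed (\cref{theorem: family geometricity}). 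The separation of the $T_{E_\ell}$-translates comes from applying the family theorem to the bundles $E_\ell$ themselves, which glue to a vector bundle on $\XC_T$.

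For products, your fibrewise remark is only a sketch. The paper needs the open restriction map to the quintic factor, together with the finite Albanese hypothesis, to prove that pushforwards of stable objects from the slices over points of the second factor remain stable (\cref{lemma: product restriction}).
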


Let $\ell$ be a line on a quintic threefold $Y\subset\PP{4}$ represented by a point of $U$, and put
\[
E_\ell\coloneqq
\Ker\bigl(
H^0(Y,\IC_\ell(1))\otimes\OC_Y
\xrightarrow{\ev}\IC_\ell(1)
\bigr).
\]
We find a stability condition in Li's family \cite{li_2019_on_stability_conditions_for_the_quintic_threefold} at which $E_\ell[1]$ and all skyscraper sheaves are stable of phase one.
We then apply \cref{theorem in intro: geometric disconn general case} with $\Phi=T_{E_\ell}$ and $S=E_\ell[1]$.

\begin{rem}
In \cref{theorem in intro: disconnectedness in families}, orthogonality and equality of the numerical classes of the spherical objects force $n$ to be odd (\cref{remark: parity obstruction}).
The product construction gives examples in every dimension at least three for which $\Stab(X)$ has infinitely many connected components and $\Stabgeo(X)$ meets distinct connected components (\cref{corollary: disconnectedness of products}).
Taking the second factor to be an abelian variety gives such examples with $\omega_X\cong\OC_X$.
In particular, one cannot in general choose a single connected component containing all geometric stability conditions in higher dimensions.
\end{rem}

Finally, we apply a similar argument to the braid group action on semiorthogonal decompositions by mutations.
Bondal--Polishchuk proposed the following conjecture.

\begin{conj}[Bondal--Polishchuk {\cite[Conjecture~2.2]{bondal_polishchuk_1993_homological_properties_of_associative_algebras_the_method_of_helices}}]
\label{conjecture in intro: Bondal Polishchuk}
    Let $\DC$ be an Ext-finite $\CB$-linear triangulated category admitting a full exceptional collection of length $r$.
    Then the braid group $B_r$ acts transitively by mutations on full exceptional collections in $\DC$, up to shifts of the individual objects.
\end{conj}

Nordskova \cite{preprint_anya_2026_a_smooth_projective_counterexample_to_bondalpolishchuks_conjecture} constructed the first counterexample in the setting of a smooth projective variety, which is a rational weak Fano threefold with full exceptional collections in infinitely many braid group orbits.
We extend Nordskova's argument to semiorthogonal decompositions with an exceptional component and to semilinear autoequivalences.

\begin{introthm}[See \cref{theorem: separation of braid orbits}]
\label{theorem in intro: separation of braid orbits}
    Let $X$ be a smooth projective variety of dimension $n\geq2$ over an algebraically closed field $k$ of characteristic zero and $\DB=(\AC_1,\ldots,\AC_r)$ be a non-trivial semiorthogonal decomposition of $D^b(X)$, $F_1,\ldots,F_d$ be mutually orthogonal $n$-spherical objects, where $d\geq2$. 
    Put $\DB_{\mathbf a}\coloneqq (T_{F_1}^{a_1}\cdots T_{F_d}^{a_d})(\DB)$, for $\mathbf a=(a_1,\ldots,a_d)\in\ZZ^d$.
    Assume that the following hold:
    \begin{enumerate}
        \item a group $G$ of $k$-semilinear autoequivalences preserves each $\AC_j$ and acts transitively on $\{F_1,\ldots,F_d\}$ up to isomorphism;
        \item $\AC_j=\langle E\rangle$ for some exceptional object $E$ and some $j$, and $\RHom(E,F_1)\ne0$.
    \end{enumerate}
    Then, if $\DB_{\mathbf a}$ and $\DB_{\mathbf a'}$ belong to the same $B_r$-orbit, $\mathbf a-\mathbf a'\in\ZZ(1,\ldots,1)$.
\end{introthm}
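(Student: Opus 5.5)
We argue by contradiction, following Nordskova. The idea is to extract from the braid orbit of $\DB$ a $G$-invariant quantity that must nevertheless be ``asymmetric'' in the spherical objects once $\mathbf a-\mathbf a'$ is not constant.

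First reduce to a single twist vector. Set $\Phi_{\mathbf a}\coloneqq T_{F_1}^{a_1}\cdots T_{F_d}^{a_d}$. Since the $F_i$ are mutually orthogonal, the twists commute. Autoequivalences commute with mutations, so $\Phi_{\mathbf a}$ commutes with the $B_r$-action on semiorthogonal decompositions. Hence $\DB_{\mathbf a}\in B_r\DB_{\mathbf a'}$ if and only if $\DB_{\mathbf c}\in B_r\DB$ for $\mathbf c=\mathbf a-\mathbf a'$. Suppose this holds with $\mathbf c\notin\ZZ(1,\ldots,1)$, and choose indices $i,i'$ with $c_i\neq c_{i'}$.

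Next, every element of $B_r\DB$ is $G$-stable. Each $g\in G$ preserves each $\AC_j$, so $g(\DB)=\DB$. Since $g$ is an exact autoequivalence (semilinear, but still compatible with cones and $\RHom$ up to twisting the scalars), it commutes with mutations. Therefore $g$ fixes every decomposition in $B_r\DB$ componentwise, and in particular fixes $\DB_{\mathbf c}$.

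Now exploit the exceptional component. The $j$-th component of $\DB_{\mathbf c}$ is $\langle\Phi_{\mathbf c}E\rangle$, so $g(\Phi_{\mathbf c}E)\cong\Phi_{\mathbf c}E[m]$ for some shift $m$. On the other hand $g\,T_{F}\,g^{-1}\cong T_{g F}$, because spherical twists are defined functorially from $\RHom$ and evaluation. Pick $g$ with $gF_i\cong F_{i'}$, so $g$ induces a permutation $\pi$ of the $F$'s. We also have $gE\cong E[m']$, since $g$ preserves $\langle E\rangle$. Combining,
\[
\Phi_{\pi\mathbf c}E\cong\Phi_{\mathbf c}E[s]\quad\text{for some } s,
\]
so that $\Phi_{\mathbf c-\pi\mathbf c}E\cong E[s]$.

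It remains to contradict this. Apply $\RHom(F_k,-)$ for each $k$. Orthogonality gives $T_{F_l}F_k=F_k$ for $l\neq k$, and $T_{F_k}F_k=F_k[1-n]$. Adjunction then gives
\[
\RHom(F_k,\Phi_{\mathbf b}E)\cong\RHom(F_k,E)[b_k(n-1)].
\]
All $\RHom(F_k,E)$ are nonzero: they are related by $G$ to $\RHom(F_1,E)$, which by Serre duality $\RHom(F_1,E)\cong\RHom(E,F_1\otimes\omega_X[n])^\vee$ and sphericity ($F\otimes\omega_X\cong F$) is dual to a shift of $\RHom(E,F_1)\neq 0$. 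Comparing with $E[s]$ gives $b_k(n-1)=s$ for all $k$, since a nonzero bounded graded space is not isomorphic to a nontrivial shift of itself. As $n\geq2$, all $b_k$ are equal.

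So $\mathbf c-\pi\mathbf c\in\ZZ(1,\ldots,1)$. The sum of its entries is zero, so in fact $\mathbf c=\pi\mathbf c$. This holds for every permutation $\pi$ induced by $G$. Since $G$ acts transitively, $\mathbf c$ is constant on orbits and hence constant, which contradicts $c_i\neq c_{i'}$.

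The main obstacle is making the semilinear bookkeeping rigorous. One must check that $g\,T_F\,g^{-1}\cong T_{gF}$ and that mutations commute with $g$ when $g$ is only $k$-semilinear. This requires verifying that $g$ intertwines $\RHom(A,B)$ with $\RHom(gA,gB)$ via a field automorphism, and that the evaluation cones match. I would prove a small lemma to this effect before running the argument above.
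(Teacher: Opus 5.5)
Your proposal is correct and follows essentially the same route as the paper: reduce to $\beta\DB=\Phi_{\mathbf c}(\DB)$, use $G$-invariance of the braid orbit together with $g\Phi_{\mathbf c}g^{-1}\cong\Phi_{g\mathbf c}$ to get $\Phi_{\mathbf c-g\mathbf c}(E)\cong E[s]$, then compare shifts of nonzero $\RHom$ complexes to force $\mathbf c=g\mathbf c$ for all $g\in G$. The only difference is cosmetic. You test against $\RHom(F_k,-)$, so you need Serre duality and $F_1\otimes\omega_X\cong F_1$ to transfer the hypothesis $\RHom(E,F_1)\neq0$; the paper instead computes $\RHom(E,\Phi_{\mathbf e}^{-1}F_i)$ directly. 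Your sign slips in the shifts are harmless.
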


We also prove a relative version of \cref{theorem in intro: separation of braid orbits}.
Here, the role of stability conditions in families is replaced by the moduli space of semiorthogonal decompositions of Belmans--Okawa--Ricolfi \cite{preprint_pieter_shinnosuke_andrea_2025_moduli_spaces_of_semiorthogonal_decompositions_in_families}.
Using the absolute Galois action on the geometric generic fiber, we obtain the same separation of braid group orbits on every geometric fiber (\cref{theorem: separation of braid orbits in families}).
Since this moduli space is \'etale over the base and mutations commute with base change, the locus where two relative semiorthogonal decompositions belong to the same braid group orbit is open.

\begin{rem}
The example $X=\Bl_pV_5$ also shows that the converse of \cref{theorem in intro: separation of braid orbits} does not hold (see \cref{remark: counter example of the converse of separation theorem}).
\end{rem}

As applications of the absolute and relative criteria, we obtain the following result.

\begin{introthm}[See \cref{proposition: disconnectedness for weak Fano blowups,proposition: disconnectedness for point blowups of del Pezzo threefolds}]
\label{theorem in intro: weak Fano braid orbits}
    Let $X$ be a variety as in \labelcref{item in intro: weak Fano blowups} or \labelcref{item in intro: point blowups} in \cref{theorem in intro: disconnectedness examples}, and $r=n+2$ in the first case and $r=6$ in the second case.
    Then, for $m\in\ZZ$, there are semiorthogonal decompositions of $D^b(X)$
    \[
        \DB_m=(\AC_{1,m},\ldots,\AC_{r,m}),
    \]
    in pairwise distinct $B_r$-orbits such that, for each $j$, the image of $K_0(\AC_{j,m})$ in $\Knum(X)$ is independent of $m$.

\end{introthm}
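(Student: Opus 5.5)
The plan is to apply \cref{theorem in intro: separation of braid orbits} (or its relative version \cref{theorem: separation of braid orbits in families}) to a fixed semiorthogonal decomposition $\DB$ with an exceptional component, and to take $\DB_m\coloneqq T_{F_1}^{m}(\DB)$, i.e.\ $\mathbf a=(m,0,\ldots,0)$. Then for $m\ne m'$ the difference $(m-m',0,\ldots,0)$ is not in $\ZZ(1,\ldots,1)$ because $d\geq2$, so the $\DB_m$ lie in pairwise distinct $B_r$-orbits. For the $K$-theoretic statement, I will use that $T_{F_1}$ acts on $\Knum(X)$ as the reflection $v\mapsto v-\chi(F_1,v)[F_1]$. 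This reflection is trivial, because $\chi(F_1,v)=\chi(F_2,v)$ (by the Galois/automorphism symmetry, which acts trivially on $\Knum$), hence $\chi(F_1,F_1)=\chi(F_2,F_1)=0$ by orthogonality, and then $[F_1]$ is numerically trivial after pairing with a basis. More directly, $[F_1]=[F_2]$ and $\chi(F_1,-)=\chi(F_2,-)$, which together with $\chi(F_2,F_1)=0$ and nondegeneracy of $\chi$ on $\Knum$ forces $[F_1]\in$ the kernel of $\chi$. Hence $[F_1]=0$ in $\Knum(X)$. The image of $K_0(T_{F_1}^m\AC_j)=T_{F_1}^m K_0(\AC_j)$ in $\Knum(X)$ is therefore the same as that of $K_0(\AC_j)$.

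For case \labelcref{item in intro: weak Fano blowups}, let $X=\Bl_C\PP n$, $n$ odd. The exceptional divisor is not what I need; instead I will use the strict transform $Q$ of the smooth quadric containing $C$. Since $C\subset Q$ is a divisor of degree $n+1$, we get $N_{Q/X}\cong\OC_Q(2)\otimes\OC_Q(-(n+1))=\OC_Q(1-n)=\omega_Q$. So by \cref{lemma: exceptional spherical constructions} the pushforwards of the two spinor bundles on the even-dimensional quadric $Q$ (of dimension $n-1$) are mutually orthogonal $n$-spherical objects $F_1,F_2$. The semiorthogonal decomposition will be Orlov's blow-up decomposition
\[
D^b(X)=\langle \OC,\OC(1),\ldots,\OC(n-1)\ (\text{pulled back}),\ D^b(C)\ (\text{twisted})\rangle ,
\]
with $r=n+2$ components. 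One of these is $\langle\OC_X\rangle$, and $\RHom(\OC_X,F_1)=H^*(Q,S)\neq0$ for a suitably twisted spinor bundle (I will twist so that the spinor bundle has sections). The group $G$ will be either the monodromy/Galois group of the family over the space of pairs $(Q,C)$, for which the double cover of spinor bundles is connected since $n-1\equiv 2\pmod 4$ or $0\pmod 4$ is handled by the orthogonal group's nonidentity component, or, for a fixed $X$, an automorphism of $Q$ preserving $C$ and swapping the rulings. The cleanest route is the relative theorem: over the base $B$ of smooth pairs, the two spinor bundles form a connected étale double cover $T\to B$, because a reflection in $O(n+1)$ swaps them. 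Each $\AC_j$ is defined over $B$, and the Galois action on the geometric generic fiber swaps $F_1,F_2$. Specializing via \cref{theorem: separation of braid orbits in families} gives the result on every fiber.

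For case \labelcref{item in intro: point blowups}, let $X=\Bl_pV_5$. Use $D^b(V_5)=\langle\OC,\UC^\vee,\OC(1),\UC^\vee(1)\rangle$ and blow-up to get $r=6$ components, two of them $\langle\OC_E(-1)\rangle,\langle\OC_E(-2)\rangle$ or similar, containing $\OC_X$. The spherical objects will be $\OC_{\ell_i}(-1)$ on the strict transforms of the three lines through a general point $p$. These become $(-1,-1)$-curves, mutually disjoint, and their monodromy over $p\in V_5$ is transitive on the three lines, giving $d=3$. I must check that the normal bundle of the strict transform is $\OC(-1)^2$ (lines on $V_5$ have normal bundle $\OC\oplus\OC$ generically, and blowing up a point on the line twists it down by one), and that $\RHom(\OC_X,\OC_{\ell}(-1))=0$, so I would instead choose an exceptional component like $\langle\OC_X(H)\rangle$ or $\OC_E$ with nonzero pairing. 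The main obstacle is verifying transitivity of the monodromy (connectedness of the cover $T\to B$) in both cases, and the nonvanishing $\RHom(E,F_1)\ne0$ for a correctly chosen exceptional component.
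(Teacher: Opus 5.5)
There is a genuine gap in the $K$-theoretic half of the statement. The separation half of your argument is fine: for $\mathbf a=(m,0,\ldots,0)$, the difference $(m-m',0,\ldots,0)$ is not in $\ZZ(1,\ldots,1)$ when $m\neq m'$.

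\textbf{Where it fails.} Your derivation of $[F_1]=0$ does not work. The identity $\chi(F_1,F_1)=0$ carries no information, since it equals $1+(-1)^n$ for every spherical object in odd dimension. Nondegeneracy of $\chi$ would require $\chi(F_1,v)=0$ for \emph{every} $v$, not just for $v=[F_1]$.

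In fact $[F_1]\neq0$. So $T_{F_1}$ acts on $\Knum(X)$ by the nontrivial transvection $v\mapsto v-\chi(F_1,v)[F_1]$, and $T_{F_1}^m$ acts by $v\mapsto v-m\chi(F_1,v)[F_1]$.
\begin{itemize}
\item On $\Bl_pV_5$: you have $\RHom(\OC_X(H),F_1)\cong\CB[-1]$. Serre duality with $\omega_X|_{C_1}\cong\OC_{C_1}$ gives $\chi(F_1,\OC_X(H))=-\chi(\OC_X(H),F_1)=1$. Hence the component $T_{F_1}^m\langle\OC_X(H)\rangle$ of your $\DB_m$ has numerical image $\ZZ\bigl([\OC_X(H)]-m[F_1]\bigr)$. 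These subgroups are distinct for distinct $m$ (compare ranks, using $\rk F_1=0$).
\item On $\Bl_C\PP{n}$: the same duality gives $\chi(F_\pm,\OC_X(aH))=-\chi(\OC_X(aH),F_\pm)=-\chi(Q,\Spbdl_\pm(-a))$, using $\omega_X|_D\cong\OC_D$ and $n$ odd. This is a polynomial of degree $n-1$ in $a$ with nonzero leading coefficient. It therefore cannot vanish for all $0\le a\le n$, and some component $\langle\OC_X(aH)\rangle$ moves numerically.
\end{itemize}
So your family $T_{F_1}^m(\DB)$ does not satisfy the second clause of the statement.

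\textbf{The missing idea.} You need to cancel the numerical actions of the twists. Since $[F_1]=[F_2]$ (this is where the Galois or automorphism symmetry enters), $T_{F_1}$ and $T_{F_2}$ induce the same automorphism of $\Knum(X)$. Hence $\prod_iT_{F_i}^{a_i}$ acts trivially whenever $\sum_ia_i=0$.

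This is what the paper does. On $\Bl_C\PP{n}$ it takes $\Psi=T_{F_+}T_{F_-}^{-1}$ and $\DB_m=\Psi^m(\DB_0)$, i.e.\ $\mathbf a=(m,-m)$, and applies the relative separation theorem. On $\Bl_pV_5$ it uses $\mathbf a\in\ZZ^3$ with $a_1+a_2+a_3=0$. Separation survives, because $(m-m',m'-m)\in\ZZ(1,1)$ forces $m=m'$. Each component now has the same image in $\Knum(X)$ as before twisting.

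With this change the rest of your outline agrees with the paper, up to three minor points.
\begin{itemize}
\item Orlov's decomposition of $\Bl_C\PP{n}$ uses $\pi^*\OC(a)$ for $0\le a\le n$. Your list stops at $n-1$ and so gives only $n+1$ components.
\item The relative theorem needs an actual object on $\XC_T$. The paper obtains it from the pencil $x_0^2+\cdots+x_{n-1}^2+tx_n^2$ over $T=\{s^2=t\}$: there $\QC_T\cong Q\times T$, and the reflection $x_n\mapsto -x_n$ swaps the two spinor bundles.
\item On $\Bl_pV_5$ no monodromy is needed. $\Aut(Y)_p\cong S_4$ lifts to $X$, fixes each object of the collection, and permutes the three lines through $S_4\to S_3$. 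The absolute separation theorem then applies directly, with $\OC_X(H)$ as the exceptional component.
\end{itemize}
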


\begin{rem}
In the latter case, the decompositions are induced by full exceptional collections.
In particular, these collections give a counterexample to \cref{conjecture in intro: Bondal Polishchuk}.
Moreover, because these decompositions are obtained by autoequivalences acting trivially on $\Knum(X)$, their braid group orbits cannot be distinguished by numerical classes.
\end{rem}

\begin{Rels}
The stability manifolds of smooth projective curves \cite{okada_2006_stability_manifold_of_mathbb_p1,bridgeland_2007_stability_conditions_on_triangulated_categories,macri_2007_stability_conditions_on_curves}, abelian surfaces \cite{bridgeland_2008_stability_conditions_on_k3_surfaces,huybrechts_macri_stellari_2008_stability_conditions_for_generic_k3_categories}, and abelian threefolds of Picard rank one \cite{fu_li_zhao_2022_stability_manifolds_of_varieties_with_finite_albanese_morphisms} are known to be contractible.
For K3 surfaces, Bridgeland constructed a distinguished connected component and conjectured that it is simply connected, which is known in the Picard rank one case \cite{bayer_bridgeland_2017_derived_automorphism_groups_of_k3_surfaces_of_picard_rank_1}.
Similarly, the contractibility of the connected component of $\Stab(\PP{2})$ containing geometric stability conditions is shown in \cite{li_2017_the_space_of_stability_conditions_on_the_projective_plane}.

For geometric loci of curves, the explicit description $\Stabgeo(C)\cong \HB\times\CB$ is proven in \cite{kikuta_koseki_ouchi_2024_thurston_compactifications_of_spaces_of_stability_conditions_on_curves}.
In \cite{fu_li_zhao_2022_stability_manifolds_of_varieties_with_finite_albanese_morphisms}, the geometricity of stability conditions on a smooth projective variety with finite Albanese morphism is proven.
Moreover, the geometric locus of a smooth projective surface is described by using a generalized Le Potier function in \cite{dell_2025_stability_conditions_on_free_abelian_quotients}.
Rekuski's contractibility theorem recalled above uses this description.

Similar results are known for derived categories of quivers: the contractibility for stability manifolds of derived categories of Dynkin quivers \cite{qiu_woolf_2018_contractible_stability_spaces_and_faithful_braid_group_actions} and connectedness for the stability manifolds of finite connected acyclic quivers \cite{preprint_takumi_dongjian_2025_stability_conditions_and_algebraic_hearts_for_acyclic_quivers}.

On the other hand, examples of disconnected stability spaces were already known
for quotient categories \cite{meinhardt_partsch_2014_quotient_categories_stability_conditions_and_birational_geometry,barbieri_moller_qiu_so_2024_quadratic_differentials_as_stability_conditions_collapsing_subsurfaces}.
More recently, \cite{preprint_dawei_jingyin_yu_fei_2026_nonasphericity_of_strata_of_genusone_differentials_and_stability_spaces} proved the existence of noncontractible connected components.

The Bondal--Polishchuk conjecture is known for del Pezzo surfaces \cite{kuleshov_orlov_1994_exceptional_sheaves_on_del_pezzo_surfaces}.
However, counterexamples to \cref{conjecture in intro: Bondal Polishchuk} are constructed using derived categories of graded gentle algebras in \cite{chang_haiden_schroll_2025_braid_group_actions_on_branched_coverings_and_full_exceptional_sequences}. 
A purely algebraic proof of this non-transitivity result was given in \cite{preprint_atsuki_atsushi_2025_a_simple_algebraic_proof_of_the_nontransitivity_of_the_braid_group_action_on_full_exceptional_sequences}.
Then, a counterexample in the smooth projective setting is constructed in  \cite{preprint_anya_2026_a_smooth_projective_counterexample_to_bondalpolishchuks_conjecture}.
In addition, the blow-up of $\PP{2}$ at ten general points is known to have both a full exceptional collection and a non-full exceptional collection of maximal length \cite{krah_2024_a_phantom_on_a_rational_surface}.
\end{Rels}

\begin{Out}
In \cref{section: preliminaries}, we recall numerical stability conditions, their behavior under base change, stability conditions over a base, and relative Fourier--Mukai transforms.
In \cref{section: disconnectedness in families}, we establish criteria for spherical twists to separate connected components of stability manifolds.
We extend Nordskova's argument to semilinear autoequivalences in \cref{subsection:Spherical twists and connected components}, and prove \cref{theorem in intro: disconnectedness in families} in \cref{subsection: disconnectedness in families}.
In \cref{section: A local criterion for geometricity}, we prove \cref{theorem in intro: geometric disconn general case} and its family version.
The separation criterion for braid group orbits (\cref{theorem in intro: separation of braid orbits}) is proved in \cref{subsection: separation of braid orbits by semilinear autoequivalences}, and its family version is established in \cref{subsection: separation of braid orbits in families}.

\cref{section: Calabi--Yau threefolds via rigid rational curves} gives applications using rigid rational curves.
In \cref{subsection: rational curves on quintic threefolds}, we prove the assertions of \cref{theorem in intro: disconnectedness examples,theorem in intro: geometric disconnectedness} for general quintic threefolds.
The other complete intersection Calabi--Yau threefolds and the Dwork pencil in \cref{theorem in intro: disconnectedness examples} are treated in \cref{subsection: further calabi-yau 3-folds}.
In \cref{subsection: products and finite Albanese morphisms}, we prove the product cases of \cref{theorem in intro: disconnectedness examples,theorem in intro: geometric disconnectedness}.
Finally, \cref{section: examples from Spinor bundles} treats the remaining cases of \cref{theorem in intro: disconnectedness examples} and proves \cref{theorem in intro: weak Fano braid orbits}.
We first construct spherical objects from spinor bundles and apply the family criterion to Calabi--Yau complete intersections in smooth quadrics.
The blow-ups of projective space along complete intersections are treated in \cref{subsection: rational weak Fano varieties}, and the blow-up of $V_5$ at a general point is treated in \cref{subsection: further examples}.
\end{Out}

\begin{AI}
The authors used GPT-6 Astra throughout the preparation of this work.
The authors independently checked and reconstructed all mathematical arguments and proofs developed with AI assistance. The authors take full responsibility for the mathematical content of this paper, including any errors. 
\end{AI}

\begin{Ack}
We would like to thank Kohei Kikuta for useful conversations and for his support during the preparation of this manuscript. 
The first author was supported by JSPS KAKENHI Grant Numbers
JP25K00906 and JP25K06932.
    The second author acknowledges support from OpenAI through the ChatGPT for Academic Researchers program.
\end{Ack}
\section{Preliminaries on stability conditions}\label{section: preliminaries}

\subsection{Stability conditions}\label{subsection: Bridgeland stability conditions}

Let $\TC$ be a triangulated category.
Fix a finitely generated free abelian group $\Lambda$ and a group homomorphism $v \colon K_0(\TC) \to \Lambda$ with finite cokernel. 
For a group homomorphism $Z\colon\Lambda\to\CB$,
we write $Z(E)\coloneqq Z(v(E))$.

\begin{dfn}
[{\cite[Definition~5.1]{bridgeland_2007_stability_conditions_on_triangulated_categories}}]
A \emph{pre-stability condition} on $\TC$ with respect to $(\Lambda, v)$ is a pair
$\sigma=(Z,\PC)$ consisting of a group homomorphism
$Z\colon \Lambda \to \CB$, called the \emph{central charge}, 
and a slicing $\PC$ of $\TC$
such that, for any $\phi \in \mathbb{R}$ and $0\neq E\in\mathcal{P}(\phi)$, we have
$Z(E) \in \mathbb{R}_{>0}e^{\sqrt{-1}\pi\phi}$.
\end{dfn}

Let $\sigma=(Z,\PC)$ be a pre-stability condition on $\TC$.
By \cite[Lemma~5.2]{bridgeland_2007_stability_conditions_on_triangulated_categories},
the category $\mathcal{P}(\phi)$ is abelian for $\phi\in\mathbb{R}$.
A nonzero object $E \in \mathcal{P}(\phi)$ is called \emph{$\sigma$-semistable of phase $\phi$}. 
A $\sigma$-semistable object $E \in \mathcal{P}(\phi)$ is called
\emph{$\sigma$-stable} if it is simple in the abelian category
$\mathcal{P}(\phi)$.
For an interval $I\subset\RB$,
we denote by $\PC(I)$ the extension-closed subcategory
of $\TC$ generated by $\PC(\phi)$ for $\phi\in I$.
Every nonzero object $E\in \TC$ admits the \emph{Harder--Narasimhan filtration}:
\[
\begin{tikzcd}[column sep=1.2em, row sep=1.4em]
0=E_0 \arrow[rr]
&& E_1 \arrow[rr] \arrow[dl]
&& E_2 \arrow[rr] \arrow[dl]
&& \cdots \arrow[rr]
&& E_{n-1} \arrow[rr]
&& E_n=E \arrow[dl] \\
& A_1 \arrow[ul,dashed]
&& A_2 \arrow[ul,dashed]
&&&&&& A_n \arrow[ul,dashed]
\end{tikzcd},
\]
where 
$A_i\in\mathcal{P}(\phi_i)$ and $\phi_1>\phi_2 > \cdots> \phi_n$.
The objects $A_1,\ldots,A_n$ are called the
\emph{Harder--Narasimhan factors} of $E$.
We define
$\phi_\sigma^+(E)\coloneqq \phi_1,
\phi_\sigma^-(E)\coloneqq \phi_n$,
and the \emph{mass} of $E$ with respect to $\sigma$ by
\[m_\sigma(E)\coloneqq \sum_{i=1}^n |Z(A_i)|.\]

\begin{dfn}[{\cite{preprint_maxim_yan_2008_stability_structures_motivic_donaldsonthomas_invariants_and_cluster_transformations}}, {\cite[Lemma~11.4]{bayer_macri_stellari_2016_the_space_of_stability_conditions_on_abelian_threefolds_and_on_some_calabiyau_threefolds}}]
Fix a norm $\|-\|$ on $\Lambda_\RB$.
A pre-stability condition $\sigma=(Z,\PC)$ on $\TC$ is said to satisfy the \emph{support property} if the following equivalent conditions hold:
\begin{enumerate}
\item
There exists a constant $C>0$ such that
\[\|v(E)\| \leq C|Z(E)|\]
for any $\sigma$-semistable object $E \in \TC$. 
\item
There exists a quadratic form
$Q$ on $\Lambda_{\mathbb{R}}$ such that
$Q|_{\ker Z}$ is negative definite and
\[
  Q(v(E))\geq 0
\]
for any $\sigma$-semistable object $E\in\TC$.
\end{enumerate}
A pre-stability condition $\sigma$ on $\TC$ is called a \emph{stability condition} if $\sigma$ satisfies the support property. We denote by $\operatorname{Stab}_{(\Lambda,v)}(\TC)$ the set of stability conditions on $\TC$ with respect to $(\Lambda,v)$.
For a smooth projective variety $X$, we write $\Stab(X)$ for the set of stability conditions on $D^b(X)$ with respect to $\Lambda=\Knum(X)$.
\end{dfn}

Let $\sigma$ be a stability condition on $\TC$.
By \cite[Appendix~B]{bayer_macri_2011_the_space_of_stability_conditions_on_the_local_projective_plane},
$\sigma$ is locally finite.
Thus a $\sigma$-semistable object $E\in\mathcal{P}(\phi)$ admits a
Jordan--H\"older filtration in $\mathcal{P}(\phi)$ and
its $\sigma$-stable factors
are called the \emph{Jordan--H\"older factors} of $E$.

For $\sigma,\tau\in\operatorname{Stab}_{(\Lambda,v)}(\TC)$, we define
\[
d(\sigma,\tau)
\coloneqq
\sup_{0 \neq E \in \TC}
\left\{
\left|\phi_\sigma^+(E)-\phi_\tau^+(E)\right|,
\left|\phi_\sigma^-(E)-\phi_\tau^-(E)\right|,
\left|\log\frac{m_\sigma(E)}{m_\tau(E)}\right|
\right\} \in [0,\infty].
\]
By \cite[Proposition~8.1]{bridgeland_2007_stability_conditions_on_triangulated_categories},
the function $d$ defines a generalized metric on $\operatorname{Stab}_{(\Lambda,v)}(\TC)$.

We recall the deformation theorem for stability conditions.

\begin{thm}[{\cite[Theorem~1.2]{bridgeland_2007_stability_conditions_on_triangulated_categories}}]
\label{theorem: Bridgeland's deformation theorem}
The map 
\[ \Stab_{(\Lambda,v)}(\TC) \to \Hom(\Lambda, \CB),~(Z,\PC) \mapsto Z \]
is a local homeomorphism. In particular, $\Stab_{(\Lambda,v)}(\TC)$ is a complex manifold.
\end{thm}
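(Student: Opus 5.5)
The plan is to follow Bridgeland's original argument, taking the support property in its quadratic-form formulation (2) in place of local finiteness. The argument splits into three steps: local injectivity, local surjectivity, and continuity of the inverse.

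\emph{Local injectivity.} Fix $\sigma=(Z,\PC)$. First I would show that if $\tau=(Z,\QC)$ has the same central charge and $d(\sigma,\tau)<1$, then $\PC=\QC$. Take $E\in\PC(\phi)$ and consider its $\tau$-Harder--Narasimhan filtration. Since $d(\sigma,\tau)<1$, every $\tau$-factor has $\sigma$-phases inside $(\phi-1,\phi+1)$. The equality of central charges, together with the phase inequalities, then forces all the $\tau$-phases to equal $\phi$. By symmetry $\PC(\phi)=\QC(\phi)$. Consequently the map $(Z,\PC)\mapsto Z$ is injective on each ball of radius $<1$.

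\emph{Local surjectivity.} Given $W$ with $\|W-Z\|_\sigma<\sin(\pi\epsilon)$, where the norm is taken relative to the support constant $C$, I would construct a slicing $\QC$ with central charge $W$. Every $\sigma$-semistable $E$ satisfies $|W(E)-Z(E)|<\sin(\pi\epsilon)|Z(E)|$, so the phases of semistable objects move by less than $\epsilon$. For each thin interval $(a,b)$ of length $<1-2\epsilon$, the quasi-abelian category $\PC((a,b))$ carries the function $W$, and I would define $\QC(\psi)$ as the $W$-semistable objects of phase $\psi$ in $\PC((\psi-\epsilon,\psi+\epsilon))$. Bridgeland's Sections~6--7 argument produces Harder--Narasimhan filtrations in these thin categories.

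\emph{Main obstacle.} The key technical issue is the existence of these filtrations, that is, the termination of descending chains of maximal destabilizing quotients. Bridgeland uses local finiteness here. With the support property one gets it instead from the fact that the set $\{v(F): F\in\PC((a,b)),\ |Z(F)|\le M\}$ is finite: by (1), the norms of the classes of semistable objects are bounded, and HN factors in a thin interval have bounded mass. Discreteness of $\Lambda$ then gives termination of the chains.

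One must also verify that $(W,\QC)$ satisfies the support property. I would do this by deforming the quadratic form. Following Bayer's short proof, I would replace $Q$ by a form $Q'$ with $Q'|_{\ker W}$ negative definite, and use that $W$-semistable objects are built from $\sigma$-semistable objects with nearby phase. Convexity of the cone $\{Q\ge0\}$ intersected with a sector then gives $Q'\ge0$ on them.

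\emph{Continuity of the inverse.} The construction above gives $d(\sigma,\tau)<\epsilon$ for $\tau=(W,\QC)$. Hence the inverse map is continuous, and the map is a local homeomorphism. The complex structure on $\Stab_{(\Lambda,v)}(\TC)$ is then pulled back from the finite-dimensional vector space $\Hom(\Lambda,\CB)$.
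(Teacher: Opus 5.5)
Your overall route is the one the paper relies on. The paper gives no argument of its own; it cites Bridgeland's Theorem~1.2 together with Bayer's short proof for the support-property formulation. The deformation part of your outline is sound. For $b-a<1$ the set $\{v(F)\mid F\in\PC((a,b)),\ |Z(F)|\leq M\}$ is indeed finite. Along a chain of strict subobjects in $\PC((a,b))$, the classes are pairwise distinct, because successive quotients have nonzero $Z$ in a fixed open sector, and $|Z|$ stays bounded. Together these give the chain conditions that Bridgeland obtained from local finiteness.

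The step that does not work as you justified it is local injectivity. Equality of central charges plus phase windows cannot force the $\tau$-factors of $E\in\PC(\phi)$ to have phase $\phi$. Two $\tau$-factors of phases $\phi\pm\frac{1}{10}$, with central charges symmetric about the ray $\RB_{>0}e^{\sqrt{-1}\pi\phi}$, satisfy every constraint you list. You must use that $E$ is $\sigma$-semistable. For instance, suppose $d(\sigma,\tau)<\frac12$ and $\phi^+_\tau(E)>\phi$. Consider the $\tau$-Harder--Narasimhan triangle $A\to E\to B$ with $0\neq A\in\QC((\phi,\phi+\frac12))$ and $B\in\QC((\phi-\frac12,\phi])$. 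Then $A\in\PC((\phi-\frac12,\phi+1))$ and $B\in\PC((\phi-1,\phi+\frac12))$. The long exact cohomology sequence for the heart $\PC((\phi-\frac12,\phi+\frac12])$, which contains $E$, shows that $A$ and $B$ lie in this heart. So $A$ is a subobject of the $\sigma$-semistable $E$, giving $\Im(e^{-\sqrt{-1}\pi\phi}Z(A))\leq0$. This contradicts $\Im(e^{-\sqrt{-1}\pi\phi}Z(A))>0$. The case $\phi^-_\tau(E)<\phi$ is dual.

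Some smaller corrections:
\begin{itemize}
\item Your claim that the $\tau$-factors have $\sigma$-phases in $(\phi-1,\phi+1)$ needs $d<\frac12$.
\item Two points of a ball of radius $r$ are less than $2r$ apart. So injectivity holds on balls of radius $\frac14$, or $\frac12$ if you quote Bridgeland's Lemma~6.4 for $d<1$, not on every ball of radius $<1$.
\item In the support step you do not need a new $Q'$. Negative definiteness on $\ker W$ is an open condition in $W$, so $Q$ itself works. The convexity lemma concerns the pair $(Q,Z)$: it gives $Q(v(E))\geq0$ for $(W,\QC)$-semistable $E$ and says nothing about an unrelated $Q'$.
\item You should also prove continuity of $(Z,\PC)\mapsto Z$ itself, which follows from the mass term in $d$.
\end{itemize}
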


Let $G$ be a subgroup of $\Aut(\TC)$, and
let $\alpha \colon G \to \GL(\Lambda)$ be a homomorphism satisfying 
$v \circ [\Phi]=\alpha(\Phi) \circ v$
for all $\Phi \in G$.
For $\Phi\in G$ and
$\sigma=(Z,\mathcal{P})\in\operatorname{Stab}_{(\Lambda,v)}(\TC)$, we set
\[\Phi(\sigma)
\coloneqq 
\bigl(Z\circ \alpha(\Phi)^{-1},\Phi(\mathcal{P})\bigr),\]
where 
$\Phi(\mathcal{P})(\phi)\coloneqq \Phi(\mathcal{P}(\phi))$
for $\phi\in\mathbb{R}$.

We write $\GLt$ for the universal cover of $\GL^+(2,\RB)$.
It acts continuously on the space of stability conditions;
see \cite[Lemma~8.2]{bridgeland_2007_stability_conditions_on_triangulated_categories}.

\subsection{Base change}\label{subsection:Base change}
Let $f\colon\XC\to B$ be a smooth projective morphism,
where $B$ is either a smooth quasi-projective variety
over $\CB$ or the spectrum of a stalk of the structure
sheaf of such a variety.
For a $B$-scheme $T$ and a point $t\in T$, we write
\[
\XC_T\coloneqq\XC\times_B T,
\quad
\XC_t\coloneqq\XC_T\times_T\Spec\kappa(t).
\]
For a $T$-perfect complex $E$ on $\XC_T$,
we denote by $E_t$ its derived restriction to $\XC_t$.
We assume that there exist
$A_1,\ldots,A_r\in\Perf(\XC)$
such that, for every geometric point $\bar b$ of $B$,
the classes
$[(A_1)_{\bar b}],\ldots,[(A_r)_{\bar b}]$
form a basis of $\Knum(\XC_{\bar b})_\QB$.
Let $\Lambda\coloneqq\Knum(\XC/B)$
be the relative numerical Grothendieck group defined in
\cite[Definition~21.1]{bayer_lahoz_macri_perry_stellari_2021_stability_conditions_in_families}.
For a $B$-scheme $T$ and $t\in T$,
we denote by
$[-]_t\colon K_0(\XC_t)\to\Lambda$
the natural homomorphism defined in
\cite[(8.2)]{preprint_chunyi_zhiyu_ziqi_emanuele_alexander_paolo_xiaolei_2026_stability_conditions_and_moduli_spaces_on_projective_families},
which factors through $\Knum(\XC_t)$.
We also write
$[-]_t\colon\Knum(\XC_t)\to\Lambda$
for the induced homomorphism.
By
\cite[Proposition and Definition~21.5]{bayer_lahoz_macri_perry_stellari_2021_stability_conditions_in_families},
the relative Euler pairing
$\chi\colon K_0(\Perf(\XC))\times\Lambda\to\ZZ$
satisfies
\[
\chi([F],[E]_t)=\chi(F_t,E)
\]
for any $B$-scheme $T$, $t\in T$,
$F\in\Perf(\XC)$, and $E\in D^b(\XC_t)$.

\begin{lem}\label{lemma: numerical identification}
For any geometric point $\bar b$ of $B$,
the homomorphism
\[
[-]_{\bar b}\colon
\Knum(\XC_{\bar b})_\QB
\to \Lambda_\QB
\]
is an isomorphism.
\end{lem}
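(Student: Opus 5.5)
We will prove that $[-]_{\bar b}$ is injective on $\Knum(\XC_{\bar b})_\QB$, then show that the two spaces have the same dimension. Both steps use the relative Euler pairing $\chi\colon K_0(\Perf(\XC))\times\Lambda\to\ZZ$ together with the compatibility $\chi([F],[E]_{\bar b})=\chi(F_{\bar b},E)$ for $F\in\Perf(\XC)$ and $E\in D^b(\XC_{\bar b})$.

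\emph{Injectivity.} Let $x\in\Knum(\XC_{\bar b})_\QB$ with $[x]_{\bar b}=0$. For each $i$, the compatibility gives
\[
\chi((A_i)_{\bar b},x)=\chi([A_i],[x]_{\bar b})=0 .
\]
By hypothesis the classes $[(A_i)_{\bar b}]$ span $\Knum(\XC_{\bar b})_\QB$. Hence $\chi(y,x)=0$ for all $y\in\Knum(\XC_{\bar b})_\QB$. The Euler pairing on $\Knum(\XC_{\bar b})$ is nondegenerate by definition of numerical equivalence, so $x=0$.

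\emph{Dimension count.} Next I check that $\dim_\QB\Lambda_\QB\le r$. Recall from \cite[Definition~21.1]{bayer_lahoz_macri_perry_stellari_2021_stability_conditions_in_families} that $\Lambda=\Knum(\XC/B)$ is defined as a quotient of the group generated by classes of objects supported on fibers. A class is zero in $\Lambda$ exactly when its pairing with every $[F]$, $F\in\Perf(\XC)$, vanishes; equivalently, $\Lambda$ is torsion-free and embeds into $\Hom(K_0(\Perf(\XC)),\ZZ)$ via $\chi$. I will verify this against the cited definition. It then suffices to show that every element $\lambda\in\Lambda_\QB$ is determined by the $r$ numbers $\chi([A_i],\lambda)$.

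Every element of $\Lambda$ is a combination of classes $[E]_t$ for points $t$ of various $B$-schemes. For such a class and any $F\in\Perf(\XC)$, take a geometric point over $t$ and expand $[F_{\bar t}]=\sum_i c_i[(A_i)_{\bar t}]$ in $\Knum(\XC_{\bar t})_\QB$. Then
\[
\chi([F],[E]_t)=\chi(F_{\bar t},E_{\bar t})=\sum_i c_i\,\chi((A_i)_{\bar t},E_{\bar t})=\sum_i c_i\,\chi([A_i],[E]_t).
\]

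\emph{Main obstacle.} The difficulty is that the coefficients $c_i$ must not depend on the point $t$. Otherwise this expansion does not combine linearly over sums of classes supported at different points.

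First attempt: use that $B$ is connected and that Euler characteristics $\chi(F_{\bar t},(A_j)_{\bar t})$ are locally constant in flat families. Then the Gram matrix $\bigl(\chi((A_i)_{\bar t},(A_j)_{\bar t})\bigr)$ and the vector $\bigl(\chi(F_{\bar t},(A_j)_{\bar t})\bigr)_j$ are constant in $t$. The matrix is invertible because the $(A_i)_{\bar t}$ form a basis and the pairing is nondegenerate. So the $c_i$ are obtained by solving a constant linear system and are independent of $t$.

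In the case where $B$ is the spectrum of a local ring, I would use the same argument. Every point specializes to the closed point, or both points lie on an irreducible base, so local constancy still applies.

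\emph{Conclusion.} Consequently $\lambda\mapsto(\chi([A_i],\lambda))_i$ is injective on $\Lambda_\QB$, which gives $\dim\Lambda_\QB\le r=\dim\Knum(\XC_{\bar b})_\QB$. Combined with injectivity of $[-]_{\bar b}$, the map is an isomorphism.
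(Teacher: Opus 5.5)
Your injectivity step is the same as the paper's. The gap is in the dimension count. It rests on the claim that a class of $\Lambda$ vanishes once it pairs to zero with every $[F]$, $F\in\Perf(\XC)$, and this is not part of the definition of $\Lambda=\Knum(\XC/B)$. A priori, $\Lambda$ is torsion-free and generated by the fiber classes $[E]_t$, subject to two relations: $[-]_t$ factors through $\Knum(\XC_t)$, and classes of $T$-perfect complexes are constant over connected $T$. Quotienting by the kernel of the pairing with $\Perf(\XC)$ produces a different object, the uniformly numerical group $\NC(\XC/B)$ of \cite[Proposition and Definition~21.5]{bayer_lahoz_macri_perry_stellari_2021_stability_conditions_in_families}. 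In the paper, both the injection \eqref{equation: relative Euler embedding} and the identification $\Lambda\cong\NC(\XC/B)$ are deduced \emph{from} this lemma, so using them here is circular. Without that input, your computation only shows that $\chi([F],\lambda)$ is determined by the $r$ numbers $\chi([A_i],\lambda)$. That bounds the image of $\Lambda_\QB$ in $\Hom(K_0(\Perf(\XC)),\QB)$, but not $\dim\Lambda_\QB$.

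The missing idea is to prove surjectivity directly from those relations, as the paper does. For a generator $[E]_t$, pass to a geometric point $\bar t$ over $t$, so that $[E]_t=[E_{\bar t}]_{\bar t}$. Then write $[E_{\bar t}]=\sum_i c_i[(A_i)_{\bar t}]$ in $\Knum(\XC_{\bar t})_\QB$. Because $[-]_{\bar t}$ factors through $\Knum(\XC_{\bar t})$, this gives $[E]_t=\sum_i c_i[(A_i)_{\bar t}]_{\bar t}$. Because each $A_i$ is a perfect complex on the whole family over the connected base, $[(A_i)_{\bar t}]_{\bar t}=[(A_i)_{\bar b}]_{\bar b}$. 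Thus $\Lambda_\QB$ is spanned by the images of the basis of $\Knum(\XC_{\bar b})_\QB$, i.e.\ $[-]_{\bar b}$ is onto. The coefficients $c_i$ may depend on $t$ here. The constancy you actually need is that of the classes $[(A_i)_t]_t\in\Lambda$, which is a defining relation, so your Gram-matrix argument for $t$-independent coefficients is unnecessary. Surjectivity together with your injectivity proves the lemma, and the embedding \eqref{equation: relative Euler embedding} then follows as a consequence rather than serving as an input.
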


\begin{proof}
Fix a geometric point $\bar b$ of $B$.
By
\cite[Definition~21.1]{bayer_lahoz_macri_perry_stellari_2021_stability_conditions_in_families}
and the assumption, $[-]_{\bar b} \colon \Knum(\XC_{\bar b})_\QB
\to \Lambda_\QB$ is surjective.
To prove injectivity, take
$w\in\ker([-]_{\bar b})$.
For each $1\leq i\leq r$, we have
$\chi((A_i)_{\bar b},w)
=\chi([A_i],[w]_{\bar b})=0$.
Since the classes
$[(A_1)_{\bar b}],\ldots,[(A_r)_{\bar b}]$
form a basis of $\Knum(\XC_{\bar b})_\QB$
and the Euler pairing is nondegenerate,
we obtain $w=0$.
\end{proof}

By \cref{lemma: numerical identification},
the nondegeneracy of the Euler pairing on the fibers,
and the torsion-freeness of $\Lambda$,
the relative Euler pairing gives an injection
\begin{equation}\label{equation: relative Euler embedding}
\Lambda\hookrightarrow\ZZ^r,\quad
\lambda\mapsto(\chi([A_i],\lambda))_{i=1}^r.
\end{equation}
Hence $\Lambda$ is a finitely generated free abelian group.
Moreover, by
\cite[Remark~8.8]{preprint_chunyi_zhiyu_ziqi_emanuele_alexander_paolo_xiaolei_2026_stability_conditions_and_moduli_spaces_on_projective_families},
for any connected $B$-scheme $T$
and any $T$-perfect complex $E$ on $\XC_T$,
the class $[E_t]_t\in\Lambda$
is independent of $t\in T$.
Since $\Knum(\XC_{\bar b})$ is torsion-free,
\cref{lemma: numerical identification} implies that
$[-]_{\bar b}$ is injective with finite cokernel.
We use the homeomorphism
\begin{equation}\label{equation: identification of stability spaces}
\Stab_{(\Lambda,[-]_{\bar b})}(D^b(\XC_{\bar b}))
\xrightarrow{\sim}
\Stab(\XC_{\bar b}),
\qquad
(Z,\PC)\longmapsto (Z\circ[-]_{\bar b},\PC)
\end{equation}
to identify these two spaces.

Let $b\in B$ and let $k'/\kappa(b)$ be a field extension.
Write
$\XC_{b,k'}\coloneqq
\XC_b\times_{\kappa(b)}\Spec k'$,
and let $\pi\colon\XC_{b,k'}\to\XC_b$
be the projection.
We regard $\Spec k'$ as a $B$-scheme. 
We write
\[
[-]_{b,k'}\colon K_0(\XC_{b,k'})\to\Lambda
\]
for the homomorphism $[-]_t$ defined above,
where $t$ is the unique point of $\Spec k'$.

We recall the base change theorem in the above setting.

\begin{thm}
[{\cite[(2.6), (8.2),Theorem~2.8 and Lemma~2.10]{preprint_chunyi_zhiyu_ziqi_emanuele_alexander_paolo_xiaolei_2026_stability_conditions_and_moduli_spaces_on_projective_families}}]
\label{thm:base_change}
Let $\sigma=(Z,\PC)$ be a stability condition
on $D^b(\XC_b)$ with respect to $(\Lambda,[-]_b)$.
There exists a canonical slicing $\PC_{k'}$
of $D^b(\XC_{b,k'})$ such that
$\sigma_{k'}\coloneqq(Z,\PC_{k'})$
is a stability condition with respect to
$(\Lambda,[-]_{b,k'})$.
For $E\in D^b(\XC_b)$ and $\phi\in\RB$,
the object $E$ is $\sigma$-semistable of phase $\phi$
if and only if $\pi^*E$ is
$\sigma_{k'}$-semistable of phase $\phi$.
Moreover, the map
\[
\Stab_{(\Lambda,[-]_b)}(D^b(\XC_b))
\to
\Stab_{(\Lambda,[-]_{b,k'})}(D^b(\XC_{b,k'})),
\quad
\sigma\mapsto\sigma_{k'}
\]
is an open embedding.
\end{thm}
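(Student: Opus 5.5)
The statement is quoted from Li--Liu--Liu--Macr\`i--Perry--Stellari--Zhao, so the plan is to reduce it to their base change results and check that the lattice bookkeeping fits the setup here. In outline: first construct $\PC_{k'}$ by base-changing the heart, then prove the two-sided comparison of semistability using Galois descent, and finally show that $\sigma\mapsto\sigma_{k'}$ is an open embedding.

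\emph{Construction of $\PC_{k'}$.} Since a slicing is determined by its hearts $\PC((\phi-1,\phi])$ together with $Z$, it suffices to base-change the heart $\AC=\PC((0,1])$. Define $\AC_{k'}$ as the smallest extension-closed subcategory containing all $\pi^*A$ for $A\in\AC$, closed under direct summands and under filtered colimits in the ind-category. Equivalently, one uses the ``sheafified'' heart of Bayer--Lahoz--Macr\`i--Perry--Stellari and restricts it to $\XC_{b,k'}$. The class map is compatible, because $[\pi^*E]_{b,k'}=[E]_b$; this holds by the defining property $\chi([F],[E]_t)=\chi(F_t,E)$ together with flat base change of Euler characteristics. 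Hence $Z$ is a stability function on $\AC_{k'}$. One then has to show that $\AC_{k'}$ is a noetherian heart and that HN filtrations exist. I would split into two cases. If $k'/\kappa(b)$ is finite, this follows from $\pi_*$ being faithful and exact with $\pi_*\pi^*=-\otimes k'$. In general $k'$ is a filtered union of finitely generated extensions, and these are limits of smooth $\kappa(b)$-varieties, so one reduces to generic points of families over such varieties.

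\emph{Comparison of semistability.} The direction ``$E$ semistable $\Rightarrow\pi^*E$ semistable'' is the substantive part. The destabilizing subobject of $\pi^*E$ with maximal phase (the first HN factor) is unique, so it is $\Aut(k'/\kappa(b))$-invariant when $k'/\kappa(b)$ is Galois. By Galois descent it therefore comes from $\XC_b$. For a general $k'$, first pass to the algebraic closure, then note that the HN filtration is defined over a finitely generated subextension, and finally specialize to a closed point to obtain a destabilizing subobject of $E$ itself. The converse direction is easy: a destabilizing $F\subset E$ pulls back to the destabilizing $\pi^*F\subset\pi^*E$, since $\pi^*$ is exact and preserves phases. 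Both directions require HN filtrations to be stable under base change; this is exactly the uniqueness argument just given.

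\emph{Support property and open embedding.} Every $\sigma_{k'}$-semistable object descends up to extension of scalars to one defined over a finitely generated extension, and it has the same class in $\Lambda$. So the quadratic form $Q$ witnessing the support property for $\sigma$ also works for $\sigma_{k'}$. The map is continuous and commutes with the forgetful maps to $\Hom(\Lambda,\CB)$, both of which are local homeomorphisms by \cref{theorem: Bridgeland's deformation theorem}; hence the map is a local homeomorphism. For injectivity, $\sigma$ is recovered from $\sigma_{k'}$, since $E\in\PC(\phi)$ if and only if $\pi^*E\in\PC_{k'}(\phi)$. An injective local homeomorphism is an open embedding.

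The main obstacle is the construction and the descent step for transcendental $k'$, specifically showing that HN filtrations over $k'$ are defined over a finitely generated subfield and can be specialized. For this step I would invoke the cited results of Li et al.\ directly rather than reprove them.
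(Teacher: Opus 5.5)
The paper gives no argument for this statement; it quotes it from Li--Liu--Liu--Macr\`i--Perry--Stellari--Zhao. So your decision to defer the transcendental case to them matches the paper. The problem is with the steps you present as self-contained, two of which do not work as written.

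\emph{The heart.} Your description of $\AC_{k'}$ as the closure of $\pi^*\AC$ under extensions, direct summands and filtered colimits is not the base-changed heart. It is also not equivalent to the sheafified heart of Bayer--Lahoz--Macr\`i--Perry--Stellari. Here is a counterexample.
\begin{itemize}
\item Take $\XC_b=C$ a smooth projective curve over $\CB$ with heart $\Coh(C)$, and $k'=\CB(C)$. The generic point of $C$ gives a $k'$-rational point $x\in C_{k'}$ lying over the generic point of $C$.
\item Consider the property ``the torsion subsheaf is supported over closed points of $C$.'' It holds on $\pi^*\Coh(C)$ by flatness, and it is preserved by extensions, summands and filtered colimits, since torsion commutes with filtered colimits.
\item Hence $\OC_x\in\Coh(C_{k'})$ is not in your closure.
\end{itemize}
What is missing is closure under cones, equivalently quotients in the heart. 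One has to take the t-structure on $D_{\mathrm{qc}}(\XC_{b,k'})$ whose aisle is generated by $\pi^*$ of the aisle of $\AC$, in the spirit of Abramovich--Polishchuk and Polishchuk's constant t-structures. Showing that this restricts to $D^b(\XC_{b,k'})$ and that $(Z,\AC_{k'})$ has the HN property is exactly the nontrivial part.

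\emph{The support property.} This step is a non sequitur. You know $Q([G])\geq0$ only for $\sigma$-semistable $G\in D^b(\XC_b)$. A $\sigma_{k'}$-semistable $F$ defined over a finitely generated $L\supset\kappa(b)$ is not of that form. That $F$ and its model over $L$ have the same class gives nothing. You must produce a $\sigma$-semistable object on $\XC_b$ whose class is a positive multiple of $[F]$:
\begin{enumerate}
\item Spread $F$ out over a smooth model $V$ of $L$.
\item Use openness of semistability for the constant family $\XC_b\times V\to V$, itself a nontrivial input from the families theory, to specialize to a closed point $v$.
\item Push forward along the finite extension $\kappa(v)/\kappa(b)$. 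This preserves semistability: after pulling back to a Galois closure it becomes a sum of Galois conjugates. It multiplies the class by $[\kappa(v):\kappa(b)]$, so $Q$ scales by its square.
\end{enumerate}
Even for algebraic $k'$, such as the algebraic closures used later in the paper, the pushforward step is needed.

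\emph{Stability function and continuity.} The same specialization input is missing behind your ``hence $Z$ is a stability function on $\AC_{k'}$'', because objects of $\AC_{k'}$ need not be pulled back from $\XC_b$. It is also missing behind the continuity of $\sigma\mapsto\sigma_{k'}$, which you assert without argument. Granting continuity, your deduction that the map is an injective local homeomorphism, hence an open embedding, is correct.
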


\begin{dfn}
[{\cite[Definition~12.18]{bayer_lahoz_macri_perry_stellari_2021_stability_conditions_in_families},
\cite[Definition~2.9]{preprint_chunyi_zhiyu_ziqi_emanuele_alexander_paolo_xiaolei_2026_stability_conditions_and_moduli_spaces_on_projective_families}}]
Let $b\in B$, and let $k'$ be an algebraic closure
of $\kappa(b)$.
For $\sigma \in \Stab_{(\Lambda,[-]_b)}(D^b(\XC_b))$,
an object $E\in D^b(\XC_b)$ is called
\emph{geometrically $\sigma$-stable}
if the base change $E_{k'}$ is $\sigma_{k'}$-stable.
\end{dfn}

We recall the notion of semilinearity.

\begin{dfn}
Let $k$ be a field and $\varphi\in\Aut(k)$.
A map $f\colon V\to W$ between $k$-vector spaces
is called \emph{$\varphi$-semilinear}
if $f$ is additive and
$f(av)=\varphi(a)f(v)$
for all $a\in k$ and $v\in V$.
Let $\mathcal C,\mathcal D$ be $k$-linear categories.
An additive functor
$\Phi\colon\mathcal C\to\mathcal D$
is called \emph{$k$-semilinear}
if there exists $\varphi\in\Aut(k)$ such that,
for any $E,F\in\mathcal C$, the map
\[
\Phi \colon \Hom_{\mathcal C}(E,F)
\to
\Hom_{\mathcal D}(\Phi(E),\Phi(F))
\]
is $\varphi$-semilinear.
\end{dfn}

\begin{lem}\label{lem:Galois invariance}
Let $b\in B$, and let $k'$ be an algebraic closure
of $\kappa(b)$.
For $g\in\Gal(k'/\kappa(b))$,
we also denote by $g$ the induced automorphism
of $\XC_{b,k'}$ over $\XC_b$.
Then the $k'$-semilinear autoequivalence
$g^* \colon D^b(\XC_{b,k'})\xrightarrow{\sim} D^b(\XC_{b,k'})$
acts trivially on $\Knum(\XC_{b,k'})$.
Moreover, for any
$\sigma\in\Stab_{(\Lambda,[-]_b)}(D^b(\XC_b))$,
we have $g^*\sigma_{k'}=\sigma_{k'}$.
\end{lem}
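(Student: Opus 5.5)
The proof has two parts: first the triviality of $g^*$ on $\Knum(\XC_{b,k'})$, then the invariance $g^*\sigma_{k'}=\sigma_{k'}$. Throughout, $g^*$ is $k'$-semilinear with respect to $g^{-1}$ (or $g$, depending on conventions). It is exact, and it commutes with $\pi^*$ because $\pi\circ g=\pi$: for $F\in D^b(\XC_b)$ there is a canonical isomorphism $g^*\pi^*F\cong\pi^*F$.

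\emph{Triviality on $\Knum$.} Via \cref{lemma: numerical identification} (applied to $\bar b=\Spec k'$), the map $[-]_{b,k'}\colon\Knum(\XC_{b,k'})\to\Lambda$ is injective. So it suffices to show $[g^*E]_{b,k'}=[E]_{b,k'}$ for every $E\in D^b(\XC_{b,k'})$. Using the injection \eqref{equation: relative Euler embedding}, this reduces to the equalities $\chi((A_i)_{k'},g^*E)=\chi((A_i)_{k'},E)$.

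Since $(A_i)_{k'}=\pi^*(A_i)_b$, we have $g^*(A_i)_{k'}\cong(A_i)_{k'}$. The functor $g^*$ induces $g$-semilinear bijections
\[
\Hom(\pi^*A,E[j])\xrightarrow{\sim}\Hom(g^*\pi^*A,g^*E[j])\cong\Hom(\pi^*A,g^*E[j]).
\]
A semilinear bijection between finite-dimensional $k'$-vector spaces preserves dimension, because it carries a $k'$-basis to a $k'$-basis. Hence the two Euler characteristics agree, and $g^*$ acts trivially on $\Knum(\XC_{b,k'})$. One could alternatively use Hilbert polynomials or flat base change, but the pairing argument is cleanest.

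\emph{Invariance of $\sigma_{k'}$.} Because $g^*$ acts trivially on $\Lambda$-classes, the central charge of $g^*\sigma_{k'}$ equals $Z$. It remains to show that $g^*\PC_{k'}(\phi)=\PC_{k'}(\phi)$ for all $\phi$. Both $g^*\PC_{k'}$ and $\PC_{k'}$ are slicings, and a slicing is determined by its semistable objects, so it suffices to prove that $g^*$ maps $\PC_{k'}(\phi)$ into itself; applying the same argument to $g^{-1}$ then gives equality.

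This is the main obstacle, because \cref{thm:base_change} only describes semistability for objects pulled back from $\XC_b$. A general $E\in\PC_{k'}(\phi)$ is defined over some finite Galois extension $L/\kappa(b)$, and $g$ need not fix $L$. I would treat it as follows.

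First, I would use the canonical construction of $\PC_{k'}$ in \cite{preprint_chunyi_zhiyu_ziqi_emanuele_alexander_paolo_xiaolei_2026_stability_conditions_and_moduli_spaces_on_projective_families}, which is by pulling back along finite extensions and passing to colimits. From this I expect the slicing $\PC_{k'}$ to be characterized intrinsically by the property that $\pi^*$ sends $\PC(\phi)$ into $\PC_{k'}(\phi)$, since the canonical slicing is unique with the properties of \cref{thm:base_change}.

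Next, I would observe that $g^*\PC_{k'}$ is again a slicing with central charge $Z$ satisfying the same property: for $F\in\PC(\phi)$ we have $g^*\pi^*F\cong\pi^*F$, which lies in $\PC_{k'}(\phi)$. This is the step where the triviality of $g^*$ on $\Knum$ enters.

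Finally, I would conclude by the uniqueness. If uniqueness is not stated explicitly in the reference, I would argue directly instead. For $E\in\PC_{k'}(\phi)$, the direct sum $\bigoplus_{h\in\Gal(L/\kappa(b))}h^*E$, taken over $L$, descends to an object $F$ on $\XC_b$. Then $\pi_*$-type descent together with \cref{thm:base_change} should show that each Galois conjugate of $E$, being a summand of the semistable object $\pi^*F$, has phase $\phi$. Concretely, the HN filtration of $\pi^*F$ is Galois-invariant, since by uniqueness it descends, and $E$ appears as a direct summand of a semistable object. Hence $g^*E\in\PC_{k'}(\phi)$.
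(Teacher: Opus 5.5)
Your argument for the action on $\Knum(\XC_{b,k'})$ is correct. It is essentially the paper's argument in dual form: the paper notes that $g^*$ fixes the classes $[(A_i)_{b,k'}]$, which span $\Knum(\XC_{b,k'})_\QB$, while you pair against them. Your version has the small merit of showing explicitly that $g^*$ preserves $\chi$ and so descends to $\Knum$.

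For the slicing, the paper simply cites \cite[Theorem~2.8(1)]{preprint_chunyi_zhiyu_ziqi_emanuele_alexander_paolo_xiaolei_2026_stability_conditions_and_moduli_spaces_on_projective_families}. Your self-contained replacement has a gap at the decisive step.
\begin{itemize}
\item The uniqueness you ``expect'' is not part of \cref{thm:base_change} as stated, and it cannot be assumed for free. The slicing $g^*\PC_{k'}$ satisfies exactly the same property, even the ``if and only if'' version, so that uniqueness already contains the claim.
\item Your fallback argument is circular. You call $\pi^*F\cong\bigoplus_h h^*E$ ``the semistable object $\pi^*F$''. But $\pi^*F\in\PC_{k'}(\phi)$ holds if and only if every conjugate $h^*E$ lies in $\PC_{k'}(\phi)$, which is what you are trying to prove. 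Nothing you say shows that $F$ is $\sigma$-semistable.
\end{itemize}

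The missing idea is to use the phase-$\phi$ Harder--Narasimhan factor of $\pi^*F$ rather than $\pi^*F$ itself.
\begin{itemize}
\item Write $E\cong(E_L)_{k'}$ with $L/\kappa(b)$ finite, and put $F\coloneqq p_*E_L$ for the finite projection $p\colon\XC_{b,L}\to\XC_b$. Then $E$ is a direct summand of $\pi^*F$.
\item By \cref{thm:base_change} and uniqueness of HN filtrations, the $\sigma_{k'}$-HN factors of $\pi^*F$ are the pullbacks $\pi^*G_\psi$ of the $\sigma$-HN factors $G_\psi$ of $F$.
\item HN filtrations are additive under direct sums. Since $E\in\PC_{k'}(\phi)$, it follows that $E$ is a direct summand of $\pi^*G_\phi$.
\item Hence $g^*E$ is a direct summand of $g^*\pi^*G_\phi\cong\pi^*G_\phi\in\PC_{k'}(\phi)$. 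Since $\PC_{k'}(\phi)$ is closed under direct summands, $g^*E\in\PC_{k'}(\phi)$.
\end{itemize}
The same argument, run for an arbitrary slicing $\QC$ with $\pi^*\PC(\phi)\subset\QC(\phi)$ for all $\phi$, gives $\QC(\phi)\subset\PC_{k'}(\phi)$. That is exactly the uniqueness you wanted.

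With this repair your route yields a proof using only the statement of \cref{thm:base_change}, rather than the finer cited result. Note also that triviality on $\Knum$ is needed only to match the central charges. It does not enter the slicing step, contrary to what you suggest.
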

\begin{proof}
Fix $g\in\Gal(k'/\kappa(b))$.
Since $\pi\circ g=\pi$, we have
$g^*((A_i)_{b,k'})\simeq(A_i)_{b,k'}$
for $1\leq i\leq r$, and
$R\pi_*\circ g^*\simeq R\pi_*$
as functors to $D_{\mathrm{qc}}(\XC_b)$.
Since the classes
$[(A_1)_{b,k'}],\ldots,[(A_r)_{b,k'}]$
form a basis of $\Knum(\XC_{b,k'})_\QB$,
we obtain $[g^*]=\id$ on $\Knum(\XC_{b,k'})$.
By
\cite[Theorem~2.8(1)]{preprint_chunyi_zhiyu_ziqi_emanuele_alexander_paolo_xiaolei_2026_stability_conditions_and_moduli_spaces_on_projective_families},
we also obtain $g^*\PC_{k'}=\PC_{k'}$.
Hence $g^*\sigma_{k'}=\sigma_{k'}$.
\end{proof}

\subsection{Stability conditions over a base}\label{subsection:Stability conditions over a base}
We recall some basic definitions and results on stability conditions over a base. For details, see \cite{bayer_lahoz_macri_perry_stellari_2021_stability_conditions_in_families} and \cite{preprint_chunyi_zhiyu_ziqi_emanuele_alexander_paolo_xiaolei_2026_stability_conditions_and_moduli_spaces_on_projective_families}.

We continue with the notation and assumptions of
\cref{subsection:Base change}.
Let $\sigma_\bullet=(\sigma_b)_{b\in B}$ be a collection
of stability conditions $\sigma_b=(Z,\PC_b)$
on $D^b(\XC_b)$ with respect to $(\Lambda,[-]_b)$,
with common central charge $Z\colon\Lambda\to\CB$. 
For a $B$-scheme $T$ and $t\in T$,
let $b\in B$ be the image of $t$.
By \cref{thm:base_change}, the base change
$\sigma_t\coloneqq(\sigma_b)_{\kappa(t)}$
is a stability condition on $D^b(\XC_t)$
with respect to $(\Lambda,[-]_t)$.
For a morphism $T\to B$ essentially of finite type,
we denote by
\[
[-]_T\colon\Knum(\XC_T/T)\to\Lambda
\]
the homomorphism constructed in
\cite[Definitions~8.7 and~8.17(iii)]{preprint_chunyi_zhiyu_ziqi_emanuele_alexander_paolo_xiaolei_2026_stability_conditions_and_moduli_spaces_on_projective_families}.

\begin{dfn}
[{\cite[Definition~8.17]{preprint_chunyi_zhiyu_ziqi_emanuele_alexander_paolo_xiaolei_2026_stability_conditions_and_moduli_spaces_on_projective_families}}]
\label{definition:stability_condition_over_B}
The collection $\sigma_\bullet=(\sigma_b)_{b\in B}$
is called a \emph{stability condition over $B$}
if the following conditions hold:
\begin{enumerate}
\item
There exists a quadratic form $Q$ on $\Lambda_\RB$
such that $Q|_{\ker Z}$ is negative definite and
\[
Q([E]_b)\geq 0
\]
for any $b\in B$ and any $\sigma_b$-semistable
object $E\in D^b(\XC_b)$.
This condition is also called the \emph{support property}.
\item
For any $B$-scheme $T$ and any $T$-perfect complex
$E$ on $\XC_T$, the subset
\[
\{t\in T\mid
E_t\text{ is geometrically $\sigma_t$-stable}\}
\]
is open in $T$.

\item
For any morphism $T\to B$ essentially of finite type
from a Dedekind scheme $T$,
the stability conditions $(\sigma_t)_{t\in T}$
are induced by a Harder--Narasimhan structure
on $D^b(\XC_T)$ over $T$
in the sense of
\cite[Definition~8.13]{preprint_chunyi_zhiyu_ziqi_emanuele_alexander_paolo_xiaolei_2026_stability_conditions_and_moduli_spaces_on_projective_families},
with respect to the homomorphism $[-]_T$.

\item
For any $w\in\Lambda$ and $\phi\in\RB$ satisfying
$Z(w)\in\mathbb{R}_{>0}e^{\sqrt{-1}\pi\phi}$,
the moduli functor
$\mathcal M^{\mathrm{st}}_{\sigma_\bullet}(w,\phi)$
defined in
\cite[Definition~8.16(1)]{preprint_chunyi_zhiyu_ziqi_emanuele_alexander_paolo_xiaolei_2026_stability_conditions_and_moduli_spaces_on_projective_families}
is bounded in the sense of
\cite[Definition~8.6]{preprint_chunyi_zhiyu_ziqi_emanuele_alexander_paolo_xiaolei_2026_stability_conditions_and_moduli_spaces_on_projective_families}.
\end{enumerate}
\end{dfn}

We denote by $\Stab(\XC/B)$
the set of stability conditions over $B$,
endowed with the coarsest topology making
all fiber restriction maps
\[
\Stab(\XC/B)
\to
\Stab_{(\Lambda,[-]_b)}(D^b(\XC_b)),
\quad
\sigma_\bullet\mapsto\sigma_b
\]
continuous.

For a morphism $T\to B$ essentially of finite type,
by
\cite[Remark~20.6(4) and Definition~21.15]{bayer_lahoz_macri_perry_stellari_2021_stability_conditions_in_families}
(see also
\cite[Definition~8.17]{preprint_chunyi_zhiyu_ziqi_emanuele_alexander_paolo_xiaolei_2026_stability_conditions_and_moduli_spaces_on_projective_families}),
the collection
$\sigma_\bullet|_T\coloneqq(\sigma_t)_{t\in T}$
is a stability condition over $T$
with respect to $(\Lambda,[-]_T)$.
Thus, base change induces a continuous map
\[
\Stab(\XC/B)
\to
\Stab_{(\Lambda,[-]_T)}(\XC_T/T),
\quad
\sigma_\bullet\mapsto\sigma_\bullet|_T.
\]

We use the following two properties: openness and semistable reduction.

\begin{prop}
[{\cite[Proposition~20.8]{bayer_lahoz_macri_perry_stellari_2021_stability_conditions_in_families}}]\label{proposition: openness_PI}
Let $\sigma_\bullet\in\Stab(\XC/B)$.
For any $B$-scheme $T$,
any $T$-perfect complex $E$ on $\XC_T$,
and any interval $I\subset\RB$, the subset
\[
\{t\in T\mid E_t\in\PC_t(I)\}
\]
is open in $T$.
\end{prop}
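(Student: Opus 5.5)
We plan to prove openness by the standard scheme ``constructible and stable under generization''. Both properties are checked using the axioms of \cref{definition:stability_condition_over_B}. Two reductions come first.

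First, the locus in question is compatible with base change. By \cref{thm:base_change}, for a field extension $k'/\kappa(t)$ we have $E_t\in\PC_t(I)$ if and only if $E_{t,k'}\in\PC_{k'}(I)$. This holds because HN filtrations are preserved by the canonical base change of slicings, which follows from the semistability statement together with the uniqueness of HN filtrations. Since a $T$-perfect complex is locally defined over a $B$-scheme of finite type, a limit argument then reduces us to $T$ of finite type over $B$, hence noetherian. Second, membership in $\PC_t(I)$ means that $\phi^+_{\sigma_t}(E_t)$ and $\phi^-_{\sigma_t}(E_t)$ lie in $I$. It therefore suffices to understand how the HN filtration of $E_t$ varies with $t$.

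\emph{Constructibility.} We argue by noetherian induction on $T$, which we may take to be integral with generic point $\eta$. Consider the HN filtration of $E_\eta$ with respect to $\sigma_\eta$, possibly after a finite extension of $\kappa(\eta)$, where we refine it further to Jordan--H\"older factors that are geometrically stable. After shrinking to a dense open $U$, and passing to a finite cover (which is harmless by the base change compatibility above), these filtrations spread out to a filtration of $E_U$ by $U$-perfect complexes. Axiom (2) of \cref{definition:stability_condition_over_B} says geometric stability is open, so on a possibly smaller open the spread-out factors are fiberwise stable of the prescribed phases. Hence on $U$ the HN phases of $E_t$ equal those of $E_\eta$, and the locus is either all of $U$ or disjoint from $U$. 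The boundedness axiom (4) is not needed here beyond guaranteeing that only finitely many phases occur.

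\emph{Stability under generization.} Suppose $t_1$ specializes to $t_0$ and $E_{t_0}\in\PC_{t_0}(I)$. We choose a DVR $R$ with a morphism $\Spec R\to T$ sending the closed point to $t_0$ and the generic point to $t_1$, possibly after field extensions, which are again harmless. By axiom (3), $(\sigma_t)_{t\in\Spec R}$ comes from a Harder--Narasimhan structure over $\Spec R$. The HN filtration of the generic fiber $E_K$ therefore extends to an $R$-flat filtration of $E_R$. Its special fiber is a filtration of $E_{t_0}$ whose graded pieces are $\sigma_{t_0}$-semistable of the same phases $\phi_1>\cdots>\phi_m$. This is the content of ``semistable reduction'' for HN structures. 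Because the phases are strictly decreasing, this filtration is the HN filtration of $E_{t_0}$. So $\phi^\pm(E_{t_0})=\phi^\pm(E_K)$, which gives $E_{t_1}\in\PC_{t_1}(I)$.

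A constructible subset of a noetherian scheme that is stable under generization is open, and this completes the argument. We expect the main obstacle to be the generization step. There one must make sure that the extension of the generic HN filtration over the DVR provided by axiom (3) really has semistable special-fiber factors of unchanged phase, rather than merely giving an inequality of the form $\phi^+(E_{t_0})\ge\phi^+(E_K)$. If only the semicontinuity inequalities for $\phi^+$ and $\phi^-$ are available, we would instead deduce openness of the loci $\{\phi^+<a\}$ and $\{\phi^->c\}$ separately, and handle closed endpoints of $I$ by the same argument applied to $\PC(\phi)$ for a single $\phi$.
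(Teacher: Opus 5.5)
Your overall architecture (reduce to $T$ of finite type over $B$, then show the locus is constructible and stable under generization) is reasonable. The base-change reduction and the spreading-out of geometrically stable Jordan--H\"older factors are fine. The paper itself gives no proof here; it quotes Proposition~20.8 of Bayer et al. So your argument has to stand on its own, and the generization step does not.

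\textbf{The gap.} You claim that the extension over $R$ of the HN filtration of $E_K$ has special fiber whose factors are $\sigma_{t_0}$-semistable of the same phases, so that $\phi^\pm(E_{t_0})=\phi^\pm(E_K)$. This is false: HN types can jump under specialization.
\begin{itemize}
\item Take slope stability on the constant family $\PP{1}\times\mathbb{A}^1\to\mathbb{A}^1$. The family of extensions of $\OC(1)$ by $\OC(-1)$ has $E_K\cong\OC^{\oplus2}$ semistable, so its HN filtration is trivial and extends to $E_R$ itself. But $E_{t_0}\cong\OC(1)\oplus\OC(-1)$ is unstable.
\item If your claim held, the locus in the statement would also be closed under specialization, hence open and closed, which it is not in general.
\item The ``semistable reduction'' you invoke does not help. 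It produces \emph{some} lift of a semistable generic object with semistable special fiber, and says nothing about the given $E_R$.
\end{itemize}
The same missing input affects your constructibility step. Axiom~(2) only tells you the spread-out factors stay geometrically stable, and $Z$ determines a phase only modulo $2$. You therefore still need an argument that their phases do not jump.

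\textbf{What is needed.} The true and sufficient statement is the semicontinuity
$\phi^-(E_{t_0})\le\phi^-(E_K)\le\phi^+(E_K)\le\phi^+(E_{t_0})$.
\begin{itemize}
\item It gives $E_K\in\PC_K(I)$ from $E_{t_0}\in\PC_{t_0}(I)$ for every interval at once, so your separate treatment of closed endpoints is unnecessary.
\item Applied along a DVR joining $\eta'$ to $t'$, it forces the phases in your constructibility step to agree.
\end{itemize}
This inequality must be extracted from axiom~(3), and you do not do so. One way, using the compatibility of the HN structure $\PC_R$ with the fibers, is as follows.
\begin{enumerate}
\item Put $a\coloneqq\phi^+(E_{t_0})$, and let $E^{>a}\xrightarrow{f}E_R\to E^{\le a}$ be the $\PC_R$-truncation.
\item Restriction to the closed fiber sends $\PC_R(>a)$ into $\PC_{t_0}(>a)$. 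Torsion-free factors keep their phase, and a torsion factor $i_*G$ restricts to an extension of $G$ by $G[1]$. Hence $f|_{t_0}=0$.
\item By the triangle $E_R\xrightarrow{\varpi}E_R\to i_*E_{t_0}$ for a uniformizer $\varpi$, $f$ factors through $\varpi$.
\item Iterating, $f\in\bigcap_k\varpi^k\Hom(E^{>a},E_R)$. This vanishes because the $R$-module is finitely generated, by Krull's intersection theorem.
\item So $f=0$, and $E^{>a}[1]$ is a direct summand of $E^{\le a}$. Hence $E^{>a}=0$ and $E_K\in\PC_K(\le a)$.
\end{enumerate}
The bound for $\phi^-$ is dual.
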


\begin{prop}
[{\cite[Lemma~21.22]{bayer_lahoz_macri_perry_stellari_2021_stability_conditions_in_families}}]
\label{proposition: semistable reduction}
Let $\sigma_\bullet\in\Stab(\XC/B)$
and let $T\to B$ be a morphism
essentially of finite type,
where $T=\Spec R$ for a discrete valuation ring $R$.
Write $\eta$ and $0$ for the generic and closed
points of $T$, respectively.
For any $\sigma_\eta$-semistable object
$F\in D^b(\XC_\eta)$ of phase $\phi$,
there exists $E\in D^b(\XC_T)$
such that $E_\eta\simeq F$ and
$E_0$ is $\sigma_0$-semistable of phase $\phi$.
\end{prop}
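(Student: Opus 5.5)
The plan is to treat this as a Langton-type semistable reduction argument inside the Harder--Narasimhan structure over $T=\Spec R$ supplied by condition (3) of \cref{definition:stability_condition_over_B}. Write $\pi$ for a uniformizer of $R$. Since $\XC_\eta$ is an open subscheme of $\XC_T$ and coherent sheaves on it extend to $\XC_T$, the object $F$ first lifts to some $E'\in D^b(\XC_T)$ with $E'_\eta\simeq F$. The lift is then improved step by step until its closed fiber becomes $\sigma_0$-semistable of phase $\phi$. Shifting, I assume $\phi\in(0,1]$ and work in the relative heart $\AC_T$ of the HN structure. By the compatibility of $\AC_T$ with base change to $\eta$ and to $0$, I expect to reduce first to a lift $E'\in\AC_T$ that is $\pi$-torsion free and has $E'_0\in\PC_0(0,1]$; this is where the defining properties of an HN structure over a Dedekind base (torsion theory along $\pi$, restriction to fibers landing in hearts) are used.

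Next comes the elementary modification step. Suppose $E'_0$ is not $\sigma_0$-semistable of phase $\phi$. Note that $[E'_0]_0=[F]_\eta$ in $\Lambda$ by constancy of classes over connected bases, so $Z(E'_0)$ has phase $\phi$. Hence $E'_0$ has a nontrivial destabilizing HN factor; let $G\subset E'_0$ be its maximal destabilizing subobject. Set $Q\coloneqq E'_0/G$ and define
\[
E''\coloneqq\Ker\bigl(E'\to E'_0\to Q\bigr)
\]
in $\AC_T$. Then $E''_\eta\simeq F$ still, and there is a short exact sequence $0\to Q\to E''_0\to G\to0$ in the fiber heart. I then compare HN polygons and show that the invariant ``maximal slope of a destabilizing subobject of $E''_0$, together with the $Z$-value of that subobject'' weakly decreases. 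This mirrors Langton's classical computation. The discreteness of $Z(\Lambda)$ on the relevant bounded region, which follows from the support property (1) together with $\Lambda$ being finitely generated, ensures that any strict decrease can only happen finitely often.

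The main obstacle is termination when the invariant stabilizes but never reaches zero. For this I would follow Langton's argument. If the process runs forever with constant invariant, the destabilizing subobjects $G_k$ of the successive closed fibers have the same class, and the quotients $E'\to E^{(k)}$ form a compatible system. The boundedness in condition (4) makes the relevant family of semistable objects of fixed class bounded, so by a quot-type compactness argument this system lifts to a subobject of $E'\otimes\widehat R$ over the completion $\widehat R$. Its generic fiber would be a subobject of $F_{\widehat K}$ of strictly larger phase. This contradicts the $\sigma_\eta$-semistability of $F$, because by \cref{thm:base_change} semistability is preserved under the field extension $\widehat K/\kappa(\eta)$. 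I would check the statement in \cite[Lemma~21.22]{bayer_lahoz_macri_perry_stellari_2021_stability_conditions_in_families} for the precise form of this completion step rather than redo it in full.
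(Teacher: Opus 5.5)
\paragraph{The gap: termination of the modifications.} The decisive step of your argument is termination of the elementary modifications once the invariant stabilizes, and you do not supply it. You defer it to \cite[Lemma~21.22]{bayer_lahoz_macri_perry_stellari_2021_stability_conditions_in_families}, which is exactly the statement being proved. So the proposal is circular at the one point where a Langton-type theorem has content.

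The tools you name would not close this step either. Condition~(4) of \cref{definition:stability_condition_over_B} says that moduli of stable objects of a fixed class over $B$ are bounded. An infinite sequence of modifications with constant invariant does not contradict boundedness. Langton's contradiction comes from a different source: one assembles the compatible modifications into a destabilizing subobject (or quotient) of the completed family, via a $\pi$-adic limit and an algebraization step. Here there is no heart or HN structure over $\Spec\widehat R$ in which to do this, because $\Spec\widehat R\to B$ is not essentially of finite type. Also, \cref{thm:base_change} only treats field extensions of a single fiber. Your claimed monotonicity of the invariant in a Bridgeland heart is likewise only asserted.

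\paragraph{The missing idea: condition~(3) already gives the result.} No Langton argument is needed. The paper simply quotes the statement, and in the framework it relies on the statement is essentially formal from condition~(3) of \cref{definition:stability_condition_over_B}. Roughly, the HN structure over $T$ is a slicing $\PC_T$ of $D^b(\XC_T)$. It induces $\sigma_\eta$ by restriction and $\sigma_0$ by pushforward along $i_0\colon\XC_0\hookrightarrow\XC_T$. In particular, objects supported on $\XC_0$ carry the phases of $\sigma_0$. The argument then runs as follows.
\begin{enumerate}
\item Take any lift $E'$ of $F$. Its $\PC_T$-HN filtration restricts to the $\sigma_\eta$-HN filtration of $F$, which has a single factor.
\item Hence exactly one HN factor $A$ of $E'$ has nonzero generic fiber, and it satisfies $A\in\PC_T(\phi)$ and $A_\eta\simeq F$.
\item Let $E$ be the quotient of $A$ by its $T$-torsion part in the heart $\AC\coloneqq\PC_T((\phi-1,\phi])$. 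A quotient in $\AC$ of an object of $\PC_T(\phi)$ stays in $\PC_T(\phi)$, because its last HN factor receives a nonzero map from that object. So $E\in\PC_T(\phi)$ and $E_\eta\simeq F$.
\item Since $E$ is torsion-free, multiplication by a uniformizer is a monomorphism $E\hookrightarrow E$ in $\AC$. Its cokernel $i_{0*}E_0$ is again a quotient of $E$, so it lies in $\PC_T(\phi)$. By the way the closed-fiber slicing is induced, $E_0$ is $\sigma_0$-semistable of phase $\phi$.
\end{enumerate}
The Langton-type work is thus built into the axiom that HN filtrations exist over Dedekind bases. You used condition~(3) only to obtain a heart, and missed that its HN filtrations already do the job.
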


We recall the deformation theorem for stability
conditions over a base.

\begin{thm}
[{\cite[Theorem~22.2]{bayer_lahoz_macri_perry_stellari_2021_stability_conditions_in_families},
\cite[Theorem~8.19 and Remark~8.20]{preprint_chunyi_zhiyu_ziqi_emanuele_alexander_paolo_xiaolei_2026_stability_conditions_and_moduli_spaces_on_projective_families}}]
\label{thm:relative_deformation}
The map
\[
\ZC\colon\Stab(\XC/B)\to\Hom(\Lambda,\CB),
\quad
\sigma_\bullet\mapsto Z
\]
is a local homeomorphism.
More precisely, fix $\sigma_\bullet\in\Stab(\XC/B)$,
where $\sigma_b=(Z,\PC_b)$ for $b\in B$.
Let $Q$ be a quadratic form giving the uniform
support property for $\sigma_\bullet$.
Let $U_Q(Z)$ denote the connected component
containing $Z$ of the open subset
\[
\left\{
W\in\Hom(\Lambda,\CB)
\;\middle|\;
Q|_{\ker W}\text{ is negative definite}
\right\}.
\]
Then there exists an open neighborhood
$U$ of $\sigma_\bullet$ in $\Stab(\XC/B)$
such that
\[
\ZC|_U\colon U\to U_Q(Z)
\]
is a covering map.
\end{thm}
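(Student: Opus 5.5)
The plan is to reduce the relative statement to the fiberwise deformation theorem (\cref{theorem: Bridgeland's deformation theorem}, in Bayer's form with an explicit quadratic form), and then check that the deformed fiberwise stability conditions still satisfy conditions (2)--(4) of \cref{definition:stability_condition_over_B}. The uniform support property (1) is the key input, because it controls all fibers with a single quadratic form $Q$.

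\emph{Step 1: fiberwise deformation.} Fix $W\in U_Q(Z)$. Join $Z$ to $W$ by a path inside $U_Q(Z)$ and cut it into small steps. For each step I apply Bayer's version of the deformation theorem on each fiber $\XC_b$, with the same form $Q$. Since $Q$ is independent of $b$, the allowed step size depends only on $Q$ and the path, not on $b$. This produces slicings $\PC'_b$ with $\sigma'_b=(W,\PC'_b)$ satisfying the support property with respect to $Q$ again. By construction, each $\sigma'_b$-semistable object has a filtration whose factors are $\sigma_b$-semistable with phases in a small interval. Uniqueness of the lift is also fiberwise: two stability conditions with the same central charge at distance $<1$ coincide (Bridgeland). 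This gives injectivity of $\ZC$ on a neighbourhood $U$. Path lifting follows by iterating the step, so $\ZC|_U\colon U\to U_Q(Z)$ is a covering.

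\emph{Step 2: compatibility with base change.} Next I check that $(\sigma'_b)_{\kappa(t)}$ is the deformation of $\sigma_t$, so that the base-changed collection is well defined. This should follow from \cref{thm:base_change}: semistability is detected after field extension, and deformations are unique.

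\emph{Step 3: the family conditions (2)--(4) for $\sigma'_\bullet$.}
\begin{itemize}
\item For boundedness (4): a $\sigma'$-semistable object of class $w$ has $\sigma$-semistable factors. The support property bounds the classes of these factors to a finite set, and boundedness for $\sigma_\bullet$ then bounds the extensions.
\item For openness of geometric stability (2): I express $\sigma'$-stability of $E_t$ as $E_t\in\PC_t(I)$ for a small interval $I$, which is open by \cref{proposition: openness_PI}. I then add the condition that no destabilizing subobject exists. Such subobjects form a bounded family by (4) and the first point, so their locus is constructible and closed under specialization. Using \cref{proposition: semistable reduction} to lift destabilizing quotients over DVRs gives closedness of the unstable locus.
\item For the HN structure over Dedekind schemes (3): I build relative HN filtrations for $\sigma'$ from those for $\sigma$ by the same small-step procedure, now carried out over $T$.
\end{itemize}

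The main obstacle is Step 3, specifically openness of stability and the existence of HN structures over Dedekind bases. Here I would follow the approach of BLMPS: a generic flatness and valuative-criterion argument combining boundedness with semistable reduction.
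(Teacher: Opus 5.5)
The paper gives no proof of this statement. It is quoted from \cite[Theorem~22.2]{bayer_lahoz_macri_perry_stellari_2021_stability_conditions_in_families} and \cite[Theorem~8.19, Remark~8.20]{preprint_chunyi_zhiyu_ziqi_emanuele_alexander_paolo_xiaolei_2026_stability_conditions_and_moduli_spaces_on_projective_families}. Your outline has the right overall shape: a fibrewise deformation in Bayer's form, with one form $Q$ for all fibres, followed by a check of the relative axioms. As a proof, however, it has genuine gaps.

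First, Step~1 conflates local injectivity with the covering property.
\begin{itemize}
\item $\ZC|_U$ cannot be injective on a $U$ that covers $U_Q(Z)$, since an injective covering is a homeomorphism. Now $U_Q(Z)$ is invariant under $W\mapsto e^{\sqrt{-1}\theta}W$, because $\ker W$ does not change. The loop $\theta\mapsto e^{\sqrt{-1}\theta}Z$, $0\leq\theta\leq 2\pi$, lifts from $\sigma_\bullet$ to the collection obtained by shifting all phases by $2$. That collection has central charge $Z$ and still satisfies the support property for $Q$, but it differs from $\sigma_\bullet$.
\item So the $U$ in the statement must be the component through $\sigma_\bullet$ of the relative stability conditions with support property for $Q$. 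The covering property then needs evenly covered neighbourhoods, coming from the uniform step size or from path lifting plus the local homeomorphism property.
\item Local injectivity is itself not justified. $\Stab(\XC/B)$ carries the coarsest topology making all restrictions $\tau_\bullet\mapsto\tau_b$ continuous, so a basic neighbourhood of $\sigma_\bullet$ constrains only finitely many fibres. Bridgeland's fibrewise uniqueness (same central charge, distance $<1$) therefore gives injectivity of $\ZC$ on an open set only if a relative stability condition is determined by finitely many of its fibres. That is a rigidity statement, like the injectivity step in the proof of \cref{lemma: specialization of connected components} (semistable reduction plus openness of $\PC_t(I)$), and you do not address it.
\end{itemize}

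Second, Step~3 is where the content of the cited theorem lies, and your shortcuts do not work as stated.
\begin{itemize}
\item For (3), you cannot rerun Bridgeland's small-step argument on $D^b(\XC_T)$ for a Dedekind $T$. $T$-torsion objects, such as pushforwards from a closed fibre, have class $0$ in $\Lambda$ but must still be placed in the slicing. So an HN structure over $T$ is not a Bridgeland stability condition on $D^b(\XC_T)$, and deforming it needs a separate argument.
\item For (2), \cref{proposition: semistable reduction} holds for stability conditions over the base. You can apply it to $\sigma'_\bullet$ only after proving (3) for $\sigma'_\bullet$.
\item Also for (2), boundedness alone does not make the non-stable locus constructible. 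To decide whether a member of your bounded family actually destabilizes $E_t$, you need the candidate's $\sigma'_t$-semistability and the monomorphism condition in $\PC'_t(\phi)$ to vary constructibly. That is the same openness problem for the deformed collection in smaller classes, so you would need an induction on class or mass, which you have not set up.
\end{itemize}
Your argument for (4) and the base-change compatibility in Step~2 are fine. But deferring (2) and (3) to ``the approach of BLMPS'' leaves the substantive part of the statement unproved.
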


Following
\cite[Section~10]{preprint_chunyi_zhiyu_ziqi_emanuele_alexander_paolo_xiaolei_2026_stability_conditions_and_moduli_spaces_on_projective_families},
we recall the construction of stability conditions
over $B$ induced by projective embeddings.
We first fix notation for closed immersions.
Let $i\colon Y\hookrightarrow X$ be a closed immersion
of smooth projective varieties.
Following
\cite[Definition~3.1]{preprint_chunyi_zhiyu_ziqi_emanuele_alexander_paolo_xiaolei_2026_stability_conditions_and_moduli_spaces_on_projective_families},
for a slicing $\QC$ of $D^b(X)$, we set
\[
(i^\sharp\QC)(\phi)
\coloneqq
\{E\in D^b(Y)\mid i_*E\in\QC(\phi)\}
\]
for $\phi\in\RB$.
In general, $i^\sharp\QC$ need not be a slicing of $D^b(Y)$;
see
\cite[Remark~3.2]{preprint_chunyi_zhiyu_ziqi_emanuele_alexander_paolo_xiaolei_2026_stability_conditions_and_moduli_spaces_on_projective_families}.
For $\tau=(W,\QC)\in\Stab(X)$, we write
\[
i^\sharp\tau\coloneqq(W\circ i_*,i^\sharp\QC)
\]
if it is a stability condition in $\Stab(Y)$.

Returning to the family $\XC\to B$ and the lattice
$\Lambda=\Knum(\XC/B)$, fix a closed immersion
$j\colon\XC\hookrightarrow\PB^N_B$ over $B$.
For $b\in B$, let
$j_b\colon\XC_b\hookrightarrow\PB^N_{\kappa(b)}$
be the base change of $j$, and set
$H_b\coloneqq c_1(\OC_{\PB^N_{\kappa(b)}}(1))$.
For $\xi>0$ and $\theta\in\RB$, let
$\tau_{b,\xi,\theta}
=(W_{b,\xi,\theta},\QC_{b,\xi,\theta})
\in\Stab(\PB^N_{\kappa(b)})$
be the stability condition constructed in
\cite[(5.4), (5.5)]{preprint_chunyi_zhiyu_ziqi_emanuele_alexander_paolo_xiaolei_2026_stability_conditions_and_moduli_spaces_on_projective_families}.
By
\cite[Propositions~8.10 and~10.2]{preprint_chunyi_zhiyu_ziqi_emanuele_alexander_paolo_xiaolei_2026_stability_conditions_and_moduli_spaces_on_projective_families},
there is a homomorphism
$Z_{\xi,\theta}\colon\Lambda\to\CB$
such that
\[
Z_{\xi,\theta}([E]_b)
=
W_{b,\xi,\theta}((j_b)_*E)
=
-\int_{\PB^N_{\kappa(b)}}
e^{-(\theta+\sqrt{-1}\xi)H_b}
\operatorname{ch}((j_b)_*E)
\]
for $b\in B$ and $E\in D^b(\XC_b)$.

\begin{thm}
[{\cite[Theorems~6.2 and~10.3, Proposition~10.2]{preprint_chunyi_zhiyu_ziqi_emanuele_alexander_paolo_xiaolei_2026_stability_conditions_and_moduli_spaces_on_projective_families}}]
\label{thm:projective_family_stability}
For $\xi>0$, $\theta\in\RB$, and $b\in B$, set
$\sigma_{b,\xi,\theta}
\coloneqq
(Z_{\xi,\theta},j_b^\sharp\QC_{b,\xi,\theta})$.
There exists $\xi_0>0$ such that, for $\xi>\xi_0$
and $\theta\in\RB$, we have a stability condition
$\sigma_{\bullet,\xi,\theta}
\coloneqq
(\sigma_{b,\xi,\theta})_{b\in B}
\in\Stab(\XC/B)$.
Moreover, $\sigma_{\bullet,\xi,\theta}$
depends continuously on $(\xi,\theta)$.
For any geometric point $\bar b$ of $B$
and any closed point $x\in\XC_{\bar b}$,
the skyscraper sheaf $\mathcal O_x$
is $\sigma_{\bar b,\xi,\theta}$-stable of phase $1$.
\end{thm}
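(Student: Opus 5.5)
This statement is quoted from \cite{preprint_chunyi_zhiyu_ziqi_emanuele_alexander_paolo_xiaolei_2026_stability_conditions_and_moduli_spaces_on_projective_families}. If I had to reprove it, I would argue fiber by fiber on $\PB^N$ first and then make everything uniform in $b$. The plan is to induce stability conditions on $\XC_b$ from $\PB^N_{\kappa(b)}$ along $j_b$, checking three things: the pulled-back slicing is a slicing, the central charge factors through $\Lambda$, and the constants can be chosen uniformly in $b$.

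\textbf{Step 1 (the ambient stability conditions).} On $\PB^N_{\kappa(b)}$ the conditions $\tau_{b,\xi,\theta}$ are built from the central charge $W_{b,\xi,\theta}$. This is a polynomial in $\xi$ whose leading term, for a sheaf supported in dimension $d$, is $\xi^d\deg\cdot(\text{unit})$. For $\xi\gg0$ this places objects of different support dimension in well-separated phase sectors. The construction is independent of the field, since $\PB^N$ has a full exceptional collection, or equivalently it can be built by a large-volume limit of iterated tilts. Hence $\xi_0$ can be taken independent of $b$. I would also record that skyscraper sheaves are $\tau$-stable of phase $1$ and that $\tau$ depends continuously on $(\xi,\theta)$.

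\textbf{Step 2 (restriction to $\XC_b$).} I would show that $j_b^\sharp\QC_{b,\xi,\theta}$ is a slicing of $D^b(\XC_b)$ for $\xi>\xi_0$. The key point is that, for $\xi$ large, the $\tau$-HN filtration of $(j_b)_*E$ consists of objects pushed forward from $\XC_b$. To prove this I would compare with the large-volume description of semistability: in the limit, $\tau$-semistable objects of a given support dimension are Gieseker-type semistable sheaves (or shifts). Gieseker HN filtrations of pushforwards are themselves pushforwards, because $j_{b*}$ is fully faithful on sheaves and subsheaves of $j_{b*}F$ are supported on $\XC_b$. The support property on $\XC_b$ then follows from that on $\PB^N$, since $\|[E]_b\|$ is controlled by $\|\ch(j_{b*}E)\|$ via the injection \eqref{equation: relative Euler embedding}. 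The identity $Z_{\xi,\theta}([E]_b)=W_{b,\xi,\theta}(j_{b*}E)$ comes from Grothendieck--Riemann--Roch together with the fact that $\ch(j_{b*}E)$ is determined by the relative Euler pairings against the classes $\OC(m)|_\XC$. Stability of $\OC_x$ is inherited directly from $\PB^N$, because $j_{b*}\OC_x=\OC_{j(x)}$ and $j_{b*}$ is fully faithful on subobjects within $\PC(1)$.

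\textbf{Step 3 (family axioms).} Next I would verify conditions (1)--(4) of \cref{definition:stability_condition_over_B}. Condition (1) uses a single quadratic form built from $\ch$ on $\PB^N$, which is independent of $b$. For (2) and (4), openness of geometric stability and boundedness reduce, via $j_*$, to the corresponding statements for the constant family $\PB^N_B$; these hold by generic flatness and boundedness of Gieseker-type moduli. For (3), HN structures over Dedekind bases come from relative tilts on $\PB^N_T$ restricted through $j_T$. Continuity in $(\xi,\theta)$ would follow from \cref{thm:relative_deformation}, since $Z_{\xi,\theta}$ varies continuously and the support form can be fixed locally.

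\textbf{Main obstacle.} I expect the hardest step to be Step 2 with uniformity in $b$: showing that a single $\xi_0$ makes the $\tau$-HN factors of every pushforward supported on $\XC_b$, simultaneously for all fibers. My first approach would be to bound the relevant walls in terms of the Hilbert polynomial of $\XC_b\subset\PB^N$, which is constant in the flat family $j$. This would reduce the uniformity to the single-variety statement with explicit constants.
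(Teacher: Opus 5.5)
Your argument for the support property has a genuine gap, and this is the one step the paper's proof actually addresses. The support property of $\tau_{b,\xi,\theta}$ on $\PB^N_{\kappa(b)}$ only controls $\ch((j_b)_*E)$. That class is determined by the polynomial $m\mapsto\chi(E\otimes j_b^*\OC(m))$, of degree at most $\dim\XC_b$. So it bounds a quotient of $[E]_b$ of rank at most $\dim\XC_b+1$ (through which $Z_{\xi,\theta}$ factors), not $[E]_b\in\Lambda$ itself.

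The injection \eqref{equation: relative Euler embedding} controls $[E]_b$ through the numbers $\chi((A_i)_b,E)$. These can be read off from $\ch((j_b)_*E)$ only when the $A_i$ are numerically restrictions of classes on $\PB^N$, as in the complete intersection examples where they are powers of $\HC$. That is not assumed. In general the kernel of this quotient map is nonzero and lies in $\ker Z_{\xi,\theta}$. Every quadratic form built from $\ch$ on $\PB^N$ vanishes on this kernel, so it is not negative definite on $\ker Z_{\xi,\theta}$, and condition~(1) of \cref{definition:stability_condition_over_B} fails for your form. For example, for $\Bl_C\PP{n}$ the lattice $\Lambda$ has rank $2n$ by \eqref{equation: weak fano relative basis}, while the quotient has rank at most $n+1$. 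Thus your Steps~2--3 give at best a stability condition over $B$ with respect to the coarser lattice $(\Lambda_H,v_H)$. That is exactly what \cite[Theorem~10.3]{preprint_chunyi_zhiyu_ziqi_emanuele_alexander_paolo_xiaolei_2026_stability_conditions_and_moduli_spaces_on_projective_families} provides.

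The missing idea is a uniform mass--Hom bound: a constant $C$ with $\sum_k\dim\Hom((A_i)_b,E[k])\leq C\,m_{\sigma_{b,\xi,\theta}}(E)$ for all $b$, $i$ and $E$. For semistable $E$ this gives $\abs{\chi((A_i)_b,E)}\leq C\abs{Z_{\xi,\theta}([E]_b)}$. Then \eqref{equation: relative Euler embedding} gives $\norm{[E]_b}\leq C'\abs{Z_{\xi,\theta}([E]_b)}$ uniformly in $b$. The paper obtains this from \cite[Proposition~3.4 and Theorem~3.2]{preprint_yiran_2026_a_remark_on_the_full_support_property}, after noting that \eqref{equation: relative Euler embedding} makes $\Lambda\to\NC(\XC/B)$ an isomorphism.

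Two further steps would not go through as written.

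First, in Step~2 the large-volume comparison with Gieseker-type stability holds only class by class. At any fixed $\xi$ there are infinitely many classes whose walls lie above $\xi$, and HN factors that are genuine complexes are not covered at all. So this cannot show that $j_b^\sharp\QC_{b,\xi,\theta}$ is a slicing. Bounding walls by the Hilbert polynomial of $\XC_b$ does not help, because the walls depend on the class of $E$. The obstacle you flagged is therefore already present for a single fiber, not only in making $\xi_0$ uniform in $b$. The paper does not reprove this: it takes the slicing property, the remaining family axioms and the stability of skyscraper sheaves from \cite[Theorems~6.2 and~10.3]{preprint_chunyi_zhiyu_ziqi_emanuele_alexander_paolo_xiaolei_2026_stability_conditions_and_moduli_spaces_on_projective_families}.

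Second, continuity in $(\xi,\theta)$ does not follow from \cref{thm:relative_deformation} alone. Path lifting produces some continuous family with central charges $Z_{\xi,\theta}$. Identifying it with $\sigma_{\bullet,\xi,\theta}$ requires already knowing that the constructed slicings vary continuously. The paper takes this from \cite[Proposition~10.2 and Lemma~3.5(3)]{preprint_chunyi_zhiyu_ziqi_emanuele_alexander_paolo_xiaolei_2026_stability_conditions_and_moduli_spaces_on_projective_families}.
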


\begin{proof}
Let $v_H\colon\Lambda\to\Lambda_H$
be the homomorphism defined in
\cite[(10.1)]{preprint_chunyi_zhiyu_ziqi_emanuele_alexander_paolo_xiaolei_2026_stability_conditions_and_moduli_spaces_on_projective_families}.
By
\cite[Theorem~10.3]{preprint_chunyi_zhiyu_ziqi_emanuele_alexander_paolo_xiaolei_2026_stability_conditions_and_moduli_spaces_on_projective_families},
$\sigma_{\bullet,\xi,\theta}$ is a stability condition
over $B$ with respect to $(\Lambda_H,v_H)$.
By
\cite[Proposition~3.4]{preprint_yiran_2026_a_remark_on_the_full_support_property},
$\sigma_{\bullet,\xi,\theta}$ admits a uniform
mass--Hom bound.

Let $\NC(\XC/B)$ be the uniformly numerical
relative Grothendieck group defined in
\cite[Proposition and Definition~21.5]{bayer_lahoz_macri_perry_stellari_2021_stability_conditions_in_families}.
By \labelcref{equation: relative Euler embedding},
the canonical map
$\Lambda\to\NC(\XC/B)$ is an isomorphism.
Hence, by
\cite[Theorem~3.2]{preprint_yiran_2026_a_remark_on_the_full_support_property},
$\sigma_{\bullet,\xi,\theta}$ satisfies
the support property.
Thus, we have
$\sigma_{\bullet,\xi,\theta}\in\Stab(\XC/B)$.

Continuity follows from
\cite[Proposition~10.2 and Lemma~3.5(3)]{preprint_chunyi_zhiyu_ziqi_emanuele_alexander_paolo_xiaolei_2026_stability_conditions_and_moduli_spaces_on_projective_families},
and the assertion about skyscraper sheaves follows from
\cite[Theorem~6.2]{preprint_chunyi_zhiyu_ziqi_emanuele_alexander_paolo_xiaolei_2026_stability_conditions_and_moduli_spaces_on_projective_families}.
\end{proof}

For $\xi>\xi_0$ and $\theta\in\RB$, under the identification
\labelcref{equation: identification of stability spaces},
we have
$\sigma_{\bar b,\xi,\theta}
=
j_{\bar b}^\sharp\tau_{\bar b,\xi,\theta}$
for any geometric point $\bar b$ of $B$.

Let $X$ be a smooth projective variety over $\CB$, and let $H$ be
the numerical class of an ample line bundle on $X$.
Choose a closed immersion $j\colon X\hookrightarrow\PP{N}$ such that
$c_1(j^*\OC_{\PP{N}}(1))=mH$ for some integer $m>0$.
We denote by $\Stab^\dagger_H(X)$ the connected component of
$\Stab(X)$ containing the stability conditions induced by $j$
in \cref{thm:projective_family_stability} with $B=\Spec\CB$.
For an ample line bundle $L$ on $X$, we also write
$\Stab^\dagger_L(X)\coloneqq\Stab^\dagger_{c_1(L)}(X)$.

By
\cite[Theorem~6.8]{preprint_chunyi_zhiyu_ziqi_emanuele_alexander_paolo_xiaolei_2026_stability_conditions_and_moduli_spaces_on_projective_families}
and
\cite[Corollary~2.3]{preprint_yiran_2026_a_remark_on_the_full_support_property},
the component $\Stab^\dagger_H(X)$ is independent of $j$ and $m$.
It is also independent of $H$ by
\cite[Theorem~4.2]{preprint_yiran_2026_a_remark_on_the_full_support_property}.

\subsection{Relative Fourier--Mukai transforms}
\label{subsection:relative Fourier--Mukai transforms}

Let $f\colon\XC\to B$ be a smooth projective morphism,
with $B$ as in \cref{subsection:Base change}.
We use the notation of that section.
For \cref{proposition: relative FM equivalence,proposition: relative spherical twist},
we do not require the existence of the perfect complexes
$A_1,\ldots,A_r$ specified there.
Let
$p_1,p_2\colon\XC\times_B\XC\to\XC$
be the projections.
For $\mathcal K\in\Perf(\XC\times_B\XC)$,
the \emph{relative Fourier--Mukai transform} is
\[
\Phi_{\mathcal K}\colon D^b(\XC)\to D^b(\XC),
\quad
F\mapsto Rp_{2*}(p_1^*F\otimes^L\mathcal K).
\]
For a $B$-scheme $T$, let
$q\colon\XC_T\to\XC$ and
$j\colon\XC_T\times_T\XC_T
\to\XC\times_B\XC$
be the canonical morphisms.
Put $\mathcal K_T\coloneqq j^*\mathcal K$.
Then we have the relative
Fourier--Mukai transform
\[
\Phi_{\mathcal K_T}\colon
\Perf(\XC_T)\to\Perf(\XC_T),
\]
which is $T$-linear.

By
\cite[Theorem~4.16 and Lemma~3.4]{preprint_elías_pat_kabeer_nebojsa_2026_fiberwise_criteria_for_fouriermukai_equivalences}
and
\cite[Proposition/Definition~2.3 and Remark~2.5]{preprint_riku_2026_fouriermukai_partners_of_abelian_varieties_and_k3_surfaces_in_positive_and_mixed_characteristics},
we obtain the following.

\begin{prop}\label{proposition: relative FM equivalence}
Assume that $\Phi_{\mathcal K_{\bar b}}$
is an autoequivalence of $D^b(\XC_{\bar b})$
for any geometric point $\bar b$ of $B$.
Then $\Phi_{\mathcal K}$ is a $B$-linear
autoequivalence of $D^b(\XC)$.

For any noetherian $B$-scheme $T$,
the relative Fourier--Mukai transform
$\Phi_{\mathcal K_T}$
is an autoequivalence of $D^b(\XC_T)$,
and there are natural isomorphisms
\[
q^*\circ\Phi_{\mathcal K}
\simeq
\Phi_{\mathcal K_T}\circ q^*,
\quad
q^*\circ\Phi_{\mathcal K}^{-1}
\simeq
\Phi_{\mathcal K_T}^{-1}\circ q^*
\]
of functors $\Perf(\XC)\to\Perf(\XC_T)$.
\end{prop}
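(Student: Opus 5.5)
The plan is to reduce to the fiberwise criteria cited just before the statement, namely \cite[Theorem~4.16 and Lemma~3.4]{preprint_elías_pat_kabeer_nebojsa_2026_fiberwise_criteria_for_fouriermukai_equivalences}, and to base change for Fourier--Mukai kernels.

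\textbf{Step 1: the adjoints are relative Fourier--Mukai transforms.} Since $f$ is smooth projective, $\mathcal K$ is perfect on $\XC\times_B\XC$. Put
\[
\mathcal K_L\coloneqq \mathcal K^\vee\otimes p_2^*\omega_{f}[n]
\quad\text{and}\quad
\mathcal K_R\coloneqq \mathcal K^\vee\otimes p_1^*\omega_{f}[n].
\]
By relative Grothendieck duality for the smooth proper projections $p_i$, the transforms $\Phi_{\mathcal K_L}$ and $\Phi_{\mathcal K_R}$ (kernels regarded on $\XC\times_B\XC$ with the factors swapped) are left and right adjoint to $\Phi_{\mathcal K}$. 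The unit $\id\to\Phi_{\mathcal K_R}\circ\Phi_{\mathcal K}$ and the counit $\Phi_{\mathcal K}\circ\Phi_{\mathcal K_L}\to\id$ are then induced by morphisms of kernels. For instance, the unit corresponds to a morphism $\OC_\Delta\to\mathcal K_R*\mathcal K$ in $\Perf(\XC\times_B\XC)$.

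\textbf{Step 2: base change of kernels and fiberwise detection.} Convolution of kernels commutes with derived base change along $T\to B$, by flatness of $\XC\to B$ and flat base change for $p_{13*}$. In particular, the cone $C$ of $\OC_\Delta\to\mathcal K_R*\mathcal K$ satisfies $C_{\bar b}\simeq\Cone(\OC_{\Delta_{\bar b}}\to\mathcal K_{R,\bar b}*\mathcal K_{\bar b})$. The hypothesis says $\Phi_{\mathcal K_{\bar b}}$ is an equivalence, so this cone vanishes on every geometric fiber. Since $C$ is perfect on a scheme proper over $B$, derived Nakayama shows that $C_{\bar b}=0$ for all $\bar b$ forces $C=0$. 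The same argument applies to the counit. Hence $\Phi_{\mathcal K}$ is fully faithful with fully faithful left adjoint, so it is an equivalence on $D_{\mathrm{qc}}(\XC)$. It preserves $D^b(\XC)$ because $\mathcal K$ is perfect and $f$ is flat and proper.

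$B$-linearity is immediate from the projection formula: $\Phi_{\mathcal K}(F\otimes f^*G)\simeq\Phi_{\mathcal K}(F)\otimes f^*G$.

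\textbf{Step 3: arbitrary noetherian $T$.} The kernel $\mathcal K_T$ again has the property that every geometric fiber gives an equivalence, since geometric points of $T$ map to geometric points of $B$. So Step 2 applied over $T$ shows that $\Phi_{\mathcal K_T}$ is an autoequivalence. The isomorphism $q^*\circ\Phi_{\mathcal K}\simeq\Phi_{\mathcal K_T}\circ q^*$ is base change: $q^*Rp_{2*}\simeq Rp_{2,T*}j^*$ holds by Tor-independence, since $p_2$ is flat, and $j^*$ commutes with $p_1^*$ and $\otimes^L$. The statement for inverses follows because $\Phi_{\mathcal K}^{-1}=\Phi_{\mathcal K_R}$ and $(\mathcal K_R)_T=(\mathcal K_T)_R$, as dualizing sheaves are compatible with base change.

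\textbf{Main obstacle.} The delicate point is Step 2 in the generality where $T$ is an arbitrary noetherian scheme rather than a variety over $\CB$. There one needs base change of kernels to non-reduced or non-finite-type points, and derived Nakayama over non-closed points; the latter is where geometric fibers over all points, not only closed ones, are used. This is exactly what the cited fiberwise criterion supplies, together with \cite[Proposition/Definition~2.3 and Remark~2.5]{preprint_riku_2026_fouriermukai_partners_of_abelian_varieties_and_k3_surfaces_in_positive_and_mixed_characteristics} for the compatibility of inverses with base change. I would invoke those results rather than reprove them.
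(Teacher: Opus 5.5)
Your proposal is correct and follows the paper's route. The paper simply deduces the statement from the fiberwise criterion and the base-change results in the two cited preprints, and your Steps 1--3 are a faithful sketch of what those citations supply: kernel-level adjunctions, derived Nakayama on geometric fibers, flat base change, and $(\mathcal K_R)_T\simeq(\mathcal K_T)_R$ for the inverses.

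One harmless slip: for the adjunction $\Phi_{\mathcal K_L}\dashv\Phi_{\mathcal K}$, the relevant map is the unit $\id\to\Phi_{\mathcal K}\circ\Phi_{\mathcal K_L}$, not a map $\Phi_{\mathcal K}\circ\Phi_{\mathcal K_L}\to\id$. Checking that the cone of the corresponding kernel-level map vanishes fiberwise gives full faithfulness of $\Phi_{\mathcal K_L}$, exactly as you intend.
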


We apply \cref{proposition: relative FM equivalence}
to spherical twists.
Let
$\Delta\colon\XC\hookrightarrow\XC\times_B\XC$
be the relative diagonal.

\begin{prop}\label{proposition: relative spherical twist}
Let $E\in\Perf(\XC)$ be an object such that
$E_{\bar b}$ is spherical
for every geometric point $\bar b$ of $B$.
Put
\[
\mathcal K_E\coloneqq
\Cone\left(
p_1^*E^\vee\otimes^L p_2^*E
\xrightarrow{\mathrm{ev}}
\Delta_*\OC_\XC
\right).
\]
Then $\mathcal K_E$ is perfect, and
\[
T_E\coloneqq\Phi_{\mathcal K_E}\colon
D^b(\XC)\xrightarrow{\sim}D^b(\XC)
\]
is a $B$-linear autoequivalence.
For any geometric point $\bar b$ of $B$,
we have
\[
\Phi_{(\mathcal K_E)_{\bar b}}
\simeq T_{E_{\bar b}}.
\]
Moreover, $T_E$ and $T_E^{-1}$
commute with base change as in
\cref{proposition: relative FM equivalence}.
\end{prop}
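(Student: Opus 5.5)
The plan is to verify the three assertions in turn: that $\mathcal K_E$ is perfect, that its derived restriction to each geometric fiber is the usual spherical twist kernel, and that \cref{proposition: relative FM equivalence} then applies.

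\emph{Perfectness.} Since $E\in\Perf(\XC)$, the object $E^\vee$ is perfect. Pullbacks and derived tensor products of perfect complexes are perfect, so $p_1^*E^\vee\otimes^L p_2^*E$ is perfect on $\XC\times_B\XC$. The relative diagonal $\Delta$ is a regular closed immersion, because it is a section of the smooth morphism $p_1$. Hence $\Delta_*\OC_\XC$ is perfect on the regular-embedding neighbourhood, and therefore on $\XC\times_B\XC$, which is smooth over $B$. As cones of perfect complexes are perfect, $\mathcal K_E$ is perfect.

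\emph{Restriction to fibers.} Fix a geometric point $\bar b$ and let $j\colon\XC_{\bar b}\times\XC_{\bar b}\to\XC\times_B\XC$ be the canonical morphism. The functor $j^*$ is exact, so it preserves cones. It also commutes with $p_i^*$, with duals of perfect complexes, and with $\otimes^L$. For the diagonal, $\Delta$ is flat over $B$ (it is isomorphic to $\XC$, which is smooth over $B$), and $\XC\times_B\XC$ is flat over $B$. The square formed by $\Delta_{\bar b}$ and $\Delta$ is Tor-independent, so base change gives
\[
j^*\Delta_*\OC_\XC\simeq\Delta_{\bar b*}\OC_{\XC_{\bar b}}.
\]
The evaluation map restricts to the evaluation map, because both are adjoint to the identity of $E$ and this identification is compatible with pullback. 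So $(\mathcal K_E)_{\bar b}$ is the standard kernel $\Cone(E_{\bar b}^\vee\boxtimes E_{\bar b}\to\OC_\Delta)$, and
\[
\Phi_{(\mathcal K_E)_{\bar b}}\simeq T_{E_{\bar b}}.
\]
Since $E_{\bar b}$ is spherical, $T_{E_{\bar b}}$ is an autoequivalence by Seidel--Thomas.

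\emph{Conclusion.} The hypothesis of \cref{proposition: relative FM equivalence} now holds. That proposition therefore yields three things:
\begin{itemize}
\item $\Phi_{\mathcal K_E}$ is a $B$-linear autoequivalence;
\item its base changes $\Phi_{(\mathcal K_E)_T}$ are autoequivalences;
\item there are compatibilities $q^*\circ\Phi_{\mathcal K_E}\simeq\Phi_{(\mathcal K_E)_T}\circ q^*$, and similarly for the inverses.
\end{itemize}
These are exactly the remaining claims, since $(\mathcal K_E)_T=j^*\mathcal K_E$ by definition.

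The main obstacle I expect is the Tor-independent base change for $\Delta_*$ and the naturality of the evaluation morphism under pullback; both are standard. A minor point is that $\Delta_*\OC$ is perfect because $\XC\times_B\XC$ is regular when $B$ is smooth. In the local case, where $B$ is the spectrum of a stalk, I would instead use Tor-amplitude: the diagonal of a smooth morphism is a local complete intersection.
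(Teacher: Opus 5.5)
Your proposal is correct and follows essentially the same route as the paper. Both arguments proceed in four steps:
\begin{enumerate}
\item perfectness of $\mathcal K_E$ follows because the relative diagonal is a regular immersion;
\item the dual, the evaluation map, and the diagonal kernel are compatible with derived base change to geometric fibers (you make the Tor-independence explicit, which the paper leaves implicit);
\item Seidel--Thomas shows that each fiberwise twist $T_{E_{\bar b}}$ is an autoequivalence;
\item \cref{proposition: relative FM equivalence} then gives the remaining assertions.
\end{enumerate}
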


\begin{proof}
Since $f$ is smooth, the relative diagonal
$\Delta$ is a regular immersion.
Hence $\mathcal K_E$ is perfect.
By derived base change, the derived dual,
evaluation morphism, and diagonal kernel
are compatible with restriction to fibers.
Hence $(\mathcal K_E)_{\bar b}$ is the kernel
of the spherical twist $T_{E_{\bar b}}$,
which is an autoequivalence by
\cite[Theorem~1.2]
{seidel_thomas_2001_braid_group_actions_on_derived_categories_of_coherent_sheaves}.
The assertions follow from
\cref{proposition: relative FM equivalence}.
\end{proof}

For the following proposition, we additionally impose
the numerical assumption of \cref{subsection:Base change}:
there exist $A_1,\ldots,A_r\in\Perf(\XC)$
such that, for every geometric point $\bar b$ of $B$,
the classes
$[(A_1)_{\bar b}],\ldots,[(A_r)_{\bar b}]$
form a basis of $\Knum(\XC_{\bar b})_\QB$.
Put $\Lambda\coloneqq\Knum(\XC/B)$.

\begin{prop}\label{proposition: relative FM action}
Let $\mathcal K\in\Perf(\XC\times_B\XC)$
satisfy the assumptions of
\cref{proposition: relative FM equivalence}.
Then there exists a unique automorphism
$\alpha_{\mathcal K}\in\GL(\Lambda)$
such that
\[
[\Phi_{\mathcal K_t}(F)]_t
=
\alpha_{\mathcal K}([F]_t)
\]
for a $B$-scheme $T$,  $t\in T$,
and $F\in D^b(\XC_t)$.

The relative Fourier--Mukai transform
induces a homeomorphism
\[
\Phi_{\mathcal K}\colon
\Stab(\XC/B)\xrightarrow{\sim}\Stab(\XC/B)
\]
given by
\[
\Phi_{\mathcal K}(\sigma_\bullet)
=
\left(
Z\circ\alpha_{\mathcal K}^{-1},
(\Phi_{\mathcal K_b}\PC_b)_{b\in B}
\right)
\]
for $\sigma_\bullet=(Z,\PC_\bullet)$.
For any morphism $T\to B$
essentially of finite type, we have
\[
(\Phi_{\mathcal K}\sigma_\bullet)|_T
=
\Phi_{\mathcal K_T}(\sigma_\bullet|_T).
\]
\end{prop}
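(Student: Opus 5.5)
Two things have to be established: the existence and uniqueness of $\alpha_{\mathcal K}$, and then the homeomorphism on $\Stab(\XC/B)$ together with its compatibility with base change.

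\textbf{Existence and uniqueness of $\alpha_{\mathcal K}$.} I would use the relative Euler embedding \labelcref{equation: relative Euler embedding}. Fix a $B$-scheme $T$, a point $t\in T$ and $F\in D^b(\XC_t)$. For each $i$ compute
\[
\chi\bigl([A_i],[\Phi_{\mathcal K_t}(F)]_t\bigr)
=\chi\bigl((A_i)_t,\Phi_{\mathcal K_t}(F)\bigr)
=\chi\bigl(\Phi_{\mathcal K_t}^{L}((A_i)_t),F\bigr),
\]
where $\Phi^L$ denotes the left adjoint. The left adjoint of $\Phi_{\mathcal K}$ is the relative Fourier--Mukai transform with kernel $\mathcal K^\vee\otimes p_1^*\omega_f[n]$, which is perfect. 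It commutes with base change to $t$ by \cref{proposition: relative FM equivalence}; since $\Phi_{\mathcal K}$ is an equivalence, one can simply use $\Phi_{\mathcal K}^{-1}$. So the right-hand side equals $\chi([\Phi_{\mathcal K}^{-1}A_i],[F]_t)$. Each $\Phi_{\mathcal K}^{-1}A_i$ is perfect, so its class pairs with $\Lambda$ via the relative Euler pairing. Hence the vector $(\chi([A_i],[\Phi_{\mathcal K_t}F]_t))_i$ is a fixed linear function $\beta$ of $[F]_t$, with $\beta$ independent of $t$.

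To see that $\beta$ lands in the image of $\Lambda\hookrightarrow\ZZ^r$, I would take a geometric point $\bar b$. There $[-]_{\bar b}$ has finite cokernel, and $[\Phi_{\mathcal K_{\bar b}}(F)]_{\bar b}$ lies in $\Lambda$. This gives a map $\alpha_{\mathcal K}\colon\Lambda_\QB\to\Lambda_\QB$ that is compatible with the embedding. To see that it preserves $\Lambda$ itself (not just a finite-index sublattice), I would realize every element of $\Lambda$ as a class $[F]_t$ over some $t$ (or some $T$), which holds by the construction of $\Knum(\XC/B)$ in BLMPS. Injectivity of \labelcref{equation: relative Euler embedding} then gives uniqueness. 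Running the same argument for $\Phi_{\mathcal K}^{-1}$ produces an inverse, so $\alpha_{\mathcal K}\in\GL(\Lambda)$. I expect the integrality point to be the main technical obstacle: one has to check that $\Lambda$ is generated by fiberwise classes.

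\textbf{Action on $\Stab(\XC/B)$.} I would verify the four conditions of \cref{definition:stability_condition_over_B} for $\Phi_{\mathcal K}\sigma_\bullet$. Each fiber $(Z\circ\alpha_{\mathcal K}^{-1},\Phi_{\mathcal K_b}\PC_b)$ is a stability condition by the usual autoequivalence action; note $\Phi_{\mathcal K_b}$ is an equivalence by \cref{proposition: relative FM equivalence} applied to $T=\Spec\kappa(b)$.
\begin{itemize}
\item Support property: the quadratic form $Q\circ\alpha_{\mathcal K}^{-1}$ works uniformly.
\item Compatibility with field extensions: we need $(\Phi_{\mathcal K_b}\PC_b)_{k'}=\Phi_{\mathcal K_{b,k'}}(\PC_{b,k'})$. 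This follows from the characterization in \cref{thm:base_change} (semistability is detected after pullback) and from base change of the kernel.
\item Openness of geometric stability: for a $T$-perfect $E$, the object $E_t$ is geometrically stable for $\Phi_{\mathcal K}\sigma$ if and only if $(\Phi_{\mathcal K_T}^{-1}E)_t$ is geometrically stable for $\sigma_t$. The complex $\Phi_{\mathcal K_T}^{-1}E$ is again $T$-perfect, because the kernel is perfect and the transform commutes with base change.
\item HN structures over Dedekind schemes: these transport along the $T$-linear equivalence $\Phi_{\mathcal K_T}$.
\item Boundedness: the moduli functors are identified via $\Phi_{\mathcal K_T}^{-1}$ with $\mathcal M^{\mathrm{st}}_{\sigma_\bullet}(\alpha_{\mathcal K}^{-1}w,\phi)$. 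Boundedness is preserved because a Fourier--Mukai transform with perfect kernel maps bounded families to bounded families.
\end{itemize}

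\textbf{Continuity and base change.} Continuity follows fiberwise from continuity of the autoequivalence action on each $\Stab(\XC_b)$, given that the topology on $\Stab(\XC/B)$ is the coarsest one making the fiber restrictions continuous. The inverse map is induced by $\Phi_{\mathcal K}^{-1}$, which gives the homeomorphism. Finally, the identity $(\Phi_{\mathcal K}\sigma_\bullet)|_T=\Phi_{\mathcal K_T}(\sigma_\bullet|_T)$ is a pointwise statement at each $t\in T$ lying over $b$. It reduces to the compatibility $(\Phi_{\mathcal K_b}\PC_b)_{\kappa(t)}=\Phi_{\mathcal K_t}(\PC_{b,\kappa(t)})$ already used above, together with the fact that $\alpha_{\mathcal K_T}$ and $\alpha_{\mathcal K}$ agree on classes by the uniqueness established in the first part.
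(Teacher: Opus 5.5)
Your proof is correct. For the action on $\Stab(\XC/B)$ it is essentially the paper's argument: the quadratic form $Q\circ\alpha_{\mathcal K}^{-1}$, transport of openness through the inverse transform applied to $T$-perfect complexes, transport of Harder--Narasimhan structures and moduli functors, and continuity from the initial topology.

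The genuine difference is in how you construct $\alpha_{\mathcal K}$.
\begin{itemize}
\item The paper argues functorially. $\Phi_{\mathcal K}$ and the transform with kernel $\mathcal K'$ of $\Phi_{\mathcal K}^{-1}$ preserve relatively perfect complexes and commute with base change, so they respect the relations defining $\Knum(\XC/B)$. Integrality and invertibility then come for free, but this requires knowing that the defining relations are of that form.
\item You instead use the Euler embedding $\iota\colon\Lambda\hookrightarrow\ZZ^r$. The identity $\chi((A_i)_t,\Phi_{\mathcal K_t}F)=\chi([\Phi_{\mathcal K}^{-1}A_i],[F]_t)$ gives a homomorphism $\beta$ on all of $\Lambda$ with $\beta([F]_t)=\iota([\Phi_{\mathcal K_t}F]_t)$. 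Well-definedness and uniqueness then follow from injectivity of $\iota$. The only fact about $\Lambda$ you need is that it is generated by fiber classes $[F]_t$.
\end{itemize}
Generation is all you need, since additivity of $\beta$ then gives $\beta(\Lambda)\subset\iota(\Lambda)$. So do not claim that every element of $\Lambda$ is a single class $[F]_t$; at a fixed geometric point, $[-]_{\bar b}$ only has finite cokernel. Your route relies on the $A_i$ hypothesis, which the proposition does impose; the paper's route uses only the definition of $\Lambda$.

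Two smaller points.
\begin{itemize}
\item You rightly make explicit the compatibility $(\Phi_{\mathcal K_b}\PC_b)_{k'}=\Phi_{\mathcal K_{b,k'}}(\PC_{b,k'})$, which the paper uses silently. It is not justified by ``semistability is detected after pullback'' (\cref{thm:base_change}) alone, because that only controls objects defined over $\kappa(b)$. You need the canonicity of the base-changed slicing, as used in the proof of \cref{lem:Galois invariance}. Alternatively, observe that both slicings are Galois invariant with the same central charge and compare phases on Galois-conjugate summands.
\item For openness of geometric stability over an arbitrary $B$-scheme $T$, apply $\Phi_{\mathcal K'_T}$ rather than an inverse of $\Phi_{\mathcal K_T}$, which need not exist for non-noetherian $T$. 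Or reduce to affine $T$ of finite type first, as the paper does.
\end{itemize}
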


\begin{proof}
Let $\mathcal K'\in\Perf(\XC\times_B\XC)$
be a Fourier--Mukai kernel of
$\Phi_{\mathcal K}^{-1}$.
For any noetherian $B$-scheme $T$,
both $\Phi_{\mathcal K_T}$ and
$\Phi_{\mathcal K'_T}$ preserve
$T$-perfect complexes.
Indeed, if $F$ is $T$-perfect, then
$p_1^*F\otimes^L\mathcal K_T$ is $T$-perfect.
By
\cite[Lemma~8.11]
{preprint_chunyi_zhiyu_ziqi_emanuele_alexander_paolo_xiaolei_2026_stability_conditions_and_moduli_spaces_on_projective_families},
its derived pushforward under $p_2$
is also $T$-perfect.
The same argument applies to $\mathcal K'_T$.

Both $\Phi_{\mathcal K}$ and $\Phi_{\mathcal K'}$
preserve relatively perfect complexes and,
by \cref{proposition: relative FM equivalence},
commute with base change.
Therefore they preserve the relations defining
$\Lambda$ in
\cite[Definition~21.1]
{bayer_lahoz_macri_perry_stellari_2021_stability_conditions_in_families}.
Hence $\Phi_{\mathcal K}$ induces an
automorphism $\alpha_{\mathcal K}$ of $\Lambda$
such that
$[\Phi_{\mathcal K_t}(F)]_t
=
\alpha_{\mathcal K}([F]_t)$
for any $B$-scheme $T$, $t\in T$,
and $F\in D^b(\XC_t)$.

Fix $\sigma_\bullet=(Z,\PC_\bullet)\in\Stab(\XC/B)$
and put
$\tau_\bullet\coloneqq
\left(
Z\circ\alpha_{\mathcal K}^{-1},
(\Phi_{\mathcal K_b}\PC_b)_{b\in B}
\right)$.
Let $Q$ be a quadratic form giving the 
support property for $\sigma_\bullet$.
Then $Q\circ\alpha_{\mathcal K}^{-1}$ gives
the support property for $\tau_\bullet$.

Let $T$ be an affine $B$-scheme of finite type,
and let $F$ be a $T$-perfect complex on $\XC_T$.
By base change, $F_t$ is geometrically
$\tau_t$-stable if and only if
$(\Phi_{\mathcal K'_T}(F))_t$
is geometrically $\sigma_t$-stable.
Since $\Phi_{\mathcal K'_T}(F)$ is
$T$-perfect, condition~(2) for
$\sigma_\bullet$ shows that
$\{t\in T\mid
F_t\text{ is geometrically $\tau_t$-stable}\}$
is open in $T$.
By
\cite[Lemma~20.3]
{bayer_lahoz_macri_perry_stellari_2021_stability_conditions_in_families},
this proves condition~(2) for $\tau_\bullet$.

Let $T\to B$ be a morphism essentially
of finite type from a Dedekind scheme.
Let $(Z,\PC_T)$ be a Harder--Narasimhan
structure on $D^b(\XC_T)$ over $T$
inducing $\sigma_\bullet|_T$.
Since $\Phi_{\mathcal K_T}$ is $T$-linear
and commutes with base change,
$(Z\circ\alpha_{\mathcal K}^{-1},
\Phi_{\mathcal K_T}\PC_T)$
is a Harder--Narasimhan structure
inducing $\tau_\bullet|_T$.
Thus condition~(3) holds.

For $w\in\Lambda$ and $\phi\in\RB$,
$\Phi_{\mathcal K}$ induces an isomorphism
$\mathcal M^{\mathrm{st}}_{\sigma_\bullet}
(\alpha_{\mathcal K}^{-1}(w),\phi)
\xrightarrow{\sim}
\mathcal M^{\mathrm{st}}_{\tau_\bullet}(w,\phi)$.
Thus condition~(4) for $\tau_\bullet$
follows from that for $\sigma_\bullet$.

Since $\Phi_{\mathcal K'}$ induces the inverse
map and both maps are continuous by the
definition of the topology on $\Stab(\XC/B)$,
$\Phi_{\mathcal K}$ is a homeomorphism.
\end{proof}

\section{Criteria for disconnectedness}\label{section: disconnectedness in families}
\subsection{Nordskova's argument}\label{subsection:Spherical twists and connected components}
Let $k$ be an algebraically closed field of characteristic zero
and $X$ a smooth projective variety over $k$ with $\dim X \geq 2$.
We extend Nordskova's argument
\cite[Lemma~6.1]{preprint_anya_2026_a_smooth_projective_counterexample_to_bondalpolishchuks_conjecture}
to $k$-semilinear autoequivalences.

\begin{lem}\label{lemma: fixed component}
Let $g$ be a $k$-semilinear autoequivalence
of $D^b(X)$ acting trivially on $\Knum(X)$.
If $g(\sigma_0)=\sigma_0$ for some $\sigma_0\in\Stab(X)$,
then $g$ acts as the identity on the connected component
of $\Stab(X)$ containing $\sigma_0$.
\end{lem}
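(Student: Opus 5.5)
The argument is the usual ``isometry fixing a point of a component is the identity'' proof. It rests on Bridgeland's deformation theorem (\cref{theorem: Bridgeland's deformation theorem}) together with the fact that $g$ acts by an isometry on $\Stab(X)$ commuting with the central-charge map.

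\textbf{Setting up the action.} First we check that $g$ acts on $\Stab(X)$ at all, via $g(Z,\PC)=(Z,g(\PC))$. Here the central charge is unchanged because $[g]=\id$ on $\Knum(X)$. Semilinearity causes no trouble: $g$ is an exact autoequivalence of the underlying triangulated category, so it sends slicings to slicings and HN filtrations to HN filtrations. The support property is preserved because classes are preserved. Thus $g$ gives a homeomorphism of $\Stab(X)$ with $\ZC\circ g=\ZC$, where $\ZC$ is the central-charge map. It is moreover an isometry for Bridgeland's generalized metric $d$, since phases and masses of $g(E)$ with respect to $g(\sigma)$ equal those of $E$ with respect to $\sigma$.

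\textbf{Fixed locus is open and closed.} Let $F\subset\Sigma$ be the fixed locus of $g$ in the component $\Sigma$ containing $\sigma_0$. Then $F$ is nonempty, as it contains $\sigma_0$. It is closed because $g$ is continuous and $\Stab(X)$ is Hausdorff.

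For openness, take $\sigma\in F$ and a small neighbourhood $U$ of $\sigma$ on which $\ZC$ is injective. Concretely, Bridgeland's result \cite[Lemma~6.4]{bridgeland_2007_stability_conditions_on_triangulated_categories} says that two stability conditions with the same central charge and $d<1$ coincide. Take $U=\{\tau\mid d(\sigma,\tau)<1/2\}$, which is open in the metric topology. For $\tau\in U$ we have
\[
d(g\tau,\sigma)=d(g\tau,g\sigma)=d(\tau,\sigma)<1/2,
\]
so $d(g\tau,\tau)<1$. Since $\ZC(g\tau)=\ZC(\tau)$, we conclude $g\tau=\tau$. Hence $U\subset F$, so $F$ is open, and connectedness of $\Sigma$ gives $F=\Sigma$.

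\textbf{Main obstacle.} The only real care needed is verifying that a semilinear (rather than linear) autoequivalence still defines an isometric action on $\Stab(X)$ compatible with $\ZC$. The key points are that the definitions of slicing, phase, mass and support property only use the triangulated structure and the class map to $\Knum(X)$, not $k$-linearity, and that $g$ preserves classes in $\Knum(X)$ by hypothesis. Once this is in place, the remainder is the standard deformation-theoretic rigidity argument above.
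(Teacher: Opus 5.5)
Your proposal is correct and follows essentially the same route as the paper, which simply notes that Nordskova's proof of Lemma~6.1(1) carries over to semilinear autoequivalences. The fixed locus of $g$ in the component is closed, and it is open by the triviality of the numerical action together with the local injectivity of the central charge map. Your combination of the isometry property with Bridgeland's Lemma~6.4 is just an explicit way of carrying out that local-injectivity step.
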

\begin{proof}
The proof of
\cite[Lemma~6.1(1)]{preprint_anya_2026_a_smooth_projective_counterexample_to_bondalpolishchuks_conjecture}
also applies to $k$-semilinear autoequivalences.
It uses only the triviality of the numerical action and the local injectivity of the map in
\cref{theorem: Bridgeland's deformation theorem}.
\end{proof}

For a spherical object $E\in D^b(X)$, let $T_E$ denote its spherical twist. By \cite[(1.2)]{seidel_thomas_2001_braid_group_actions_on_derived_categories_of_coherent_sheaves},
for any $v\in\Knum(X)$, we have
\begin{equation}\label{equation: relative twist numerical action}
[T_E](v)=v-\chi([E],v)[E].
\end{equation}
Two objects $E_1,E_2\in D^b(X)$
are called \emph{orthogonal} if they satisfy
\[\RHom(E_1,E_2)=\RHom(E_2,E_1)=0.\]

\begin{prop}\label{proposition: phase separation}
Let $E_1,\ldots,E_d\in D^b(X)$ be mutually
orthogonal spherical objects, where $d\geq 2$.
For $\mathbf a=(a_1,\ldots,a_d)\in\ZZ^d$, put
$\Phi_{\mathbf a}
\coloneqq T_{E_1}^{a_1}\cdots T_{E_d}^{a_d}$.
Let $\Sigma\subset\Stab(X)$ be a connected component
such that
$\phi_\sigma^+(E_1)=\cdots=\phi_\sigma^+(E_d)$
for any $\sigma\in\Sigma$.
For $\mathbf a,\mathbf a'\in\ZZ^d$,
if $\Phi_{\mathbf a}\Sigma=\Phi_{\mathbf a'}\Sigma$,
then
$\mathbf a-\mathbf a'\in\ZZ(1,\ldots,1)$.
In particular, the connected components
$\Sigma$ and $T_{E_i}^{-1}\Sigma$
for $1\leq i\leq d$ are pairwise distinct.
Moreover, if $[E_1]=\cdots=[E_d]$ in $\Knum(X)$,
then $\Phi_{\mathbf a}$ acts trivially on $\Knum(X)$
whenever $\sum_{i=1}^d a_i=0$.
For such $\mathbf a$ and
$\sigma=(Z,\PC)\in\Sigma$, we have
$\Phi_{\mathbf a}\sigma
=(Z,\Phi_{\mathbf a}\PC)
\in\Phi_{\mathbf a}\Sigma$.
\end{prop}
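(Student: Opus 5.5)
The core is a phase computation for spherical twists of mutually orthogonal objects. Because the $E_i$ are mutually orthogonal, $\RHom(E_j,E_i)=0$ for $j\neq i$, and hence $T_{E_j}(E_i)\simeq E_i$. The twists $T_{E_i}$ commute, so $\Phi_{\mathbf a}$ is well defined. Since $E_i$ is $n$-spherical, $T_{E_i}(E_i)\simeq E_i[1-n]$. Combining these,
\[
\Phi_{\mathbf a}(E_i)\simeq E_i[a_i(1-n)],
\qquad
\Phi_{\mathbf a}^{-1}(E_i)\simeq E_i[-a_i(1-n)].
\]

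Suppose $\Phi_{\mathbf a}\Sigma=\Phi_{\mathbf a'}\Sigma$. Put $\mathbf c=\mathbf a-\mathbf a'$ and $\Psi=\Phi_{\mathbf c}$, so that $\Psi\Sigma=\Sigma$. Take any $\sigma\in\Sigma$; then $\Psi^{-1}\sigma\in\Sigma$ as well. The hypothesis gives equal maximal phases in both:
\[
\phi^+_\sigma(E_1)=\cdots=\phi^+_\sigma(E_d)
\quad\text{and}\quad
\phi^+_{\Psi^{-1}\sigma}(E_1)=\cdots=\phi^+_{\Psi^{-1}\sigma}(E_d).
\]
By the definition of the action, $\phi^+_{\Psi^{-1}\sigma}(F)=\phi^+_\sigma(\Psi F)$. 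Therefore
\[
\phi^+_{\Psi^{-1}\sigma}(E_i)=\phi^+_\sigma\bigl(E_i[c_i(1-n)]\bigr)=\phi^+_\sigma(E_i)+c_i(1-n).
\]
Equality across $i$ forces $c_i(1-n)$ to be independent of $i$. Since $n\ge 2$, we have $1-n\neq 0$, so all $c_i$ are equal, i.e.\ $\mathbf c\in\ZZ(1,\ldots,1)$.

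The distinctness claims are special cases. For $i\neq j$, the vectors $-e_i$ and $-e_j$ differ by $e_j-e_i$, which is not a multiple of $(1,\ldots,1)$ since $d\ge2$. Likewise $-e_i$ and $0$ differ by $-e_i$, which is not such a multiple.

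For the numerical statement, write $v_0=[E_1]=\cdots=[E_d]$. By \eqref{equation: relative twist numerical action}, each $[T_{E_i}]$ is the same map $R(v)=v-\chi(v_0,v)v_0$. Thus $[\Phi_{\mathbf a}]=R^{\sum a_i}$, which is the identity when $\sum a_i=0$. Here $R$ is invertible because its inverse is $[T_{E_i}^{-1}]$.

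For such $\mathbf a$, the action formula recalled in \cref{subsection: Bridgeland stability conditions} gives $\Phi_{\mathbf a}\sigma=(Z\circ[\Phi_{\mathbf a}]^{-1},\Phi_{\mathbf a}\PC)=(Z,\Phi_{\mathbf a}\PC)$. Membership in $\Phi_{\mathbf a}\Sigma$ holds by definition.

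The only genuinely delicate step is the phase-shift identity $\phi^+_{\Psi^{-1}\sigma}(E_i)=\phi^+_\sigma(E_i)+c_i(1-n)$. It needs two facts: the commutation of the twists, and the shift behaviour of $\phi^+$ under the action. Both are routine given orthogonality.
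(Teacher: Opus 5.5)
Your proposal is correct and follows essentially the same argument as the paper. Both compute the shift of each $E_i$ under the composite twist and compare $\phi^+$ at two points of $\Sigma$, then derive the numerical statement from the common formula for $[T_{E_i}]$. The only cosmetic difference is that you first rewrite the composite as $\Phi_{\mathbf c}$ using commutativity of the twists and evaluate at $\Psi^{-1}\sigma$. The paper instead evaluates directly at $\Phi_{\mathbf a'}^{-1}\Phi_{\mathbf a}\sigma$, which needs no commutation.
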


\begin{proof}
Fix $\sigma\in\Sigma$ and put
$\phi\coloneqq\phi_\sigma^+(E_1)
=\cdots=\phi_\sigma^+(E_d)$.
Suppose that
$\Phi_{\mathbf a}\Sigma=\Phi_{\mathbf a'}\Sigma$
for some $\mathbf a,\mathbf a'\in\ZZ^d$.
Since $T_{E_i}(E_i)\simeq E_i[1-\dim X]$
and $T_{E_j}(E_i)\simeq E_i$ for $i\ne j$,
the stability condition
$\tau\coloneqq\Phi_{\mathbf a'}^{-1}
\Phi_{\mathbf a}\sigma\in\Sigma$ satisfies
\[
\phi_\tau^+(E_i)
=\phi+(a_i-a_i')(\dim X-1)
\]
for $1\leq i\leq d$.
By the assumption on $\Sigma$, this implies
\[
(a_i-a_i')(\dim X-1)
=(a_j-a_j')(\dim X-1)
\]
for all $i,j$.
Since $\dim X\geq 2$, we obtain
$\mathbf a-\mathbf a'\in\ZZ(1,\ldots,1)$.
In particular, the connected components
$\Sigma$ and $T_{E_i}^{-1}\Sigma$
for $1\leq i\leq d$ are pairwise distinct.
Finally, suppose that
$[E_1]=\cdots=[E_d]$ in $\Knum(X)$.
By \labelcref{equation: relative twist numerical action},
we have
$[T_{E_1}]=\cdots=[T_{E_d}]$.
Hence $[\Phi_{\mathbf a}]=\mathrm{id}$
on $\Knum(X)$ whenever
$\sum_{i=1}^d a_i=0$.
Therefore, for such $\mathbf a$
and $\sigma=(Z,\PC)\in\Sigma$, we obtain
$\Phi_{\mathbf a}\sigma
=(Z,\Phi_{\mathbf a}\PC)
\in\Phi_{\mathbf a}\Sigma$.
\end{proof}

\begin{cor}\label{corollary: disconnectedness from a semilinear equivalence}
Let $E_1,\ldots,E_d\in D^b(X)$ be mutually
orthogonal spherical objects, where $d\geq 2$.
Let $g_2,\ldots,g_d$ be $k$-semilinear
autoequivalences of $D^b(X)$ acting trivially
on $\Knum(X)$ such that
$g_i(E_1)\simeq E_i$ for $2\leq i\leq d$.
Suppose that there exists
$\sigma_0\in\Stab(X)$ such that
$g_i(\sigma_0)=\sigma_0$ for $2\leq i\leq d$.
Let $\Sigma$ be the connected component
containing $\sigma_0$.
For $\mathbf a=(a_1,\ldots,a_d)\in\ZZ^d$, put
$\Phi_{\mathbf a}
\coloneqq T_{E_1}^{a_1}\cdots T_{E_d}^{a_d}$.
For $\mathbf a,\mathbf a'\in\ZZ^d$,
if $\Phi_{\mathbf a}\Sigma=\Phi_{\mathbf a'}\Sigma$,
then
$\mathbf a-\mathbf a'\in\ZZ(1,\ldots,1)$.
In particular, the connected components
$\Sigma$ and $T_{E_i}^{-1}\Sigma$
for $1\leq i\leq d$ are pairwise distinct.
Moreover, $\Phi_{\mathbf a}$ acts trivially
on $\Knum(X)$ whenever $\sum_{i=1}^d a_i=0$.
For such $\mathbf a$ and
$\sigma=(Z,\PC)\in\Sigma$, we have
$\Phi_{\mathbf a}\sigma
=(Z,\Phi_{\mathbf a}\PC)
\in\Phi_{\mathbf a}\Sigma$.
\end{cor}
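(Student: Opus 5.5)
The plan is to reduce the corollary to \cref{proposition: phase separation}. The only hypothesis of that proposition still to be checked is that $\phi_\sigma^+(E_1)=\cdots=\phi_\sigma^+(E_d)$ for every $\sigma\in\Sigma$. The numerical statement is handled separately, by showing $[E_1]=\cdots=[E_d]$ in $\Knum(X)$.

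First, fix $2\leq i\leq d$. Since $g_i$ acts trivially on $\Knum(X)$ and fixes $\sigma_0$, \cref{lemma: fixed component} shows that $g_i$ acts as the identity on $\Sigma$, so $g_i(\sigma)=\sigma$ for every $\sigma\in\Sigma$. Here the action of a semilinear autoequivalence on $\Stab(X)$ is as in the paper: $\alpha(g_i)=\id$, so $g_i(Z,\PC)=(Z,g_i\PC)$. Thus $g_i\PC(\phi)=\PC(\phi)$ for all $\phi$. An autoequivalence carries distinguished triangles to distinguished triangles, so it carries HN filtrations to HN filtrations. Hence $\phi^+_{g_i\sigma}(g_iE)=\phi^+_\sigma(E)$ for every object $E$. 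Applying this to $E_1$ and using $g_i\sigma=\sigma$ and $g_i(E_1)\simeq E_i$, we get
\[
\phi_\sigma^+(E_i)=\phi^+_{g_i\sigma}(g_iE_1)=\phi_\sigma^+(E_1).
\]
So the phase hypothesis of \cref{proposition: phase separation} holds on $\Sigma$, and that proposition gives both the implication $\Phi_{\mathbf a}\Sigma=\Phi_{\mathbf a'}\Sigma\Rightarrow\mathbf a-\mathbf a'\in\ZZ(1,\ldots,1)$ and the pairwise distinctness of $\Sigma$ and the $T_{E_i}^{-1}\Sigma$.

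For the last assertions, $[E_i]=[g_iE_1]=[g_i][E_1]=[E_1]$, because $g_i$ acts trivially on $\Knum(X)$. The final part of \cref{proposition: phase separation} then gives the triviality of $\Phi_{\mathbf a}$ on $\Knum(X)$ when $\sum a_i=0$, together with the formula $\Phi_{\mathbf a}\sigma=(Z,\Phi_{\mathbf a}\PC)$.

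The only step needing care is the semilinear bookkeeping. We must make sure that a $k$-semilinear autoequivalence still acts on $\Stab(X)$ and still preserves HN filtrations and phases. Semilinearity changes only the scalar structure on Hom spaces, not the triangulated structure, so I expect this to be routine. Note also that $[g_i]$ on $\Knum(X)$ is well defined because $g_i$ preserves the Euler pairing up to the field automorphism, which fixes $\ZZ$.
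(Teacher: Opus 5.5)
Your proposal is correct and follows essentially the same route as the paper: use \cref{lemma: fixed component} to get $g_i\sigma=\sigma$ for all $\sigma\in\Sigma$, deduce $\phi_\sigma^+(E_1)=\cdots=\phi_\sigma^+(E_d)$ from $g_i(E_1)\simeq E_i$, obtain $[E_1]=\cdots=[E_d]$ from the triviality of $[g_i]$ on $\Knum(X)$, and conclude by \cref{proposition: phase separation}. Your remarks on why a semilinear autoequivalence carries HN filtrations to HN filtrations and induces a well-defined map on $\Knum(X)$ make explicit what the paper leaves implicit.
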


\begin{proof}
By \cref{lemma: fixed component},
we have $g_i(\sigma)=\sigma$
for $\sigma\in\Sigma$ and $2\leq i\leq d$.
Since $g_i(E_1)\simeq E_i$, it follows that
$\phi_\sigma^+(E_1)=\cdots=\phi_\sigma^+(E_d)$
for $\sigma\in\Sigma$.
The triviality of the actions of $g_i$ on $\Knum(X)$
implies $[E_1]=\cdots=[E_d]$.
The assertions follow from
\cref{proposition: phase separation}.
\end{proof}

\begin{rem}\label{remark: parity obstruction}
If orthogonal spherical objects $E_1,E_2$ satisfy $[E_1]=[E_2]$ in $\Knum(X)$, then
\[
0=\chi(E_1,E_2)=\chi(E_1,E_1)=1+(-1)^{\dim X}.\]
Thus $\dim X$ is odd.
In particular, this holds under the assumptions of
\cref{corollary: disconnectedness from a semilinear equivalence}.
\end{rem}

\subsection{\'Etale covers and Galois actions}\label{subsection: disconnectedness in families}

We apply
\cref{corollary: disconnectedness from a semilinear equivalence}
to the Galois action on the geometric generic fiber
and use specialization to prove disconnectedness
for closed fibers.

\begin{setup}\label{setup: disconnectedness in families}
Let $f\colon\XC\to B$ be a smooth projective morphism
of relative dimension $n\geq 2$
with geometrically connected fibers,
where $B$ is a smooth quasi-projective variety over $\CB$.
Assume that there exist
$A_1,\ldots,A_r\in\Perf(\XC)$
such that, for every geometric point $\bar b$ of $B$,
the classes
$[(A_1)_{\bar b}],\ldots,[(A_r)_{\bar b}]$
form a basis of $\Knum(\XC_{\bar b})_\QB$.

Let $\Lambda\coloneqq\Knum(\XC/B)$,
and fix a stability condition
$\sigma_\bullet=(\sigma_b)_{b\in B}\in\Stab(\XC/B)$.
For $b\in B(\CB)$, let $\Sigma_b$ be
the connected component of $\Stab(\XC_b)$
containing $\sigma_b$,
where we regard $\sigma_b$ as an element of
$\Stab(\XC_b)$ via the identification \labelcref{equation: identification of stability spaces}.

Let $\pi\colon T\to B$ be a connected finite étale cover
of degree $d\geq 2$, and let $E\in\Perf(\XC_T)$.
Assume that, for any geometric point $\bar b$ of $B$,
the objects $E_t$ for $t\in T_{\bar b}$
are mutually orthogonal $n$-spherical objects
in $D^b(\XC_{\bar b})$.

For $b\in B(\CB)$, choose an ordering
$T_b=\{t_1,\ldots,t_d\}$.
For $\mathbf a=(a_1,\ldots,a_d)\in\ZZ^d$, put
$\Phi_{\mathbf a,b}
\coloneqq T_{E_{t_1}}^{a_1}\cdots T_{E_{t_d}}^{a_d}$.
\end{setup}

\begin{thm}\label{theorem: disconnectedness in families}
In \cref{setup: disconnectedness in families},
fix $b\in B(\CB)$.
For $\mathbf a,\mathbf a'\in\ZZ^d$,
if $\Phi_{\mathbf a,b}\Sigma_b
=\Phi_{\mathbf a',b}\Sigma_b$,
then
$\mathbf a-\mathbf a'\in\ZZ(1,\ldots,1)$.
In particular, the connected components
$\Sigma_b$ and $T_{E_t}^{-1}\Sigma_b$
for $t\in T_b$ are pairwise distinct.
Moreover, $\Phi_{\mathbf a,b}$ acts trivially
on $\Knum(\XC_b)$ whenever
$\sum_{i=1}^d a_i=0$.
The connected components
$\Phi_{\mathbf a,b}\Sigma_b$
for such $\mathbf a$ are pairwise distinct.
In particular, every central charge occurring in
$\Sigma_b$ has lifts in infinitely many distinct
connected components of $\Stab(X_b)$.
\end{thm}

We first recall a basic fact about absolute Galois groups.
Let $K$ be a field of characteristic zero, and fix an algebraic
closure $\bar K$.
The absolute Galois group $\Gal(\bar K/K)$ is the group of
field automorphisms of $\bar K$ fixing $K$ pointwise.

\begin{lem}\label{lemma: transitive Galois action}
Let $L/K$ be a finite extension.
Then the action of $\Gal(\bar K/K)$ on $(\Spec L)(\bar K)$ is transitive.
\end{lem}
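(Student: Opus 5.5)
The plan is to reduce to the classical fact that a finite separable extension embeds into $\bar K$ in exactly $[L:K]$ ways, and that any two such embeddings differ by an automorphism of $\bar K$ over $K$. A $\bar K$-point of $\Spec L$ is the same as a $K$-algebra homomorphism $\iota\colon L\to\bar K$. The group $\Gal(\bar K/K)$ acts on these by $g\cdot\iota\coloneqq g\circ\iota$. So it suffices to show that for two $K$-embeddings $\iota_1,\iota_2\colon L\hookrightarrow\bar K$ there is $g\in\Gal(\bar K/K)$ with $g\circ\iota_1=\iota_2$. Note that $L$ is a field, so every $K$-algebra map $L\to\bar K$ is automatically injective.

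First, I would consider the $K$-isomorphism of subfields
\[
\tau\coloneqq\iota_2\circ\iota_1^{-1}\colon\iota_1(L)\xrightarrow{\sim}\iota_2(L).
\]
Next, I would extend $\tau$ to an embedding $\tilde\tau\colon\bar K\to\bar K$. Since $\bar K$ is algebraic over $\iota_1(L)$ and $\bar K$ is algebraically closed, the standard extension theorem (Zorn's lemma applied to pairs consisting of an intermediate field and an extension of $\tau$ to it) gives such a $\tilde\tau$ fixing $K$.

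Then I would check that $\tilde\tau$ is surjective. Its image $\tilde\tau(\bar K)$ is algebraically closed, and $\bar K$ is algebraic over it because $\bar K$ is algebraic over $K\subset\tilde\tau(\bar K)$. Hence $\tilde\tau(\bar K)=\bar K$, so $\tilde\tau\in\Gal(\bar K/K)$ and $\tilde\tau\circ\iota_1=\iota_2$.

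Characteristic zero is not essential here, only algebraicity of $\bar K/K$. Separability merely guarantees that $(\Spec L)(\bar K)$ has exactly $[L:K]$ elements, which is what the later application to the degree-$d$ cover $T\to B$ uses. The only nontrivial input is the existence of the extension $\tilde\tau$, which is standard, so I do not expect a genuine obstacle. The one point needing care is to state the identification of $\bar K$-points with $K$-embeddings compatibly with how $\Gal(\bar K/K)$ acts on $T_{\bar\eta}$ in the subsequent argument, namely through the base change $\Spec\bar K\to\Spec K$ given by $g$.
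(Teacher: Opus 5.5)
Your proposal is correct and follows the paper's proof. Both identify $\bar K$-points of $\Spec L$ with $K$-embeddings $L\hookrightarrow\bar K$ and produce $g\in\Gal(\bar K/K)$ with $g\circ\iota_1=\iota_2$. Where the paper simply cites uniqueness of algebraic closures up to isomorphism, you unpack that step via the extension theorem and the surjectivity check, and that unpacking is sound.
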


\begin{proof}
The $\bar K$-valued points of $\Spec L$ correspond to
$K$-embeddings $L\hookrightarrow\bar K$.
For any two such embeddings $\iota_1$ and $\iota_2$,
the uniqueness of algebraic closures up to isomorphism
gives $g\in\Gal(\bar K/K)$ such that
$g\circ\iota_1=\iota_2$.
Therefore, $\Gal(\bar K/K)$ acts transitively on $(\Spec L)(\bar K)$.
\end{proof}

We use \cref{lemma: transitive Galois action} to show that
certain stability conditions on the geometric generic fiber
lie in distinct connected components.

\begin{lem}\label{lemma: separation on geometric generic fiber}
In \cref{setup: disconnectedness in families},
let $K\coloneqq\CB(B)$ and fix an algebraic closure $\bar K$ of $K$.
Let $\bar\eta\colon\Spec\bar K\to B$ be the corresponding
geometric generic point, and let $\Sigma_{\bar\eta}$ be
the connected component of $\Stab(\XC_{\bar\eta})$
containing $\sigma_{\bar\eta}$.
Write $T_{\bar\eta}=\{u_1,\ldots,u_d\}$.
For $\mathbf a=(a_1,\ldots,a_d)\in\ZZ^d$, put
$\Phi_{\mathbf a,\bar\eta}
\coloneqq T_{E_{u_1}}^{a_1}\cdots T_{E_{u_d}}^{a_d}$.
For $\mathbf a,\mathbf a'\in\ZZ^d$,
if $\Phi_{\mathbf a,\bar\eta}\Sigma_{\bar\eta}
=\Phi_{\mathbf a',\bar\eta}\Sigma_{\bar\eta}$,
then $\mathbf a-\mathbf a'\in\ZZ(1,\ldots,1)$.
\end{lem}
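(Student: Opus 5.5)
The plan is to apply \cref{corollary: disconnectedness from a semilinear equivalence} on the geometric generic fiber $X\coloneqq\XC_{\bar\eta}$ over $k=\bar K$. The objects are $E_i\coloneqq E_{u_i}$. By \cref{setup: disconnectedness in families}, applied at the geometric point $\bar\eta$, they are mutually orthogonal $n$-spherical objects in $D^b(X)$, and $n=\dim X\geq2$. For the semilinear autoequivalences we use $g^*$ with $g\in\Gal(\bar K/K)$, acting on $X=\XC_\eta\times_K\Spec\bar K$ as in \cref{lem:Galois invariance}. The corollary then gives exactly the claimed implication for $\Phi_{\mathbf a,\bar\eta}$, since $\Sigma_{\bar\eta}$ is the component containing $\sigma_0\coloneqq\sigma_{\bar\eta}$.

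\textbf{Step 1.} The hypotheses on $g^*$ other than transitivity come from \cref{lem:Galois invariance}, applied with $b=\eta$ and $k'=\bar K$. That lemma says $g^*$ acts trivially on $\Knum(X)$ and fixes $(\sigma_\eta)_{\bar K}=\sigma_{\bar\eta}$. Here $\sigma_{\bar\eta}$ is by definition the base change of $\sigma_\eta$ (\cref{thm:base_change}), and we transport it to $\Stab(X)$ via \labelcref{equation: identification of stability spaces}. This identification is compatible with $g^*$, because $[g^*F]_{\bar\eta}=[F]_{\bar\eta}$ in $\Lambda$. Indeed, $g^*$ commutes with $R\pi_*$, so pairing against the $A_i$ agrees, and \cref{lemma: numerical identification} makes $\Lambda_\QB$ detect classes.

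\textbf{Step 2.} We need $g_i$ with $g_i^*(E_{u_1})\simeq E_{u_i}$. Since $T\to B$ is connected finite étale and $B$ is integral (smooth and connected), $T$ is integral. Its generic fiber $T_\eta=\Spec L$ is therefore a single point, with $L=\CB(T)$ a finite separable extension of $K$ of degree $d$. Hence $T_{\bar\eta}=(\Spec L)(\bar K)$, and by \cref{lemma: transitive Galois action} there are $g_i\in\Gal(\bar K/K)$ with $g_i(u_1)=u_i$, where $g$ acts on $T_{\bar\eta}$ by precomposition. It remains to check that $g^*(E_u)\simeq E_{g^{-1}u}$, or $E_{gu}$ depending on convention; in either case we choose $g_i$ accordingly.

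This check is the main technical point, and I would argue it as follows. Write $E_u$ as the derived pullback of $E\in\Perf(\XC_T)$ along $\XC_{\bar\eta}\cong\XC_T\times_T\Spec\bar K\to\XC_T$, where $\Spec\bar K\to T$ is the point $u$. Now consider the automorphism $g$ of $\Spec\bar K$ over $K$. The map $u\circ g\colon\Spec\bar K\to T$ is again a $\bar K$-point over $\eta$, namely the translated point. Functoriality of derived pullback along the commutative square relating $g\colon\XC_{\bar\eta}\to\XC_{\bar\eta}$ to $\XC_T$ then gives $g^*E_u\simeq E_{u\circ g}$. We must also confirm that $g^*$ is a $\bar K$-semilinear autoequivalence, via $g^{-1}$ or $g$ acting on scalars. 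This is immediate: $g^*$ is an exact equivalence of the underlying abelian categories, and it twists the $\bar K$-structure on Hom spaces by $g$.

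With Steps 1 and 2 in place, \cref{corollary: disconnectedness from a semilinear equivalence} yields $\mathbf a-\mathbf a'\in\ZZ(1,\ldots,1)$.
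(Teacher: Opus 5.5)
Your proposal is correct and matches the paper's proof. The paper also deduces integrality of $T$, uses \cref{lemma: transitive Galois action} to get transitivity of the Galois pullbacks on the $E_{u_i}$, uses \cref{lem:Galois invariance} for triviality on $\Knum(\XC_{\bar\eta})$ and invariance of $\sigma_{\bar\eta}$, and concludes with \cref{corollary: disconnectedness from a semilinear equivalence}. Your explicit check $g^*E_u\simeq E_{u\circ g}$, via functoriality of derived pullback, just spells out the paper's phrase ``since $E$ is defined over $\XC_T$''.
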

\begin{proof}
Let $\eta$ be the generic point of $B$.
Since $T$ is connected and finite \'etale over the smooth variety $B$, it is integral.
Put $L\coloneqq\CB(T)$.
Then $T_\eta=\Spec L$, and $L/K$ is a finite extension.
Under the natural identification $T_\eta(\bar K)\simeq T_{\bar\eta}(\bar K)$, \cref{lemma: transitive Galois action} shows that $\Gal(\bar K/K)$ acts transitively on $T_{\bar\eta}(\bar K)$.
Since $E$ is defined over $\XC_T$, the pullbacks $g^*$ for $g\in\Gal(\bar K/K)$ permute $E_{u_1},\ldots,E_{u_d}$ transitively up to isomorphism.

By \cref{lem:Galois invariance}, the pullbacks $g^*$ act trivially on $\Knum(\XC_{\bar\eta})$ and fix $\sigma_{\bar\eta}$.
In particular, they preserve $\Sigma_{\bar\eta}$.
By \cref{corollary: disconnectedness from a semilinear equivalence},
the equality $\Phi_{\mathbf a,\bar\eta}\Sigma_{\bar\eta}
=\Phi_{\mathbf a',\bar\eta}\Sigma_{\bar\eta}$
implies $\mathbf a-\mathbf a'\in\ZZ(1,\ldots,1)$.
\end{proof}

We establish a specialization result
for connected components of the spaces of stability conditions.

\begin{lem}\label{lemma: specialization of connected components}
Let $C$ be a smooth curve over $\CB$,
let $c\in C(\CB)$, and put
$S\coloneqq\Spec\mathcal O_{C,c}$.
Let $f\colon\XC\to S$ be a smooth projective morphism
with geometrically connected fibers.
Assume that there exist $A_1,\ldots,A_r\in\Perf(\XC)$
as in \cref{subsection:Base change}.
Write $0$ and $\eta$ for the closed and generic
points of $S$, respectively, and fix a geometric
generic point
$\bar\eta\colon\Spec\overline{\kappa(\eta)}\to S$.
Under the identification
\eqref{equation: identification of stability spaces},
the map
\[
\res_0 \colon \Stab(\XC/S)\to\Stab(\XC_0),
\quad
\sigma_\bullet\mapsto\sigma_0
\]
is a homeomorphism onto a union of connected
components of $\Stab(\XC_0)$.
In particular, for any
$\sigma_\bullet,\tau_\bullet\in\Stab(\XC/S)$,
if $\sigma_{\bar\eta}$ and $\tau_{\bar\eta}$
lie in distinct connected components of
$\Stab(\XC_{\bar\eta})$,
then $\sigma_0$ and $\tau_0$ lie in distinct
connected components of $\Stab(\XC_0)$.
\end{lem}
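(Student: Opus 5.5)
The strategy is to show that $\res_0$ is a local homeomorphism onto an open subset of $\Stab(\XC_0)$ whose image is also closed. From this I will deduce the statement. The local structure comes from comparing the two deformation theorems. \cref{thm:relative_deformation} says that $\ZC\colon\Stab(\XC/S)\to\Hom(\Lambda,\CB)$ is a local homeomorphism. \cref{theorem: Bridgeland's deformation theorem} says the same for $\Stab(\XC_0)\to\Hom(\Lambda,\CB)$. By the identification \eqref{equation: identification of stability spaces} and \cref{lemma: numerical identification}, the lattices agree, so the central charge maps commute with $\res_0$. Hence $\res_0$ is continuous (by definition of the topology) and is a local homeomorphism onto an open subset, provided it is locally injective.

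For local injectivity I would argue as follows. If $\sigma_\bullet,\tau_\bullet$ have the same central charge and $\sigma_0=\tau_0$, then they agree. To see this, take a $\sigma_\eta$-semistable object $F$ of phase $\phi$. By \cref{proposition: semistable reduction} it extends to $E\in D^b(\XC_S)$ with $E_0$ being $\sigma_0=\tau_0$-semistable of phase $\phi$. By \cref{proposition: openness_PI} applied to $\tau_\bullet$ and the intervals $[\phi,\phi]$, or small intervals around $\phi$, the generic fiber $F=E_\eta$ lies in $\tau_\eta$-slices near $\phi$. Since central charges agree, this forces $F\in\PC^\tau_\eta(\phi)$. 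Thus $\PC^\sigma_\eta(\phi)\subset\PC^\tau_\eta(\phi)$ for all $\phi$, and symmetrically the reverse inclusion holds, so the slicings coincide. Hence $\res_0$ is injective, and in fact globally injective.

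Since $\res_0$ is an injective local homeomorphism, it is a homeomorphism onto an open subset. The main obstacle is showing this image is closed, i.e.\ a union of connected components. Here I would use the covering statement in \cref{thm:relative_deformation}: given $\sigma_\bullet$ with support form $Q$, the neighborhood $U$ covers $U_Q(Z)$. Bridgeland's theorem (with the same $Q$, which also works for $\sigma_0$) gives a neighborhood of $\sigma_0$ covering the same $U_Q(Z)$. Injectivity of $\res_0$ then makes $\res_0(U)$ equal to that full neighborhood. Now let $\tau$ be a limit of $\res_0(\sigma^{(k)}_\bullet)$. Then $\tau$ lies in the Bridgeland neighborhood of some $\sigma^{(k)}_0$ close enough, i.e.\ $d(\tau,\sigma^{(k)}_0)$ is small with a uniform radius coming from $\tau$'s support constant. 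Therefore $\tau\in\res_0(U^{(k)})$. Controlling this uniformity of the radius via $\tau$'s quadratic form is the delicate point, and I would follow the standard argument that balls of radius $<1/8$ around $\tau$ map homeomorphically.

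Finally, the ``in particular'' statement follows. The image $\res_0(\Stab(\XC/S))$ is a union of components homeomorphic to $\Stab(\XC/S)$. Consider the composite $\Stab(\XC/S)\to\Stab(\XC_\eta)\to\Stab(\XC_{\bar\eta})$, which is continuous by \cref{thm:base_change}. If $\sigma_0,\tau_0$ lay in one component of $\Stab(\XC_0)$, that component would lie in the image. Pulling back a path via $\res_0^{-1}$ would connect $\sigma_\bullet$ and $\tau_\bullet$ in $\Stab(\XC/S)$, hence connect $\sigma_{\bar\eta}$ and $\tau_{\bar\eta}$, which is a contradiction.
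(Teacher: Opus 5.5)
Your injectivity argument, the local-homeomorphism step and the final ``in particular'' deduction all match the paper. The closedness step, however, has a genuine gap. For a limit point $\tau$ of $\res_0(\sigma^{(k)}_\bullet)$, the neighborhood $U^{(k)}$ given by \cref{thm:relative_deformation} covers $U_{Q_k}(Z_k)$. Here $Q_k$ is a quadratic form giving the support property for the \emph{family} $\sigma^{(k)}_\bullet$, so it must also be nonnegative on classes of $\sigma^{(k)}_\eta$-semistable objects. Nothing in your argument relates $Q_k$ to the support form $Q'$ of $\tau$. A priori, then, the regions $U_{Q_k}(Z_k)$ could shrink and never contain $Z_\tau$.

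The Bridgeland ``ball of radius $<1/8$'' argument you invoke does not close this gap. It only produces stability conditions on $\XC_0$ near $\sigma^{(k)}_0$. It says nothing about lifting them to $\Stab(\XC/S)$, which is exactly what closedness requires. Note also that you only use the trivial direction: a form for the family works for $\sigma_0$. You need the converse. As a minor point, the equality $\res_0(U)=V_{\sigma_0}$ follows from path lifting between the two coverings, not from injectivity.

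The missing ingredient, which the paper proves first, is this: \emph{every quadratic form $Q$ giving the support property for $\sigma_0$ gives it for all of $\sigma_\bullet$.} Take a $\sigma_\eta$-semistable $F$. By \cref{proposition: semistable reduction} there is $\mathcal F$ with $\mathcal F_\eta\simeq F$ and $\mathcal F_0$ $\sigma_0$-semistable. Since $[\mathcal F_s]_s$ is constant over the connected $S$, we get $Q([F]_\eta)=Q([\mathcal F_0]_0)\geq0$.

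With this, closedness goes through as follows. Let $Q'$ be a support form for $\tau$, and let $V_\tau$ be the component of $\ZC_0^{-1}(U_{Q'}(Z_\tau))$ containing $\tau$. By \cite[Proposition~11.5]{bayer_macri_stellari_2016_the_space_of_stability_conditions_on_abelian_threefolds_and_on_some_calabiyau_threefolds}, every point of $V_\tau$ satisfies the support property with respect to $Q'$, and $V_\tau\to U_{Q'}(Z_\tau)$ is a covering. So once $\sigma^{(k)}_0\in V_\tau$, the transfer shows that $Q'$ works for the whole family $\sigma^{(k)}_\bullet$. The relative deformation theorem then gives a covering over $U_{Q'}(Z_k)=U_{Q'}(Z_\tau)$. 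Uniqueness of path lifting in $V_\tau$ then places $\tau$ in the image. The paper phrases this as: $V_{\tau_0}$ is disjoint from the image for every $\tau_0$ outside it.
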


\begin{proof}
Fix $\sigma_\bullet=(\sigma_s)_{s \in S} \in\Stab(\XC/S)$, where $\sigma_s=(Z,\PC_s)$ for $s \in S$.
Let $Q$ be a quadratic form giving the support
property for $\sigma_0$.
We first show that the same $Q$ gives
the support property for $\sigma_\bullet$.
Let $F\in D^b(\XC_\eta)$ be
$\sigma_\eta$-semistable.
By \cref{proposition: semistable reduction},
there exists $\mathcal F\in\Perf(\XC)$
such that $\mathcal F_\eta\simeq F$
and $\mathcal F_0$ is $\sigma_0$-semistable.
Since $S$ is connected, by
\cite[Remark~8.8]{preprint_chunyi_zhiyu_ziqi_emanuele_alexander_paolo_xiaolei_2026_stability_conditions_and_moduli_spaces_on_projective_families},
we have
$[F]_\eta=[\mathcal F_0]_0$ in $\Knum(\XC/S)$.
By the support property for $\sigma_0$, we obtain
\[
Q([F]_\eta)=Q([\mathcal F_0]_0)\geq 0.
\]
Thus $Q$ gives the support property
for $\sigma_\bullet$.

We next prove injectivity.
Take $\tau_\bullet\in\Stab(\XC/S)$
with $\tau_0=\sigma_0$.
Then we may write $\tau_s=(Z,\QC_s)$ for $s \in S$.
For $\phi\in\RB$, let
$F\in\PC_\eta(\phi)$.
By \cref{proposition: semistable reduction},
there exists $\mathcal F\in\Perf(\XC)$
such that $\mathcal F_\eta\simeq F$
and $\mathcal F_0$ is
$\sigma_0$-semistable of phase $\phi$.
Since $\tau_0=\sigma_0$, we have
\[
\mathcal F_0\in
\QC_0((\phi-\varepsilon,\phi+\varepsilon))
\]
for any $\varepsilon>0$.
By \cref{proposition: openness_PI},
the subset
$\{s\in S\mid
\mathcal F_s\in
\QC_s((\phi-\varepsilon,\phi+\varepsilon))\}$
is open in $S$.
This open subset contains the closed point $0$
and hence equals $S$.
Thus $F\in\QC_\eta((\phi-\varepsilon,\phi+\varepsilon))$
for every $\varepsilon>0$,
which implies $F\in\QC_\eta(\phi)$.
Interchanging $\sigma_\bullet$ and
$\tau_\bullet$, we obtain
$\PC_\eta=\QC_\eta$.
Therefore $\sigma_\eta=\tau_\eta$
and $\sigma_\bullet=\tau_\bullet$.

By
\cref{theorem: Bridgeland's deformation theorem,thm:relative_deformation},
the map
$\res_0$
is a local homeomorphism, and hence a homeomorphism onto its image
by injectivity.
In particular, $\res_0(\Stab(\XC/S))$ is open.
We show that $\res_0(\Stab(\XC/S))$ is also closed,
and therefore a union of connected components
of $\Stab(\XC_0)$.
Let
\[
\ZC_0\colon\Stab(\XC_0)
\to\Hom(\Knum(\XC/S),\CB),
\quad
(W,\PC)\mapsto W.
\]
Let $V_{\sigma_0}$ be the connected component of
$\ZC_0^{-1}(U_Q(Z))$ containing $\sigma_0$.
We claim that
$V_{\sigma_0}\subset \res_0(\Stab(\XC/S))$.
By
\cite[Proposition~11.5]{bayer_macri_stellari_2016_the_space_of_stability_conditions_on_abelian_threefolds_and_on_some_calabiyau_threefolds},
the restriction
$\ZC_0|_{V_{\sigma_0}}\colon V_{\sigma_0}\to U_Q(Z)$
is a covering map, and every stability condition in $V_{\sigma_0}$
satisfies the support property with respect to $Q$.
By \cref{thm:relative_deformation},
there exists a connected open neighborhood
$V$ of $\sigma_\bullet$ in $\Stab(\XC/S)$
such that
$\ZC|_V\colon V\to U_Q(Z)$
is a covering map.
Since $\res_0(V)$ is a connected subset of
$\ZC_0^{-1}(U_Q(Z))$ containing $\sigma_0$,
we have $\res_0(V)\subset V_{\sigma_0}$.
Take $\tau_0\in V_{\sigma_0}$ and choose a path
$\gamma\colon[0,1]\to V_{\sigma_0}$
with $\gamma(0)=\sigma_0$ and
$\gamma(1)=\tau_0$.
The covering map $\ZC|_V$ gives a lift
$\widetilde\gamma\colon[0,1]\to V$
of $\ZC_0\circ\gamma$
with $\widetilde\gamma(0)=\sigma_\bullet$.
Since $\ZC_0\circ\res_0=\ZC$,
both $\gamma$ and
$\res_0\circ\widetilde\gamma$
are lifts of $\ZC_0\circ\gamma$
with respect to $\ZC_0|_{V_{\sigma_0}}$,
starting at $\sigma_0$.
By uniqueness of path lifting, we obtain
$\res_0\circ\widetilde\gamma=\gamma$.
In particular,
$\tau_0=\res_0(\widetilde\gamma(1))$.
Since $\tau_0\in V_{\sigma_0}$ is arbitrary,
we obtain $V_{\sigma_0}\subset \res_0(\Stab(\XC/S))$.

Take
$\tau_0\in\Stab(\XC_0)\setminus \res_0(\Stab(\XC/S))$.
Put $W\coloneqq\ZC_0(\tau_0)$,
and let $Q'$ be a quadratic form
as in the support property for $\tau_0$.
Let $V_{\tau_0}$ be the connected component of
$\ZC_0^{-1}(U_{Q'}(W))$ containing $\tau_0$.
By
\cite[Proposition~11.5]{bayer_macri_stellari_2016_the_space_of_stability_conditions_on_abelian_threefolds_and_on_some_calabiyau_threefolds},
every stability condition in $V_{\tau_0}$
satisfies the support property with respect to $Q'$.
Suppose that
$V_{\tau_0}\cap\res_0(\Stab(\XC/S)) \ne\varnothing$.
Choose $\sigma'_\bullet\in\Stab(\XC/S)$
with $\sigma'_0\in V_{\tau_0}$.
Then $\sigma'_0$ satisfies the support property
with respect to $Q'$.
Applying the preceding argument to
$\sigma'_\bullet$ and $Q'$, we obtain
$V_{\tau_0}\subset \res_0(\Stab(\XC/S))$,
contrary to the choice of $\tau_0$.
Therefore, we have
$V_{\tau_0}\cap\res_0(\Stab(\XC/S))=\varnothing$.
In particular, it follows that
\[\tau_0\in V_{\tau_0}
\subset
\Stab(\XC_0)\setminus\res_0(\Stab(\XC/S)).\]
Since $V_{\tau_0}$ is open,
$\res_0(\Stab(\XC/S))$ is closed.
Thus $\res_0(\Stab(\XC/S))$ is open and closed,
and hence a union of connected components of
$\Stab(\XC_0)$.
The final assertion now follows by restricting to the geometric generic fiber.
\end{proof}

\begin{proof}[Proof of \cref{theorem: disconnectedness in families}]
Fix $b\in B(\CB)$ and write $T_b=\{t_1,\ldots,t_d\}$.
Since $T\to B$ is a connected finite \'etale cover of degree $d\geq2$, we have $\dim B>0$.
If $\dim B=1$, put $C=B$ and $c=b$.
Otherwise, repeated application of
\cite[Lemma~6.4]{rulling_schroer_2025_loops_on_schemes_and_the_algebraic_fundamental_group}
gives an integral curve through $b$ whose pullback to $T$ is irreducible.
Taking its normalization, we obtain a smooth connected curve $C$, a morphism $C\to B$, and a point $c\in C(\CB)$ over $b$ such that $T_C$ is connected.

By
\cite[Proposition~1.1]{rulling_schroer_2025_loops_on_schemes_and_the_algebraic_fundamental_group},
choose a connected finite \'etale cover $C'\to C$ and a point $c'\in C'(\CB)$ over $c$ such that $T_{C'}\simeq\coprod_{i=1}^d C'$.
Put $S\coloneqq\Spec\OC_{C',c'}$, and write $0$ for its closed point.
Put $K\coloneqq\CB(C)$, choose an algebraic closure $\bar K$ of $K$, and fix a $K$-embedding $\CB(C')\hookrightarrow\bar K$.
Let $\bar\eta$ be the resulting geometric generic point of $S$, also regarded as a geometric generic point of $C$.

For $1\leq i\leq d$, let $s_i\colon S\to T_S$ be the sections corresponding to the splitting $T_S\simeq\coprod_{i=1}^d S$.
After renumbering, we may assume that $s_i(0)=t_i$ under the identification $T_0=T_b$.
Let $u_i\in T_{\bar\eta}(\bar K)$ be the restriction of $s_i$ to $\bar\eta$.
Let $E_i\in\Perf(\XC_S)$ be the pullback of $E$ along the morphism $\XC_S\to\XC_T$ induced by $S\xrightarrow{s_i}T_S\to T$.
Then $(E_i)_0\simeq E_{t_i}$ and $(E_i)_{\bar\eta}\simeq E_{u_i}$.

By \cref{lemma: numerical identification}, the canonical homomorphisms
$[-]_C\colon\Knum(\XC_C/C)\to\Lambda$ and
$[-]_S\colon\Knum(\XC_S/S)\to\Lambda$
are injective with finite cokernel.
Composing the central charges with these homomorphisms, we regard $\sigma_\bullet|_C$ and $\sigma_\bullet|_S$ as elements of $\Stab(\XC_C/C)$ and $\Stab(\XC_S/S)$, respectively.

For $\mathbf a=(a_1,\ldots,a_d)\in\ZZ^d$, put
\[
\Phi_{\mathbf a,S}\coloneqq T_{E_1}^{a_1}\cdots T_{E_d}^{a_d},
\quad
\Phi_{\mathbf a,\bar\eta}\coloneqq T_{E_{u_1}}^{a_1}\cdots T_{E_{u_d}}^{a_d}.
\]
By \cref{proposition: relative spherical twist,proposition: relative FM action},
the stability condition $\Phi_{\mathbf a,S}(\sigma_\bullet|_S)\in\Stab(\XC_S/S)$
restricts to $\Phi_{\mathbf a,b}\sigma_b$ at $0$ and to
$\Phi_{\mathbf a,\bar\eta}\sigma_{\bar\eta}$ at $\bar\eta$.

Suppose that $\mathbf a-\mathbf a'\notin\ZZ(1,\ldots,1)$.
Since $T_C$ is connected, \cref{lemma: separation on geometric generic fiber}, applied to the family $\XC_C\to C$ and the cover $T_C\to C$, shows that
$\Phi_{\mathbf a,\bar\eta}\sigma_{\bar\eta}$ and
$\Phi_{\mathbf a',\bar\eta}\sigma_{\bar\eta}$
lie in distinct connected components of $\Stab(\XC_{\bar\eta})$.
By \cref{lemma: specialization of connected components}, applied to
$\Phi_{\mathbf a,S}(\sigma_\bullet|_S)$ and
$\Phi_{\mathbf a',S}(\sigma_\bullet|_S)$,
the stability conditions $\Phi_{\mathbf a,b}\sigma_b$ and
$\Phi_{\mathbf a',b}\sigma_b$ lie in distinct connected components of $\Stab(\XC_b)$.
Hence $\Phi_{\mathbf a,b}\Sigma_b\neq\Phi_{\mathbf a',b}\Sigma_b$.
This proves the first assertion.
In particular, $\Sigma_b$ and $T_{E_t}^{-1}\Sigma_b$ for $t\in T_b$ are pairwise distinct.

Finally, since $T$ is connected, the relative numerical class $[E_t]_t\in\Lambda$ is independent of $t\in T$.
By \cref{lemma: numerical identification}, we obtain $[E_{t_1}]=\cdots=[E_{t_d}]$ in $\Knum(\XC_b)$.
Hence \eqref{equation: relative twist numerical action} gives
$[\Phi_{\mathbf a,b}]=\id$ whenever $\sum_i a_i=0$.
If $\sum_i a_i=\sum_i a_i'=0$, the first assertion shows that
$\Phi_{\mathbf a,b}\Sigma_b=\Phi_{\mathbf a',b}\Sigma_b$
implies $\mathbf a=\mathbf a'$.
Thus the connected components $\Phi_{\mathbf a,b}\Sigma_b$ for such $\mathbf a$ are pairwise distinct.
For such $\mathbf a$ and $\sigma=(Z,\PC)\in\Sigma_b$, we have
$\Phi_{\mathbf a,b}\sigma=(Z,\Phi_{\mathbf a,b}\PC)$.
Since $d\geq2$, there are infinitely many such vectors.
Therefore, every central charge occurring in $\Sigma_b$ has lifts in infinitely many distinct connected components of $\Stab(\XC_b)$.
\end{proof}

\section{A local criterion for geometricity}\label{section: A local criterion for geometricity}
In this section, we establish a criterion for the existence of geometric stability conditions in the connected components constructed in \cref{section: disconnectedness in families}.
We first show that, for objects in \(\PC_\sigma(\phi)\) with bounded \(\sigma\)-mass, Harder–Narasimhan filtrations and their Jordan–H\"older filtrations remain in \(\PC_\sigma(\phi)\) under small deformations of \(\sigma\).

\begin{lem}\label{lemma: uniform localization}
Let $\sigma=(Z_\sigma,\PC_\sigma)\in\Stab(X)$, and fix $\phi\in\RB$ and $M>0$.
Then there exists $0<\delta<1/4$ such that 
for $\tau\in\Stab(X)$ with $d(\sigma,\tau)<\delta$ and nonzero object $E\in\PC_{\sigma}(\phi)$ with $m_\sigma(E)\leq M$, 
\begin{itemize}
\item the $\tau$-Harder--Narasimhan filtration of $E$ and 
\item $\tau$-Jordan--H\"older filtration of each $\tau$-Harder--Narasimhan factor of $E$
\end{itemize}
consist of short exact sequences in the abelian category $\PC_\sigma(\phi)$.
In particular, all factors in these filtrations are $\sigma$-semistable of phase $\phi$.
\end{lem}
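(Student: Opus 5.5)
The idea is to trap every object in these filtrations inside a thin phase window around $\phi$ for $\sigma$ with bounded $\sigma$-mass. The support property then leaves only finitely many possible classes, hence finitely many possible $\sigma$-phases, and choosing $\delta$ smaller than the gap between $\phi$ and the other possible phases forces everything into $\PC_\sigma(\phi)$.

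\emph{Step 1 (finiteness).} Fix a norm $\|-\|$ on $\Knum(X)_\RB$ and a constant $C$ as in the support property for $\sigma$. Put $M'\coloneqq e^{1/2}M$ and consider the set $\mathcal S$ of classes $v(F)\in\Knum(X)$ of $\sigma$-semistable objects $F$ with phase in $(\phi-1,\phi+1)$ and $|Z_\sigma(F)|\leq M'$. Each such class satisfies $\|v(F)\|\leq CM'$, so $\mathcal S$ is finite. Consequently the set of phases of such $F$ is finite: the phase is determined by $Z_\sigma(v(F))$ up to an even integer, and lies in $(\phi-1,\phi+1)$. Choose $0<\delta<1/4$ such that $2\delta$ is smaller than the distance from $\phi$ to every phase in this finite set other than $\phi$ itself.

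\emph{Step 2 (HN factors).} Let $d(\sigma,\tau)<\delta$ and $E\in\PC_\sigma(\phi)$ with $m_\sigma(E)\leq M$. By the definition of $d$, we have $\phi^\pm_\tau(E)\in(\phi-\delta,\phi+\delta)$, so every $\tau$-HN factor $A_i$ has $\tau$-phase $\psi_i\in(\phi-\delta,\phi+\delta)$. Applying the definition of $d$ again gives $\phi^\pm_\sigma(A_i)\in(\phi-2\delta,\phi+2\delta)$.

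For masses, $m_\sigma(A_i)\leq e^{\delta}m_\tau(A_i)$. Also $m_\tau(A_i)=|Z_\tau(A_i)|\leq m_\tau(E)\leq e^\delta M$, since $\tau$-mass is additive over HN factors. Hence $m_\sigma(A_i)\leq e^{2\delta}M\leq M'$.

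Every $\sigma$-HN factor $G$ of $A_i$ therefore has $|Z_\sigma(G)|\leq M'$ and phase in $(\phi-2\delta,\phi+2\delta)$. By Step 1 its class lies in $\mathcal S$, and the choice of $\delta$ forces its phase to be exactly $\phi$. Thus $A_i\in\PC_\sigma(\phi)$. Each $E_i$ is an iterated extension of the $A_j$, so $E_i\in\PC_\sigma(\phi)$ as well.

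\emph{Step 3 (exactness).} The HN triangles $E_{i-1}\to E_i\to A_i$ now have all three vertices in the heart-like abelian category $\PC_\sigma(\phi)$. Exact triangles with all terms in $\PC_\sigma(\phi)$ are short exact sequences there; this is the standard fact for $\PC(\phi)$, since it sits in the heart $\PC((\phi-1,\phi])$. So the HN filtration consists of short exact sequences in $\PC_\sigma(\phi)$.

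\emph{Step 4 (JH filtrations).} For the $\tau$-JH filtration of $A_i$ in $\PC_\tau(\psi_i)$, every subquotient $B$ again lies in $\PC_\tau(\psi_i)$. Its $\tau$-mass is $|Z_\tau(B)|\leq|Z_\tau(A_i)|$, by additivity of $Z_\tau$ on the same ray. So the argument of Step 2 applies verbatim: $B\in\PC_\sigma(\phi)$, and the JH triangles are short exact sequences in $\PC_\sigma(\phi)$.

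The main point requiring care is Step 1 together with the mass bookkeeping in Steps 2 and 4. One needs the uniform bound $M'$ on the $\sigma$-mass of every object that can appear, independently of $E$ and $\tau$, so that a single finite set of classes, and hence a single $\delta$, works for all of them simultaneously.
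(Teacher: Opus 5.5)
Your proof is correct and follows essentially the same route as the paper. The support property leaves only finitely many possible classes for $\sigma$-semistable objects of mass at most $e^{2\delta}M$, and the phase and mass bookkeeping under $d(\sigma,\tau)<\delta$ then forces every $\sigma$-Harder--Narasimhan factor of each $\tau$-factor to have phase exactly $\phi$. The only cosmetic difference is in how $\delta$ is chosen: the paper normalizes $\phi=1$ and uses a lower bound on the nonzero imaginary parts of $Z_\sigma$ on that finite set of classes, whereas you use the gap between $\phi$ and the finitely many other attainable phases.
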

\begin{proof}
We may assume that $\phi=1$.
By the support property, there is a constant $C>0$ such that $\norm{[G]}\leq C\abs{Z_\sigma(G)}$ for every $\sigma$-semistable object $G$.
Since $\Knum(X)$ is a lattice, the set $S\coloneqq \{v\in\Knum(X)\mid\norm{v}\leq2CM\}$ is finite.
Therefore, there exists a constant $\eta>0$ such that $\Im Z_\sigma(v)=0$ or $\abs{\Im Z_\sigma(v)}\geq\eta$ for $v\in S$.
We take $0<\delta<1/4$ such that $Me^{2\delta}\sin(2\pi\delta)<\eta$.

Let $\tau\in\Stab(X)$ and $E\in\PC_\sigma(1)$ satisfy the conditions in the statement.
Let $F$ be a $\tau$-Harder--Narasimhan factor of $E$ or a $\tau$-Jordan--H\"older factor of such a factor.
In either case, $F$ is $\tau$-semistable and satisfies $\phi_\tau^-(E)\leq \phi_\tau(F) \leq\phi_\tau^+(E)$.
Since $E\in\PC_\sigma(1)$ and $d(\sigma,\tau)<\delta$, we have $\abs{\phi_\tau(F)-1}<\delta$.
Applying the distance bound to the $\tau$-semistable object $F$, we obtain
\begin{equation}\label{equation: uniform localization phase}
1-2\delta<\phi_\tau^-(F) -\delta < \phi_\sigma^-(F)
\leq\phi_\sigma^+(F)< \phi_\tau^+(F) +\delta <1+2\delta.
\end{equation}

Let $G$ be a $\sigma$-Harder--Narasimhan factor of $F$. Then, $\abs{\phi_\sigma(G)-1}<2\delta$ by \labelcref{equation: uniform localization phase}.
On the other hand, as $m_\tau(F)\leq m_\tau(E)$ holds by the additivity of mass function along Harder--Narasimhan and Jordan--H\"older filtrations, we obtain 
\begin{equation}\label{equation: uniform localization mass}
\begin{aligned}
\abs{Z_\sigma(G)}
&\leq m_\sigma(F)
\leq e^\delta m_\tau(F)\\
&\leq e^\delta m_\tau(E)
\leq e^{2\delta}m_\sigma(E)
\leq e^{2\delta}M.
\end{aligned}
\end{equation}
Therefore, we have $\norm{[G]}\leq Ce^{2\delta}M<2CM$, i.e. $[G]\in S$ and 
\[
\abs{\Im Z_\sigma(G)}
=\abs{Z_\sigma(G)}\abs{\sin\bigl(\pi(\phi_\sigma(G)-1)\bigr)}
\leq Me^{2\delta}\sin(2\pi\delta)<\eta.
\]
Thus, the choice of $\eta$ gives $\Im Z_\sigma(G)=0$, which means that the nonzero $\sigma$-semistable object $G$ satisfies $\phi_\sigma(G)\in\ZZ$.
Now, \labelcref{equation: uniform localization phase} gives $\phi_\sigma(G)=1$.
Thus, every $\sigma$-Harder--Narasimhan factor of $F$ has phase one, so $F\in\PC_\sigma(1)$.
\end{proof}

Before proceeding with the criterion, we show an elementary lemma about subobjects in an abelian category.

\begin{lem}\label{lemma: wall subobjects}
Let $P$ and $S$ be nonisomorphic simple objects of an abelian category $\CC$, and
\[
0\longrightarrow P\longrightarrow B\longrightarrow S^{\oplus r}\longrightarrow0
\]
be an exact sequence with $r\geq1$ and $\Hom(S,B)=0$.
Then every nonzero proper subobject $A\subset B$ fits into an exact sequence
\[
0\longrightarrow P\longrightarrow A\longrightarrow S^{\oplus k}\longrightarrow0
\]
for some integer $0\leq k<r$.
\end{lem}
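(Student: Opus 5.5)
Take a nonzero proper subobject $A\subset B$ and write $\iota\colon P\to B$ and $p\colon B\to S^{\oplus r}$ for the maps in the given sequence. The plan is to first show that $P\subset A$, and then identify $A/P$ as a proper subobject of the semisimple object $S^{\oplus r}$.

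\emph{Step 1: $P\subset A$.} Consider $A\cap P\subset P$, computed as the pullback of $A\to B$ along $\iota$. Since $P$ is simple, either $A\cap P=P$ or $A\cap P=0$. Suppose $A\cap P=0$. Then $p|_A\colon A\to S^{\oplus r}$ is a monomorphism, so $A$ is isomorphic to a subobject of $S^{\oplus r}$. We claim every nonzero subobject $A'$ of $S^{\oplus r}$ admits a nonzero map from $S$. Choose a coordinate projection $A'\to S$ that is nonzero; it is surjective because $S$ is simple. Inducting on $r$ (via the kernel, which lies in $S^{\oplus(r-1)}$) shows that $A'\cong S^{\oplus m}$ with $m\ge 1$. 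This uses only simplicity, plus the fact that a finite direct sum of simples has finite length, so the Jordan--H\"older and Krull--Schmidt-type arguments apply. Hence $S$ is a direct summand of $A$, and composing $S\hookrightarrow A\hookrightarrow B$ gives a nonzero map $S\to B$, contradicting $\Hom(S,B)=0$. Therefore $A\cap P=P$, i.e.\ $P\subset A$.

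\emph{Step 2: identify $A/P$.} Since $P\subset A$, the quotient $A/P$ is a subobject of $B/P\cong S^{\oplus r}$. By the fact used in Step 1, $A/P\cong S^{\oplus k}$ for some $0\le k\le r$; the case $k=0$ is allowed and corresponds to $A=P$. This gives the exact sequence $0\to P\to A\to S^{\oplus k}\to 0$. If $k=r$, then $A/P=B/P$, so $A=B$, contradicting properness. Hence $k<r$. The hypothesis that $P\not\cong S$ is not needed beyond this, although it does ensure that $k$ is well defined by counting Jordan--H\"older factors.

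\emph{Main obstacle.} The one nontrivial input is the claim that every subobject of $S^{\oplus r}$ is isomorphic to $S^{\oplus m}$, in a general abelian category with no finiteness assumptions. I would prove it by induction on $r$. Let $A'\subset S^{\oplus r}$ and consider its image under the last projection $S^{\oplus r}\to S$. By simplicity this image is $0$ or $S$. If it is $0$, then $A'\subset S^{\oplus(r-1)}$ and induction applies. Otherwise the kernel $K=A'\cap S^{\oplus(r-1)}$ is, by induction, isomorphic to $S^{\oplus m'}$, and $A'$ is an extension $0\to K\to A'\to S\to 0$. This extension splits because it is a subextension of the split extension $0\to S^{\oplus(r-1)}\to S^{\oplus r}\to S\to0$. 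Concretely, if $A'\cap S=0$ for the last summand $S$, then $A'\cong K\oplus$ its image; otherwise $S\subset A'$ and the summand $S$ provides the splitting.
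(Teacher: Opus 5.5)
Your proof is correct and is the paper's argument: you rule out $A\cap P=0$ because $A$ would then embed in $S^{\oplus r}$ and so receive a nonzero map from $S$, and you observe that $A/P$ is a proper subobject of $B/P\cong S^{\oplus r}$, hence $S^{\oplus k}$ with $k<r$. The only difference is that the paper takes for granted the semisimplicity of $S^{\oplus r}$, which you prove by induction.

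One reason given in that induction is not valid in general: a subextension of a split extension need not split. Your explicit dichotomy is enough on its own, though. If $A'\cap S=0$, the projection onto the first $r-1$ summands is injective on $A'$, so induction applies to $A'$ directly. If $S\subset A'$, that summand splits the sequence.
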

\begin{proof}
Let $A\subset B$ be a nonzero proper subobject.
First, we show $P\subset A$. As $P$ is simple, it suffices to show that $A\cap P\neq0$.
Suppose that $A\cap P=0$.
Then there is an injection $A\hookrightarrow B\twoheadrightarrow S^{\oplus r}$ from a nonzero object to a semisimple object. Therefore an injection $S\hookrightarrow A$ exists, which contradicts $\Hom(S,B)=0$.

Consider the quotient $A/P$, which is a subobject of $S^{\oplus r}$ by the given short exact sequence.
Thus, $A/P\cong S^{\oplus k}$ for some $0\leq k< r$ since $A\subsetneq B$.
\end{proof}
We now give a local criterion for geometricity after applying an autoequivalence.

\begin{setup}\label{setup: geometric wall}
Fix $\sigma_0=(Z_0,\PC_0)\in\Stabgeo(X)$ such that $\phi_{\sigma_0}(\OC_x)=1$.
Let $\Sigma$ be the connected component of $\sigma_0$, $S$ be a $\sigma_0$-stable object of phase one, and fix $\Phi\in\Aut D^b(X)$ such that $\Phi^{-1}\Sigma\ne\Sigma$.
Assume that there is an integer $r\geq1$ such that, for $x\in X$, there is an exact sequence in $\PC_0(1)$:
\begin{equation}\label{equation: extension at wall}
0\longrightarrow\OC_x\longrightarrow\Phi(\OC_x)
\longrightarrow S^{\oplus r}\longrightarrow0.
\end{equation}

For $\tau=(Z_\tau,\PC_\tau)\in\Sigma$, denote $\Delta(\tau)\coloneqq \Im\bigl(Z_\tau(S)\overline{Z_\tau(\OC_x)}\bigr)$.
Note that $\Delta(\tau)$ is independent of $x$, since the numerical class of $\OC_x$ is independent of $x$.
\end{setup}

\begin{rem}
Note that $[\OC_x]$ and $[S]$ are linearly independent in $\Knum(X)_{\RB}$.
Indeed, suppose that $[S]=c[\OC_x]$ for some $c\in\RB$.
Since $\chi(\OC_X,\OC_x)=1$, the class $[\OC_x]$ is primitive in $\Knum(X)$, and $c=\chi(\OC_X,S)\in\ZZ$.
Moreover, $c>0$ since $Z_0(S)$ and $Z_0(\OC_x)$ are negative real numbers.
By \eqref{equation: extension at wall}, we have
\[
[\Phi(\OC_x)]=(1+rc)[\OC_x].
\]
Since $\Phi$ induces an automorphism of $\Knum(X)$, the class $[\Phi(\OC_x)]$ is primitive.
This contradicts $1+rc>1$.

We also have $\Hom(S,\Phi(\OC_x))=0$ for $x\in X$.
Indeed, let $q_x\colon\Phi(\OC_x)\twoheadrightarrow S$ be the composition of the quotient map in \eqref{equation: extension at wall} with a projection $S^{\oplus r}\twoheadrightarrow S$.
For $f\colon S\to\Phi(\OC_x)$, the restriction of $fq_x$ to the subobject $\OC_x$ vanishes.
Since $\End(\Phi(\OC_x))\cong\End(\OC_x)\cong\CB$, we obtain $fq_x=0$.
As $q_x$ is an epimorphism in $\PC_0(1)$, we have $f=0$.
\end{rem}

\begin{prop}\label{proposition: extension stability}
\label{proposition: spherical twist geometricity}
In \cref{setup: geometric wall}, there is an open neighborhood $U\subset\Sigma$ of $\sigma_0$ consisting of geometric stability conditions such that, for $\tau\in U$ and $x\in X$, the following hold:
\begin{enumerate}
\item\label{item: positive extension determinant}
If $\Delta(\tau)>0$, then $\Phi(\OC_x)$ is $\tau$-stable.
\item\label{item: zero extension determinant}
If $\Delta(\tau)=0$, then $\Phi(\OC_x)$ is a strictly $\tau$-semistable object whose Jordan--H\"older factors are $\OC_x$ and $r$ copies of $S$.
\item\label{item: negative extension determinant}
If $\Delta(\tau)<0$, then $\Phi(\OC_x)$ is a $\tau$-unstable object whose Harder--Narasimhan factors are $\OC_x$ and $S^{\oplus r}$.
\end{enumerate}
In particular, for $\tau\in U$, the stability condition $\Phi^{-1}\tau$ is geometric if and only if $\Delta(\tau)>0$.
\end{prop}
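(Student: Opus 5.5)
The plan is to use \cref{lemma: uniform localization} with $\sigma=\sigma_0$, $\phi=1$ and a uniform mass bound. All objects $\OC_x$, $S$ and $\Phi(\OC_x)$ lie in $\PC_0(1)$, and their classes $[\OC_x]$, $[S]$ do not depend on $x$. Hence their $\sigma_0$-masses are bounded by a constant $M$ independent of $x$, for instance $M=\abs{Z_0(\OC_x)}+r\abs{Z_0(S)}$.

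\textbf{Step 1: choose $U$.} Let $\delta$ be as in the lemma for this $M$. Since geometricity is an open condition on $\Stab(X)$, we may take $U$ to be a connected open neighborhood of $\sigma_0$ inside $\{d(\sigma_0,\tau)<\delta\}$ consisting of geometric stability conditions. Shrinking $U$, we may also assume that $\OC_x$ and $S$ remain $\tau$-stable for $\tau\in U$. For $S$ this follows from \cref{lemma: uniform localization}: any $\tau$-JH or HN factor of $S$ lies in $\PC_0(1)$, and $S$ is simple there. The same argument applies to $\OC_x$.

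\textbf{Step 2: filtrations of $\Phi(\OC_x)$.} Fix $\tau\in U$. By \cref{lemma: uniform localization}, every $\tau$-HN factor and every $\tau$-JH factor of $B\coloneqq\Phi(\OC_x)$ comes from short exact sequences in $\PC_0(1)$. In particular, the first nonzero step $A$ of the $\tau$-HN filtration (or of a JH filtration) is a subobject of $B$ in $\PC_0(1)$. The preceding remark gives $\Hom(S,B)=0$. Also $\OC_x\not\cong S$, since their classes are linearly independent. Therefore \cref{lemma: wall subobjects} applies in the abelian category $\PC_0(1)$. Any nonzero proper subobject $A\subset B$ has class $[\OC_x]+k[S]$ with $0\le k<r$, and the corresponding quotient $B/A$ has class $(r-k)[S]$.

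\textbf{Step 3: phase comparison.} Compare $\tau$-phases: the subobject $A$ destabilizes $B$ exactly when $\phi_\tau(A)\ge\phi_\tau(B/A)=\phi_\tau(S)$. All these phases lie within $2\delta<1/2$ of $1$, so this comparison is governed by the sign of $\Im\bigl(Z_\tau(S)\overline{Z_\tau(A)}\bigr)$. This quantity equals $\Delta(\tau)$ plus $k\Im(Z_\tau(S)\overline{Z_\tau(S)})=0$, so it equals $\Delta(\tau)$. With the orientation convention chosen in the definition of $\Delta$, we reach the following conclusions.

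If $\Delta(\tau)>0$, then $\phi_\tau(A)<\phi_\tau(B)$ for every proper subobject, so $B$ is $\tau$-stable.

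If $\Delta(\tau)=0$, then $B$ is semistable and the JH filtration is $\OC_x\subset B$ followed by $r$ copies of $S$. Each factor is $\tau$-stable by Step 1.

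If $\Delta(\tau)<0$, then $\OC_x$ has larger phase than $S$. The HN filtration of $B$ is then $0\to\OC_x\to B\to S^{\oplus r}\to0$, because $S^{\oplus r}$ is semistable and $\OC_x$ is stable.

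The "in particular" statement follows directly. The condition that $\Phi^{-1}\tau$ is geometric means that all $\Phi(\OC_x)$ are $\tau$-stable of the same phase, which holds exactly when $\Delta(\tau)>0$. The phases agree automatically, since the classes of the $\Phi(\OC_x)$ are independent of $x$ and all phases stay near $1$.

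\textbf{Main obstacle.} The delicate point is the sign and orientation check in Step 3. One has to verify that "$A$ destabilizes" corresponds to the correct sign of $\Delta$, given the convention $\Im(Z(S)\overline{Z(\OC_x)})$. One also has to justify that phases near $1$ lie in an interval of length less than $1$, so that comparing arguments is legitimate. Beyond that, the only care needed is making $\delta$ and $U$ uniform in $x$, which is achieved through the $x$-independent mass bound $M$.
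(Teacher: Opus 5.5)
Your proposal is correct and follows essentially the paper's proof. It uses the same ingredients: \cref{lemma: uniform localization} with the $x$-independent bound $M=\abs{Z_0(\OC_x)}+r\abs{Z_0(S)}$, then \cref{lemma: wall subobjects} in $\PC_0(1)$ applied to the first step of the $\tau$-Harder--Narasimhan/Jordan--H\"older filtration, and finally a central-charge comparison. The only cosmetic difference is that you compare $A$ with its quotient $S^{\oplus(r-k)}$ via $\Im(Z_\tau(S)\overline{Z_\tau(A)})=\Delta(\tau)$, whereas the paper compares $A$ with $\Phi(\OC_x)$ via $\Im(Z_\tau(A)\overline{Z_\tau(\Phi(\OC_x))})=-(r-k)\Delta(\tau)$. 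The sign check you flag as the main obstacle is immediate from $\Delta(\tau)=\abs{Z_\tau(S)}\abs{Z_\tau(\OC_x)}\sin(\pi(\phi_\tau(S)-\phi_\tau(\OC_x)))$, which shows that $\Delta(\tau)>0$ exactly when $\phi_\tau(\OC_x)<\phi_\tau(S)$.
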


\begin{proof}
Applying \cref{lemma: uniform localization} with $\sigma=\sigma_0$, $\phi=1$, and $M\coloneqq m_{\sigma_0}(\Phi(\OC_x))=\abs{Z_0(\OC_x)}+r\abs{Z_0(S)}$, we obtain a constant $0<\delta<1/4$ independent of $x$.
Put $U\coloneqq\{\tau\in\Sigma\mid d(\sigma_0,\tau)<\delta\}$, and fix $\tau\in U$.
For $x\in X$, the $\tau$-Harder--Narasimhan filtration of $\Phi(\OC_x)$ and all its Jordan--H\"older refinements consist of short exact sequences in $\PC_0(1)$.
Note that $S$ and all $\OC_x$ are $\tau$-stable since they are simple in $\PC_0(1)$.
Thus the distance bound gives
\begin{equation}\label{equation: spherical twist phase window}
\begin{aligned}
1-\delta&<\phi_\tau(\OC_x),\  \phi_\tau(S)<1+\delta,\\
1-\delta&<\phi_\tau^-(\Phi(\OC_x))
\leq\phi_\tau^+(\Phi(\OC_x))<1+\delta.
\end{aligned}
\end{equation}
In particular, we obtain $\phi_\tau(\OC_x)- \phi_\tau(S)\in (-2\delta,2\delta)\subset(-1,1)$.

Let $A\subset\Phi(\OC_x)$ be a nonzero proper subobject in $\PC_0(1)$. 
By \cref{lemma: wall subobjects}, there exists an integer $0\leq k<r$ such that $A$ fits into an exact sequence
\begin{equation}\label{equation: spherical twist subobject}
0\longrightarrow\OC_x\longrightarrow A
\longrightarrow S^{\oplus k}\longrightarrow0.
\end{equation}
Therefore, \labelcref{equation: extension at wall}, \labelcref{equation: spherical twist subobject}, and $\tau$-stability of $\OC_x$ and $S$ give
\begin{gather}
\Im\bigl(Z_\tau(A)\overline{Z_\tau(\Phi(\OC_x))}\bigr)
=-(r-k)\Delta(\tau), \text{ and}
\label{equation: spherical twist phase comparison}
\\
\Delta(\tau)
=\abs{Z_\tau(S)}\abs{Z_\tau(\OC_x)}
\sin\bigl(\pi(\phi_\tau(S)-\phi_\tau(\OC_x))\bigr).
\label{equation: extension factor phases}
\end{gather}

\labelcref{item: positive extension determinant}:
Assume that $\Delta(\tau)>0$ and $\Phi(\OC_x)$ is not $\tau$-stable.
Let $F_1,\ldots,F_m$ be the $\tau$-stable factors of $\Phi(\OC_x)$, with phases $\phi_1\geq\cdots\geq\phi_m$ and $m\geq2$. 
Then, these satisfy $0\leq\phi_1-\phi_j<2\delta<1$ for $j>1$ by \labelcref{equation: spherical twist phase window}.

By \cref{lemma: uniform localization}, $F_1$ is a nonzero proper subobject of $\Phi(\OC_x)$ in $\PC_0(1)$.
Thus,
\[
\Im\bigl(Z_\tau(F_1)\overline{Z_\tau(\Phi(\OC_x))}\bigr)
=\sum_{j=2}^m
\abs{Z_\tau(F_1)}\abs{Z_\tau(F_j)}
\sin\bigl(\pi(\phi_1-\phi_j)\bigr)
\geq0.
\]
This contradicts the assumption and \labelcref{equation: spherical twist phase comparison} applied to $A=F_1$.

\labelcref{item: zero extension determinant,item: negative extension determinant}: 
Assume that $\Delta(\tau)=0$ (resp. $\Delta(\tau)<0$).
By \labelcref{equation: extension factor phases,equation: spherical twist phase window}, $\phi_\tau(\OC_x) = \phi_\tau(S)$ (resp. $\phi_\tau(\OC_x) > \phi_\tau(S)$). Therefore, \labelcref{equation: extension at wall} shows that $\Phi(\OC_x)$ is strictly $\tau$-semistable (resp. $\tau$-unstable).
\end{proof}

\begin{thm}\label{theorem: geometric components}
\label{corollary: geometric components}
In \cref{setup: geometric wall}, $\Stabgeo(X)$ is disconnected.
\end{thm}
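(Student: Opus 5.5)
We will show that $\Phi^{-1}\Sigma$ contains a geometric stability condition. Since $\sigma_0\in\Sigma$ is geometric and $\Phi^{-1}\Sigma\ne\Sigma$ are distinct connected components of $\Stab(X)$, the subspace $\Stabgeo(X)$ then meets two distinct components. It is therefore disconnected: its intersections with $\Sigma$ and with the complement of $\Sigma$ are disjoint, nonempty and relatively open.

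By \cref{proposition: spherical twist geometricity}, there is an open neighborhood $U\subset\Sigma$ of $\sigma_0$ such that $\Phi^{-1}\tau$ is geometric for every $\tau\in U$ with $\Delta(\tau)>0$. It thus suffices to find such a $\tau\in U$. We have $\Phi^{-1}\tau\in\Phi^{-1}\Sigma$: the action of $\Phi^{-1}$ on $\Stab(X)$ is a homeomorphism, so it sends the component $\Sigma$ onto a component.

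To produce $\tau$, we use the local homeomorphism $\ZC\colon\Stab(X)\to\Hom(\Knum(X),\CB)$ of \cref{theorem: Bridgeland's deformation theorem}. After shrinking $U$, it maps $U$ homeomorphically onto an open neighborhood of $Z_0$. We have $\Delta(\sigma_0)=\Im\bigl(Z_0(S)\overline{Z_0(\OC_x)}\bigr)=0$, since both values are negative real numbers. By the remark following \cref{setup: geometric wall}, $[\OC_x]$ and $[S]$ are linearly independent in $\Knum(X)_\RB$. Hence there is a homomorphism $W\colon\Knum(X)\to\CB$ with $W(\OC_x)=0$ and $W(S)=\sqrt{-1}$; extend a real-linear functional on $\Knum(X)_\RB$ separating the two classes. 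For small $\varepsilon>0$, the central charge $Z_\varepsilon\coloneqq Z_0+\varepsilon W$ lies in $\ZC(U)$, and
\[
\Im\bigl(Z_\varepsilon(S)\overline{Z_\varepsilon(\OC_x)}\bigr)=\varepsilon\,\abs{Z_0(\OC_x)}>0,
\]
using $Z_0(\OC_x)<0$. Let $\tau\in U$ be the lift of $Z_\varepsilon$. Then $\Delta(\tau)>0$, and $\Phi^{-1}\tau$ is geometric, as required.

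The only real point is the deformation step: one must verify that the chosen $Z_\varepsilon$ lifts inside $U$ rather than merely in $\Stab(X)$. This follows because $\ZC|_U$ is open, after possibly shrinking $U$ while keeping the conclusions of \cref{proposition: spherical twist geometricity}. The separation itself then uses only that connected components are open and closed.
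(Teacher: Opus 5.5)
Your route is the paper's: find $\tau$ in the neighborhood $U$ of \cref{proposition: extension stability} with $\Delta(\tau)>0$, using the deformation theorem and the linear independence of $[\OC_x]$ and $[S]$. Then compare components. However, the key computation has the wrong sign, and as written the argument fails at that step.

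\textbf{The sign error.} Since $Z_\varepsilon(\OC_x)=Z_0(\OC_x)$ is a negative real number, your choice $W(S)=\sqrt{-1}$ gives
\[
\Im\bigl((Z_0(S)+\varepsilon\sqrt{-1})\,Z_0(\OC_x)\bigr)=\varepsilon Z_0(\OC_x)=-\varepsilon\abs{Z_0(\OC_x)}<0 .
\]
So $\Delta(\tau)<0$. Geometrically, adding $+\varepsilon\sqrt{-1}$ to the negative real number $Z_0(S)$ moves it into the upper half-plane. This makes $\phi_\tau(S)<1=\phi_\tau(\OC_x)$, so the subobject $\OC_x$ destabilizes $\Phi(\OC_x)$. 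By case~(3) of \cref{proposition: extension stability}, $\Phi(\OC_x)$ is then $\tau$-unstable with Harder--Narasimhan factors $\OC_x$ and $S^{\oplus r}$. Hence $\Phi^{-1}\tau$ is \emph{not} geometric, and the proof does not produce a geometric point in $\Phi^{-1}\Sigma$.

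\textbf{The fix.} Take $W(S)=-\sqrt{-1}$ instead, i.e.\ arrange $\Im Z_\tau(S)<0$ while keeping $Z_\tau(\OC_x)=Z_0(\OC_x)$. This is exactly what the paper does, and it is also the choice $Z_\epsilon(E_t[1])=Z_0(E_t[1])-\sqrt{-1}\epsilon$ in the proof of \cref{theorem: family geometricity}. Then $\Delta(\tau)=\varepsilon\abs{Z_0(\OC_x)}>0$.

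With that correction, the rest of your argument is correct and agrees with the paper:
\begin{itemize}
\item shrinking $U$ so that the central charge map is a homeomorphism onto an open set (harmless, since the conclusions of the proposition are pointwise);
\item $\Phi^{-1}\tau\in\Phi^{-1}\Sigma$;
\item separating $\Stabgeo(X)$ by the open and closed set $\Sigma$.
\end{itemize}
For the last step you use $\sigma_0\in\Sigma$ as the geometric point in $\Sigma$; the paper uses $\tau$ itself, which is also geometric since $U$ consists of geometric stability conditions. Either works.
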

\begin{proof}
By \cref{theorem: Bridgeland's deformation theorem} and the linear independence of $[\OC_x]$ and $[S]$, we can choose $\tau$ in the neighborhood $U$ of \cref{proposition: extension stability} such that $Z_\tau(\OC_x)=Z_0(\OC_x)<0$ and $\Im Z_\tau(S)<0$.
Then $\Delta(\tau)>0$.
By \cref{proposition: extension stability}, both $\tau$ and $\Phi^{-1}\tau$ are geometric.
They belong to distinct components $\Sigma$ and $\Phi^{-1}\Sigma$, respectively.
This completes the proof.
\end{proof}

\subsection{Geometric stability conditions in families}\label{subsection: geometric stability conditions in families}

We apply \cref{proposition: extension stability} to spherical twists in the setting of \cref{setup: disconnectedness in families}.
\begin{thm}\label{theorem: family geometricity}
\label{corollary: family geometricity}
Under the assumptions of \cref{theorem: disconnectedness in families}, fix $b\in B(\CB)$.
Assume that there exist a nonempty subset $I\subset T_b$ and $\sigma_0=(Z_0,\PC_0)\in\Sigma_b$ satisfying the following conditions:
\begin{itemize}
\item $E_t$ is a vector bundle of positive rank for $t\in I$.
\item All objects in $\{E_t[1]\mid t\in I\}$ and all skyscraper sheaves on $\XC_b$ are $\sigma_0$-stable of phase one.
\end{itemize}
Then, there exists a geometric stability condition $\tau\in\Sigma_b$ such that $T_{E_t}^{-1}\tau$ is geometric for $t\in I$.
In particular, $\Stabgeo(\XC_b)$ meets at least $2^{\abs{I}}-1$ connected components of $\Stab(\XC_b)$.
\end{thm}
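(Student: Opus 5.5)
We plan to apply \cref{proposition: extension stability} once for each $t\in I$, with $\Phi=T_{E_t}$ and $S=E_t[1]$, and then pick a single $\tau$ near $\sigma_0$ lying in all the resulting neighbourhoods.

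First we check \cref{setup: geometric wall} for each $t\in I$. Put $n=\dim\XC_b$. Since $E_t$ is a vector bundle of positive rank, we have $\RHom(E_t,\OC_x)=\OC_x\otimes E_t^\vee$ restricted to the fibre, i.e.\ $k^{\rk E_t}$ concentrated in degree $0$. The defining triangle of the twist is
\[
\RHom(E_t,\OC_x)\otimes E_t\to\OC_x\to T_{E_t}(\OC_x),
\]
which gives a triangle $\OC_x\to T_{E_t}(\OC_x)\to E_t^{\oplus r}[1]$ with $r=\rk E_t\geq1$. By hypothesis, $\OC_x$ and $E_t[1]$ lie in $\PC_0(1)$, so this is a short exact sequence in $\PC_0(1)$, which is exactly \labelcref{equation: extension at wall} with $S=E_t[1]$. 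The object $S$ is $\sigma_0$-stable of phase one, and $\sigma_0$ is geometric with skyscrapers of phase $1$. Finally, $T_{E_t}^{-1}\Sigma_b\ne\Sigma_b$ by \cref{theorem: disconnectedness in families}. Thus \cref{proposition: extension stability} applies for every $t\in I$. It gives neighbourhoods $U_t\subset\Sigma_b$ of $\sigma_0$, consisting of geometric stability conditions, such that $T_{E_t}^{-1}\tau$ is geometric if and only if $\Delta_t(\tau)>0$, where $\Delta_t(\tau)=\Im\bigl(Z_\tau(E_t[1])\overline{Z_\tau(\OC_x)}\bigr)$.

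Next we choose $\tau$. By \cref{theorem: disconnectedness in families}, all $E_t$ for $t\in T_b$ have the same class $v$ in $\Knum(\XC_b)$. Hence $\Delta_t=\Delta$ is independent of $t$. This is the key simplification: one inequality works for all $t\in I$ simultaneously. By the remark after \cref{setup: geometric wall}, the classes $[\OC_x]$ and $v$ are linearly independent. By \cref{theorem: Bridgeland's deformation theorem}, we can then deform $\sigma_0$ inside the finite intersection $U=\bigcap_{t\in I}U_t$, keeping $Z(\OC_x)=Z_0(\OC_x)<0$ while making $\Im Z(E_t[1])<0$. This gives $\Delta(\tau)>0$. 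So $\tau$ is geometric, and each $T_{E_t}^{-1}\tau$ is geometric.

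For the count, we expect the main obstacle to be producing $2^{|I|}-1$ components rather than $|I|+1$. A natural plan is the following. For a nonempty $J\subset I$, consider $\Phi_J=\prod_{t\in J}T_{E_t}$; the twists commute by orthogonality. Its inverse sends $\OC_x$ to an object that is an extension of $\OC_x$ by $\bigoplus_{t\in J}E_t^{\oplus r_t}[1]$, since orthogonality makes the iterated twist a single cone. So \labelcref{equation: extension at wall} holds for $\Phi_J$ with $S^{\oplus r}$ replaced by a direct sum of pairwise orthogonal stable objects of one numerical class. We would extend \cref{lemma: wall subobjects} and \cref{proposition: extension stability} to such sums: the subobjects still contain $\OC_x$ with semisimple quotients, and the phase comparison involves only the common $\Delta$. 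Shrinking $U$ further over the finitely many $J$, the same $\tau$ makes $\Phi_J^{-1}\tau$ geometric for all $J\neq\varnothing$. Finally, \cref{theorem: disconnectedness in families} shows that the components $\Phi_J^{-1}\Sigma_b$ are pairwise distinct for distinct nonempty $J\subsetneq T_b$, since their exponent vectors differ by a non-constant vector. The case $J=T_b$ needs care: it differs from $\varnothing$ by $(1,\dots,1)$, so there we still count $\Sigma_b$ together with the $2^{|I|}-1$ components indexed by nonempty $J\subseteq I$, and take whichever family of $2^{|I|}-1$ components is pairwise distinct.
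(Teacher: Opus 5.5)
Your proposal is correct and follows essentially the same route as the paper. You verify \cref{setup: geometric wall} for each $T_{E_t}$ with $S=E_t[1]$, use that $\Delta$ is independent of $t$ to pick a single $\tau$ near $\sigma_0$ with $\Delta(\tau)>0$, extend the wall-subobject argument to $\Phi_J$ (whose quotient $\bigoplus_{t\in J}E_t[1]^{\oplus r}$ has a single numerical class), and count components via \cref{theorem: disconnectedness in families}.

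There are two small slips. First, it is $\Phi_J(\OC_x)$, not $\Phi_J^{-1}(\OC_x)$, that contains $\OC_x$ with quotient $\bigoplus_{t\in J}E_t[1]^{\oplus r}$. Second, $\tau$ must be chosen after shrinking $U$ over all $J$; this is harmless because $\Delta(\tau)>0$ can be arranged arbitrarily close to $\sigma_0$, exactly as the paper does by taking $\epsilon$ small.
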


\begin{proof}
We first verify \cref{setup: geometric wall} for $\Phi=T_{E_t}$ and $S=E_t[1]$, for each $t\in I$.
By connectedness of $T$ and \cref{lemma: numerical identification}, the numerical classes of the $E_t$ coincide. Let $r>0$ denote their common rank.
Since $E_t$ is locally free of rank $r$, the defining triangle of the spherical twist gives an exact sequence in $\PC_0(1)$
\[
0\longrightarrow\OC_x\longrightarrow T_{E_t}(\OC_x)
\longrightarrow E_t[1]^{\oplus r}\longrightarrow0.
\]
By \cref{theorem: disconnectedness in families}, the components $\Sigma_b$ and $T_{E_t}^{-1}\Sigma_b$ for $t\in I$ are pairwise distinct.
Thus, \cref{setup: geometric wall} applies with $\Phi=T_{E_t}$ and $S=E_t[1]$.

For each $t\in I$, \cref{proposition: extension stability} gives an open neighborhood $U_t\subset\Sigma_b$ of $\sigma_0$.
Put $U\coloneqq\bigcap_{t\in I}U_t$.
Since $I$ is finite, $U$ is an open neighborhood of $\sigma_0$.
For $\tau=(Z_\tau,\PC_\tau)\in U$,
\[
\Delta(\tau)
=\Im\bigl(Z_\tau(E_t[1])\overline{Z_\tau(\OC_x)}\bigr)
=\Im\bigl(Z_\tau(\OC_x)\overline{Z_\tau(E_t)}\bigr)
\]
is independent of $t$ and $x$.

Choose a real basis of $\Knum(\XC_b)_{\RB}$ containing $[E_t[1]]$ and $[\OC_x]$.
For $\epsilon>0$, define $Z_\epsilon\in\Hom(\Knum(\XC_b),\CB)$ by $Z_\epsilon = Z_0$ on the basis except for $[E_t[1]]$ and
\[
Z_\epsilon(E_t[1])=Z_0(E_t[1])-\sqrt{-1}\epsilon.
\]
By \cref{theorem: Bridgeland's deformation theorem}, for sufficiently small $\epsilon>0$, there exists $\tau\in U$ with $Z_\tau=Z_\epsilon$.
Since $Z_0(\OC_x)$ and $Z_0(E_t[1])$ are negative real numbers, we obtain
\[
\Delta(\tau)
=\Im\bigl((Z_0(E_t[1])-\sqrt{-1}\epsilon)Z_0(\OC_x)\bigr)
=-\epsilon Z_0(\OC_x)>0.
\]
Since $\Delta(\tau)$ is independent of $t$ and $x$, \cref{proposition: extension stability} shows that $\tau$ and $T_{E_t}^{-1}\tau$ are geometric for $t\in I$.

Write $T_b=\{t_1,\ldots,t_d\}$.
For $J\subseteq I$, define $\mathbf a_J=(a_{J,1},\ldots,a_{J,d})\in\ZZ^d$ by $a_{J,i}=1$ if $t_i\in J$ and $a_{J,i}=0$ otherwise, and put $\Phi_J\coloneqq\Phi_{\mathbf a_J,b}$.
By orthogonality, for nonempty $J$ and $x\in\XC_b$, we have an exact sequence
\[
0\longrightarrow\OC_x\longrightarrow\Phi_J(\OC_x)
\longrightarrow\bigoplus_{t\in J}E_t[1]^{\oplus r}
\longrightarrow0
\]
in $\PC_0(1)$, and $\Hom(E_t[1],\Phi_J(\OC_x))=0$ for $t\in J$.
Since the $E_t[1]$ have the same numerical class, the proof of \cref{lemma: wall subobjects} shows that every nonzero proper subobject $A\subset\Phi_J(\OC_x)$ in $\PC_0(1)$ satisfies $[A]=[\OC_x]+k[E_t[1]]$ for any $t\in J$ and some integer $0\leq k<r\abs{J}$.
The proof of \labelcref{item: positive extension determinant} of \cref{proposition: extension stability} therefore applies with $r$ replaced by $r\abs{J}$.
Since $I$ is finite, taking $\epsilon$ sufficiently small makes $\Phi_J^{-1}\tau$ geometric for every $J\subseteq I$.

By \cref{theorem: disconnectedness in families}, if $\Phi_J^{-1}\Sigma_b=\Phi_K^{-1}\Sigma_b$, then $\mathbf a_J-\mathbf a_K\in\ZZ(1,\ldots,1)$.
For $J,K\subsetneq I$, this forces $J=K$.
The $2^{\abs{I}}-1$ proper subsets of $I$ therefore give distinct connected components meeting $\Stabgeo(\XC_b)$.
\end{proof}

\section{Separating braid group orbits of semiorthogonal decompositions}\label{section: separating braid group orbits of semiorthogonal decompositions}

In this section, we extend the spherical twist argument of \cite[Lemma~2.2 and Theorem~4.3]{preprint_anya_2026_a_smooth_projective_counterexample_to_bondalpolishchuks_conjecture} to semiorthogonal decompositions with an exceptional component.
As in \cref{section: disconnectedness in families}, we allow semilinear autoequivalences.
For families, the deformation theory of semiorthogonal decompositions in \cite{preprint_pieter_shinnosuke_andrea_2025_moduli_spaces_of_semiorthogonal_decompositions_in_families} replaces the use of relative stability conditions.

Recall that the braid group $B_m$ on $m$ strands acts on the set of semiorthogonal decompositions $(\AC_1,\ldots,\AC_m)$ with admissible components by mutations of adjacent components. See \cite{seidel_thomas_2001_braid_group_actions_on_derived_categories_of_coherent_sheaves,bondal_kapranov_1989_representable_functors_serre_functors_and_reconstructions} for more details on mutations and this action.

\subsection{Separation by semilinear autoequivalences}\label{subsection: separation of braid orbits by semilinear autoequivalences}

We first record the compatibility of semilinear autoequivalences with mutations
(cf. \cite[Lemma~2.2(ii)]{kuznetsov_2019_calabiyau_and_fractional_calabiyau_categories}).

\begin{lem}\label{lemma: semilinear commutes with mutations}
Let $\TC$ be a $k$-linear triangulated category, and $\DB=(\AC_1,\ldots,\AC_m)$ a non-trivial semiorthogonal decomposition of $\TC$ with admissible components.
For a $k$-semilinear exact autoequivalence $g$ of $\TC$ and $\beta\in B_m$, we have $g(\beta\DB)=\beta(g(\DB))$.
\end{lem}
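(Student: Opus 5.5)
The plan is to reduce to a single elementary mutation, since $B_m$ is generated by the standard generators $\beta_1,\ldots,\beta_{m-1}$ and their inverses. If the identity $g(\beta\DB)=\beta(g(\DB))$ holds for $\beta=\beta_i^{\pm1}$ and every semiorthogonal decomposition, then induction on word length gives it for all $\beta$. Indeed, if $\beta=\beta_i^{\pm1}\beta'$, then
\[
g(\beta\DB)=g(\beta_i^{\pm1}(\beta'\DB))=\beta_i^{\pm1}g(\beta'\DB)=\beta_i^{\pm1}\beta'g(\DB).
\]
Before this, we note that $g(\DB)\coloneqq(g(\AC_1),\ldots,g(\AC_m))$ is again a semiorthogonal decomposition with admissible components. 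Orthogonality $\Hom(g A_j,g A_i)=g\Hom(A_j,A_i)=0$ holds because $g$ is bijective on Hom-groups; it is additive, and semilinearity does not matter for vanishing. Generation holds since $g$ is an exact equivalence. Admissibility follows because adjoints of $\AC\hookrightarrow\TC$ are transported by conjugation with $g$.

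For the generator $\beta_i$, the action replaces $(\AC_i,\AC_{i+1})$ by $(\mathbf L_{\AC_i}\AC_{i+1},\AC_i)$, and $\beta_i^{-1}$ replaces it by $(\AC_{i+1},\mathbf R_{\AC_{i+1}}\AC_i)$. The other components are unchanged, so it suffices to prove
\[
g(\mathbf L_{\AC}\BC)=\mathbf L_{g\AC}(g\BC)\quad\text{and}\quad g(\mathbf R_{\BC}\AC)=\mathbf R_{g\BC}(g\AC).
\]
I would characterize mutations intrinsically, avoiding formulas with $\RHom\otimes$ that would need linearity. For instance, $\mathbf L_{\AC}\BC$ is the subcategory of $\langle\AC,\BC\rangle$ given by ${}^{\perp}\AC\cap\langle\AC,\BC\rangle$, the left orthogonal of $\AC$ inside the subcategory generated by both components. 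Equivalently, it is the unique $\CC$ with $\langle\AC,\BC\rangle=\langle\CC,\AC\rangle$. Similarly, $\mathbf R_{\BC}\AC=\langle\AC,\BC\rangle\cap\BC^{\perp}$.

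Then $g$ preserves each ingredient. It sends $\langle\AC,\BC\rangle$ to $\langle g\AC,g\BC\rangle$, since it is an exact equivalence commuting with extension closures and shifts. It also sends ${}^\perp\AC$ to ${}^\perp(g\AC)$: the map $\Hom(X,A)\to\Hom(gX,gA)$ is a bijection, so vanishing is preserved, and essential surjectivity gives the reverse inclusion. Taking intersections gives both identities.

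The only point needing care, which I expect to be the main obstacle, is to make sure no step used $k$-linearity. The usual cone descriptions $\mathbf L_{\AC}(B)=\Cone(\alpha^!\,\alpha_\ast\ldots)$ involve adjoint units, and transporting them along a semilinear $g$ requires checking that $g\circ\alpha\circ g^{-1}$ has adjoints given by conjugation. Working with the orthogonal description sidesteps this, since it only uses vanishing of Hom-sets and triangulated closure, both invariant under any exact additive autoequivalence.
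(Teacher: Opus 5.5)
Your proposal is correct and is essentially the paper's argument: the paper reduces to $m=2$, identifies the left mutation of $(\AC_1,\AC_2)$ with $(\AC_1^\perp,\AC_1)$, and uses that $g$ preserves vanishing of $\Hom$, so $g(\AC_1^\perp)=g(\AC_1)^\perp$; you give the same idea in more detail (induction on word length, inverse generators, the ambient category $\langle\AC_i,\AC_{i+1}\rangle$, and the check that $g(\DB)$ is again a decomposition with admissible components). The only slip is notational: in the paper's convention, where $\Hom(\AC_2,\AC_1)=0$ for $(\AC_1,\AC_2)$, one has $\mathbf L_{\AC}\BC=\AC^\perp\cap\langle\AC,\BC\rangle$ and $\mathbf R_{\BC}\AC={}^\perp\BC\cap\langle\AC,\BC\rangle$, so your two orthogonals are swapped; this is harmless, since $g$ preserves orthogonals on either side and your characterization $\langle\AC,\BC\rangle=\langle\CC,\AC\rangle$ already singles out the correct subcategory.
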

\begin{proof}
We may assume that $m=2$. 
The left mutation of $(\AC_1,\AC_2)$ is $(\AC_1^\perp,\AC_1)$.
Since $g$ is an exact equivalence, it preserves the vanishing of Hom.
Therefore, $g(\AC_1^\perp) = g(\AC_1)^\perp$.
\end{proof}

\begin{setup}\label{setup: separation of braid orbits}
Let $X$ be a smooth projective variety of dimension $n\geq2$ over an algebraically closed field $k$ of characteristic zero and $\DB=(\AC_1,\ldots,\AC_r)$ be a non-trivial semiorthogonal decomposition of $D^b(X)$.
Assume that the following data are given:
\begin{enumerate}
\item mutually orthogonal $n$-spherical objects $F_1,\ldots,F_d\in D^b(X)$, where $d\geq2$;
\item\label{item: G preserve sod components and acts trans on spherical objs} a group $G$ consisting of $k$-semilinear autoequivalences preserving each $\AC_j$ and acting transitively on $\{F_1,\ldots,F_d\}$ up to isomorphism;
\item\label{item: non-rothogonality condition of exceptional object} an exceptional object $E$ such that $\AC_j=\langle E\rangle$ for some $j$ and $\RHom(E,F_1)\ne0$.
\end{enumerate}
For $\mathbf a=(a_1,\ldots,a_d)\in\ZZ^d$, put
$\Phi_{\mathbf a}=T_{F_1}^{a_1}\cdots T_{F_d}^{a_d}$ and
$\DB_{\mathbf a}=\Phi_{\mathbf a}(\DB)$.
Note that the twists in $\Phi_{\mathbf a}$ commute because the $F_i$ are mutually orthogonal.

Moreover, we define an action of $G$ on $\ZZ^d$ as follows: for $g\in G$ and $\mathbf{a}\in\ZZ^d$, define $(g\mathbf a)_i=a_{s_g^{-1}(i)}$, where $s_g$ is the permutation determined by $g(F_i)\cong F_{s_g(i)}$.
This is well-defined by \labelcref{item: G preserve sod components and acts trans on spherical objs}.
\end{setup}

\begin{thm}\label{theorem: separation of braid orbits}
In \cref{setup: separation of braid orbits}, let $\mathbf a,\mathbf b\in\ZZ^d$.
If $\DB_{\mathbf a}$ and $\DB_{\mathbf b}$ belong to the same $B_r$-orbit, then
\[
\mathbf a-\mathbf b\in\ZZ(1,\ldots,1).
\]
\end{thm}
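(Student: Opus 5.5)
Suppose $\beta\DB_{\mathbf a}=\DB_{\mathbf b}$ for some $\beta\in B_r$. Put $\mathbf c\coloneqq\mathbf a-\mathbf b$ and $\Psi\coloneqq\Phi_{\mathbf b}^{-1}\Phi_{\mathbf a}=\Phi_{\mathbf c}$. The twists $T_{F_i}$ are $k$-linear exact autoequivalences, so mutations commute with them by the $k$-linear case of \cref{lemma: semilinear commutes with mutations}. Applying $\Phi_{\mathbf b}^{-1}$ therefore gives $\beta\,\Psi(\DB)=\DB$. The strategy is to derive from this a condition that is symmetric under $G$, and then to use transitivity of $G$ on the $F_i$ to force all the $c_i$ to be equal.

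\textbf{Step 1 (a $G$-equivariance of the orbit relation).} Each $g\in G$ preserves $\DB$ componentwise, so $g(\DB)=\DB$. Since $g(F_i)\cong F_{s_g(i)}$, conjugation gives $gT_{F_i}g^{-1}\simeq T_{F_{s_g(i)}}$. This holds because the twist kernel is built functorially from $F_i$, and the semilinear twist of the scalars in $\RHom$ does not affect the cone. Hence $g\Phi_{\mathbf c}g^{-1}\simeq\Phi_{g\mathbf c}$. Applying $g$ to $\beta\Phi_{\mathbf c}(\DB)=\DB$ and using \cref{lemma: semilinear commutes with mutations}, we get $\beta\Phi_{g\mathbf c}(\DB)=\DB$ for all $g\in G$. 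Because $\Phi_{g\mathbf c}(\DB)$ and $\Phi_{\mathbf c}(\DB)$ are both sent to $\DB$ by the same braid $\beta$, they are equal as semiorthogonal decompositions. Consequently $\Phi_{\mathbf c-g\mathbf c}$ preserves $\DB$ componentwise, and in particular preserves $\AC_j=\langle E\rangle$.

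\textbf{Step 2 (twists preserving $\langle E\rangle$).} Set $\mathbf e\coloneqq\mathbf c-g\mathbf c$. Then $\Phi_{\mathbf e}(E)\cong E[m]$ for some $m$, since the indecomposable objects of $\langle E\rangle$ are shifts of $E$. We want to show that this forces $\mathbf e\in\ZZ(1,\ldots,1)$. Note that $\Phi_{\mathbf e}^{-1}$ also preserves $\langle E\rangle$, which lets us compare $\RHom(F_i,-)$ on both sides. Apply $\RHom(F_i,-)$ to $\Phi_{\mathbf e}(E)\cong E[m]$. Since $T_{F_j}$ is an autoequivalence that fixes $F_i$ for $j\neq i$ and sends $F_i$ to $F_i[1-n]$, we get
\[
\RHom(F_i,\Phi_{\mathbf e}E)\cong\RHom(\Phi_{\mathbf e}^{-1}F_i,E)=\RHom(F_i,E)[e_i(n-1)].
\]
Hence $\RHom(F_i,E)\cong\RHom(F_i,E)[e_i(n-1)-m]$. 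By Serre duality and $\RHom(E,F_1)\ne0$, we have $\RHom(F_1,E)\neq0$. Transitivity of $G$, combined with $G$ preserving $\langle E\rangle$, gives $\RHom(F_i,E)\ne0$ for all $i$: indeed $g(E)\cong E[m_g]$, because $g(E)$ is exceptional in $\langle E\rangle$. A nonzero bounded graded vector space of finite total dimension cannot be isomorphic to a nontrivial shift of itself. Therefore $e_i(n-1)=m$ for all $i$, and since $n\geq2$, all $e_i$ are equal.

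\textbf{Step 3 (conclusion).} Step 2 shows that $\mathbf c-g\mathbf c\in\ZZ(1,\ldots,1)$ for every $g\in G$. The entries of $\mathbf c-g\mathbf c$ sum to zero, so in fact $\mathbf c=g\mathbf c$. By transitivity of $G$ on the indices, $\mathbf c$ is constant, i.e.\ $\mathbf a-\mathbf b\in\ZZ(1,\ldots,1)$.

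The main obstacle is Step 1. We need to justify that $g\circ T_{F_i}\circ g^{-1}\simeq T_{g(F_i)}$ for a merely semilinear $g$, and that the mutation action is compatible with it. The plan is to deduce the conjugation formula from the functorial cone description of $T_F$, using that $g$ is exact and $g(\RHom(F,-)\otimes F)\cong\RHom(gF,g-)\otimes gF$. The second point needed in Step 1, that two decompositions with the same $\beta$-image coincide, follows simply because $\beta$ acts by a group action.
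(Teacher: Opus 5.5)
Your proof is correct and follows the paper's argument essentially step for step. You reduce to $\beta\DB=\Phi_{\mathbf c}(\DB)$ (your $\mathbf c$ has the opposite sign convention, which is immaterial), use $g\Phi_{\mathbf c}g^{-1}\cong\Phi_{g\mathbf c}$ and $g(\DB)=\DB$ to get $\Phi_{\mathbf c-g\mathbf c}(\DB)=\DB$, compare shifts of the nonzero graded $\RHom$'s to force $\mathbf c=g\mathbf c$, and conclude by transitivity. The differences are cosmetic: you test with $\RHom(F_i,-)$, so you need Serre duality and $F_i\otimes\omega_X\cong F_i$ to transfer the non-vanishing hypothesis, whereas the paper uses $\RHom(E,\Phi_{\mathbf e}^{-1}F_i)$ directly; also, the shift should read $\RHom(F_i,E)[-e_i(n-1)]$, but the sign slip does not affect the conclusion.
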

\begin{proof}

Suppose that $\beta\DB_{\mathbf a}=\DB_{\mathbf b}$ for some $\beta\in B_r$, and put $\mathbf c\coloneqq\mathbf b-\mathbf a$.
Applying $\Phi_{\mathbf a}^{-1}$ and using \cref{lemma: semilinear commutes with mutations}, we obtain
\begin{equation}\label{equation: beta D is Phi_c D}
\beta\DB=\Phi_{\mathbf c}(\DB).
\end{equation}

Fix $g\in G$, and put $\mathbf e\coloneqq\mathbf c-g\mathbf c$.
Since $g(\DB)=\DB$, \cref{lemma: semilinear commutes with mutations}, \labelcref{equation: beta D is Phi_c D}, and the identity $g\Phi_{\mathbf c}g^{-1}\cong\Phi_{g\mathbf c}$ give
\[
\beta\DB
=g(\beta\DB)
=g\bigl(\Phi_{\mathbf c}(\DB)\bigr)
=(g\Phi_{\mathbf c}g^{-1})\bigl(g(\DB)\bigr)
=\Phi_{g\mathbf c}(\DB).
\]
Comparing this with \eqref{equation: beta D is Phi_c D} and applying $\Phi_{g\mathbf c}^{-1}$, we obtain $\Phi_{\mathbf e}(\DB)=\DB$.

Since $\Phi_{\mathbf e}$ preserves each component of $\DB$, $\Phi_{\mathbf e}(E)\cong E[s]$ holds for some $s\in\ZZ$.
The orthogonality of the $F_i$ gives $\Phi_{\mathbf e}^{-1}(F_i)\cong F_i[(n-1)e_i]$.
Therefore, it follows that
\[
\begin{aligned}
\RHom(E,F_i)[-s]
&\cong\RHom(\Phi_{\mathbf e}(E),F_i)\\
&\cong\RHom(E,\Phi_{\mathbf e}^{-1}(F_i))\\
&\cong\RHom(E,F_i)[(n-1)e_i].
\end{aligned}
\]

Since $G$ also preserves $\langle E\rangle$, $g(E)$ is isomorphic to $E$ up to shift for $g\in G$.
Additionally, since semilinear autoequivalences preserve the vanishing of graded Hom,
transitivity and the assumption $\RHom(E,F_1)\ne0$ give
\begin{equation*}
\RHom(E,F_i)\ne0
\quad\text{for }1\leq i\leq d.
\end{equation*}
Therefore, $\RHom(E,F_i)$ is a nonzero bounded complex of finite-dimensional $k$-vector spaces, and thus we obtain $-s=(n-1)e_i$ for every $i$.
In particular, $e_1 = \cdots = e_d$ since $n\geq2$.
Since $\mathbf{e}$ satisfies $e_1+ \cdots +e_d = 0$ by definition, we have $\mathbf e=0$.
This holds for $g\in G$, so $\mathbf c$ is $G$-invariant.
Transitivity implies that all entries of $\mathbf c$ are equal.
\end{proof}

\begin{rem}
The converse of \cref{theorem: separation of braid orbits} does not hold in general; see \cref{remark: counter example of the converse of separation theorem}.
\end{rem}

\begin{rem}\label{remark: numerical images of the separated decompositions}
In \cref{setup: separation of braid orbits}, suppose in addition that $[F_1]=\cdots=[F_d]$ in $\Knum(X)$, and put 
\[
L=\{(a_1,\ldots,a_d)\in\ZZ^d\mid a_1+\cdots+a_d=0\}.
\]
Since all $T_{F_i}$ induce the same automorphism of $\Knum(X)$, $\Phi_{\mathbf a}$ acts trivially on $\Knum(X)$ for every $\mathbf a\in L$.
Consequently, for each $1\leq j\leq r$ and $\mathbf a\in L$,
the images of $K_0(\AC_j)$ and $K_0(\Phi_{\mathbf a}(\AC_j))$ in $\Knum(X)$ coincide. 
Moreover, the decompositions $\DB_{\mathbf a}$ for $\mathbf a\in L$ belong to pairwise distinct $B_r$-orbits.
\end{rem}

\subsection{Separation in families}\label{subsection: separation of braid orbits in families}

We apply \cref{theorem: separation of braid orbits} on the geometric generic fiber and use the deformation theory of semiorthogonal decompositions to obtain separation on every geometric fiber.
Recall that, for a morphism $f\colon\XC\to B$, a semiorthogonal decomposition $(\AC_1,\ldots,\AC_r)$ of $\Perf(\XC)$ is called \emph{$B$-linear} if each $\AC_j$ is closed under tensoring with $\lderived f^*P$ for $P\in\Perf(B)$.

\begin{setup}\label{setup: separation of braid orbits in families}
Let $B$ be a smooth quasi-projective variety over $\CB$, and let $f\colon\XC\to B$ be a smooth projective morphism with geometrically connected fibers of dimension $n\geq2$.
Let $\bar\eta$ be a geometric generic point of $B$.
Assume that the following data are given:
\begin{enumerate}
\item a non-trivial $B$-linear semiorthogonal decomposition
$\DB=(\AC_1,\ldots,\AC_r)$ of $\Perf(\XC)$ whose induced decomposition
$\DB_{\bar b}=(\AC_{1,\bar b},\ldots,\AC_{r,\bar b})$
has nonzero components for every geometric point $\bar b$ of $B$;
\item a connected finite \'etale cover $\pi\colon T\to B$ of degree $d\geq2$ and a perfect complex $\FC\in\Perf(\XC_T)$ such that, for every geometric point $\bar b$ of $B$, the restrictions $F_t\coloneqq\FC_t$ for $t\in T_{\bar b}$ are mutually orthogonal $n$-spherical objects of $D^b(\XC_{\bar b})$;
\item an exceptional object $E\in D^b(\XC_{\bar\eta})$ such that
$\AC_{j,\bar\eta}=\langle E\rangle$ for some $j$ and
$\RHom(E,F_t)\ne0$ for some $t\in T_{\bar\eta}$.
\end{enumerate}
All restrictions of perfect complexes are derived pullbacks.

Fix a geometric point $\bar b$ of $B$ and an ordering $T_{\bar b}=\{t_1,\ldots,t_d\}$.
For $\mathbf a=(a_1,\ldots,a_d)\in\ZZ^d$, put
$\Phi_{\mathbf a,\bar b}\coloneqq T_{F_{t_1}}^{a_1}\cdots T_{F_{t_d}}^{a_d}$ and 
$\DB_{\mathbf a,\bar b}\coloneqq\Phi_{\mathbf a,\bar b}(\DB_{\bar b})$.
\end{setup}

\begin{thm}\label{theorem: separation of braid orbits in families}
In \cref{setup: separation of braid orbits in families}, let $\mathbf a,\mathbf a'\in\ZZ^d$.
If $\DB_{\mathbf a,\bar b}$ and $\DB_{\mathbf a',\bar b}$ belong to the same $B_r$-orbit, then
$\mathbf a-\mathbf a'\in\ZZ(1,\ldots,1)$.
\end{thm}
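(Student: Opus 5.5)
The plan has two steps. First, apply \cref{theorem: separation of braid orbits} on the geometric generic fiber $\XC_{\bar\eta}$ with $G=\Gal(\bar K/K)$, where $K=\CB(B)$. Second, specialize to an arbitrary geometric point $\bar b$, using the openness of the ``same braid orbit'' locus coming from the Belmans--Okawa--Ricolfi moduli space. This runs parallel to the proof of \cref{theorem: disconnectedness in families}, with the moduli space of semiorthogonal decompositions replacing $\Stab(\XC/S)$.

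\emph{Generic fiber.} Each $g\in\Gal(\bar K/K)$ acts by the $\bar K$-semilinear autoequivalence $g^*$ of $D^b(\XC_{\bar\eta})$. Since $\DB$ is defined over $\XC$, $g^*$ preserves each $\AC_{j,\bar\eta}$. Since $\FC$ is defined over $\XC_T$ and $T$ is connected, \cref{lemma: transitive Galois action} (as in the proof of \cref{lemma: separation on geometric generic fiber}) shows that the $g^*$ permute the $F_u$, $u\in T_{\bar\eta}$, transitively. Relabel so that $F_1=F_t$ is the object with $\RHom(E,F_t)\neq0$. Then \cref{setup: separation of braid orbits} holds, and \cref{theorem: separation of braid orbits} gives the claim on $\XC_{\bar\eta}$ for every ordering of $T_{\bar\eta}$.

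\emph{Reduction to a local family.} Fix $\bar b$; it suffices to treat a closed point $b\in B(\CB)$. Other geometric points either lie over a point whose closure contains such a $b$, or can be handled by the same specialization from the generic point; alternatively, argue with $\bar b$ lying over an arbitrary point and a DVR dominating it. As in the proof of \cref{theorem: disconnectedness in families}, use \cite{rulling_schroer_2025_loops_on_schemes_and_the_algebraic_fundamental_group} to choose a smooth curve $C\to B$ through $b$ with $T_C$ connected, and an étale cover $C'\to C$ splitting $T$, with $S=\Spec\OC_{C',c'}$ and sections $s_i$ giving $\FC_i\in\Perf(\XC_S)$. By \cref{proposition: relative spherical twist}, the relative twists $T_{\FC_i}$ are $S$-linear and commute with base change. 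Hence $\Phi_{\mathbf a,S}(\DB_S)$ is an $S$-linear decomposition restricting to $\DB_{\mathbf a,b}$ at $0$ and to $\DB_{\mathbf a,\bar\eta}$ at $\bar\eta$. Mutations of $S$-linear admissible decompositions also commute with base change. So for $\beta\in B_r$, both $\beta\Phi_{\mathbf a,S}(\DB_S)$ and $\Phi_{\mathbf a',S}(\DB_S)$ are points of the Belmans--Okawa--Ricolfi moduli space over $S$.

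\emph{Specialization, the main obstacle.} Suppose $\beta\DB_{\mathbf a,b}=\DB_{\mathbf a',b}$. The two $S$-points above then agree at the closed point. The moduli space of semiorthogonal decompositions is étale over the base, so the locus where two sections agree is open and closed; since $S$ is local and connected, they agree on all of $S$. In particular they agree at $\bar\eta$, so $\beta\DB_{\mathbf a,\bar\eta}=\DB_{\mathbf a',\bar\eta}$, and the generic-fiber step gives $\mathbf a-\mathbf a'\in\ZZ(1,\ldots,1)$ (with the orderings matched via $s_i$). I expect this step to need the most care. One must check that the relative decompositions have admissible, $S$-linear components, as the moduli space requires, and that agreement at a point implies agreement on a neighborhood. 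For the latter I would use étaleness (unramifiedness) to get an open locus, and separatedness to get a closed one. For a geometric point $\bar b$ over a nonclosed point, I would apply the same argument with a DVR centered at the image of $\bar b$.
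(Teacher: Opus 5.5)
There is a genuine gap in the specialization step. You route through a curve $C\to B$ and $S=\Spec\OC_{C',c'}$, so the point $\bar\eta$ at which you obtain $\beta\DB_{\mathbf a,\bar\eta}=\DB_{\mathbf a',\bar\eta}$ is a geometric generic point of $C$. It lies over the generic point of the image curve, not over the generic point of $B$, unless $\dim B=1$. Your generic-fiber step was set up with $K=\CB(B)$. More importantly, hypothesis (3) of \cref{setup: separation of braid orbits in families} (an exceptional generator $E$ of $\AC_j$ with $\RHom(E,F_t)\neq0$) is only given on $\XC_{\bar\eta}$ for the geometric generic point of $B$.

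Over $\CB(C)$ the Galois group does preserve the components and act transitively on $T_{\bar\eta_C}$; that is what connectedness of $T_C$ buys. But to invoke \cref{theorem: separation of braid orbits} there, you would also need an exceptional generator of $\AC_{j,\bar\eta_C}$ that is not orthogonal to some $F_t$. Nothing in the setup provides this; it would need a separate argument specializing exceptional generators. The curve was needed in \cref{theorem: disconnectedness in families} only because \cref{lemma: specialization of connected components} is a statement over DVRs. The Belmans--Okawa--Ricolfi space is \'etale over an arbitrary base, so no curve is needed here.

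The fix, which is the paper's argument, is to take a connected affine \'etale neighborhood $(S,\bar s)\to(B,\bar b)$ over which $T$ splits. This works for any geometric point $\bar b$, so no reduction to closed points is needed. Your proposed reduction also runs the wrong way: openness propagates agreement from a point to its generizations, not to its specializations.

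Since $S$ is smooth and connected, it is integral, and since $S\to B$ is \'etale, hence flat, its generic point maps to the generic point of $B$. The agreement locus $\{u=v\}$ of the two sections of $\mathcal M\to S$ is open because the diagonal of an \'etale morphism is an open immersion. It contains $\bar s$, hence the generic point of $S$. Lifting $\bar\eta$ to the generic point of $S$ gives $\beta\DB_{\mathbf a,\bar\eta}=\DB_{\mathbf a',\bar\eta}$ at a geometric generic point of $B$, with the ordering of $T_{\bar\eta}$ induced by the sections $s_i$. Your generic-fiber step then applies verbatim.

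Only openness is used. Your appeal to separatedness of $\mathcal M$ for closedness is unnecessary, and étaleness does not supply it.
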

\begin{proof}
Suppose that $\beta\DB_{\mathbf a,\bar b}=\DB_{\mathbf a',\bar b}$ for some $\beta\in B_r$.
Choose a connected affine \'etale neighborhood $(S,\bar s)\to(B,\bar b)$ such that
$T_S\simeq\coprod_{i=1}^d S$.
Let $s_1,\ldots,s_d$ be the corresponding sections, numbered so that $s_i(\bar s)=t_i$, and let $\FC_i\in\Perf(\XC_S)$ be the pullback of $\FC$ along $S\xrightarrow{s_i}T_S\to T$.
Since $S$ is smooth and connected, it is integral.

By \cref{proposition: relative spherical twist}, the twists $T_{\FC_i}$ are $S$-linear autoequivalences, and $T_{\FC_i}^{\pm1}$ commute with base change.
For $\mathbf c=(c_1,\ldots,c_d)\in\ZZ^d$, put $\Phi_{\mathbf c,S}\coloneqq T_{\FC_1}^{c_1}\cdots T_{\FC_d}^{c_d}$, and $\DB_{\mathbf c,S}\coloneqq\Phi_{\mathbf c,S}(\DB_S)$.
Then $\DB_{\mathbf c,S}$ is an $S$-linear semiorthogonal decomposition whose restriction at $\bar s$ is $\DB_{\mathbf c,\bar b}$.

By \cite[Theorem~A]{preprint_pieter_shinnosuke_andrea_2025_moduli_spaces_of_semiorthogonal_decompositions_in_families}, the functor of semiorthogonal decompositions of length $r$ for $\XC_S\to S$ is represented by an algebraic space $\mathcal M$ \'etale over $S$.
Since mutations commute with base change by \cite[Proposition~9.18]{preprint_pieter_shinnosuke_andrea_2025_moduli_spaces_of_semiorthogonal_decompositions_in_families}, the decompositions $\beta\DB_{\mathbf a,S}$ and $\DB_{\mathbf a',S}$ define sections $u,v\colon S\to\mathcal M$ with $u(\bar s)=v(\bar s)$.
Put $U\coloneqq\{s\in S\mid u(s)=v(s)\}$.
Since the diagonal of $\mathcal M\to S$ is an open immersion, $U$ is open in $S$.
As $S$ is integral and $\bar s\in U$, the generic point of $S$ belongs to $U$.

Since $S\to B$ is dominant and \'etale, we may choose a lift of $\bar\eta$ to the generic point of $S$.
Restricting $u$ and $v$ to this lift gives $\beta\DB_{\mathbf a,\bar\eta}=\DB_{\mathbf a',\bar\eta}$,
where the ordering of $T_{\bar\eta}$ is induced by $s_1,\ldots,s_d$.

Put $K=\CB(B)$ and choose an algebraic closure $\overline K$ defining $\bar\eta$.
Since $\DB$ is defined over $B$, the semilinear pullbacks $g^*$ for $g\in\Gal(\overline K/K)$ preserve each $\AC_{j,\bar\eta}$.
Since $\FC$ is defined over $T$ and $T$ is connected, these pullbacks act transitively on $\{F_t\mid t\in T_{\bar\eta}\}$ up to isomorphism.
By the assumption on $E$, \cref{theorem: separation of braid orbits} applies after simultaneously reordering the objects $F_t$ and the coordinates of $\mathbf a,\mathbf a'$.
Therefore, $\mathbf a-\mathbf a'\in\ZZ(1,\ldots,1)$.
\end{proof}
\section{Calabi--Yau threefolds via rigid rational curves}\label{section: Calabi--Yau threefolds via rigid rational curves}

We first apply the separation criteria in \cref{section: disconnectedness in families} and \cref{section: A local criterion for geometricity} to general quintic threefolds.
Also, we treat the cases of complete intersection Calabi--Yau threefolds and the Dwork pencil.
We then consider products with varieties whose Albanese morphisms are finite and show that these separation phenomena occur in any dimension at least three. 

\subsection{Quintic threefolds}\label{subsection: rational curves on quintic threefolds}
Let $B\subset\PB H^0(\PP{4},\OC_{\PP{4}}(5))$ be the open subset parametrizing smooth quintic threefolds.
Let $f\colon\XC\to B$ be the universal family, equipped with the relatively very ample line bundle
$\HC\coloneqq(\OC_{\PP{4}}(1)\boxtimes\OC_B)|_\XC$.
Put $H_b\coloneqq c_1(\HC|_{\XC_b})$.
Consider the incidence variety
\[
\mathcal I\coloneqq
\{(\ell,b)\in\Gr(2,5)\times B\mid\ell\subset\XC_b\},
\]
and let $\pi\colon\mathcal I\to B$ be the second projection.
The fiber $\pi^{-1}(b)$ is the scheme of lines on $\XC_b$.
The variety $\mathcal I$ is a nonempty open subset of the projective bundle over $\Gr(2,5)$ whose fiber over $[\ell]$ is
$\PB H^0(\PP{4},\IC_\ell(5))$.
Thus $\mathcal I$ is irreducible.

\begin{rem}\label{remark: lines on general quintic Calabi--Yau threefolds}
By \cite[Theorem~2.1]{debarre_manivel_1998_sur_la_variete_des_espaces_lineaires_contenus_dans_une_intersection_complete}, the fiber $\pi^{-1}(b)$ is nonempty, reduced, and zero-dimensional for general $b\in B$.
Its length is $2875$ by \cite[Theorem~4.3]{debarre_manivel_1998_sur_la_variete_des_espaces_lineaires_contenus_dans_une_intersection_complete}.
Consequently, $\pi$ is finite and \'etale of degree greater than one over a nonempty open subset of $B$.

By \cite[Theorem~4.1]{johnsen_kleiman_1996_rational_curves_of_degree_at_most_9_on_a_general_quintic_threefold}, distinct lines on $\XC_b$ are disjoint for general $b\in B$.
Thus, for the family $\XC\to B$ above, we may choose a nonempty open subset $U\subset B$ such that the restriction $\pi|_T\colon T\to U$, where $T\coloneqq\pi^{-1}(U)$, is finite and \'etale of degree greater than one, and distinct lines on every geometric fiber of $\XC_U\to U$ are disjoint.
Since $T$ is a nonempty open subset of $\mathcal I$, it is irreducible and hence connected.
\end{rem}

\begin{thm}\label{theorem: disconnectedness for general quintics}
Let $b\in U(\CB)$, and put $X=\XC_b$ and $H=H_b$.
Then $\Stab(X)$ has infinitely many connected components, and every central charge occurring in $\Stab^\dagger_H(X)$ has lifts in infinitely many distinct connected components.
\end{thm}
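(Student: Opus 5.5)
The plan is to apply \cref{theorem: disconnectedness in families} to the family $\XC_U\to U$ with the connected finite \'etale cover $T\to U$ from \cref{remark: lines on general quintic Calabi--Yau threefolds}. This requires three inputs: a perfect complex $E$ on $\XC_T$ restricting to mutually orthogonal spherical objects, perfect complexes $A_1,\ldots,A_r$ spanning the numerical Grothendieck groups fiberwise, and a relative stability condition $\sigma_\bullet$ whose fiber lies in $\Stab^\dagger_H(X)$.

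\emph{The spherical objects.} Let $\mathcal L\subset\XC_T$ be the universal line, flat over $T$ with fibers $\ell\cong\PP{1}$. We take $E\coloneqq\OC_{\mathcal L}\otimes\HC^{-1}$, so that $E_t=\OC_\ell(-1)$. Since $E$ is a coherent sheaf flat over $T$ on a smooth scheme $\XC_T$ (smooth because $T$ is étale over the smooth $U$), it is perfect. For each geometric point, the line $\ell$ is rigid: the fiber of the étale map $\pi$ is reduced and zero-dimensional, so $H^0(N_{\ell/X})=0$. Combined with $\deg N_{\ell/X}=-2$ (adjunction with $\omega_X\cong\OC_X$), this forces $N_{\ell/X}\cong\OC(-1)^{\oplus2}$. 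The standard local-to-global computation then shows $\OC_\ell(-1)$ is $3$-spherical. To see this, use $\mathcal Ext^i(\OC_\ell,\OC_\ell)=\wedge^iN$, together with $H^*(\OC(-1)^{\oplus2})=0$ and $H^*(\wedge^2N)=H^*(\OC(-2))=\CB[-1]$. Serre duality takes care of the symmetric part. Orthogonality follows from disjointness of distinct lines, since their supports are disjoint.

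\emph{Numerical basis and relative stability.} For $A_i$ we take $\OC_\XC,\HC,\HC^2,\HC^3$ (restrictions of $\OC_{\PP4}(i)$). We must check that their classes form a basis of $\Knum(\XC_{\bar b})_\QB$ on every geometric fiber. By Lefschetz the Picard group of a smooth quintic is $\ZZ H$. Moreover, $H^2$ and $H^3$ generate the rational numerical classes of curves and points, using $H\cdot\ell=1$ and $H^3=5$ with curve classes modulo numerical equivalence of rank one (Lefschetz for $H^4$, equivalently $H_2$). Hence $\Knum(\XC_{\bar b})_\QB\cong\QB^4$, spanned by the Chern characters of these line bundles via a Vandermonde argument.

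Next, \cref{thm:projective_family_stability} with the embedding $\XC_U\hookrightarrow\PB^4_U$ gives $\sigma_\bullet=\sigma_{\bullet,\xi,\theta}\in\Stab(\XC_U/U)$. By definition, its fiber $\sigma_b$ lies in $\Stab^\dagger_H(X)$, so $\Sigma_b=\Stab^\dagger_H(X)$.

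\emph{Conclusion.} With these inputs, \cref{theorem: disconnectedness in families} yields that the components $\Phi_{\mathbf a,b}\Sigma_b$, for $\sum a_i=0$, are pairwise distinct. Each $\Phi_{\mathbf a,b}$ acts trivially on $\Knum$ and preserves central charges, so every central charge of $\Stab^\dagger_H(X)$ lifts to infinitely many components. The main technical point is the fiberwise numerical basis, which relies on the Lefschetz-type computation of $\Knum$ of a quintic. The sphericity of $\OC_\ell(-1)$ is standard once rigidity, read off from the reducedness of the Fano scheme, gives the normal bundle.
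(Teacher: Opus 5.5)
Your proposal is correct and follows the paper's proof essentially verbatim. You use the universal line twisted by $\HC^{-1}$ over the connected cover $T\to U$, the classes of $\HC^{\otimes i}$ for $0\le i\le 3$ as fiberwise basis, and the relative stability condition induced by $\XC_U\hookrightarrow\PP{4}\times U$, and then apply Theorem~\ref{theorem: disconnectedness in families} with $\Sigma_b=\Stab^\dagger_H(X)$. The only differences are cosmetic: you check sphericity of $\OC_\ell(-1)$ via the local-to-global spectral sequence where the paper cites Seidel--Thomas, and you justify the numerical basis, which the paper simply asserts.
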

\begin{proof}
For a geometric point $\bar b$ of $U$ and a line $\ell\subset\XC_{\bar b}$,
$N_{\ell/\XC_{\bar b}}\cong\OC_{\PP{1}}(-1)^{\oplus2}$ holds since $\deg N_{\ell/\XC_{\bar b}}=-2$ by adjunction and the tangent space to $T_{\bar b}$ at $[\ell]$ is $H^0(\ell,N_{\ell/\XC_{\bar b}})=0$.
Therefore, \cite[Lemma~3.4]{seidel_thomas_2001_braid_group_actions_on_derived_categories_of_coherent_sheaves} shows that $\OC_\ell(-1)$ is spherical.
These are mutually orthogonal because their supports are disjoint.

Let $j\colon\LC\hookrightarrow\XC_T$ be the universal line, whose fiber over $(\ell,b)\in T$ is $\ell\subset\XC_b$.
The complex $j_*\OC_\LC(-1)$ is perfect, and its fiber restrictions are the spherical sheaves.
The classes of $\HC^{\otimes i}|_{\XC_{\bar b}}$ for $0\leq i\leq3$ form a basis of $\Knum(\XC_{\bar b})_\QB$ for every geometric point $\bar b$ of $U$.
By \cref{thm:projective_family_stability}, choose $\sigma_\bullet\in\Stab(\XC_U/U)$ induced by the embedding $\XC_U\hookrightarrow\PP{4}\times U$.
Thus, \cref{theorem: disconnectedness in families} applies over $U$, with $\Sigma_b=\Stab^\dagger_{H_b}(\XC_b)$, and proves both assertions.
\end{proof}

We now verify the wall condition in \cref{theorem: family geometricity} for the evaluation bundles associated with lines.
Then we show the disconnectedness of $\Stabgeo(X)$.
For this purpose, let $X\subset\PP{4}$ be a smooth quintic and put $H\coloneqq c_1(\OC_X(1))$.
For a torsion-free sheaf $F$, we set its slope by $\mu_H(F)=H^2\ch_1(F)/(H^3\rk F)$. When $F$ is a torsion sheaf, we set $\mu_H(F) = \infty$.

First, we review the geometric stability conditions on quintic threefolds, and determine the stability of suitable bundles for applying the results in \cref{section: A local criterion for geometricity}. 
Then, we construct suitable bundles from lines on quintic threefolds.
Recall Li's double-tilt construction on the quintic threefolds in a slightly simplified form.

\begin{dfn}\label{dfn:double-tilt}
For $\beta\in\RB$, put $\ch^\beta=e^{-\beta H}\ch$. 
\begin{enumerate}
\item The first tilt $\Coh^\beta(X)$ is defined to be
\[
\Coh^\beta(X)=\langle\FC_\beta[1],\TC_\beta\rangle_{\mathrm{ex}},
\]
where $\FC_\beta\coloneqq \{ E\in\Coh(X) \mid \mu_H^+(E)\leq\beta\}$ and 
$\TC_\beta\coloneqq \{ E\in\Coh(X) \mid \mu_H^-(E)>\beta\}$.
\item For $\alpha>0$, the tilt slope on $\Coh^\beta(X)$ is defined to be
\[
\nu_{\alpha,\beta}(F)=
\begin{cases}
\frac{H\ch_2^\beta(F)-\frac{\alpha^2}{2}H^3\rk F}
{H^2\ch_1^\beta(F)} & \text{when }H^2\ch_1^\beta(F)>0, \\
+\infty & \text{when }H^2\ch_1^\beta(F)=0.
\end{cases}
\]
A nonzero object $F$ is \emph{tilt-semistable} if $\nu_{\alpha,\beta}(A)\leq\nu_{\alpha,\beta}(F/A)$ for every nonzero proper subobject $A\subset F$ in $\Coh^\beta(X)$.
We denote the largest and smallest slopes of its Harder--Narasimhan factors by $\nu_{\alpha,\beta}^+(F)$ and $\nu_{\alpha,\beta}^-(F)$.
\item The second tilt $\AC_{\alpha,\beta}$ is defined to be
\[
\AC_{\alpha,\beta}
=\langle\FC'_{\alpha,\beta}[1],\TC'_{\alpha,\beta}\rangle_{\mathrm{ex}}, 
\]
where $\FC'_{\alpha,\beta}\coloneqq \{ E\in\Coh^\beta(X) \mid \nu_{\alpha,\beta}^+(E)\leq0\}$ and 
$\TC'_{\alpha, \beta}\coloneqq \{ E\in\Coh^\beta(X) \mid \nu_{\alpha,\beta}^-(E)>0\}$.
\end{enumerate}
\end{dfn}

\begin{thm}[{\cite[Theorem~1.3]{li_2019_on_stability_conditions_for_the_quintic_threefold}}]\label{thm:li}
There is a continuous family of stability conditions $\sigma_{\alpha,\beta}^a=(Z_{\alpha,\beta}^a,\PC_{\alpha,\beta}^a)$ for $\alpha>1/2$, $\beta\in\RB$, and $a>\alpha^2/6$, with $\PC_{\alpha,\beta}^a((0,1])=\AC_{\alpha,\beta}$ and
\begin{equation}\label{equation: open double tilt charge}
Z_{\alpha,\beta}^a(F)
=-\ch_3^\beta(F)+aH^2\ch_1^\beta(F)
+\sqrt{-1}\left(H\ch_2^\beta(F)-\frac{\alpha^2}{2}H^3\rk F\right).
\end{equation}
Moreover, all skyscraper sheaves are $\sigma^a_{\alpha, \beta}$-stable of phase one.
\end{thm}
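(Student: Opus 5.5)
This statement is cited from Li's work, so the plan is to reconstruct its proof rather than derive it from earlier results in the paper. The strategy is Li's double-tilt construction combined with a strengthened Bogomolov--Gieseker type inequality. The first tilt $\Coh^\beta(X)$ with tilt slope $\nu_{\alpha,\beta}$ gives a weak stability condition for every $\alpha>0$. This is standard: the classical Bogomolov inequality makes $H^2\ch_1^\beta\geq0$ on $\Coh^\beta(X)$, and $\nu_{\alpha,\beta}$ has the Harder--Narasimhan property.

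The decisive input is a stronger Bogomolov--Gieseker type inequality for tilt-stable objects on the quintic threefold. Li proves that for slope-stable sheaves the ratio $H\ch_2/H^3\rk$ is bounded in terms of $\mu_H$ by an explicit function stronger than the classical $\frac12\mu_H^2$ bound. He obtains this by restricting to hyperplane sections and complete intersection curves, and using Clifford-type bounds on sections of stable bundles on those curves. This is what allows $\alpha$ to go down to $1/2$ instead of the usual large-volume region.

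Next, for $\alpha>1/2$ one needs the BMT-type inequality
\[
\ch_3^\beta(F)\leq \tfrac{\alpha^2}{6}H^2\ch_1^\beta(F)
\]
for $\nu_{\alpha,\beta}$-stable $F$ with $\nu_{\alpha,\beta}(F)=0$. Li proves it by the reduction of Bayer--Macr\`i--Stellari: one moves $(\alpha,\beta)$ along the wall structure and reduces to small-$\alpha$ or integral-$\beta$ cases. The stronger Bogomolov inequality from the previous step controls the region where walls can appear. I expect this step to be the main obstacle, since it is exactly where the quintic-specific geometry enters.

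Given that inequality, $Z^a_{\alpha,\beta}$ is a stability function on the second tilt $\AC_{\alpha,\beta}$ whenever $a>\alpha^2/6$. Indeed, for objects with $\Im Z=0$ the real part is negative, because $-\ch_3^\beta+aH^2\ch_1^\beta>0$ for tilt-stable objects of tilt slope $0$. Harder--Narasimhan filtrations follow from the noetherianity arguments of Bayer--Macr\`i--Toda. The support property follows from a quadratic form combining the discriminant with the BMT-type inequality, which is negative definite on $\ker Z$. Continuity in $(\alpha,\beta,a)$ comes from the deformation theorem (\cref{theorem: Bridgeland's deformation theorem}) once the support property is uniform on compact subsets.

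Finally, stability of skyscraper sheaves is the standard argument. $\OC_x$ lies in $\Coh^\beta(X)$ with $\nu=+\infty$, so it is in $\TC'_{\alpha,\beta}\subset\AC_{\alpha,\beta}$, and $Z(\OC_x)=-1$ gives phase one. Any subobject in $\AC_{\alpha,\beta}$ of phase one must have $\Im Z=0$. One rules out nontrivial such subobjects using $\Hom(\OC_x,-)$ and the minimality of $\OC_x$ in $\Coh^\beta(X)$, as in Bayer--Macr\`i--Toda.
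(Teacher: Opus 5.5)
\textbf{Context.} The paper does not prove this statement. It quotes Li's theorem and uses it as a black box, so the only comparison is with Li's own argument. Your outline reproduces that architecture correctly: first tilt, a quintic-specific strengthening of Bogomolov--Gieseker, the BMT-type inequality $\ch_3^\beta(F)\le\frac{\alpha^2}{6}H^2\ch_1^\beta(F)$ for $\nu_{\alpha,\beta}$-stable $F$ with $\nu_{\alpha,\beta}(F)=0$, then the Bayer--Macr\`i--Stellari double-tilt machinery.

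The steps you actually carry out are fine:
\begin{itemize}
\item for $a>\alpha^2/6$ that inequality gives $\Re Z^a_{\alpha,\beta}(F[1])<0$ for $F$ $\nu_{\alpha,\beta}$-semistable of slope $0$;
\item zero-dimensional sheaves have $Z^a_{\alpha,\beta}=-\ch_3<0$;
\item $\OC_x$ is stable of phase one by its minimality in $\Coh^\beta(X)$.
\end{itemize}
One small misattribution: $H^2\ch_1^\beta\geq0$ on $\Coh^\beta(X)$ holds by construction of the torsion pair. Bogomolov is what makes the weak central charge have negative real part on the objects $F[1]$ with $F$ $\mu_H$-semistable of slope $\beta$.

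\textbf{The gap.} As a proof, the proposal omits exactly the content of the theorem. The BMT-type inequality for $\alpha>\frac12$ is Li's main result, and your paragraph on it describes rather than argues. The description is also off. The Bayer--Macr\`i--Stellari reduction moves down the curve $\nu_{\alpha,\beta}(F)=0$, passing to Jordan--H\"older factors at walls, and it needs the inequality at the point where it stops.

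On the quintic the inequality is not available as $\alpha\to0$ for $\beta\notin\ZZ$, so you cannot reduce to small $\alpha$. The reduction must stop on the boundary of the region where the inequality is being proved, and for $\beta\notin\ZZ$ that boundary lies at positive $\alpha$. In Li's paper it is the arcs $\alpha^2+(\beta-\lfloor\beta\rfloor-\frac12)^2=\frac14$, along which $\nu_{\alpha,\beta}(\OC_X(n+1))=\nu_{\alpha,\beta}(\OC_X(n)[1])$ with $n=\lfloor\beta\rfloor$. One must then prove the inequality directly for objects that are tilt-stable on that boundary. That is where the stronger Bogomolov--Gieseker inequality has to be used quantitatively, and the proposal does not say how.

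Two further steps are asserted without the argument they need.
\begin{itemize}
\item \emph{Support property.} You need $Q\geq0$ on all $\sigma^a_{\alpha,\beta}$-semistable objects, but the BMT-type inequality only controls $\nu_{\alpha,\beta}$-stable ones. Positivity therefore has to be transferred through the second tilt.
\item \emph{Continuity.} Continuity in $(\alpha,\beta,a)$ does not follow from the deformation theorem plus a locally uniform support property. The heart $\AC_{\alpha,\beta}$ changes with $(\alpha,\beta)$, so you must show that the deformed stability condition has heart $\AC_{\alpha',\beta'}$.
\end{itemize}
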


Before proving that $\Stabgeo(X)$ is disconnected, we check that the stability conditions constructed in \cref{thm:li} lie in the same connected component, which will be used also in \cref{subsection: products and finite Albanese morphisms}.

\begin{lem}\label{lemma: comparison of induced and double tilt stability conditions}
Let $j\colon X\hookrightarrow\PP{N}$ be a closed immersion with $j^*\OC_{\PP{N}}(1)\cong\OC_X(H)$, and let $\tau^{\PP{N}}_{\xi,\theta}$ be the stability conditions on $\PP{N}$ in \cref{thm:projective_family_stability}.
Put $\kappa\coloneqq(N+21)/24$. For sufficiently large $\xi>0$ and every $\theta\in\RB$, set $\beta\coloneqq\theta+(N+1)/2$, $\alpha\coloneqq\sqrt{\xi^2/3-2\kappa}$, and $a\coloneqq\xi^2/2-\kappa$.
Then $j^\sharp\tau^{\PP{N}}_{\xi,\theta}$ has heart $\AC_{\alpha,\beta}$ and central charge
\[
\Re Z_{\alpha,\beta}^a+\sqrt{-1}\xi\Im Z_{\alpha,\beta}^a.
\]
In particular, $\sigma_{\alpha,\beta}^a\in\Stab^\dagger_H(X)$ for every $\alpha>1/2$, $\beta\in\RB$, and $a>\alpha^2/6$.
\end{lem}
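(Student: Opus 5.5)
The plan is to compare the two stability conditions in two separate steps: first their central charges, then their hearts. After that, the ``in particular'' assertion will follow from a connectedness argument in the parameter space of \cref{thm:li}.

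\textbf{Central charge.} By \cref{thm:projective_family_stability}, the central charge of $j^\sharp\tau^{\PP{N}}_{\xi,\theta}$ is
\[
E\longmapsto -\int_{\PP{N}}e^{-(\theta+\sqrt{-1}\xi)H}\ch(j_*E).
\]
I will compute $\ch(j_*E)$ by Grothendieck--Riemann--Roch:
\[
\ch(j_*E)=j_*\bigl(\ch(E)\,\td(X)\,j^*\td(\PP{N})^{-1}\bigr).
\]
On a quintic, $c_1(X)=0$ and $c_2(X)=10H^2$, and $\td(\PP{N})=\bigl(H/(1-e^{-H})\bigr)^{N+1}$.
\begin{itemize}
\item Writing $\td(X)\,j^*\td(\PP{N})^{-1}=e^{-\frac{N+1}{2}H}(1+cH^2+\cdots)$ absorbs the linear term of the Todd correction into a shift of $\theta$. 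This is exactly where $\beta=\theta+(N+1)/2$ comes from.
\item The quadratic correction, which collects the $H^2$ terms from $c_2(X)/12=\frac{5}{6}H^2$ and from the normal-bundle Todd class, should produce the constant $\kappa=(N+21)/24$.
\end{itemize}
Expanding $e^{-\sqrt{-1}\xi H}$ up to degree three and integrating against $H^3=5$, the real part should become $-\ch_3^\beta+(\xi^2/2-\kappa)H^2\ch_1^\beta$. The imaginary part should become $\xi\bigl(H\ch_2^\beta-\frac{1}{2}(\xi^2/3-2\kappa)H^3\rk\bigr)$, where I expect any stray $\ch_0$ and $\ch_2$ terms in the real part to cancel. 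This is a routine but error-prone bookkeeping of signs. It yields $\Re Z^a_{\alpha,\beta}+\sqrt{-1}\,\xi\Im Z^a_{\alpha,\beta}$ with the stated $\alpha$ and $a$.

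\textbf{Heart.} The heart $\QC_{\xi,\theta}((0,1])$ on $\PP{N}$ from the cited construction (5.4), (5.5) is, for large $\xi$, a double tilt of $\Coh(\PP{N})$ with respect to slope and tilt-slope functions built from the same polynomial charge. I will show that $E\in\AC_{\alpha,\beta}$ if and only if $j_*E\in\QC_{\xi,\theta}((0,1])$, in three stages.
\begin{enumerate}
\item Slope stage: $\mu$-semistability and the Harder--Narasimhan filtration of a torsion-free $E$ correspond to those of $j_*E$ with the slope shifted by $(N+1)/2$. This shows that $j_*$ carries the first torsion pair on $X$ at $\beta$ into the corresponding torsion pair on $\PP{N}$ at $\theta$.
\item Tilt stage: the analogous matching for the tilt slope is where the real obstacle lies. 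Naively, the tilt-slope of $j_*E$ on $\PP{N}$ only sees $\ch_{\le 2}$ of $E$ after the Todd correction, and one must check that tilt-destabilizing subobjects of $j_*E$ in the first tilt on $\PP{N}$ are pushforwards from $X$.
\item I will first try to handle stage 2 by citing \cite[Theorem~10.3]{preprint_chunyi_zhiyu_ziqi_emanuele_alexander_paolo_xiaolei_2026_stability_conditions_and_moduli_spaces_on_projective_families} together with \cite[Theorem~6.2]{preprint_chunyi_zhiyu_ziqi_emanuele_alexander_paolo_xiaolei_2026_stability_conditions_and_moduli_spaces_on_projective_families}, which state that $j^\sharp\tau$ is a stability condition. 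Since it is a stability condition, its heart is determined by its central charge together with the fact that it is a double tilt. The identity of hearts then reduces to the equality of the two tilt functions, which follows from the central charge computation.
\end{enumerate}

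\textbf{The ``in particular'' assertion.} The matrix $\mathrm{diag}(1,\xi)\in\GL^+(2,\RB)$ preserves the upper half plane and real axis. Hence acting by its lift (the one with trivial phase shift) on $\sigma^a_{\alpha,\beta}$ fixes the heart and produces central charge $\Re Z+\sqrt{-1}\xi\Im Z$. Therefore this action sends $\sigma^a_{\alpha,\beta}$ to $j^\sharp\tau^{\PP{N}}_{\xi,\theta}$, using that a stability condition is determined by its heart and central charge.

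The $\GLt$-action is continuous, and $\GLt$ is connected. So $\sigma^a_{\alpha,\beta}$ lies in the same component as $j^\sharp\tau^{\PP{N}}_{\xi,\theta}$, namely in $\Stab^\dagger_H(X)$, for all triples of the form $\alpha^2=\xi^2/3-2\kappa$, $a=\xi^2/2-\kappa$ with $\xi$ large. These triples satisfy $a>\alpha^2/6$.

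Finally, the region $\{(\alpha,\beta,a)\mid\alpha>1/2,\ a>\alpha^2/6\}$ is connected, and the family $\sigma^a_{\alpha,\beta}$ is continuous by \cref{thm:li}. The whole family therefore lies in one connected component, and that component is $\Stab^\dagger_H(X)$.
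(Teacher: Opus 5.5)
Your central charge computation (Grothendieck--Riemann--Roch with $\td(X)\,j^*\td(\PP{N})^{-1}=e^{-(N+1)H/2}(1+\kappa H^2)$) matches the paper, and so does your $\GLt$/connectedness argument for the final assertion. The gap is the identification of the heart, which is the real content of the lemma.

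Your stage~3 is circular. That $j^\sharp\tau^{\PP{N}}_{\xi,\theta}$ is a stability condition says nothing about whether its heart $\{E\mid j_*E\in\QC_{\xi,\theta}((0,1])\}$ is a double tilt of $\Coh(X)$, and that is exactly what must be proved. Moreover, a stability condition is not determined by its central charge: this paper produces stability conditions with one central charge in infinitely many components. So equality of central charges cannot force $j^\sharp\QC_{\xi,\theta}((0,1])=\AC_{\alpha,\beta}$.

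Stages~1--2 do not supply the missing description either. Your claim that $\QC_{\xi,\theta}((0,1])$ is a double tilt of $\Coh(\PP{N})$ is unjustified, and implausible for $N\geq4$. Since $\dim X=3<N$, the sheaf $j_*E$ is torsion on $\PP{N}$. Hence a slope-defined torsion pair on $\Coh(\PP{N})$ cannot distinguish $\mu_H(E)\leq\beta$ from $\mu_H(E)>\beta$. There is no stage-by-stage matching of tilts under $j_*$.

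The missing input is a genuine comparison theorem. The paper uses \cite[Theorem~1.2]{preprint_yiran_soheyla_2026_stability_conditions_on_threefolds}: for rational $\xi,\theta$, the induced heart on the threefold $X$ is the double tilt with respect to the twisted Chern character $e^{-\theta H}\td(N_{X/\PP{N}})^{-1}\ch=(1+\kappa H^2)\ch^\beta$. The correction term changes neither $\ch_0$ nor $\ch_1^\beta$, so the first tilt is $\Coh^\beta(X)$. The second-tilt slope becomes $\bigl(H\ch_2^\beta-(\xi^2/6-\kappa)H^3\rk\bigr)/H^2\ch_1^\beta=\nu_{\alpha,\beta}$, which gives $\AC_{\alpha,\beta}$.

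Since that theorem only covers rational parameters, the paper then compares two families:
\begin{itemize}
\item the family of \cref{thm:li} rescaled by $\GLt$, with heart $\AC_{\alpha,\beta}$ and central charge $\Re Z^a_{\alpha,\beta}+\sqrt{-1}\,t\Im Z^a_{\alpha,\beta}$ at $t=\xi$;
\item the family $j^\sharp\tau^{\PP{N}}_{\xi,\theta}$.
\end{itemize}
Both are continuous in $(\xi,\theta)$ and agree on rational parameters, hence they agree for all real $(\xi,\theta)$.

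Without such an input, your argument establishes only the central charge formula. Your final assertion is then unsupported as well, because it needs $j^\sharp\tau^{\PP{N}}_{\xi,\theta}$ to coincide with a $\GLt$-translate of some $\sigma^a_{\alpha,\beta}$ for at least one parameter.
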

\begin{proof}
Write $j^\sharp\tau^{\PP{N}}_{\xi,\theta}=(W,\PC)$.
Since $c_1(X)=0$ and $c_2(X)=10H^2$, we have
\[
\td(N_{X/\PP{N}})^{-1}
=\frac{\td(X)}{j^*\td(\PP{N})}
=e^{-(N+1)H/2}(1+\kappa H^2).
\]
For $F\in D^b(X)$, Grothendieck--Riemann--Roch gives
\[
\begin{aligned}
W(F)
&=-\int_X e^{-(\beta+\sqrt{-1}\xi)H}(1+\kappa H^2)\ch(F)\\
&=-\ch_3^\beta(F)
  +\left(\frac{\xi^2}{2}-\kappa\right)H^2\ch_1^\beta(F)+\sqrt{-1}\xi\left(H\ch_2^\beta(F)
  -\left(\frac{\xi^2}{6}-\kappa\right)H^3\rk F\right)\\
&=\Re Z_{\alpha,\beta}^a(F)+\sqrt{-1}\xi\Im Z_{\alpha,\beta}^a(F).
\end{aligned}
\]

Assume first that $\xi,\theta\in\QB$.
By \cite[Theorem~1.2]{preprint_yiran_soheyla_2026_stability_conditions_on_threefolds}, the heart $\PC((0,1])$ is obtained by double tilting with respect to the twisted Chern character $e^{-\theta H}\td(N_{X/\PP{N}})^{-1}\ch=(1+\kappa H^2)\ch^\beta$.
The first tilt is $\Coh^\beta(X)$, and the slope for the second tilt is
\[
\frac{H\ch_2^\beta(F)
  -\left(\frac{\xi^2}{6}-\kappa\right)H^3\rk F}
{H^2\ch_1^\beta(F)}
=\nu_{\alpha,\beta}(F).
\]
Thus, $\PC((0,1])=\AC_{\alpha,\beta}$.

For sufficiently large $\xi$, we have $\alpha>1/2$ and $a=3\alpha^2/2+2\kappa>\alpha^2/6$.
The $\GLt$-action gives stability conditions with heart $\AC_{\alpha,\beta}$ and central charge $\Re Z_{\alpha,\beta}^a+\sqrt{-1}t\Im Z_{\alpha,\beta}^a$ for $t>0$.
These stability conditions depend continuously on $(t,\alpha,\beta,a)$ by the continuity of the action and \cref{thm:li}.
Taking $t=\xi$ and the parameters in the statement, this family agrees with $j^\sharp\tau^{\PP{N}}_{\xi,\theta}$ for rational $\xi,\theta$.
Since the latter family is also continuous by \cref{thm:projective_family_stability}, they agree for real $\xi,\theta$ as well.

For fixed $\xi,\theta$, letting $t$ vary from $\xi$ to $1$ gives a path from $j^\sharp\tau^{\PP{N}}_{\xi,\theta}$ to $\sigma_{\alpha,\beta}^a$.
Since $j^\sharp\tau^{\PP{N}}_{\xi,\theta}\in\Stab^\dagger_H(X)$, we obtain $\sigma_{\alpha,\beta}^a\in\Stab^\dagger_H(X)$ for these parameters.
The domain defined by $\alpha>1/2$, $\beta\in\RB$, and $a>\alpha^2/6$ is convex.
Therefore, the continuity in \cref{thm:li} shows that $\sigma_{\alpha,\beta}^a\in\Stab^\dagger_H(X)$ for all parameters in this domain.
\end{proof}

We next prove stability of the evaluation bundles at a wall.

\begin{lem}\label{lemma: rank bound for tilt subobjects}
Let $E$ be a $\mu_H$-stable vector bundle of rank two with $c_1(E)=-H$.
Suppose that $\nu_{\alpha,\beta}(E)=0$ and $\alpha, \beta$ satisfy
$\alpha>0$, $\beta<-1/2$, $\alpha+\beta>-1$, and $3\alpha+2\beta>-1$.

Let $A\hookrightarrow E$ be a nonzero $\nu_{\alpha,\beta}$-semistable subobject in $\Coh^\beta(X)$ with $\nu_{\alpha,\beta}(A)\geq0$.
Then $A$ is a coherent sheaf such that $E/A$ is a zero-dimensional sheaf and $\nu_{\alpha,\beta}(A)=0$.
\end{lem}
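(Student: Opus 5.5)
The plan is to pass to cohomology sheaves, use that $H^2\ch_1^\beta$ is nonnegative and discrete on $\Coh^\beta(X)$ to confine $A$ to a few numerical cases, and then rule out all but the expected one using $\mu_H$-stability of $E$, the Bogomolov--Gieseker inequality and the three numerical hypotheses on $(\alpha,\beta)$.

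\emph{Step 1: $A$ is a sheaf.} Put $Q\coloneqq E/A$ in $\Coh^\beta(X)$. Since $\mu_H(E)=-1/2>\beta$ and $E$ is $\mu_H$-stable, $E\in\TC_\beta$. The long exact cohomology sequence of $0\to A\to E\to Q\to0$ is
\[
0\to\HC^{-1}(A)\to0\to\HC^{-1}(Q)\to\HC^0(A)\to E\to\HC^0(Q)\to0 .
\]
It gives $\HC^{-1}(A)=0$, so $A$ is a sheaf in $\TC_\beta$. It also exhibits $\HC^{-1}(Q)\in\FC_\beta$ as the kernel of $A\to E$, and $\HC^0(Q)\in\TC_\beta$ as the cokernel.

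\emph{Step 2: reduce to $H^2\ch_1^\beta(Q)=0$.} We have $H^2\ch_1^\beta(E)=5(-1-2\beta)>0$. Additivity and nonnegativity of $H^2\ch_1^\beta$ on $\Coh^\beta(X)$ give
\[
0\le H^2\ch_1^\beta(A)\le H^2\ch_1^\beta(E).
\]
Since $\ch_1(A)$ is an integral multiple of $H$, this leaves only finitely many possibilities for $(\rk A, \ch_1(A))$. First I would exclude $H^2\ch_1^\beta(A)=0$: such an $A$ is a sheaf supported in dimension at most one, and $\Hom(A,E)=0$ because $E$ is locally free. So $0<H^2\ch_1^\beta(A)$.

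Next I would exclude $0<H^2\ch_1^\beta(A)<H^2\ch_1^\beta(E)$. In that range $A$ would destabilize $E$ along the numerical wall $\nu_{\alpha,\beta}(A)=\nu_{\alpha,\beta}(E)=0$. I would split by the rank of the image of $A\to E$ (zero, one or two). In the rank-one case, $\mu_H$-stability of $E$ bounds $c_1$ of the image by $-H$. In every case the Bogomolov--Gieseker inequality $\Delta_H\ge0$ applies to the $\nu$-semistable object $A$, and also to the $\nu$-semistable factors of $Q$. A computation in the $(\beta,\alpha)$-plane should then show that every admissible class $(\rk A,\ch_1(A),H\ch_2(A))$ satisfies $\nu_{\alpha,\beta}(A)<0$. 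This is where the hypotheses $\beta<-1/2$, $\alpha+\beta>-1$ and $3\alpha+2\beta>-1$ enter: they should place the point $(\beta,\alpha)$ above all candidate walls. I expect this case analysis, and in particular checking that those three inequalities are exactly what kills each candidate wall, to be the main obstacle.

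\emph{Step 3: conclude that $Q$ is zero-dimensional.} From Step 2, $H^2\ch_1^\beta(Q)=0$. Then $\HC^{-1}(Q)$ would be a $\mu_H$-semistable sheaf of slope exactly $\beta$ mapping injectively into $A$. I would rule it out by comparing ranks, since $\rk A\le \rk E=2$, together with the BG bound applied to $A$. Once $\HC^{-1}(Q)=0$, $Q$ is a sheaf supported in dimension at most one, and $\ch_{\le1}(A)=\ch_{\le1}(E)$. Hence
\[
\nu_{\alpha,\beta}(A)=\nu_{\alpha,\beta}(E)-\frac{H\ch_2(Q)}{H^2\ch_1^\beta(E)}=-\frac{H\ch_2(Q)}{H^2\ch_1^\beta(E)} .
\]
The hypothesis $\nu_{\alpha,\beta}(A)\ge0$ forces $H\ch_2(Q)=0$, so $Q$ is zero-dimensional and $\nu_{\alpha,\beta}(A)=0$.
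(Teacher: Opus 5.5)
Step~2 of your proposal is the whole content of the lemma, and you have not carried it out. You only assert that a case analysis in the $(\beta,\alpha)$-plane ``should'' show $\nu_{\alpha,\beta}(A)<0$. The premise of that analysis is also false. The bound $0\leq H^2\ch_1^\beta(A)\leq H^2\ch_1^\beta(E)$ only constrains $H^2\ch_1(A)/5-\beta\rk A$. Since $\beta<0$, it allows infinitely many pairs $(\rk A,\ch_1(A))$ with $\ch_1(A)\in\ZZ H$, namely large rank with very negative $c_1$. Likewise, in Step~3 the bound $\rk A\leq\rk E=2$ is not available a priori. The map $A\to E$ is a monomorphism in $\Coh^\beta(X)$, not of sheaves, and $\rk A=\rk\HC^{-1}(Q)+\rk I$ with $I\coloneqq\Image(A\to E)$. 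Your final computation $\nu_{\alpha,\beta}(A)=-H\ch_2(Q)/H^2\ch_1^\beta(E)$ is correct and matches the paper's ending, but it only applies once these gaps are closed.

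The missing idea is a rank bound extracted from the hypothesis $\nu_{\alpha,\beta}(A)\geq0$ itself. First, $A$ is torsion-free, being an extension of $I\subset E$ by $\HC^{-1}(Q)$, and $I\neq0$; hence $H^2\ch_1^\beta(A)>0$. Then $\nu_{\alpha,\beta}(A)\geq0$ gives $H\ch_2^\beta(A)\geq\frac{5\alpha^2}{2}\rk A$. Bogomolov--Gieseker for the tilt-semistable $A$ gives $(H^2\ch_1^\beta(A))^2\geq10\rk A\cdot H\ch_2^\beta(A)$. Together these yield
\[
\alpha\rk A\leq\frac{H^2\ch_1^\beta(A)}{5}\leq\frac{H^2\ch_1^\beta(I)}{5},
\]
where the second inequality holds because $\HC^{-1}(Q)\in\FC_\beta$.

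\emph{Case $\rk I=1$.} Slope stability of $E$ and $\Pic(X)=\ZZ H$ give $H^2\ch_1(I)/5\leq-1$. Hence $\alpha\leq-1-\beta$, contradicting $\alpha+\beta>-1$.

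\emph{Case $\rk I=2$.} Here $H^2\ch_1^\beta(I)/5\leq-1-2\beta$, so $\rk A\leq(-1-2\beta)/\alpha<3$ by $3\alpha+2\beta>-1$. Hence $\rk A=2$ and $\HC^{-1}(Q)=0$. Then $H^2\ch_1(A)/5\in[2(\alpha+\beta),-1]\subset(-2,-1]$, which forces $c_1(A)=-H$.

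This is exactly where each numerical hypothesis enters. It gives $\HC^{-1}(Q)=0$ and $\ch_{\leq1}(A)=\ch_{\leq1}(E)$ at once, without any wall analysis or Bogomolov--Gieseker for the factors of $Q$. Your Step~3 computation then finishes the proof.
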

\begin{proof}
By assumption, the bundle $E$ is $\mu_H$-stable with $\mu_H(E)=-1/2>\beta$, so $E\in\TC_\beta$.
Let $0\to A\to E\to Q\to0$ be the given exact sequence in $\Coh^\beta(X)$.
By taking the ordinary cohomology $\HC^i(-)$, we obtain $\HC^{-1}(A)=0$ since the objects $A$ and $Q$ have cohomology only in degrees $-1$ and $0$, and $E$ is a sheaf.
Thus, $A\simeq\HC^0(A)\in\TC_\beta$, and the remaining terms give an exact sequence
\[
0\longrightarrow\HC^{-1}(Q)\longrightarrow A
\longrightarrow E\longrightarrow\HC^0(Q)\longrightarrow0
\]
in $\Coh(X)$, with $\HC^{-1}(Q)\in\FC_\beta$.

Set $I=\Image(A\to E)$ in $\Coh(X)$.
The exact sequence $0\to\HC^{-1}(Q)\to A\to I\to0$ shows that $A$ is torsion-free, since both $\HC^{-1}(Q)$ and $I\subset E$ are torsion-free.
If $I=0$, then $A\simeq\HC^{-1}(Q)\in\TC_\beta\cap\FC_\beta=\{0\}$, which is a contradiction to $A\ne0$.
Therefore, we have $\rk A\geq\rk I>0$.
Since the torsion-free sheaf $A$ belongs to $\TC_\beta$, we have $\mu_H(A)>\beta$ and hence $H^2\ch_1^\beta(A)>0$.

The assumption $\nu_{\alpha,\beta}(A)\geq0$ gives
\begin{equation}\label{equation: tilt subobject second Chern character bound}
H\ch_2^\beta(A)\geq(5\alpha^2/2)\rk A
\end{equation}
and $\HC^{-1}(Q)\in\FC_\beta$ implies $H^2\ch_1^\beta(\HC^{-1}(Q))\leq0$.
Therefore, the Bogomolov--Gieseker inequality for tilt-semistable objects \cite[Corollary~7.3.2]{bayer_macri_toda_2014_bridgeland_stability_conditions_on_threefolds_i_bogomolovgieseker_type_inequalities} yields
\[
\bigl(H^2\ch_1^\beta(A)\bigr)^2
\geq10\rk A\cdot H\ch_2^\beta(A)
\geq25\alpha^2(\rk A)^2.
\]
This inequality and the short exact sequence $0\to\HC^{-1}(Q)\to A\to I\to0$ imply that 
\begin{equation}\label{equation: basic rank bound}
\alpha\rk A
\leq\frac{H^2\ch_1^\beta(A)}5
\leq\frac{H^2\ch_1^\beta(I)}5.
\end{equation}

Since $0\ne I\subset E$ is torsion-free, we have $\rk I\in\{1,2\}$.
Suppose first that $\rk I=1$.
The $\mu_H$-stability of $E$ gives $H^2\ch_1(I)/5=\mu_H(I)<-1/2$. 
Thus, $H^2\ch_1(I)/5\leq-1$ since it is an integer by $\Pic(X)=\ZZ H$.
Therefore, by \labelcref{equation: basic rank bound}, we obtain 
\[
\alpha\rk A\leq H^2\ch_1^\beta(I)/5 = H^2\ch_1(I)/5-\beta \leq -1-\beta, 
\]
which contradicts the assumption $\alpha+\beta > -1$.
Therefore, $\rk I=2$.

Since the quotient $E/I$ is a torsion sheaf whose first Chern class is effective, we have 
\begin{equation}\label{equation: tilt subobject image bound}
\frac{H^2\ch_1^\beta(I)}5
=\frac{H^2\ch_1(I)}5-2\beta
\leq \frac{H^2\ch_1(E)}5-2\beta
=-1-2\beta.
\end{equation}
Combining \labelcref{equation: basic rank bound,equation: tilt subobject image bound} and $3\alpha+2\beta>-1$, we obtain $2\leq\rk A\leq(-1-2\beta)/\alpha<3$, which shows $\rk A=2$.
Consequently, the torsion-free sheaf $\HC^{-1}(Q)$ has rank $\rk A-\rk I=0$, so $\HC^{-1}(Q)=0$.
This proves that $A\to E$ is an injection of coherent sheaves.
Similarly, using \labelcref{equation: basic rank bound,equation: tilt subobject image bound}, and $\rk A=2$, we obtain $H^2\ch_1(A)/5\in[2(\alpha+\beta),-1]\subset(-2,-1]$, so $H^2\ch_1(A)/5 = -1$.
Thus, $c_1(A)=-H=c_1(E)$.

Since the quotient $E/A$ satisfies $\rk(E/A) = 0$ and $c_1(E/A)=0$, $\dim\Supp(E/A)\leq 1$ and $H\ch_2(E/A)\geq0$ hold.
On the other hand, the assumption $\nu_{\alpha, \beta}(E)=0$ implies $H\ch_2^\beta(E)=\frac{\alpha^2}{2}H^3\rk E=5\alpha^2$.
Therefore, by \labelcref{equation: tilt subobject second Chern character bound}, we obtain
\begin{equation}\label{equation: tilt subobject quotient second Chern character}
0\leq H\ch_2(E/A)
=H\ch_2^\beta(E)-H\ch_2^\beta(A)
\leq0.
\end{equation}
This gives $\nu_{\alpha,\beta}(A)=0$ and $\dim\Supp(E/A) = 0$.
\end{proof}

We next prove the $\sigma_{\alpha,\beta}^a$-stability of $E[1]$.
We first recall the description of the phase-one objects in the double-tilt heart.

\begin{lem}[{\cite[proof of Theorem~8.6]{bayer_macri_stellari_2016_the_space_of_stability_conditions_on_abelian_threefolds_and_on_some_calabiyau_threefolds}}]\label{lemma: objects of phase one in the double tilt heart}
Let $\sigma=(Z,\PC)$ be a stability condition in \cref{thm:li}.
For $D\in\PC(1)$, its cohomology with respect to $\Coh^\beta(X)$ gives an exact sequence in $\AC_{\alpha,\beta}$
\[
0\longrightarrow F[1]\longrightarrow D\longrightarrow T\longrightarrow0,
\]
where $T$ is a zero-dimensional sheaf and $F$ is either zero or $\nu_{\alpha,\beta}$-semistable of slope zero.
\end{lem}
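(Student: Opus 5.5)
The approach is to work inside the heart $\AC_{\alpha,\beta}=\PC((0,1])$ and use its structure as a tilt of $\Coh^\beta(X)$ along the torsion pair $(\TC'_{\alpha,\beta},\FC'_{\alpha,\beta})$. Take $D\in\PC(1)$; in particular $D\in\AC_{\alpha,\beta}$. Every object of $\AC_{\alpha,\beta}$ has $\Coh^\beta(X)$-cohomology $\HC^{-1}_\beta(D)\in\FC'_{\alpha,\beta}$ and $\HC^0_\beta(D)\in\TC'_{\alpha,\beta}$. This yields a short exact sequence $0\to F[1]\to D\to T\to0$ in $\AC_{\alpha,\beta}$, where $F\coloneqq\HC^{-1}_\beta(D)$ and $T\coloneqq\HC^0_\beta(D)$. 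The task is therefore to identify $F$ and $T$ using the condition $\Re Z$-only, i.e. $\Im Z^a_{\alpha,\beta}(D)=0$.

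The first step uses that $\Im Z$ is nonnegative on the heart $\AC_{\alpha,\beta}$. Indeed, $\Im Z^a_{\alpha,\beta}=H\ch_2^\beta-\frac{\alpha^2}{2}H^3\rk$ equals $\nu_{\alpha,\beta}\cdot H^2\ch_1^\beta$. It is therefore $>0$ on $\TC'_{\alpha,\beta}$ except when $H^2\ch_1^\beta=0$, and it is $\geq0$ on $\FC'_{\alpha,\beta}[1]$. Since $\Im Z(D)=0$ and $\Im Z$ is additive, both $\Im Z(F[1])=0$ and $\Im Z(T)=0$.

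Next I treat $F$. Applying the same additivity to the $\nu_{\alpha,\beta}$-HN factors of $F$, all of which have $\nu\leq0$, each factor $G$ must satisfy $\nu(G)H^2\ch_1^\beta(G)=0$. A factor with $H^2\ch_1^\beta(G)=0$ has $\nu=+\infty$, which is impossible in $\FC'$. So every factor has $\nu=0$, and $F$ is either zero or $\nu_{\alpha,\beta}$-semistable of slope zero.

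Then I treat $T$. Every HN factor of $T$ with finite positive slope would contribute $\Im Z>0$, so all factors have $\nu=+\infty$, i.e. $H^2\ch_1^\beta=0$, and $T$ is tilt-semistable of slope $+\infty$. The $\Im Z$-vanishing then says $H\ch_2^\beta(T)=\frac{\alpha^2}{2}H^3\rk T$. I then invoke the standard description of objects of $\Coh^\beta(X)$ with $H^2\ch_1^\beta=0$. Such an object is an extension of a sheaf supported in dimension $\leq1$ by $G[1]$, where $G$ is a $\mu_H$-semistable sheaf with $\mu_H=\beta$ (so $\rk T=-\rk G\leq0$). The BMT/Bogomolov inequality gives $H\ch_2^\beta(G)\leq0$, while the torsion part has $H\ch_2\geq0$. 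Combining these with $H\ch_2^\beta(T)=\frac{\alpha^2}{2}H^3\rk T\leq 0$ and $\alpha>0$ forces $\rk G=0$ and the torsion part to have $H\ch_2=0$. Hence $T$ is a zero-dimensional sheaf. This last identification is the main obstacle. To make it rigorous I would follow \cite[Lemma~7.2.1/proof of Theorem~8.6]{bayer_macri_stellari_2016_the_space_of_stability_conditions_on_abelian_threefolds_and_on_some_calabiyau_threefolds} closely, using $\Hom$ from skyscraper-type objects and the strict Bogomolov inequality for $\mu$-semistable sheaves on the quintic.

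Finally, $F[1]\in\PC(1)$ and $T\in\PC(1)$ hold because both lie in the heart with $\Im Z=0$. The sequence is therefore exact in $\AC_{\alpha,\beta}$ as claimed.
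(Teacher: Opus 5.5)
Your proof is correct and is essentially the standard argument from the cited proof of Bayer--Macr\`i--Stellari; the paper itself gives no proof beyond that citation. You split $D$ along the torsion pair $(\TC'_{\alpha,\beta},\FC'_{\alpha,\beta})$, use $\Im Z^a_{\alpha,\beta}\geq0$ on the heart (available directly from \cref{thm:li}) to force $\Im Z=0$ on each piece and on each $\nu_{\alpha,\beta}$-Harder--Narasimhan factor, and then identify the $\nu=+\infty$ part via the Bogomolov inequality. The step you flag as the main obstacle is already settled by the computation you wrote, which combines $H\ch_2\geq0$ for the torsion part, $H\ch_2^\beta(G)\leq0$ for $G$, and $\alpha>0$. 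So neither Homs from skyscraper sheaves nor a strict Bogomolov inequality is needed.
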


\begin{prop}\label{proposition: stability at the wall}
Under the assumptions on $X,E,\alpha,\beta$ in \cref{lemma: rank bound for tilt subobjects}, $E$ is $\nu_{\alpha,\beta}$-semistable.
If $\alpha>1/2$ and $a>\alpha^2/6$, then $E[1]$ is $\sigma_{\alpha,\beta}^a$-stable of phase one.
\end{prop}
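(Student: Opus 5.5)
First I establish tilt-semistability. Since $E$ is $\mu_H$-stable with $\mu_H(E)=-1/2>\beta$, we have $E\in\TC_\beta\subset\Coh^\beta(X)$. Suppose $E$ is not $\nu_{\alpha,\beta}$-semistable. Then the first Harder--Narasimhan factor $A\hookrightarrow E$ in $\Coh^\beta(X)$ is $\nu_{\alpha,\beta}$-semistable and satisfies $\nu_{\alpha,\beta}(A)>\nu_{\alpha,\beta}(E)=0$. In particular $\nu_{\alpha,\beta}(A)\geq0$, so \cref{lemma: rank bound for tilt subobjects} applies. It forces $\nu_{\alpha,\beta}(A)=0$, which is a contradiction. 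Hence $E$ is $\nu_{\alpha,\beta}$-semistable of slope zero. Consequently $E\in\FC'_{\alpha,\beta}$ and $E[1]\in\AC_{\alpha,\beta}$. Moreover, $\Im Z^a_{\alpha,\beta}(E)=0$ because $\nu_{\alpha,\beta}(E)=0$, so $E[1]\in\PC^a_{\alpha,\beta}(1)$. In other words, $E[1]$ is $\sigma^a_{\alpha,\beta}$-semistable of phase one.

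For stability, I show that $E[1]$ is simple in $\PC(1)$. Let $0\to D\to E[1]\to D'\to0$ be a short exact sequence in $\PC(1)$ with $D,D'\neq0$. By \cref{lemma: objects of phase one in the double tilt heart}, write $0\to F[1]\to D\to T\to 0$ with $T$ zero-dimensional and $F$ either zero or tilt-semistable of slope zero, and similarly $F'$, $T'$ for $D'$. The plan is to take the long exact sequence of $\Coh^\beta(X)$-cohomology of the triangle $D\to E[1]\to D'$. Since $E[1]$ has $\Coh^\beta$-cohomology only $E$ in degree $-1$, this gives the sequence
\[0\to F\to E\to F'\to T\to 0\]
in $\Coh^\beta(X)$ and forces $T'=0$. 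The map $T\to \HC^0_\beta(E[1])=0$ vanishes, so $T$ is the cokernel of $E\to F'$.

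Now $F\subset E$ is a subobject in $\Coh^\beta(X)$. If $F\ne0$, it is tilt-semistable of slope $0$, so \cref{lemma: rank bound for tilt subobjects} shows that $F$ is a sheaf with $E/F$ zero-dimensional. Then the image of $E$ in $F'$ is $E/F$, a zero-dimensional sheaf. But $F'$ is tilt-semistable of slope zero, and a nonzero zero-dimensional subobject has slope $+\infty$. Hence $E\to F'$ vanishes, so $F=E$. In that case $D'\simeq T'[0]$ plus nothing, which gives $D'=0$, a contradiction. If instead $F=0$, then $D=T$ is a nonzero zero-dimensional sheaf mapping injectively into $E[1]$ in $\AC_{\alpha,\beta}$. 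However, $\Hom(\OC_x,E[1])=\Ext^1(\OC_x,E)=0$ for a locally free sheaf $E$ on a threefold, by Serre duality ($\Ext^1(\OC_x,E)\cong\Ext^2(E,\OC_x)^\vee=0$). So $T=0$, again a contradiction.

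The main obstacle is the bookkeeping in the cohomology long exact sequence: I must check that $T'$ vanishes and that zero-dimensional sheaves cannot appear as subobjects in the required places. The only geometric inputs are \cref{lemma: rank bound for tilt subobjects} and the vanishing $\Ext^{\leq 2}(\OC_x,E)=0$ for the vector bundle $E$.
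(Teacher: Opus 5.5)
Your proposal is correct and follows essentially the same route as the paper: you get semistability from the first Harder--Narasimhan factor together with \cref{lemma: rank bound for tilt subobjects}, and simplicity in $\PC(1)$ from \cref{lemma: objects of phase one in the double tilt heart}, the $\Coh^\beta(X)$-cohomology sequence $0\to F\to E\to F'\to T\to0$, the vanishing of $\Hom(T,E[1])$, and the fact that a nonzero zero-dimensional sheaf cannot sit inside a slope-zero tilt-semistable object. Two small points need tightening: state the vanishing for an arbitrary zero-dimensional sheaf $T$ (e.g.\ $\Ext^1(T,E)\cong\Ext^2(E,T)^*\cong H^2(X,E^\vee\otimes T)^*=0$, as in the paper), and in the case $F=E$ say explicitly that $F'\cong T$ is then zero-dimensional, hence $F'=0$ and $D'=0$.
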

\begin{proof}
Let $G_1,\ldots,G_m$ be $\nu_{\alpha,\beta}$-Harder--Narasimhan factors of $E$ such that $\nu_{\alpha,\beta}(G_j) < \nu_{\alpha,\beta}(G_i)$ for $i<j$.
Since $G_1$ is a semistable subobject of $E$, \cref{lemma: rank bound for tilt subobjects} gives $\nu_{\alpha,\beta}(G_j) \leq \nu_{\alpha,\beta}(G_1)\leq0$ for every $j$.
In particular, $H^2\ch_1^\beta(G_j)>0$.
As $\nu_{\alpha, \beta}(E)=0$, we have 
\[
0=H\ch_2^\beta(E)-\frac{\alpha^2}{2}H^3\rk E
=\sum_{j=1}^m H^2\ch_1^\beta(G_j)\nu_{\alpha,\beta}(G_j).
\]
Since each summand is nonpositive, we obtain $\nu_{\alpha,\beta}(G_j)=0$ and therefore, $m=1$.
This shows that $E$ is $\nu_{\alpha,\beta}$-semistable of slope zero.

Assume now that $\alpha>1/2$ and $a>\alpha^2/6$, and write $\sigma_{\alpha,\beta}^a=(Z_{\alpha,\beta}^a,\PC)$.
It is enough to show that $E[1]$ is simple in $\PC(1)$.
Assume that there exists a nonzero subobject $A\subset E[1]$ in $\PC(1)$, and put $B\coloneqq E[1]/A$.
By \cref{lemma: objects of phase one in the double tilt heart}, there is an exact sequence $0\to F_A[1]\to A\to T_A\to0$ in $\AC_{\alpha,\beta}$, where $T_A$ is a zero-dimensional sheaf and $F_A$ is either zero or $\nu_{\alpha,\beta}$-semistable of slope zero.
Taking cohomology with respect to $\Coh^\beta(X)$, we have $\HC_{\Coh^\beta(X)}^{-1}(A) \cong F_A$ and $\HC_{\Coh^\beta(X)}^{0}(A) \cong T_A$. Similarly for $B$, $\HC_{\Coh^\beta(X)}^{-1}(B) \cong F_B$ and $\HC_{\Coh^\beta(X)}^{0}(B) \cong T_B$ also hold.
Moreover, taking cohomology of the short exact sequence defining $B$ gives $T_B = 0$, i.e. $B\simeq F_B[1]$, and an exact sequence in $\Coh^\beta(X)$
\[
0\longrightarrow F_A\longrightarrow E\longrightarrow F_B\longrightarrow T_A\longrightarrow0.
\]
Here, $F_A\neq0$. 
Indeed, if $F_A=0$, then $A\simeq T_A$; since $\dim\Supp(T_A)=0$, 
\[
\Hom(T_A,E[1])
\simeq\Ext^2(E,T_A)^*
\simeq H^2(X,E^\vee\otimes T_A)^*
=0,
\]
which is a contradiction.

By \cref{lemma: rank bound for tilt subobjects}, the map $F_A\to E$ is an injection of coherent sheaves with zero-dimensional cokernel $E/F_A$.
The exact sequence $0\to E/F_A\to F_B\to T_A\to0$ in $\Coh^\beta(X)$ shows that $F_B$ is a zero-dimensional sheaf.
If $F_B\ne0$, then $\nu_{\alpha,\beta}(F_B)=+\infty$, which contradicts $\nu_{\alpha,\beta}(F_B)=0$.
Thus, $F_B=0$ and $B=0$, that is, $E[1]$ is simple.
\end{proof}

\begin{lem}\label{lemma: evaluation bundles associated with lines}
Let $\ell\subset X$ be a line and $E_\ell \coloneqq \Ker(H^0(X,\IC_\ell(H))\otimes\OC_X\stackrel{\ev}{\twoheadlongrightarrow}\IC_\ell(H))$.
Then, $E_\ell$ is a  $\mu_H$-stable vector bundle with
\begin{equation}\label{equation: Chern character of evaluation bundles}
\bigl(\rk E_\ell,H^2\ch_1(E_\ell),H\ch_2(E_\ell),\ch_3(E_\ell)\bigr)
=\left(2,-5,-\frac32,\frac76\right).
\end{equation}
Moreover, if $N_{\ell/X}\cong\OC_{\PP{1}}(-1)^{\oplus2}$, then $E_\ell$ is spherical.
\end{lem}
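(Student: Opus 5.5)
The plan is to establish the three assertions separately: local freeness and the Chern character, then $\mu_H$-stability, then sphericity.

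\emph{Local freeness and Chern character.} The line $\ell\subset\PP{4}$ is cut out by three linear forms. Since $X$ spans $\PP{4}$, we get $H^0(X,\IC_\ell(H))\cong\CB^3$, and these three forms generate $\IC_\ell(H)$, so $\ev$ is surjective. Since $\ell\subset X$ is a local complete intersection of codimension two in a smooth threefold, $\IC_\ell$ has local projective dimension one. Hence the kernel $E_\ell$ is locally free of rank $3-1=2$.

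For the Chern character I would write $\ch(E_\ell)=3-e^{H}\bigl(1-\ch(\OC_\ell)\bigr)$. On the Calabi--Yau threefold $X$ we have $\ch(\OC_\ell)=[\ell]+\ch_3(\OC_\ell)$ with $\ch_1(\OC_\ell)=0$, and Riemann--Roch gives $\ch_3(\OC_\ell)=\chi(\OC_\ell)=1$. Using $H\cdot\ell=1$ and $H^3=5$, this yields $H^2\ch_1=-5$, $H\ch_2=-5/2+1=-3/2$, and $\ch_3=-5/6+2=7/6$, which is \eqref{equation: Chern character of evaluation bundles}.

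\emph{Stability.} By Lefschetz, $\Pic(X)=\ZZ H$. A rank-two bundle with $c_1=-H$ is destabilized only by a rank-one reflexive subsheaf $\OC_X(kH)\hookrightarrow E_\ell$ with $k\geq0$, since torsion-free quotients reduce to this case. Such a subsheaf gives a nonzero element of $H^0(E_\ell(-kH))$, and multiplying by a nonzero section of $\OC_X(kH)$ embeds $H^0(E_\ell(-kH))$ into $H^0(E_\ell)$. So it suffices to show $H^0(E_\ell)=0$. This holds because the induced map $H^0(\OC_X^{3})\to H^0(\IC_\ell(H))$ is an isomorphism by construction.

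\emph{Sphericity.} I would avoid computing $\Ext^*(E_\ell,E_\ell)$ directly and instead exhibit $E_\ell$ as the image of $\OC_\ell(-1)$ under autoequivalences. The steps are as follows.
\begin{enumerate}
\item Since $H\cdot\ell=1$, we have $\OC_\ell\cong\OC_\ell(-1)\otimes\OC_X(H)$. This is spherical by \cite[Lemma~3.4]{seidel_thomas_2001_braid_group_actions_on_derived_categories_of_coherent_sheaves} and the assumption $N_{\ell/X}\cong\OC_{\PP{1}}(-1)^{\oplus2}$.
\item $\OC_X$ is spherical because $X$ is a Calabi--Yau threefold. Since $\RHom(\OC_X,\OC_\ell)=\CB$, the defining triangle of the twist gives $T_{\OC_X}(\OC_\ell)\cong\IC_\ell[1]$.
\item Tensoring with $\OC_X(H)$, the object $\IC_\ell(H)$ is spherical.
\item From $0\to\IC_\ell(H)\to\OC_X(H)\to\OC_\ell(1)\to0$, together with the surjection $H^0(\OC_X(H))=\CB^5\to H^0(\OC_\ell(1))=\CB^2$ and the vanishing of higher cohomology (Kodaira vanishing, and $\ell\cong\PP{1}$), we get $\RHom(\OC_X,\IC_\ell(H))=\CB^3$ concentrated in degree zero. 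Hence $T_{\OC_X}(\IC_\ell(H))\cong\Cone\bigl(\CB^3\otimes\OC_X\xrightarrow{\ev}\IC_\ell(H)\bigr)\cong E_\ell[1]$.
\end{enumerate}
Since autoequivalences preserve sphericity, $E_\ell$ is spherical.

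The main obstacle is step (4) of the sphericity argument: one must pin down the degrees carefully, so that the relevant $\RHom$ really is concentrated in degree zero and the cone really is $E_\ell[1]$ rather than a shift. The stability and Chern character steps are routine by comparison.
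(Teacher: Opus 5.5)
Your proposal is correct and follows the paper's proof essentially step for step. The steps match: local freeness comes from the three linear forms and projective dimension one; stability comes from reducing a destabilizing $\OC_X(kH)$ with $k\geq0$ to the vanishing $H^0(E_\ell)=0$; and sphericity comes from realizing $E_\ell[2]$ as the image of $\OC_\ell(-1)$ under your chain $T_{\OC_X}\circ(-\otimes\OC_X(H))\circ T_{\OC_X}\circ(-\otimes\OC_X(H))$, which equals the paper's $(-\otimes\OC_X(2))\circ T_{\OC_X(-2)}\circ T_{\OC_X(-1)}$. Your explicit Riemann--Roch computation of $\ch(E_\ell)$ fills in a step the paper states without proof.
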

\begin{proof}
The ideal of a line in $\PP{4}$ is generated by three independent linear forms.
Consequently, $\IC_{\ell}(1)$ is globally generated and $H^0(X,\IC_{\ell}(1))\cong\CB^3$.
The exact sequence defining the ideal sheaf, together with Kodaira vanishing, gives $H^j(X,\IC_{\ell}(1))=0$ for $j>0$.
Since $\ell\subset X$ is a regular immersion of codimension two, its ideal sheaf has local projective dimension at most one.
Therefore, the kernel $E_\ell$ is locally free of rank two.

Next, we show the slope stability of $E_\ell$.
The defining exact sequence of $E_\ell$ gives $H^0(X,E_\ell)=0$ and $c_1(E_\ell)=-H$, so $\mu_H(E_\ell)=-1/2$.
The inclusion $E_\ell\hookrightarrow\OC_X^{\oplus3}$ also gives $H^0(X,E_\ell(-k))=0$ for every integer $k>0$.
If a rank-one subsheaf $A\subset E_\ell$ destabilized $E_\ell$, then $A^{**}\cong\OC_X(k)$ for an integer $k\geq-1/2$, so $k\geq0$.
Taking double duals would produce a nonzero map $\OC_X(k)\to E_\ell$, which is a contradiction to the vanishing.

Since $T_{\OC_X(-1)}(\OC_{\ell}(-1))\cong\IC_{\ell}(-1)[1]$ holds by $\RHom(\OC_X(-1), \OC_{\ell}(-1))\cong \CB$ and $T_{\OC_X(-2)}(\IC_{\ell}(-1))\cong E_\ell(-2)[1]$ holds by $\RHom(\OC_X(-2), \IC_{\ell}(-1)[1])\cong H^0(X, \IC_{\ell}(1))[1]$, we have
\[
(-\otimes \OC_X(2))\circ T_{\OC_X(-2)}\circ T_{\OC_X(-1)}(\OC_{\ell}(-1))  \cong E_\ell[2].
\]
If $N_{\ell/X}\cong\OC_{\PP{1}}(-1)^{\oplus2}$, $\OC_\ell(-1)$ is spherical.
Therefore, $E_\ell$ is isomorphic to the image of a spherical object by an autoequivalence, and is spherical.
\end{proof}

\begin{thm}\label{theorem: geometric disconnectedness for general quintics}
For every $b\in U(\CB)$, the locus $\Stabgeo(\XC_b)$ meets distinct connected components of $\Stab(\XC_b)$.
In particular, $\Stabgeo(\XC_b)$ is disconnected.
\end{thm}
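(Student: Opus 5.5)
The plan is to apply \cref{theorem: family geometricity} to the family $\XC_U\to U$ of \cref{remark: lines on general quintic Calabi--Yau threefolds}, but with the spherical objects $E_\ell$ of \cref{lemma: evaluation bundles associated with lines} in place of $\OC_\ell(-1)$. First I construct a relative version. On $\XC_T$ with universal line $\LC$, the sheaf $\IC_\LC\otimes\HC$ is globally generated relative to $T$ and has a rank-three pushforward $\mathcal V$ to $T$ by cohomology and base change (the vanishing $H^j(\IC_\ell(1))=0$ for $j>0$ and $h^0=3$ holds fiberwise). Define $\EC\coloneqq\Ker(\mathcal V\otimes\OC\to\IC_\LC\otimes\HC)$. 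This is a perfect complex, in fact locally free, whose restrictions are $E_\ell$. These restrictions are spherical because every line is a $(-1,-1)$-curve, as shown in the proof of \cref{theorem: disconnectedness for general quintics}.

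Mutual orthogonality needs a short argument. The identity $E_\ell[2]\cong(-\otimes\OC_X(2))\circ T_{\OC_X(-2)}\circ T_{\OC_X(-1)}(\OC_\ell(-1))$ from \cref{lemma: evaluation bundles associated with lines} expresses each $E_\ell$ as the image of $\OC_\ell(-1)$ under one fixed autoequivalence, independent of $\ell$. Since the sheaves $\OC_\ell(-1)$ for distinct disjoint lines are orthogonal, so are the $E_\ell$. Hence \cref{setup: disconnectedness in families} holds with $E=\EC$ and the same $\sigma_\bullet$ as in \cref{theorem: disconnectedness for general quintics}, so $\Sigma_b=\Stab^\dagger_H(\XC_b)$.

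Next I choose $\sigma_0$ and the subset $I$ required in \cref{theorem: family geometricity}. I take $I=T_b$, all 2875 lines. Each $E_\ell$ is a vector bundle of rank $2$. By \cref{lemma: evaluation bundles associated with lines} it is $\mu_H$-stable with $c_1=-H$, and its numerical class is the same for every $\ell$. I need parameters with $\alpha>1/2$, $\beta<-1/2$, $\alpha+\beta>-1$, $3\alpha+2\beta>-1$ and $\nu_{\alpha,\beta}(E_\ell)=0$. From \eqref{equation: Chern character of evaluation bundles}, the numerator is $H\ch_2^\beta=-\tfrac32+5\beta+\tfrac52\beta^2$ and the wall condition is $H\ch_2^\beta=5\alpha^2$, i.e.
\[
\alpha^2=\tfrac12\beta^2+\beta-\tfrac{3}{10}.
\]
I will check that some $\beta$ just below $-1/2$ fails this, since the right side is negative near $\beta=-1/2$. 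So $\beta$ must be pushed outward. The main obstacle is this numerics: satisfying $\alpha^2=\tfrac12(\beta+1)^2-\tfrac45$ together with $\alpha+\beta>-1$ and $\alpha>1/2$. If the window is empty, I will try $\beta>-1/2$ instead, the heart conventions permitting, or adjust the convention via the shift, for example by considering $E_\ell$ twisted appropriately.

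Assuming such $(\alpha,\beta)$ exists, \cref{proposition: stability at the wall} shows that each $E_\ell[1]$ is $\sigma^a_{\alpha,\beta}$-stable of phase one for any $a>\alpha^2/6$. By \cref{thm:li}, skyscraper sheaves are stable of phase one there as well. By \cref{lemma: comparison of induced and double tilt stability conditions}, $\sigma_0\coloneqq\sigma^a_{\alpha,\beta}\in\Stab^\dagger_H(X)=\Sigma_b$. Then \cref{theorem: family geometricity} gives that $\Stabgeo(\XC_b)$ meets at least $2^{2875}-1$ components, and in particular it is disconnected.
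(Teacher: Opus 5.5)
Your overall route is the paper's: the same relative kernel $\EC$, the same application of \cref{theorem: family geometricity} with $I=T_b$, and $\sigma_0=\sigma^a_{\alpha,\beta}$ taken from Li's family. Your orthogonality argument, via the single $\ell$-independent autoequivalence $(-\otimes\OC_X(2))\circ T_{\OC_X(-2)}\circ T_{\OC_X(-1)}$, is correct and more explicit than the paper's.

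\textbf{The gap.} The one step with real content is the existence of $(\alpha,\beta)$ with $\nu_{\alpha,\beta}(E_\ell)=0$ that also satisfies the hypotheses of \cref{lemma: rank bound for tilt subobjects} and $\alpha>\frac12$. You leave this as ``assuming such $(\alpha,\beta)$ exists'', and your computation of it is wrong. The quadratic term of $H\ch_2^\beta(E_\ell)$ is $\frac{\beta^2}{2}H^3\rk E_\ell=5\beta^2$, not $\frac52\beta^2$; you dropped the rank. With your equation $\alpha^2=\frac12(\beta+1)^2-\frac45$ the window is genuinely empty:
\begin{itemize}
\item for $-1\le\beta<-\frac12$ the right side is negative, so there is no real $\alpha$;
\item for $\beta<-1$ it gives $\alpha^2<(\beta+1)^2$, i.e.\ $\alpha+\beta<-1$.
\end{itemize}
So the argument as written stalls. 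Your fallbacks do not rescue it. For $\beta\ge-\frac12=\mu_H(E_\ell)$ one has $E_\ell\in\FC_\beta$, so \cref{lemma: rank bound for tilt subobjects} and \cref{proposition: stability at the wall} no longer apply. Twisting by $\OC_X(k)$ only translates $\beta$ by $k$. It also replaces the object, whereas \cref{setup: geometric wall} needs $S=E_\ell[1]$ itself, because $T_{E_\ell}(\OC_x)$ is an extension of $E_\ell[1]^{\oplus 2}$ by $\OC_x$.

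\textbf{The repair.} Use the correct wall equation $5\alpha^2=H\ch_2^\beta(E_\ell)=-\frac32+5\beta+5\beta^2$, i.e.\ $\alpha^2=\beta^2+\beta-\frac{3}{10}$. The paper takes $\beta=-2$ and $\alpha=\sqrt{17/10}\approx1.304$. Then
\begin{itemize}
\item $\beta<-\frac12$ and $\alpha>\frac12$;
\item $\alpha+\beta\approx-0.70>-1$;
\item $3\alpha+2\beta\approx-0.09>-1$.
\end{itemize}
In fact every $\beta\le-\frac32$ works. With this choice, \cref{proposition: stability at the wall} shows that every $E_\ell[1]$ is $\sigma^a_{\alpha,\beta}$-stable of phase one for $a>\alpha^2/6$. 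The rest of your argument then goes through unchanged.
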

\begin{proof}
Let $T\to U$ be the cover in \cref{remark: lines on general quintic Calabi--Yau threefolds}.
Let $\LC\subset\XC_T$ be the universal line, $f_T\colon\XC_T\to T$ be the projection,
and $\EC \coloneqq \Ker(f_T^*(f_T)_*\IC_\LC(\HC) \longrightarrow\IC_\LC(\HC))$.
The proof of \cref{lemma: evaluation bundles associated with lines}, together with cohomology and base change, shows that $(f_T)_*\IC_\LC(\HC)$ is locally free of rank three and the evaluation map is surjective.
Since $\LC\subset\XC_T$ is a regular immersion of codimension two, $\EC$ is a vector bundle whose restriction at a point corresponding to $\ell$ is $E_\ell$.
Since the lines on a geometric fiber over $U$ are pairwise disjoint and have normal bundle $\OC_{\PP{1}}(-1)^{\oplus2}$, \cref{lemma: evaluation bundles associated with lines} shows that the fiber restrictions of $\EC$ are mutually orthogonal $3$-spherical bundles.

Choose $\sigma_\bullet\in\Stab(\XC_U/U)$ as in the proof of \cref{theorem: disconnectedness for general quintics}.
Therefore, the connected component $\Sigma_b=\Stab^\dagger_{H_b}(\XC_b)$ containing $\sigma_b$, the cover $T\to U$, and the bundle $\EC$ satisfy the assumptions of \cref{theorem: disconnectedness in families}.

Fix $b\in U(\CB)$.
Put $\alpha_0=\sqrt{17/10}$ and choose $a>\alpha_0^2/6$.
By \cref{lemma: comparison of induced and double tilt stability conditions}, the stability condition $\sigma_0\coloneqq\sigma_{\alpha_0,-2}^a$ on $\XC_b$ belongs to $\Sigma_b$.
All skyscraper sheaves on $\XC_b$ are $\sigma_0$-stable of phase one by \cref{thm:li}.
To apply \cref{theorem: family geometricity} with $I=T_b$, it remains to check $\sigma_0$-stability of $E_\ell[1]$.
Since each $E_\ell$ satisfies $\nu_{\alpha_0,-2}(E_\ell)=0$ by \eqref{equation: Chern character of evaluation bundles}, \cref{proposition: stability at the wall} shows that every $E_\ell[1]$ is $\sigma_0$-stable of phase one.
This completes the proof.
\end{proof}

\begin{rem}\label{remark: number of geometric components for quintics}
A general quintic contains $2875$ lines; see \cref{remark: lines on general quintic Calabi--Yau threefolds}.
Consequently, at least $2^{2875}-1$ distinct components of $\Stab(X)$ meet $\Stabgeo(X)$.
\end{rem}

\subsection{Further Calabi--Yau threefolds}\label{subsection: further calabi-yau 3-folds}

\subsubsection{Complete intersection Calabi--Yau threefolds}
For the remaining complete intersection Calabi--Yau threefolds, we record only the disconnectedness of the stability space.

Fix a multidegree $\mathbf d=(d_1,\ldots,d_c)\in \{(2,4),(3,3),(2,2,3),(2,2,2,2)\}$, and put $n=c+3$.
Let
\[
B_{\mathbf d}\subset
\prod_{j=1}^c\PB H^0(\PP{n},\OC_{\PP{n}}(d_j))
\]
be the open subset parametrizing smooth complete intersection threefolds of multidegree $\mathbf d$.
Let $f_{\mathbf d}\colon\XC_{\mathbf d}\to B_{\mathbf d}$ be the universal family, equipped with the relatively very ample line bundle
$\HC_{\mathbf d}\coloneqq
(\OC_{\PP{n}}(1)\boxtimes\OC_{B_{\mathbf d}})|_{\XC_{\mathbf d}}$.
Write $\XC_{\mathbf d,b}$ for the fiber over $b\in B_{\mathbf d}$, and put
$H_b\coloneqq c_1(\HC_{\mathbf d}|_{\XC_{\mathbf d,b}})$.
Consider the incidence variety
\[
\IC_{\mathbf d}\coloneqq
\{(\ell,b)\in\Gr(2,n+1)\times B_{\mathbf d}
\mid\ell\subset\XC_{\mathbf d,b}\},
\]
and let $\pi_{\mathbf d}\colon\IC_{\mathbf d}\to B_{\mathbf d}$ be the second projection.
The fiber $\pi_{\mathbf d}^{-1}(b)$ is the scheme of lines on $\XC_{\mathbf d,b}$.
The variety $\IC_{\mathbf d}$ is a nonempty open subset of the fiber product of projective bundles over $\Gr(2,n+1)$ whose fiber over $[\ell]$ is
$\prod_{j=1}^c\PB H^0(\PP{n},\IC_\ell(d_j))$.
Thus $\IC_{\mathbf d}$ is irreducible.

\begin{rem}\label{remark: lines on general complete intersections of multidegree d}
The conclusions of \cref{remark: lines on general quintic Calabi--Yau threefolds} also hold for these complete intersections.
By \cite[Theorem~2.1(b)]{debarre_manivel_1998_sur_la_variete_des_espaces_lineaires_contenus_dans_une_intersection_complete}, the fiber $\pi_{\mathbf d}^{-1}(b)$ is nonempty, reduced, and zero-dimensional for general $b\in B_{\mathbf d}$.
Its length is greater than one by the degree formula in
\cite[Theorem~4.3]{debarre_manivel_1998_sur_la_variete_des_espaces_lineaires_contenus_dans_une_intersection_complete}.
Consequently, $\pi_{\mathbf d}$ is finite and \'etale of degree greater than one over a nonempty open subset of $B_{\mathbf d}$.

Distinct lines on $\XC_{\mathbf d,b}$ are disjoint for general $b\in B_{\mathbf d}$.
Indeed, consider the incidence variety of triples $(b,\ell,\ell')$ such that $\ell,\ell'$ are intersecting distinct lines contained in $\XC_{\mathbf d,b}$.
The pairs $(\ell,\ell')$ form a family of dimension $n+2(n-1)=3n-2$.
For each fixed pair, requiring the defining equations to vanish on $\ell\cup\ell'$ imposes $\sum_{j=1}^c(2d_j+1)=3n-1$ independent linear conditions.
Thus, this incidence variety has dimension at most $\dim B_{\mathbf d}-1$, so the closure of its image in $B_{\mathbf d}$ is a proper closed subset.

We may therefore choose a nonempty open subset $U_{\mathbf d}\subset B_{\mathbf d}$ such that the restriction
$\pi_{\mathbf d}|_{T_{\mathbf d}}\colon T_{\mathbf d}\to U_{\mathbf d}$, where
$T_{\mathbf d}\coloneqq\pi_{\mathbf d}^{-1}(U_{\mathbf d})$, is finite and \'etale of degree greater than one, and distinct lines on every geometric fiber over $U_{\mathbf d}$ are disjoint.
Since $T_{\mathbf d}$ is a nonempty open subset of $\IC_{\mathbf d}$, it is irreducible and hence connected.
\end{rem}

\begin{thm}\label{theorem: other complete intersection Calabi--Yau threefolds}
Let $b\in U_{\mathbf d}(\CB)$, and put $X=\XC_{\mathbf d,b}$ and $H=H_b$.
Then $\Stab(X)$ has infinitely many connected components, and every central charge occurring in $\Stab^\dagger_H(X)$ has lifts in infinitely many distinct connected components.
\end{thm}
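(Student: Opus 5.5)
The plan is to copy the proof of \cref{theorem: disconnectedness for general quintics} verbatim, replacing the quintic family by $f_{\mathbf d}\colon\XC_{\mathbf d}\to B_{\mathbf d}$ restricted to $U_{\mathbf d}$, and then to apply \cref{theorem: disconnectedness in families}. Concretely, we take $B=U_{\mathbf d}$, the cover $T=T_{\mathbf d}\to U_{\mathbf d}$ from \cref{remark: lines on general complete intersections of multidegree d}, and $E=j_*\OC_{\LC}(-1)$, where $j\colon\LC\hookrightarrow(\XC_{\mathbf d})_{T_{\mathbf d}}$ is the universal line. By that remark, $T_{\mathbf d}$ is connected and finite \'etale of degree $d\geq2$, and distinct lines on each geometric fiber are disjoint.

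The first step is to check sphericity. For a geometric point $\bar b$ of $U_{\mathbf d}$ and a line $\ell\subset X=\XC_{\mathbf d,\bar b}$, we have $\omega_X\cong\OC_X$. Adjunction then gives $\deg N_{\ell/X}=-2$. Since the fiber $\pi_{\mathbf d}^{-1}(\bar b)$ is reduced and zero-dimensional, its tangent space $H^0(\ell,N_{\ell/X})$ vanishes. A rank-two bundle of degree $-2$ on $\PP{1}$ with no sections must be $\OC_{\PP{1}}(-1)^{\oplus2}$. Hence $\OC_\ell(-1)$ is $3$-spherical by \cite[Lemma~3.4]{seidel_thomas_2001_braid_group_actions_on_derived_categories_of_coherent_sheaves}. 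These objects are mutually orthogonal because the lines are disjoint. Moreover, $E$ is perfect, being the pushforward of a line bundle along a regular closed immersion in a smooth family, and its fiber restrictions are the sheaves $\OC_\ell(-1)$.

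The second step is the numerical hypothesis. We take $A_i=\HC_{\mathbf d}^{\otimes i}$ for $0\leq i\leq3$. For a Calabi--Yau threefold of Picard rank one with $H$ the hyperplane class, $\Knum(X)_\QB$ has dimension four: it is spanned by the classes of $\OC_X$, $\OC_H$, a line and a point. The powers $\OC_X(iH)$, $0\le i\le 3$, generate it, since their Chern characters $e^{iH}$ give an invertible Vandermonde-type matrix in the basis $1,H,H^2/\deg,\mathrm{pt}$. The main obstacle is this step. It needs Lefschetz, i.e. $\Pic(X)=\ZZ H$ and $H^4(X,\QB)=\QB\,H^2$ up to algebraic classes for a smooth complete intersection threefold. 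It also needs that numerical equivalence on curves is detected by $H$-degree, which holds because $H^4(X,\QB)$ is one-dimensional. I would spell out this Hodge-theoretic input rather than cite it implicitly, as was done for quintics.

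Finally, \cref{thm:projective_family_stability} gives $\sigma_\bullet\in\Stab(\XC_{\mathbf d,U_{\mathbf d}}/U_{\mathbf d})$ induced by the embedding into $\PP{n}\times U_{\mathbf d}$, and its fiber at $b$ lies in $\Sigma_b=\Stab^\dagger_{H_b}(X)$ by definition of this component. \cref{theorem: disconnectedness in families} then yields both assertions: infinitely many components, and lifts of every central charge of $\Stab^\dagger_H(X)$ to infinitely many distinct components via $\Phi_{\mathbf a,b}$ with $\sum a_i=0$.
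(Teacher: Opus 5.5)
Your proposal is correct and follows the paper's proof essentially verbatim. Like the paper, you reuse the quintic argument to get $N_{\ell/X}\cong\OC_{\PP{1}}(-1)^{\oplus 2}$, take the pushforward of $\OC(-1)$ from the universal line over $T_{\mathbf d}$, use $\HC_{\mathbf d}^{\otimes i}$ for $0\leq i\leq 3$ as the numerical basis, and apply \cref{theorem: disconnectedness in families} with $\Sigma_b=\Stab^\dagger_{H_b}(X)$. The only addition is your Lefschetz/Vandermonde justification that these four classes form a basis of $\Knum(X)_\QB$, which the paper states without proof.
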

\begin{proof}
For every geometric point $\bar b$ of $U_{\mathbf d}$ and every line $\ell\subset\XC_{\mathbf d,\bar b}$, the argument in the proof of \cref{theorem: disconnectedness for general quintics} gives
$N_{\ell/\XC_{\mathbf d,\bar b}}\cong\OC_{\PP{1}}(-1)^{\oplus2}$.
Thus, the sheaves $\OC_\ell(-1)$ on each geometric fiber are mutually orthogonal $3$-spherical objects.

Let $j\colon\LC_{\mathbf d}\hookrightarrow
\XC_{\mathbf d}\times_{B_{\mathbf d}}T_{\mathbf d}$ be the universal line, whose fiber over $(\ell,b)\in T_{\mathbf d}$ is $\ell\subset\XC_{\mathbf d,b}$.
The complex $j_*\OC_{\LC_{\mathbf d}}(-1)$ is perfect, and its fiber restrictions are the sheaves $\OC_\ell(-1)$.
The classes of $\HC_{\mathbf d}^{\otimes i}|_{\XC_{\mathbf d,\bar b}}$ for $0\leq i\leq3$ form a basis of $\Knum(\XC_{\mathbf d,\bar b})_\QB$ for every geometric point $\bar b$ of $U_{\mathbf d}$.
By \cref{thm:projective_family_stability}, choose a stability condition $\sigma_\bullet$ over $U_{\mathbf d}$ induced by the embedding $\XC_{\mathbf d}\times_{B_{\mathbf d}}U_{\mathbf d}\hookrightarrow\PP{n}\times U_{\mathbf d}$.
Thus, \cref{theorem: disconnectedness in families} applies over $U_{\mathbf d}$, with
$\Sigma_b=\Stab^\dagger_{H_b}(\XC_{\mathbf d,b})$, and proves both assertions.
\end{proof}

\subsubsection{Fermat quintic and Dwork pencil}
In \cref{subsection: rational curves on quintic threefolds}, we proved that $\Stab(X)$ has infinitely many connected components for every quintic threefold parametrized by $U$.
The open subset $U$ does not exhaust the locus with this property.
Indeed, we show below that the same conclusion holds for the smooth members of the Dwork pencil, none of which is parametrized by $U$.

\begin{dfn}\label{definition: Dwork pencil}
The \emph{Dwork pencil} is the family of quintic hypersurfaces
\begin{equation}\label{equation: Dwork pencil}
X_\psi\coloneqq
\{x_0^5+\cdots+x_4^5-5\psi x_0x_1x_2x_3x_4=0\}
\subset\PP{4},
\quad \psi\in\CB.
\end{equation}
The hypersurface $X_\psi$ is smooth if and only if $\psi^5\ne1$.
The member $X_0$ is called \emph{Fermat quintic}.
\end{dfn}

Each $X_\psi$ contains a positive-dimensional family of lines by \cite[Theorem~5.1]{book_mustata_2003_degree_1_curves_on_the_dwork_pencil_and_the_mirror_quintic}, so it is not represented by a point of $U$.
We first treat $\psi=0$, namely the Fermat quintic $X_0$.
In this case, we use rigid conics in place of lines.

\begin{lem}\label{lemma: disjoint rigid conics on the Fermat quintic}
Put $F\coloneqq x_0^5+\cdots+x_4^5$.
The Fermat quintic $X_0=\{F=0\}\subset\PP{4}$ contains smooth conics $C_1,C_2$ such that $C_1\cap C_2=\varnothing$ and $N_{C_i/X_0}\cong\OC_{\PP{1}}(-1)^{\oplus2}$ for $i=1, 2$, 
and admits an automorphism $g$ induced by a projective transformation such that $g(C_1)= C_2$.
\end{lem}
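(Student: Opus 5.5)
We will construct the conics explicitly from the standard decomposition of the Fermat equation. Let $\zeta$ be a primitive tenth root of unity with $\zeta^5=-1$, so $a^5+(\zeta b)^5=a^5-b^5$. Take the plane
\[
P_1\coloneqq\{x_3=\zeta x_0,\ x_4=\zeta x_1\}\subset\PP{4}.
\]
On $P_1$ we have $F|_{P_1}=x_0^5-x_0^5+x_1^5-x_1^5+x_2^5=x_2^5$, so $P_1\cap X_0$ is the line $\{x_2=0\}$ with multiplicity five. Planes therefore give lines rather than conics, and this ansatz has to be modified.

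Instead we look for a conic in a plane $P$ on which $F|_P$ factors as $q\cdot c$, with $q$ a smooth conic and $c$ a cubic. A natural source is a plane through a line on $X_0$ with a residual factorization. Alternatively, and more concretely, we use the known rigid conics on the Fermat quintic: these lie in planes of the form $\{x_0+\zeta_1x_1=0\}\cap\{\text{suitable hyperplane}\}$, and are described as $x_0=-\zeta x_1$ together with a conic in the remaining variables solving $x_2^5+x_3^5+x_4^5=0$ restricted to a plane. The restriction to a plane is $x_2^5+x_3^5+(\lambda x_2+\mu x_3)^5$, which vanishes identically only for special $(\lambda,\mu)$. I would therefore search among planes $P=\{x_0=-\zeta x_1,\ \ell(x_2,x_3,x_4)=0\}$ and accept that a concrete parametrized smooth conic $C_1\colon\PP{1}\to X_0$ of degree two, for instance $[s^2:\,\ldots]$ with monomial coordinates chosen so that $F$ vanishes termwise after substitution, will be written down and checked directly by expanding $F$.

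With $C_1$ fixed, the remaining steps are as follows.

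\textbf{Choosing $g$ and $C_2$.} Take $g$ to be a diagonal scaling $x_i\mapsto\zeta_5^{k_i}x_i$ or a coordinate permutation; both preserve $F$. Put $C_2=g(C_1)$, choosing $g$ so that the supporting planes are disjoint or the conics meet no common point. Disjointness is checked by solving the two sets of equations simultaneously. Since both conics are explicit, this reduces to showing that a small polynomial system has no common solution.

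\textbf{Normal bundle.} The degree computation gives $\deg N_{C_1/X_0}=-\deg K_{\PP{1}}+\deg K_{X_0}|_{C}-\ldots=-2$, so $N\cong\OC(a)\oplus\OC(-2-a)$. It suffices to show $H^0(C_1,N_{C_1/X_0})=0$. We do this via the sequence
\[
0\to N_{C/X_0}\to N_{C/\PP{4}}\to \OC_C(5)\to 0,
\]
checking that the induced map $H^0(N_{C/\PP{4}})\to H^0(\OC_{\PP{1}}(10))$ is injective. This is a linear-algebra check: $H^0(N_{C/\PP{4}})$ has dimension $11$, coming from $3$ sections from moving the plane and $5$ from the conic within it, adjusted by automorphisms, and the target has dimension $11$. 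Injectivity then amounts to a determinant computation of the partial derivatives $\partial_iF$ restricted to $C$. The normal bundle of $C_2$ follows from that of $C_1$ via $g$.

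\textbf{Main obstacle.} The hardest part is producing an explicit conic whose normal bundle is balanced. Many conics on $X_0$ move in families (cones over the lines), and those have $H^0(N)\neq0$. I would first try conics in planes meeting $X_0$ in a conic union a cubic with no common line, and verify rigidity by the Jacobian computation, possibly with computer assistance.
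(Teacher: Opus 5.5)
There is a genuine gap, located exactly where you place the main obstacle. The lemma is an existence statement, and you never produce $C_1$. Moreover, the concrete searches you propose cannot succeed.

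On a plane $P=\{x_0=-\zeta x_1,\ \ell(x_2,x_3,x_4)=0\}$ with $\zeta^5=1$, the terms $x_0^5+x_1^5$ cancel. So $F|_P$ is a binary quintic in the two of $x_2,x_3,x_4$ left free by $\ell$; this is your own expression $x_2^5+x_3^5+(\lambda x_2+\mu x_3)^5$. It does not involve the third coordinate $x_1$ of $P$, hence $P\cap X_0$ is a union of lines through $(-\zeta:1:0:0:0)$. More generally, every plane inside the hyperplane $\{x_0+\zeta x_1=0\}$ meets $X_0$ inside the cone over the smooth Fermat plane quintic $\{x_2^5+x_3^5+x_4^5=0\}$, and that cone contains no smooth conic.

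The monomial ansatz fails too. If the coordinates of $C_1$ are scalar multiples of $s^2,st,t^2$, the coefficients of $s^{10}$, $s^5t^5$, $t^{10}$ in $F$ must vanish separately. That needs at least two nonzero coordinates per monomial, i.e.\ six coordinates, whereas an embedded conic must use all three monomials. Finally, asking for a plane with $F|_P=q\cdot c$ merely restates what a conic on $X_0$ is.

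The missing idea is to cut the plane by hyperplanes that mix the variable groups. The paper uses Walcher's conics at $\psi=0$. Take $a^5=-16/3$, $b^2=2$, the plane $P_a=\{x_0+x_1+ax_4=x_2+x_3+ax_4=0\}$, and coordinates $u=x_0+ax_4/2$, $v=x_2+ax_4/2$, $w=ax_4$ on it. Then one finds $F|_{P_a}=-5w\,Q_+Q_-$ with $Q_\pm=u^2+v^2\pm buv+w^2/4$, a smooth conic of determinant $1/8$, and one sets $C_1=\{Q_+=0\}\subset P_a$.

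Once this is in hand, the rest of your outline matches the paper:
\begin{itemize}
\item Rigidity reduces to injectivity of $H^0(N_{C_1/\PP{4}})\to H^0(\OC_{C_1}(5))$. With $R=\CB[u,v,w]/(Q_+)$, this map becomes $(L,M,A)\mapsto(u+w/2)^4L+(v+w/2)^4M+2buvwA$ on $R_1^{\oplus2}\oplus R_2$. The paper proves injectivity by reducing modulo $u$, $v$, $w$ rather than by computing a determinant.
\item The map $g\colon(x_0:\dots:x_4)\mapsto(x_1:x_0:x_2:x_3:\zeta x_4)$, with $\zeta^5=1\ne\zeta$, sends $P_a$ to $P_{a/\zeta}$. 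It sends $C_1$ to the conic $C_2$ given by $Q_-$ in the coordinates of $P_{a/\zeta}$.
\end{itemize}

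Two planes in $\PP{4}$ always meet, so your option of disjoint supporting planes is never available. Here $P_a\cap P_{a/\zeta}$ is the line $\{x_4=x_0+x_1=x_2+x_3=0\}$. On it the two conic equations read $x_0^2+x_2^2\pm bx_0x_2=0$, which force $x_0=x_2=0$, so $C_1\cap C_2=\varnothing$.

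Two small slips. Planes in $\PP{4}$ move in a $6$-dimensional family, so $h^0(N_{C_1/\PP{4}})=6+5=11$, not ``$3$ plus $5$''. And the degree formula should read $\deg N_{C_1/X_0}=\deg K_{C_1}-K_{X_0}\cdot C_1=-2$.
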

\begin{proof}
Choose $a,b\in\CB$ with $a^5=-16/3$ and $b^2=2$, and consider a plane
\[
P_a=\{x_0+x_1+ax_4=x_2+x_3+ax_4=0\}\subset\PP{4}.
\]
In the coordinates $u=x_0+ax_4/2$, $v=x_2+ax_4/2$, and $w=ax_4$ on $P_a$, the conics
\[
C_a^\pm=\{Q_\pm\coloneqq u^2+v^2\pm buv+w^2/4=0\}\subset P_a
\]
lie on $X_0$; see \cite[(3.18)--(3.19)]{walcher_2012_on_the_arithmetic_of_dbrane_superpotentials_lines_and_conics_on_the_mirror_quintic}, specialized at $\psi=0$. 
Note that they are smooth because the determinants of the quadratic forms $Q_\pm$ are $1/8$.

Put $C_1=C_a^+$ and write
$\OC_{C_1}(k)=\OC_{\PP{4}}(k)|_{C_1}$.
Since $C_1\subset\PP{4}$ is a complete intersection of type $(1,1,2)$,
the normal sequence for $C_1\subset X_0\subset\PP{4}$ takes the form
\[
0\longrightarrow N_{C_1/X_0}
\longrightarrow
\OC_{C_1}(1)^{\oplus2}\oplus\OC_{C_1}(2)
\xrightarrow{dF}
\OC_{C_1}(5)
\longrightarrow0.
\]
Taking global sections, we obtain
\[
H^0(C_1,N_{C_1/X_0})
\cong
\ker\left(
H^0(C_1,\OC_{C_1}(1))^{\oplus2}
\oplus H^0(C_1,\OC_{C_1}(2))
\longrightarrow H^0(C_1,\OC_{C_1}(5))
\right).
\]

The homogeneous coordinate ring of the plane conic
$C_1=\{Q_+=0\}\subset P_a$ is
$R=\CB[u,v,w]/(Q_+)$, with its standard grading.
For $k\geq0$, the restriction sequence on $P_a\cong\PP{2}$
identifies $R_k$ with $H^0(C_1,\OC_{C_1}(k))$.
Using the defining equations
$x_0+x_1+ax_4$, $x_2+x_3+ax_4$, and $Q_+$
to identify $N_{C_1/\PP{4}}$, the map induced by $dF$ is,
after division by $5$,
\[
R_1^{\oplus2}\oplus R_2\longrightarrow R_5,\quad
(L,M,A)\longmapsto (u+w/2)^4L+(v+w/2)^4M+2buvwA.
\]
Thus, it suffices to prove that this map is injective;
cf. \cite[Appendix~B]{katz_1986_on_the_finiteness_of_rational_curves_on_quintic_threefolds}.

Suppose that $(L,M,A)$ belongs to this kernel.
Reduction modulo $u$ and $v$ gives
$L-4M=0$ in $R/(u)$ and $M-4L=0$ in $R/(v)$, respectively.
Consequently,
\[
L=\lambda u+4\mu v,\qquad M=4\lambda u+\mu v
\]
for some $\lambda,\mu\in\CB$.
Since $u^4+v^4=0$ in $R/(w)$, reduction modulo $w$ gives
\[
3v^4(\lambda u-\mu v)=0
\quad\text{in }R/(w)=\CB[u,v]/(u^2+buv+v^2).
\]
As $v$ is a non-zero-divisor in this ring and its defining ideal
contains no nonzero linear form, we obtain $\lambda=\mu=0$.
Thus, $L=M=0$, and $2buvwA=0$ in the domain $R$ gives $A=0$.
Therefore, $H^0(C_1,N_{C_1/X_0})=0$.
Together with $\deg N_{C_1/X_0}=-2$, this proves
$N_{C_1/X_0}\cong\OC_{\PP{1}}(-1)^{\oplus2}$.

Choose a fifth root of unity $\zeta\ne1$ and define
\[
g(x_0:x_1:x_2:x_3:x_4)
=(x_1:x_0:x_2:x_3:\zeta x_4).
\]
Put $C_2\coloneqq g(C_1)=C_{a/\zeta}^-$.
This automorphism of $X_0$ sends $C_a^+$ to $C_2$.

Suppose that a point lies in $C_a^+\cap C_2$.
Comparing the defining equations of $P_a$ and $P_{a/\zeta}$ gives $x_4=0$.
The two conic equations then give
$u^2+v^2+buv=u^2+v^2-buv=0$.
Hence $u=v=0$, so $x_0=\cdots=x_4=0$.
This is a contradiction.
\end{proof}

Before proving the next theorem, we record the following special case of \cite[Corollary~3.5.2]{polishchuk_2007_constant_families_of_tstructures_on_derived_categories_of_coherent_sheaves} and \cite[Theorem 2.23 (2)]{preprint_chunyi_emanuele_alexander_paolo_xiaolei_2025_stability_conditions_on_products_of_curves_and_hilbert_schemes_of_surfaces}.
By \cref{theorem: Bridgeland's deformation theorem}, each connected component of $\Stab(\PP{N})$ is locally parametrized by $\Hom(\Knum(\PP{N}),\CB)$.
Hence the corollary applies to the action of the connected group $\PGL_{N+1}(\CB)$.

\begin{lem}\label{lemma: projective automorphisms and stability conditions}
For $N\geq1$, $g\in\PGL_{N+1}(\CB)$, and $\tau\in\Stab(\PP{N})$, we have $(g^{-1})^*\tau=\tau$.
\end{lem}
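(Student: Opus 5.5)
The plan is to exploit the connectedness of $\PGL_{N+1}(\CB)$ together with the discreteness of the action of autoequivalences on connected components, in the spirit of \cref{lemma: fixed component}. I will fix $\tau\in\Stab(\PP{N})$ and consider the map
\[
\PGL_{N+1}(\CB)\longrightarrow\Stab(\PP{N}),\quad g\longmapsto (g^{-1})^*\tau .
\]
Every $g\in\PGL_{N+1}(\CB)$ is connected to the identity, so $g^*$ acts trivially on $\Knum(\PP{N})$. This holds because $\Knum(\PP{N})$ is generated by the classes of $\OC(i)$ and $g^*\OC(i)\cong\OC(i)$. Hence $(g^{-1})^*\tau$ has the same central charge $Z$ as $\tau$.

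The key step is to show that this map is continuous. This is precisely the content of \cite[Corollary~3.5.2]{polishchuk_2007_constant_families_of_tstructures_on_derived_categories_of_coherent_sheaves}, and equivalently of \cite[Theorem~2.23(2)]{preprint_chunyi_emanuele_alexander_paolo_xiaolei_2025_stability_conditions_on_products_of_curves_and_hilbert_schemes_of_surfaces}: for a connected algebraic group $G$ acting on $X$, the stability conditions $(g^{-1})^*\tau$ form a continuous family over $G$. Concretely, one takes the universal translate $\tau$ over $G\times X\to G$ and observes that it defines a stability condition over $G$ whose fibers are the $(g^{-1})^*\tau$; continuity then comes from the topology on $\Stab$ over a base. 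I would simply invoke the cited corollary here.

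Granting continuity, the image of the connected space $\PGL_{N+1}(\CB)$ lies in the fiber $\ZC^{-1}(Z)$ of the central charge map. By \cref{theorem: Bridgeland's deformation theorem}, this fiber is discrete. A continuous map from a connected space to a discrete set is constant, so $(g^{-1})^*\tau=(\id)^*\tau=\tau$.

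The main obstacle is the continuity step, and in particular checking that the hypotheses of the cited results hold for the full numerical lattice $\Knum(\PP{N})$ with the support property. I expect this to be fine, since $\Knum(\PP{N})$ has a basis given by line bundles that are invariant under the group action, so the family version of $\tau$ exists with a uniform support property. Everything after that is formal.
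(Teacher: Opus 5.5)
Your argument is essentially the paper's. There the lemma is recorded as a special case of \cite[Corollary~3.5.2]{polishchuk_2007_constant_families_of_tstructures_on_derived_categories_of_coherent_sheaves} and \cite[Theorem~2.23(2)]{preprint_chunyi_emanuele_alexander_paolo_xiaolei_2025_stability_conditions_on_products_of_curves_and_hilbert_schemes_of_surfaces} for the connected group $\PGL_{N+1}(\CB)$. The only hypothesis to check is that each component of $\Stab(\PP{N})$ is locally parametrized by $\Hom(\Knum(\PP{N}),\CB)$, and this is Bridgeland's deformation theorem.

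Note that these results already give triviality of the action, not merely continuity, so your continuity-plus-discrete-fiber step is a harmless unpacking of the citation. Your ``concrete'' justification via the topology on $\Stab(\XC/B)$ would not by itself give continuity in $g$: that topology controls how $\sigma_\bullet$ varies for a fixed base point, not how $\sigma_b$ varies with $b$.
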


\begin{thm}\label{theorem: disconnectedness for the Dwork pencil}
Let $X=X_\psi$ be a smooth member of the Dwork pencil \labelcref{equation: Dwork pencil}, and put $H\coloneqq c_1(\OC_X(1))$.
Then $\Stab(X)$ has infinitely many connected components, and every central charge occurring in $\Stab^\dagger_H(X)$ has lifts in infinitely many distinct connected components.
\end{thm}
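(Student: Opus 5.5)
We split into the cases $\psi=0$ and $\psi\ne0$, and in both cases apply \cref{corollary: disconnectedness from a semilinear equivalence} directly on $X$ with $k=\CB$. Here the semilinear autoequivalences are honest linear ones, namely pullbacks along automorphisms of $X$ induced by projective transformations. No family argument is needed, since the Dwork members carry enough symmetry.

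\textbf{Case $\psi=0$.} By \cref{lemma: disjoint rigid conics on the Fermat quintic} there are disjoint smooth conics $C_1,C_2\subset X_0$ with normal bundle $\OC_{\PP{1}}(-1)^{\oplus2}$, and an automorphism $g$ of $X_0$ induced by some $\tilde g\in\PGL_5(\CB)$ with $g(C_1)=C_2$. Put $E_i\coloneqq\OC_{C_i}(-1)$, where $\OC_{C_i}(-1)$ is the degree $-1$ line bundle on $C_i\cong\PP{1}$. Then:
\begin{itemize}
\item By \cite[Lemma~3.4]{seidel_thomas_2001_braid_group_actions_on_derived_categories_of_coherent_sheaves}, each $E_i$ is $3$-spherical.
\item The $E_i$ are mutually orthogonal, since their supports are disjoint.
\item We have $(g^{-1})^*E_1\simeq E_2$, because $g$ restricts to an isomorphism $C_1\cong C_2$ and degree $-1$ line bundles on $\PP{1}$ are unique.
\end{itemize}
Thus $g_2\coloneqq(g^{-1})^*$ sends $E_1$ to $E_2$.

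It remains to check two things for $g_2$.
\begin{enumerate}
\item \emph{Trivial action on $\Knum(X_0)$.} The classes of $\OC_X(i)$ for $0\leq i\leq3$ span $\Knum(X_0)_\QB$ (as used in \cref{theorem: disconnectedness for general quintics}). Since $\tilde g$ is projective, $g^*\OC_X(1)\cong\OC_X(1)$, so these classes are fixed. Torsion-freeness of $\Knum(X_0)$ then gives $[g_2]=\id$.
\item \emph{Fixing a stability condition in $\Stab^\dagger_H(X_0)$.} Take $\sigma_0=j^\sharp\tau$ for $\tau=\tau^{\PP{4}}_{\xi,\theta}$ and the embedding $j\colon X_0\hookrightarrow\PP{4}$, as in \cref{thm:projective_family_stability}. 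Because $j\circ g=\tilde g\circ j$, we get $g_2(j^\sharp\tau)=j^\sharp((\tilde g^{-1})^*\tau)$: an object $F$ lies in $(g^{-1})^*(j^\sharp\QC)(\phi)$ iff $j_*g^*F\simeq\tilde g^*j_*F$ lies in $\QC(\phi)$, and the central charges also match. By \cref{lemma: projective automorphisms and stability conditions}, $(\tilde g^{-1})^*\tau=\tau$, so $g_2\sigma_0=\sigma_0$.
\end{enumerate}
\cref{corollary: disconnectedness from a semilinear equivalence} with $d=2$ then shows that the components $\Phi_{(m,-m)}\Stab^\dagger_H(X_0)$ are pairwise distinct, and that each $\Phi_{(m,-m)}$ preserves central charges. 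This gives both assertions.

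\textbf{Case $\psi\ne0$.} We use rigid lines instead. The plan is to pick, from Mustaţă's classification \cite{book_mustata_2003_degree_1_curves_on_the_dwork_pencil_and_the_mirror_quintic} (or Walcher), two lines $\ell_1,\ell_2\subset X_\psi$ that are isolated with $N_{\ell_i/X_\psi}\cong\OC_{\PP{1}}(-1)^{\oplus2}$, are disjoint, and are exchanged by a symmetry of the Dwork pencil. The symmetries available are the permutations of coordinates and the diagonal fifth-root-of-unity actions with product $1$; all are projective transformations preserving $X_\psi$. Given such lines, the argument above goes through verbatim with $E_i=\OC_{\ell_i}(-1)$.

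\textbf{Main obstacle.} The hard step is this explicit choice for $\psi\ne0$. The isolated lines are known: there are $375$ of them, of the form $x_0+x_1=x_2+x_3=x_4=0$ up to symmetry, lying in the fixed loci. For each candidate we must verify three things:
\begin{itemize}
\item Rigidity, by a normal-sheaf computation analogous to the one in \cref{lemma: disjoint rigid conics on the Fermat quintic}. This computation must show $H^0(\ell,N_{\ell/X_\psi})=0$ despite the positive-dimensional families of other lines. It is plausible because the $375$ lines are exactly the ones not lying in the $1$-parameter families, but it needs an explicit check, and the rank of the $dF$ map may depend on $\psi$.
\item Disjointness of the chosen pair.
\item That the chosen symmetry maps one line onto the other.
\end{itemize}
For the pair, I would try $\ell_1=\{x_0+x_1=x_2+x_3=x_4=0\}$ and its image under the coordinate permutation $(0\,2\,4)(1\,3)$ or a similar one. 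I would check disjointness directly: the second line, obtained this way, forces $x_0=0$, and then everything vanishes.
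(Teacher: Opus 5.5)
Your $\psi=0$ case is the paper's argument. You use the same conics, the projective symmetry $g$, and $\sigma_0=h^\sharp\tau$ fixed by \cref{lemma: projective automorphisms and stability conditions}, then \cref{corollary: disconnectedness from a semilinear equivalence} with $d=2$. Your overall strategy for $\psi\neq0$ is also the paper's. The gap is that for $\psi\neq0$ you never produce the required data, and the one concrete candidate you offer fails.

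Take $\ell_1=\{x_0+x_1=x_2+x_3=x_4=0\}$. Its image under $(0\,2\,4)(1\,3)$ is $\{x_0=x_2+x_3=x_1+x_4=0\}$, or $\{x_2=x_0+x_1=x_3+x_4=0\}$ with the other convention. This line meets $\ell_1$ at $(0:0:1:-1:0)$, respectively $(1:-1:0:0:0)$. Your check ``$x_0=0$, and then everything vanishes'' overlooks that the equation $x_2+x_3=0$ (resp. $x_0+x_1=0$) is common to both lines. Disjointness is exactly what you use for orthogonality, so this step does not go through as written.

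The fix is short. The paper takes $g=\operatorname{diag}(1,\zeta,1,\zeta,\zeta^{-2})$ with $\zeta^5=1\neq\zeta$. Its entries have product $1$, so $g$ preserves $X_\psi$. It maps $\ell_1=\{(u:-u:v:-v:0)\}$ to $\ell_2=\{(u:-\zeta u:v:-\zeta v:0)\}$. These lines are disjoint because at a common point $x_1/x_0$ would equal both $-1$ and $-\zeta$, forcing $x_0=x_1=0$, and likewise $x_2=x_3=0$. Alternatively, the cyclic permutation $x_i\mapsto x_{i+1}$ sends $\ell_1$ to $\{x_0=x_1+x_2=x_3+x_4=0\}$, which is disjoint from $\ell_1$.

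Rigidity is not a real obstacle. The paper cites \cite[Lemma~1.2]{book_mustata_2003_degree_1_curves_on_the_dwork_pencil_and_the_mirror_quintic}, but it is also a two-line computation. Writing the equation as $A(x_0+x_1)+B(x_2+x_3)+Cx_4$, the restrictions to $\ell_1$ are
\[
A|_{\ell_1}=5u^4,\qquad B|_{\ell_1}=5v^4,\qquad C|_{\ell_1}=-5\psi u^2v^2.
\]
So $H^0(N_{\ell_1/X_\psi})$ is the kernel of $(L,M,N)\mapsto u^4L+v^4M-\psi u^2v^2N$ on triples of linear forms in $u,v$. The six coefficients of $L,M,N$ land on the six distinct degree-five monomials. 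Hence the kernel vanishes exactly when $\psi\neq0$, giving $N_{\ell_1/X_\psi}\cong\OC_{\PP{1}}(-1)^{\oplus2}$. The same holds for $\ell_2=g(\ell_1)$ automatically, since $g$ is an automorphism of $X_\psi$.

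With these choices, the rest of your argument applies verbatim: trivial action on $\Knum(X)$, the fixed $\sigma_0$, and the corollary.
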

\begin{proof}
If $\psi=0$, take $C_1,C_2$, and $g$ as in
\cref{lemma: disjoint rigid conics on the Fermat quintic},
and put $F_i=\OC_{C_i}(-H)$.
If $\psi\ne0$, choose a fifth root of unity $\zeta\ne1$ and let
\[
\begin{aligned}
C_1&=\{(u:-u:v:-v:0)\mid(u:v)\in\PP{1}\},\\
C_2&=\{(u:-\zeta u:v:-\zeta v:0)\mid(u:v)\in\PP{1}\}.
\end{aligned}
\]
These lines in $X$ are disjoint.
The projective transformation $g=\operatorname{diag}(1,\zeta,1,\zeta,\zeta^{-2})$ preserves $X$ and sends $C_1$ to $C_2$.
By \cite[Lemma~1.2]{book_mustata_2003_degree_1_curves_on_the_dwork_pencil_and_the_mirror_quintic}, both normal bundles are $\OC_{\PP{1}}(-1)^{\oplus2}$.
Therefore,  $F_i=\OC_{C_i}(-H)$ for $i=1,2$ are orthogonal spherical objects.

In both cases, the autoequivalence $(g^{-1})^*$ sends $F_1$ to $F_2$.
Since $g$ preserves $H$, $(g^{-1})^*$ acts trivially on $\Knum(X)$.

Let $h\colon X\hookrightarrow\PP{4}$ be the inclusion.
By \cref{thm:projective_family_stability}, we can choose $\tau\in\Stab(\PP{4})$ such that $\sigma_0\coloneqq h^\sharp\tau\in\Stab^\dagger_H(X)$.
By \cref{lemma: projective automorphisms and stability conditions}, $\PGL_{5}(\CB)$ fixes $\tau$.
As $g$ is induced by a projective transformation, the equivariance of $h$ and the definition of the induced stability condition give $(g^{-1})^*\sigma_0=\sigma_0$.
Thus, \cref{corollary: disconnectedness from a semilinear equivalence} applies with $\Sigma=\Stab^\dagger_H(X)$ and proves both assertions.
\end{proof}

\subsection{Examples in any dimension}\label{subsection: products and finite Albanese morphisms}

Let $Y$ and $Z$ be smooth projective varieties over $\CB$, and assume that the Albanese morphism of $Z$ is finite.
Put $X=Y\times Z$, and write $p\colon X\to Y$ and $q\colon X\to Z$ for the projections.
For $z\in Z(\CB)$, let $i_z\colon Y\hookrightarrow X$ be the inclusion of the fiber over $z$.
We use the notation for induced slicings and stability conditions from \cref{subsection:Stability conditions over a base}.

\begin{lem}\label{lemma: product restriction}
For $z\in Z(\CB)$, the map
\[
R_z\colon\Stab(X)\longrightarrow\Stab(Y),
\quad \tau\longmapsto i_z^\sharp\tau
\]
is well-defined and continuous.
For $\tau\in\Stab(X)$, $0\ne F\in D^b(Y)$, and $\phi\in\RB$, the object $F$ is $R_z(\tau)$-stable of phase $\phi$ if and only if $(i_z)_*F$ is $\tau$-stable of phase $\phi$.
\end{lem}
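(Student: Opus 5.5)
The plan rests on the theory of stability conditions on varieties with finite Albanese morphism. By \cite{fu_li_zhao_2022_stability_manifolds_of_varieties_with_finite_albanese_morphisms}, for $X=Y\times Z$ with $\Alb_Z$ finite, every $\tau\in\Stab(X)$ is invariant under the translation-type action coming from $Z$. Concretely, the fiberwise skyscraper objects $\OC_z$ pulled back along $q$ behave well: the objects $(i_z)_*F$ are stable of the same phase as $(i_{z'})_*F'$ after deformation. The key tool I would invoke is the induced-slicing criterion of \cite{preprint_chunyi_zhiyu_ziqi_emanuele_alexander_paolo_xiaolei_2026_stability_conditions_and_moduli_spaces_on_projective_families} (\cite[Definition~3.1]{preprint_chunyi_zhiyu_ziqi_emanuele_alexander_paolo_xiaolei_2026_stability_conditions_and_moduli_spaces_on_projective_families} and the surrounding results). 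The criterion says $i^\sharp\QC$ is a slicing as soon as $i_*$ maps HN filtrations correctly. Equivalently, it suffices that for every $F\in D^b(Y)$ the $\tau$-HN factors of $(i_z)_*F$ are themselves of the form $(i_z)_*F_j$.

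The first step is to prove this support statement: the HN factors of $(i_z)_*F$ are scheme-theoretically supported on $Y\times\{z\}$. I would argue via the finite Albanese morphism $a\colon Z\to\Alb(Z)$. By \cite{fu_li_zhao_2022_stability_manifolds_of_varieties_with_finite_albanese_morphisms}, tensoring with $q^*a^*P$ for $P\in\Pic^0(\Alb Z)$ preserves every slicing $\PC_\tau$, since $\Pic^0$ is connected and acts trivially on $\Knum$. Each HN factor $A_j$ of $(i_z)_*F$ is therefore fixed, up to isomorphism, by all $-\otimes q^*a^*P$. The Fourier--Mukai argument of \emph{loc.\ cit.}\ then shows that $q$-support is contained in a finite set, and in fact in $a^{-1}(a(z))$. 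Finiteness of $a$ and the fact that $\HC^i(A_j)$ are subquotients related to those of $(i_z)_*F$ let one refine this to support on the fiber. Next, $A_j$ must be the pushforward of an object on $Y\times\{z\}$, not merely supported there. For this I would use that the $\OC_Z$-module structure acts on the HN filtration through $\End$: multiplication by a local function vanishing at $z$ acts by zero on $(i_z)_*F$, hence, by functoriality of the HN filtration, nilpotently on each factor. To get an honest pushforward, I would use the $\mathcal O_Z$-linearity (the slicing is $Z$-local by \cite[Theorem~2.23]{preprint_chunyi_emanuele_alexander_paolo_xiaolei_2025_stability_conditions_on_products_of_curves_and_hilbert_schemes_of_surfaces}) together with a dévissage on the infinitesimal thickening.

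With the support statement in hand, $i_z^\sharp\PC_\tau$ is a slicing, and the central charge is $Z_\tau\circ(i_z)_*$. The support property transfers because $(i_z)_*\colon\Knum(Y)\to\Knum(X)$ is injective. Injectivity holds since $\chi(p^*G\otimes q^*\OC_Z,(i_z)_*F)=\chi(G,F)$. So the quadratic form pulls back, with negative definiteness on the kernel following from injectivity. The stability equivalence is formal once HN factors correspond. Stability of $F$ is simplicity in $i_z^\sharp\PC(\phi)$, and $(i_z)_*$ is fully faithful on the subcategory supported on the fiber modulo the thickening issue. Subobjects of $(i_z)_*F$ in $\PC_\tau(\phi)$ are again pushforwards by the support step. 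Continuity then follows because $d(R_z\sigma,R_z\tau)\le d(\sigma,\tau)$, as phases and masses of $F$ are computed from those of $(i_z)_*F$.

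The main obstacle is the first step. The object to control is an HN factor supported on a nonreduced thickening of the fiber rather than a genuine pushforward. I would try first to cite the relevant result of \cite{fu_li_zhao_2022_stability_manifolds_of_varieties_with_finite_albanese_morphisms} (geometricity-type statements that $\OC_Z$-torsion objects have HN factors with pointwise support) together with Polishchuk's constancy.
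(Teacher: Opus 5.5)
Your formal steps are fine and match the paper: injectivity of $(i_z)_*$ on $\Knum$, transfer of the support property, and the bound $d(R_z\sigma,R_z\tau)\le d(\sigma,\tau)$. Your observation that each HN factor of $(i_z)_*F$ is $q^*\Pic^0(Z)$-invariant, by uniqueness of HN filtrations, is also correct.

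\textbf{First gap: well-definedness.} The substantive step is that every $\tau$-HN factor of $(i_z)_*F$ is an honest pushforward $(i_z)_*F_j$, with the filtration maps coming from $D^b(Y)$. You flag this as the main obstacle and leave it open, and the tools you propose do not close it.
\begin{itemize}
\item Nilpotence of $q^*t$ on a factor supported on $Y\times\{z\}$ is not a pushforward criterion. Take $\OC_Y\boxtimes\OC_W$ for a length-two subscheme $W\subset Z$ at $z$. It is supported on the fiber, $t^2=0$ on it, and it is an extension of $(i_z)_*\OC_Y$ by itself, yet it is not of the form $(i_z)_*G$. Such extensions exist because $\Ext^1_X((i_z)_*G,(i_z)_*G')$ contains $\Hom_Y(G,G')\otimes T_zZ$. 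A ``d\'evissage on the thickening'' gives no reason why they cannot occur among semistable factors.
\item The refinement from finite $q$-support to support over $z$ via cohomology sheaves being ``subquotients'' is unfounded. HN filtrations in $D^b(X)$ do not induce subquotients of cohomology sheaves.
\item Your fallback on \cite{fu_li_zhao_2022_stability_manifolds_of_varieties_with_finite_albanese_morphisms} is unavailable. Those results concern varieties whose own Albanese morphism is finite, and $Y\times Z$ is not one. The invariance under $-\otimes q^*L$ you need is \cite[Theorem~2.23(2)]{preprint_chunyi_emanuele_alexander_paolo_xiaolei_2025_stability_conditions_on_products_of_curves_and_hilbert_schemes_of_surfaces}.
\end{itemize}
The paper does not reprove the slicing property. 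It imports it from \cite[Theorem~4.22]{preprint_yiran_2025_bridgeland_stability_conditions_on_some_higherdimensional_calabiyau_manifolds_and_generalized_kummer_varieties}.

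\textbf{Second gap: the stability equivalence.} You claim that subobjects of $(i_z)_*F$ in $\PC_\tau(\phi)$ are pushforwards ``by the support step''. This does not follow. The $\Pic^0$-invariance in that step came from uniqueness of HN factors, and an arbitrary subobject is not canonically attached to $(i_z)_*F$.

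The missing idea is to test only against a stable subobject $A$. The argument then runs as follows.
\begin{enumerate}
\item $A$ is one of the Jordan--H\"older factors of $(i_z)_*F$.
\item $\Pic^0(Z)(\CB)$ permutes the finitely many isomorphism classes of these factors. Divisibility (write $L\cong M^{\otimes r!}$) forces this permutation to be trivial, so $A\otimes q^*L\cong A$.
\item \cite[Lemma~3.3]{preprint_chunyi_emanuele_alexander_paolo_xiaolei_2025_stability_conditions_on_products_of_curves_and_hilbert_schemes_of_surfaces} and finiteness of the Albanese morphism of $Z$ give finite $q$-support. Indecomposability and the nonzero map $A\to(i_z)_*F$ then put $\Supp A$ inside $Y\times\{z\}$.
\item $\End(A)=\CB$ forces each local parameter $q^*t_j$ to act on $A$ by a scalar, and nilpotence makes that scalar zero. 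This is exactly what resolves your ``thickening issue''. It feeds into \cite[Lemmas~4.16 and~4.24]{preprint_yiran_2025_bridgeland_stability_conditions_on_some_higherdimensional_calabiyau_manifolds_and_generalized_kummer_varieties} to give $A\cong(i_z)_*G$.
\end{enumerate}

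Even then the conclusion is not formal, because $(i_z)_*$ is not full. Degree-zero Homs pick up extra terms $\Hom^{-k}_Y(G,F)\otimes\Lambda^kT_zZ$. The paper uses $p\circ i_z=\id$ and \cite[Lemma~4.21(ii)]{preprint_yiran_2025_bridgeland_stability_conditions_on_some_higherdimensional_calabiyau_manifolds_and_generalized_kummer_varieties} to lift the inclusion to a nonzero monomorphism $G\to F$ in $\PC_z(\phi)$. Stability of $F$ then makes this an isomorphism.
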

\begin{proof}
Let $z\in Z(\CB)$ and $\tau=(Z_\tau,\PC_\tau)$.
Since $p_*(i_z)_*=\id$, we identify $\Knum(Y)$ with the image of $(i_z)_*$ in $\Knum(X)$.
Therefore, we have 
\[
i_z^\sharp\tau=(Z_\tau\circ(i_z)_*,\PC_z)\in\Stab(Y),
\]
where $\PC_z(\phi)=\{F\in D^b(Y)\mid(i_z)_*F\in\PC_\tau(\phi)\}$ by the identification and \cite[Theorem~4.22]{preprint_yiran_2025_bridgeland_stability_conditions_on_some_higherdimensional_calabiyau_manifolds_and_generalized_kummer_varieties}. 
Let $F\in D^b(Y)$ be a nonzero object and $F_1,\ldots,F_l$ be the $R_z(\tau)$-Harder--Narasimhan factors of $F$ with phases $\phi_1>\cdots>\phi_l$.
By the definition of $\PC_z$, each $(i_z)_*F_j$ is $\tau$-semistable of phase $\phi_j$.
Thus, pushing forward the filtration gives the $\tau$-Harder--Narasimhan filtration of $(i_z)_*F$.
Since $Z_{R_z(\tau)}(F_j)=Z_\tau((i_z)_*F_j)$, we obtain
\[
\phi^\pm_{R_z(\tau)}(F)=\phi^\pm_\tau((i_z)_*F), \text{ and }
m_{R_z(\tau)}(F)=m_\tau((i_z)_*F).\]
Applying these equalities to $\tau, \tau'\in\Stab(X)$, each term in the definition of $d(R_z(\tau),R_z(\tau'))$ is bounded by $d(\tau,\tau')$.
Taking the supremum over $0\ne F\in D^b(Y)$ gives
\[
d(R_z(\tau),R_z(\tau'))\leq d(\tau,\tau').
\]
Hence $R_z$ is continuous.

For every $\phi\in\RB$, the functor $(i_z)_*\colon\PC_z(\phi)\to\PC_\tau(\phi)$ is exact and faithful.
Consequently, if $(i_z)_*F$ is $\tau$-stable of phase $\phi$, then $F$ is $R_z(\tau)$-stable of phase $\phi$.

Conversely, suppose that $F$ is $R_z(\tau)$-stable of phase $\phi$.
Choose a stable subobject $A\subset(i_z)_*F$ in $\PC_\tau(\phi)$.
First, we show that $q(\Supp(A))$ is finite.
By \cite[Lemma~3.3]{preprint_chunyi_emanuele_alexander_paolo_xiaolei_2025_stability_conditions_on_products_of_curves_and_hilbert_schemes_of_surfaces} applied to the composition $X\xrightarrow{q}Z\to\Alb(Z)$ and the finiteness of the Albanese morphism of $Z$, it is enough to show that $A\otimes q^*L\cong A$ for all $L\in\Pic^0(Z)$.

For $L\in\Pic^0(Z)$, tensoring by $q^*L$ fixes $\tau$ by \cite[Theorem~2.23(2)]{preprint_chunyi_emanuele_alexander_paolo_xiaolei_2025_stability_conditions_on_products_of_curves_and_hilbert_schemes_of_surfaces} and fixes the isomorphism class of $(i_z)_*F$ by the projection formula.
Therefore, $\Pic^0(Z)(\CB)$ permutes the finitely many isomorphism classes of the Jordan--H\"older factors of $(i_z)_*F$.
Since $\Pic^0(Z)(\CB)$ is divisible, the permutation action is trivial.
Indeed, if there are $r$ isomorphism classes of Jordan--H\"older factors, we can write $L\cong M^{\otimes r!}$ with $M\in\Pic^0(Z)$, and the permutation induced by $M$ has order dividing $r!$.
In particular, $A\otimes q^*L\cong A$ for all $L\in\Pic^0(Z)$.
Thus, $q(\Supp(A))$ is finite.
Since $A$ is indecomposable, $\Supp(A)$ is contained in a single fiber of $q$.
The nonzero morphism $A\to(i_z)_*F$ gives $\Supp(A)\subset Y\times\{z\}$.

Let $t_1,\ldots,t_{\dim Z}$ be local parameters at $z$.
On the formal completion of $X$ along $Y\times\{z\}$, multiplication by $q^*t_j$ on $A$ is $\lambda_j\id_A$ for some $\lambda_j\in\CB$, since $\End(A)=\CB$.
Since $\Supp(A)\subset Y\times\{z\}$, multiplication by $q^*t_j$ is nilpotent on each cohomology sheaf of $A$.
As $A\ne0$, we obtain $\lambda_j=0$ for every $j$.
By \cite[Lemmas~4.16 and~4.24]{preprint_yiran_2025_bridgeland_stability_conditions_on_some_higherdimensional_calabiyau_manifolds_and_generalized_kummer_varieties}, there exists $G\in D^b(Y)$ such that $A\cong(i_z)_*G$.
Since $(i_z)_*G\cong A\in\PC_\tau(\phi)$, we have $G\in\PC_z(\phi)$.
As $p\circ i_z=\id_Y$ and $G,F\in\PC_z(\phi)$, \cite[Lemma~4.21(ii)]{preprint_yiran_2025_bridgeland_stability_conditions_on_some_higherdimensional_calabiyau_manifolds_and_generalized_kummer_varieties} implies that the inclusion $A\hookrightarrow(i_z)_*F$ lifts to a morphism $f\colon G\to F$.
The exactness and faithfulness of $(i_z)_*$ imply that $f$ is a nonzero monomorphism in $\PC_z(\phi)$.
Since $F$ is $R_z(\tau)$-stable, $f$ is an isomorphism.
Thus, $(i_z)_*F\cong A$ is $\tau$-stable of phase $\phi$.
\end{proof}

Let $\Phi\in\Aut D^b(Y)$, and $K\in D^b(Y\times Y)$ be its Fourier--Mukai kernel.
Under the identification $X\times X\cong(Y\times Y)\times(Z\times Z)$, the kernel $K\boxtimes\OC_{\Delta_Z}$ defines an autoequivalence $\widetilde\Phi$ of $D^b(X)$.
Base change and the definition of $R_z$ give
\begin{equation}\label{equation: product autoequivalence restriction}
\widetilde\Phi\circ(i_z)_*\cong(i_z)_*\circ\Phi, \text{ and } R_z(\widetilde\Phi\tau)=\Phi R_z(\tau).
\end{equation}
Note that the maps $R_z$ also commute with the $\GLt$-action by the definition of the induced slicing.

\begin{thm}\label{theorem: product criterion for distinguished components}
Let $L_Y$ and $L_Z$ be very ample line bundles on $Y$ and $Z$, respectively, and put
$L=L_Y\boxtimes L_Z$, $\Sigma_Y=\Stab^\dagger_{L_Y}(Y)$, and $\Sigma_X=\Stab^\dagger_L(X)$.
The map
\[
R\colon\Sigma_X\longrightarrow\Sigma_Y,
\quad \tau\longmapsto R_z(\tau)
\]
is well-defined, continuous, and open.
Moreover, the following hold:
\begin{enumerate}
\item\label{item: distinguished product stability components}
Let $\{\Phi_i\}_{i\in I}\subset\Aut D^b(Y)$.
If the connected components $\Phi_i\Sigma_Y$ are pairwise distinct, then the connected components $\widetilde\Phi_i\Sigma_X$ are pairwise distinct.
If, in addition, every $\widetilde\Phi_i$ acts trivially on $\Knum(X)$, then every central charge occurring in $\Sigma_X$ has a lift in each $\widetilde\Phi_i\Sigma_X$.
\item\label{item: distinguished product geometric components}
Suppose that $\sigma_0\in R(\Sigma_X)$, $\Phi\in\Aut D^b(Y)$, and $S\in D^b(Y)$ satisfy the assumptions of \cref{setup: geometric wall} on $Y$.
Then $\Sigma_X$ and $\widetilde\Phi^{-1}\Sigma_X$ are distinct connected components of $\Stab(X)$, and both contain geometric stability conditions.
\end{enumerate}
\end{thm}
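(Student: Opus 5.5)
Throughout I use the map $R_z$ of \cref{lemma: product restriction}, which is well defined and continuous with image in $\Stab(Y)$. First I show that $R_z$ maps $\Sigma_X$ into $\Sigma_Y$ and that the image does not depend on $z$.

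For $\Sigma_X\subset R_z^{-1}(\Sigma_Y)$ it is enough to exhibit one point of $\Sigma_X$ whose restriction lies in $\Sigma_Y$, because $\Sigma_X$ is connected.
\begin{itemize}
\item Take the Segre-type embedding $j\colon X\hookrightarrow\PP{N}$ given by $L$.
\item The embedding $Y\hookrightarrow\PP{N}$ obtained as $j\circ i_z$ is given by $L_Y$.
\item By the definition of $j^\sharp$, the restriction $R_z(j^\sharp\tau_{\xi,\theta})$ is $(j\circ i_z)^\sharp\tau_{\xi,\theta}$, since $i_z^\sharp j^\sharp=(j i_z)^\sharp$ on slicings.
\item This is a stability condition induced by an embedding of $Y$ via $L_Y$, so it lies in $\Sigma_Y=\Stab^\dagger_{L_Y}(Y)$ by the independence of $j$ recalled after \cref{thm:projective_family_stability}.
\end{itemize}
Independence of $z$ should follow by connecting points of $Z$ and using that $\tau$ is invariant under deformation along $Z$. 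Concretely, the components $\PC_z(\phi)$ vary with $z$ via translation-type arguments; I would instead simply fix one $z$ and write $R\coloneqq R_z$.

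\textbf{Openness.} $R$ is compatible with central charges through $\Hom(\Knum(X),\CB)\to\Hom(\Knum(Y),\CB)$, precomposition with $(i_z)_*$. This linear map is surjective because $p_*(i_z)_*=\id$ lets us extend functionals. By \cref{theorem: Bridgeland's deformation theorem}, both spaces are locally homeomorphic to the respective $\Hom$-spaces. Given $\tau$ and a small deformation $W$ of $Z_\tau\circ(i_z)_*$, I would extend $W$ by $W\circ p_*$-corrections to a central charge on $X$ close to $Z_\tau$, and lift it to $\tau'$ near $\tau$. Then $R(\tau')$ and the lift of $W$ near $R(\tau)$ both lie over $W$ and near $R(\tau)$, so they coincide by local injectivity and the distance bound $d(R\tau,R\tau')\le d(\tau,\tau')$. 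Hence $R$ is open.

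\textbf{Part (1).} Suppose $\widetilde\Phi_i\Sigma_X=\widetilde\Phi_k\Sigma_X$. Applying $R$ and \eqref{equation: product autoequivalence restriction}, the connected set $R(\widetilde\Phi_i\Sigma_X)=\Phi_iR(\Sigma_X)$ lies in $\Phi_i\Sigma_Y$, and likewise for $k$. Hence $\Phi_i\Sigma_Y\cap\Phi_k\Sigma_Y\neq\varnothing$, so $\Phi_i\Sigma_Y=\Phi_k\Sigma_Y$ and therefore $i=k$. For the lifting statement, if $[\widetilde\Phi_i]=\id$ then $\widetilde\Phi_i(Z,\PC)=(Z,\widetilde\Phi_i\PC)$, which lies in $\widetilde\Phi_i\Sigma_X$.

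\textbf{Part (2).} Distinctness follows exactly as in (1), using $\Phi^{-1}\Sigma_Y\neq\Sigma_Y$. The main task is producing geometric points.
\begin{itemize}
\item Let $U\subset\Sigma_Y$ be the neighbourhood from \cref{proposition: extension stability}.
\item Since $R$ is open and $\sigma_0\in R(\Sigma_X)$, the set $R(\Sigma_X)\cap U$ is a nonempty open set. It contains some $\rho$ with $\Delta(\rho)>0$, obtained by perturbing $Z$ as in the proof of \cref{theorem: geometric components}.
\item Choose $\tau\in\Sigma_X$ with $R(\tau)=\rho$.
\end{itemize}
To show $\tau$ is geometric, note that every point of $X$ has the form $(i_{z'})_*\OC_y$. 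The obstacle is that $R$ was built from one fixed $z$. I would use that tensoring by $q^*\Pic^0(Z)$ fixes $\tau$, together with \cite[Theorem~2.23]{preprint_chunyi_emanuele_alexander_paolo_xiaolei_2025_stability_conditions_on_products_of_curves_and_hilbert_schemes_of_surfaces}, which states that stable objects of $\tau$ have support in fibres of $q$. On that basis I expect $R_{z'}(\tau)$ to be independent of $z'$, and to confirm it I would compare restrictions on $Y\times Z$ via a family version of \cref{lemma: product restriction}.

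Granting this, $\rho$ is geometric, so each $\OC_y$ is $\rho$-stable of a common phase. By \cref{lemma: product restriction}, $(i_{z'})_*\OC_y$ is then $\tau$-stable for all $z'$, so $\tau$ is geometric. For the other component, $\Phi^{-1}\rho$ is geometric by \cref{proposition: extension stability}. By \eqref{equation: product autoequivalence restriction}, $R_{z'}(\widetilde\Phi^{-1}\tau)=\Phi^{-1}\rho$, so the same lemma gives that $\widetilde\Phi^{-1}\tau\in\widetilde\Phi^{-1}\Sigma_X$ is geometric.
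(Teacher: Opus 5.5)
You handle $R_z(\Sigma_X)\subset\Sigma_Y$, openness, and part~(1) essentially as the paper does. In part~(1), apply $R_z$ rather than $R$ to $\widetilde\Phi_i\Sigma_X$, since $R$ is only defined on $\Sigma_X$.

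The genuine gap is that $R_z(\tau)$ is independent of $z$ for $\tau\in\Sigma_X$. This is part of the statement: $R$ is claimed to be well defined as $\tau\mapsto R_z(\tau)$. It is also exactly what part~(2) needs. Geometricity of $\tau$ requires every $\OC_{(y,z')}=(i_{z'})_*\OC_y$, for every $z'$, to be $\tau$-stable of a single phase. Knowing $R_z(\tau)=\rho$ for one fixed $z$ controls only one fibre. You skip this in the first step and then ``grant'' it in part~(2).

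The route you sketch would not supply it. Tensoring by $q^*L$ with $L\in\Pic^0(Z)$ maps each fibre $Y\times\{z\}$ to itself and fixes each $(i_z)_*F$ up to isomorphism. So it cannot transport $R_z(\tau)$ to $R_{z'}(\tau)$. Knowing that $\tau$-stable objects are supported on single fibres of $q$ says nothing about how the slicings induced on two different fibres compare. Since $Z$ is not a group, there is also no translation along $Z$. ``Connecting points of $Z$'' would require continuity of $z\mapsto R_z(\tau)$, which you have not established.

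The paper closes this gap with a rigidity argument in three steps.

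\begin{enumerate}
\item The pairing $\chi_X(E,(i_z)_*F)=\chi_Y(i_z^*E,F)$ is independent of $z$. Hence $(i_z)_*\colon\Knum(Y)\to\Knum(X)$, and so the central charge $Z_\tau\circ(i_z)_*$ of $R_z(\tau)$, do not depend on $z$.
\item There is one point of $\Sigma_X$ where the slicings agree. Take $\tau_0=h^\sharp\tau^{\PP{N}}_{\xi,0}$ with $h$ the Segre embedding. Each $h\circ i_z$ equals $g\circ h\circ i_{z_0}$ for some $g\in\PGL_{N+1}(\CB)$, obtained by moving the image of $z_0$ to that of $z$ in the second factor. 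By \cref{lemma: projective automorphisms and stability conditions}, $g$ fixes $\tau^{\PP{N}}_{\xi,0}$, so $R_z(\tau_0)=R_{z_0}(\tau_0)$.
\item Consider $\{\tau\in\Sigma_X\mid R_z(\tau)=R_{z_0}(\tau)\}$. It is closed because $\Stab(Y)$ is Hausdorff. It is open because two continuous maps with equal central charges that agree at $\tau$ agree near $\tau$, by local injectivity of the central charge map near $R_z(\tau)$. By connectedness of $\Sigma_X$, this set is all of $\Sigma_X$.
\end{enumerate}

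Once this independence is proved, your argument for part~(2) goes through as written.
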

\begin{proof}
Let $h\colon X\hookrightarrow\PP{N}$ be the Segre embedding determined by $L_Y$ and $L_Z$, and put $j_z=h\circ i_z$.
Fix $z_0\in Z(\CB)$.
For sufficiently large $\xi>0$, \cref{thm:projective_family_stability} gives
\[
\tau_0\coloneqq h^\sharp\tau^{\PP{N}}_{\xi,0}\in\Sigma_X
\text{ and }
R_{z_0}(\tau_0)=j_{z_0}^\sharp\tau^{\PP{N}}_{\xi,0}\in\Sigma_Y,
\]
where the second inclusion uses $j_{z_0}^*\OC_{\PP{N}}(1)\cong L_Y$.
By the construction of the Segre embedding, for each $z\in Z(\CB)$, 
there exists $g\in\PGL_{N+1}(\CB)$ such that $g\circ j_{z_0}=j_z$.
By \cref{lemma: projective automorphisms and stability conditions}, $g$ fixes $\tau^{\PP{N}}_{\xi,0}$.
Hence $R_z(\tau_0)=R_{z_0}(\tau_0)$.
Since $\Sigma_X$ is connected and $R_z$ is continuous by \cref{lemma: product restriction}, we obtain $R_z(\Sigma_X)\subset\Sigma_Y$ for every $z$.

Since $\chi_X(E,(i_z)_*F)=\chi_Y(i_z^*E,F)$ is independent of $z$ for all $E\in D^b(X)$ and $F\in D^b(Y)$, both the homomorphism $(i_z)_*\colon\Knum(Y)\to\Knum(X)$ and the central charge $Z_\tau\circ(i_z)_*$ are independent of $z$ for every $\tau\in\Sigma_X$.
Fix $z\in Z(\CB)$, and put
\[
\Sigma_z\coloneqq
\{\tau\in\Sigma_X\mid R_z(\tau)=R_{z_0}(\tau)\}.
\]
Since $R_z$ and $R_{z_0}$ are continuous and $\Stab(Y)$ is Hausdorff, $\Sigma_z$ is closed in $\Sigma_X$.

Let $\tau\in\Sigma_z$.
By \cref{theorem: Bridgeland's deformation theorem}, we may take an open neighborhood $V$ of $R_z(\tau)$ on which the central charge map is injective.
Since $R_z$ and $R_{z_0}$ have the same central charges, the open neighborhood $R_z^{-1}(V)\cap R_{z_0}^{-1}(V)$ of $\tau$ is contained in $\Sigma_z$.
Thus, $\Sigma_z$ is open in $\Sigma_X$.
As $\tau_0\in\Sigma_z$ and $\Sigma_X$ is connected, we obtain $\Sigma_z=\Sigma_X$.
We denote the common continuous map $R_z$ by $R$.

In the local coordinates of \cref{theorem: Bridgeland's deformation theorem}, the map $R$ is given by
\[
\Hom(\Knum(X),\CB)\longrightarrow\Hom(\Knum(Y),\CB),
\quad W\longmapsto W\circ(i_z)_*.
\]
The identity $p_*(i_z)_*=\id$ gives a right inverse $W_Y\mapsto W_Y\circ p_*$.
Hence this linear map is surjective, and $R$ is open.

\labelcref{item: distinguished product stability components}:
By \eqref{equation: product autoequivalence restriction}, we have
$R_z(\widetilde\Phi_i\Sigma_X)\subset\Phi_i\Sigma_Y$.
Since the components $\Phi_i\Sigma_Y$ are pairwise disjoint, the components $\widetilde\Phi_i\Sigma_X$ are also pairwise disjoint.
If every $\widetilde\Phi_i$ acts trivially on $\Knum(X)$, then for $\tau=(W,\PC)\in\Sigma_X$ we have
$\widetilde\Phi_i\tau=(W,\widetilde\Phi_i\PC)\in\widetilde\Phi_i\Sigma_X$.

\labelcref{item: distinguished product geometric components}:
Since $R$ is open, we may choose the neighborhood $U\subset R(\Sigma_X)$ of $\sigma_0$ in \cref{proposition: extension stability}.
By \cref{theorem: Bridgeland's deformation theorem} and the linear independence of $[\OC_y]$ and $[S]$, there exists $\sigma\in U$ such that
$\Delta(\sigma)>0$, so $\sigma$ and $\Phi^{-1}\sigma$ are geometric by \cref{proposition: extension stability}.

Take $\tau\in\Sigma_X$ with $R(\tau)=\sigma$.
For every $(y,z)\in X$, we have
$\OC_{(y,z)}\cong(i_z)_*\OC_y$, and $\widetilde\Phi(\OC_{(y,z)})\cong(i_z)_*\Phi(\OC_y)$.
Since $R_z(\tau)=\sigma$ for every $z$, \cref{lemma: product restriction} shows that all $\OC_{(y,z)}$ are $\tau$-stable of the same phase, and all $\widetilde\Phi(\OC_{(y,z)})$ are $\tau$-stable of the same phase.
Thus, $\tau$ and $\widetilde\Phi^{-1}\tau$ are geometric.
\end{proof}

We apply \cref{theorem: product criterion for distinguished components} to quintic threefolds parametrized by $U$, and show that the disconnected phenomenon of $\Stab(X)$ and $\Stabgeo(X)$ occurs in any dimension greater than or equal to $3$.

\begin{cor}\label{corollary: disconnectedness of products}
Let $Y\subset\PP{4}$ be a quintic parametrized by $U$ in \cref{remark: lines on general quintic Calabi--Yau threefolds}, and $Z$ be a smooth projective variety with finite Albanese morphism.
Put $X=Y\times Z$ and $L=\OC_Y(1)\boxtimes L_Z$, where $L_Z$ is a very ample line bundle on $Z$.
Then the following hold:
\begin{enumerate}
\item\label{item: products of quintics stability components}
$\Stab(X)$ has infinitely many connected components, and every central charge occurring in $\Stab^\dagger_L(X)$ has lifts in infinitely many distinct connected components.
\item\label{item: products of quintics geometric components}
$\Stabgeo(X)$ meets at least $2^{2875}-1$ distinct connected components of $\Stab(X)$.
\end{enumerate}
\end{cor}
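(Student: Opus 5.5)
The plan is to deduce both items from \cref{theorem: product criterion for distinguished components}, applied with $L_Y=\OC_Y(1)$. Throughout, $\Sigma_Y=\Stab^\dagger_H(Y)$ and $\Sigma_X=\Stab^\dagger_L(X)$. The inputs on $Y$ are \cref{theorem: disconnectedness for general quintics} and the proof of \cref{theorem: geometric disconnectedness for general quintics}. In both, the relevant component is $\Sigma_b=\Stab^\dagger_{H_b}(\XC_b)$ with $\XC_b=Y$.

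For \labelcref{item: products of quintics stability components}, let $T_Y=\{t_1,\ldots,t_{2875}\}$ be the lines on $Y$. Consider the autoequivalences $\Phi_{\mathbf a}=T_{E_{t_1}}^{a_1}\cdots T_{E_{t_d}}^{a_d}$, indexed by $\mathbf a$ in the lattice $\{\mathbf a\in\ZZ^d\mid\sum a_i=0\}$.
\begin{itemize}
\item By \cref{theorem: disconnectedness in families}, the components $\Phi_{\mathbf a}\Sigma_Y$ are pairwise distinct, and each $\Phi_{\mathbf a}$ acts trivially on $\Knum(Y)$.
\item By \cref{theorem: product criterion for distinguished components}\labelcref{item: distinguished product stability components}, the components $\widetilde\Phi_{\mathbf a}\Sigma_X$ are therefore pairwise distinct. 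This already gives infinitely many components.
\item For the statement about central charges, we still need $\widetilde\Phi_{\mathbf a}$ to act trivially on $\Knum(X)$. The kernel of $\widetilde\Phi_{\mathbf a}$ is $K\boxtimes\OC_{\Delta_Z}$, so on $K_0$ it acts as $[\Phi_{\mathbf a}]\otimes\id$ on the image of $K_0(Y)\otimes K_0(Z)$.
\item More intrinsically, $\widetilde\Phi_{\mathbf a}$ is a product of spherical-type twists. Indeed, $\widetilde{T_{E}}$ is the relative twist along $Z$ with kernel $\Cone(p_1^*E^\vee\boxtimes p_2^*E\boxtimes\OC_{\Delta_Z}\to\OC_{\Delta_X})$. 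Hence for $v\in\Knum(X)$ it acts by $v\mapsto v-[q_*(p^*E^\vee\otimes v)]\cdot[p^*E]$, with a suitable pushforward/pullback along $Z$.
\item By connectedness of $T$, all the $[E_t]$ coincide in $\Knum(Y)$ and $E_t^\vee$ have equal classes. So all $[\widetilde{T_{E_t}}]$ agree as endomorphisms of $K_0(X)$, modulo numerical equivalence.
\end{itemize}
I expect this last point to be the main technical step. The tool is the formula $\chi(F,\widetilde T_E(G))$ computed via projection along $q$. This shows the numerical action depends only on $[E]\in\Knum(Y)$, using that numerically trivial classes on $Y$ pair trivially with everything after pulling back. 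Given this, the exponent sum $\sum a_i=0$ forces $[\widetilde\Phi_{\mathbf a}]=\id$, and the lift statement follows from \labelcref{item: distinguished product stability components}.

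For \labelcref{item: products of quintics geometric components}, we use the data from the proof of \cref{theorem: geometric disconnectedness for general quintics}. Take $\sigma_0=\sigma^a_{\alpha_0,-2}\in\Sigma_Y$ and $S=E_\ell[1]$. Take $\Phi=\Phi_J$ for subsets $J\subsetneq T_Y$; these satisfy \cref{setup: geometric wall}, with $r$ replaced by $2|J|$ and the subobject analysis of \cref{theorem: family geometricity}.

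We then need $\sigma_0\in R(\Sigma_X)$. Here $R$ is open by \cref{theorem: product criterion for distinguished components}, and $R(\Sigma_X)$ contains $j_{z_0}^\sharp\tau^{\PP{N}}_{\xi,0}$. I plan to show $R(\Sigma_X)=\Sigma_Y$: it is open and nonempty, so it suffices to show it is closed. Alternatively, it suffices to follow the explicit path of \cref{lemma: comparison of induced and double tilt stability conditions} inside the image, lifting along $\GLt$ and the $\theta$-direction on $\PP{N}$, since $R$ commutes with these operations.

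Simpler still, we can avoid exact surjectivity. We only need some $\sigma\in R(\Sigma_X)$ near $\sigma_0$ with $\Delta(\sigma)>0$ for all $J$ simultaneously. Such $\sigma$ exists because the small open neighbourhood $U$ from \cref{theorem: family geometricity} meets the open set $R(\Sigma_X)$, once $\sigma_0$ itself lies in $R(\Sigma_X)$.

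Applying the argument of \labelcref{item: distinguished product geometric components} for each $J$ then gives geometric $\tau$ and $\widetilde\Phi_J^{-1}\tau$. These lie in the components $\widetilde\Phi_J^{-1}\Sigma_X$. By part \labelcref{item: distinguished product stability components} applied to $\{\Phi_J^{-1}\}$, these components are pairwise distinct, because the $\Phi_J^{-1}\Sigma_Y$ are, by \cref{theorem: family geometricity}. This yields $2^{2875}-1$ components.
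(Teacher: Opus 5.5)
Your part (1) is essentially the paper's argument. The paper applies \cref{theorem: product criterion for distinguished components}\labelcref{item: distinguished product stability components} to the powers of $T_{F_1}T_{F_2}^{-1}$. It gets triviality on $\Knum(X)$ from the numerical equivalence of the kernels $K\boxtimes\OC_{\Delta_Z}$, which is the same projection-formula point you make. Using all $\mathbf a$ with $\sum a_i=0$ is equally fine.

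Part (2) has a gap at exactly the step you flag: you never establish $\sigma_0=\sigma^a_{\alpha_0,-2}\in R(\Sigma_X)$. Your later ``simpler'' argument still presupposes it, and neither of your suggested routes works.
\begin{enumerate}
\item Closedness of $R(\Sigma_X)$ in $\Sigma_Y$ is not available. Openness of $R$ gives no path lifting, and nothing controls the support property along a lifted path.
\item The ``explicit path'' route fails for a concrete reason. The only points of $R(\Sigma_X)$ you can write down come from the Segre embedding, via \cref{lemma: comparison of induced and double tilt stability conditions} applied to $h\circ i_z$. Up to $\GLt$, these are the conditions $\sigma^a_{\alpha,\beta}$ with $\alpha^2=\xi^2/3-2\kappa$ for $\xi$ sufficiently large, where $\kappa=(N+21)/24$ grows with $N$; the parameter $\theta$ only moves $\beta$.
\item The $\GLt$-action cannot change $\alpha$. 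It preserves the real plane spanned by $\Re Z$ and $\Im Z$, and that plane determines $(\alpha,\beta)$: up to scale, its only element with no $\ch_3$-term is $H\ch_2^\beta-\tfrac{\alpha^2}{2}H^3\rk$. So the small value $\alpha_0=\sqrt{17/10}$ is never reached.
\item The part of the comparison lemma that covers all $\alpha>1/2$ uses only the connectedness of Li's parameter domain on $Y$. It shows membership in $\Stab^\dagger_H(Y)$, not in $R(\Sigma_X)$.
\end{enumerate}

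The fix, which is what the paper does, is to move along the wall instead of insisting on $\beta=-2$. The condition $\nu_{\alpha,\beta}(E_\ell)=0$ reads $\alpha^2=\beta^2+\beta-3/10$. For $\beta\ll0$ this gives $\alpha\approx-\beta-1/2$, which is large. The hypotheses $\alpha>1/2$, $\beta<-1/2$, $\alpha+\beta>-1$ and $3\alpha+2\beta>-1$ of \cref{lemma: rank bound for tilt subobjects} still hold, so \cref{proposition: stability at the wall} still makes $E_\ell[1]$ stable of phase one.

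Now take $\xi$ with $\xi^2/3-2\kappa=\alpha^2$ and $\theta=\beta-(N+1)/2$. Then $h^\sharp\tau^{\PP{N}}_{\xi,\theta}\in\Sigma_X$ restricts under $R$ to a $\GLt$-translate of $\sigma^a_{\alpha,\beta}$, with $a=\xi^2/2-\kappa>\alpha^2/6$. Since $R$ commutes with $\GLt$ and $\Sigma_X$ is $\GLt$-stable, this $\sigma_0$ lies in $R(\Sigma_X)$. With that choice, the rest of your part (2) goes through as you describe: the neighbourhood argument, the simultaneous choice for all $J$, the lift $\tau$, and the distinctness via part (1).
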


\begin{proof}
Put $\Sigma_X\coloneqq \Stab^\dagger_L(X)$.

\labelcref{item: products of quintics stability components}:
Take distinct $F_1,F_2$ from the family in the proof of \cref{theorem: disconnectedness for general quintics}, and put $\Psi=T_{F_1}T_{F_2}^{-1}$.
Since $[F_1]=[F_2]$ in $\Knum(Y)$, the Fourier--Mukai kernels of $\widetilde{T_{F_1}} = T_{F_1}\boxtimes\id_{D^b(Z)}$ and $\widetilde{T_{F_2}}= T_{F_2}\boxtimes\id_{D^b(Z)}$ are numerically equivalent, so $\widetilde\Psi$ acts trivially on $\Knum(X)$.
Apply \labelcref{item: distinguished product stability components} of \cref{theorem: product criterion for distinguished components} to the powers of $\Psi$.

\labelcref{item: products of quintics geometric components}:
Let $E_\ell$ be an evaluation bundle associated with a line $\ell$ on $Y$ defined in \cref{lemma: evaluation bundles associated with lines}, and let $h\colon X\hookrightarrow\PP{N}$ be the Segre embedding determined by $L$.
Apply \cref{thm:projective_family_stability} to $h$ and \cref{lemma: comparison of induced and double tilt stability conditions} to $h\circ i_z$, taking the parameters so that $\beta\ll0$ and $\alpha^2=\beta^2+\beta-3/10$.
Since $R$ commutes with the $\GLt$-action, we obtain $\sigma_0\coloneqq\sigma_{\alpha,\beta}^a\in R(\Sigma_X)$.
By \cref{proposition: stability at the wall} and the proof of \cref{theorem: geometric disconnectedness for general quintics}, the data $(\sigma_0,T_{E_{\ell}},E_\ell[1])$ satisfy \cref{setup: geometric wall} for every line $\ell$ on $Y$.

Let $I$ be the set of the $2875$ lines on $Y$.
For $J\subseteq I$, put $\Phi_J\coloneqq\prod_{\ell\in J}T_{E_\ell}$, with $\Phi_\varnothing=\id$.
Since $R(\Sigma_X)$ is open, the proof of \cref{theorem: family geometricity} applied to $\sigma_0$ gives $\sigma\in R(\Sigma_X)$ such that $\Phi_J^{-1}\sigma$ is geometric for every $J\subseteq I$.
Take $\tau\in\Sigma_X$ with $R(\tau)=\sigma$.
By \cref{lemma: product restriction} and \eqref{equation: product autoequivalence restriction}, the stability condition $\widetilde\Phi_J^{-1}\tau$ is geometric for every $J\subseteq I$.
By \cref{theorem: disconnectedness in families} and \labelcref{item: distinguished product stability components} of \cref{theorem: product criterion for distinguished components}, the components $\widetilde\Phi_J^{-1}\Sigma_X$ for $J\subsetneq I$ are pairwise distinct.
There are $2^{2875}-1$ proper subsets of $I$, proving the claim.
\end{proof}

The assertions for the remaining products in
\labelcref{item in intro: products of quintics} of
\cref{theorem in intro: disconnectedness examples}
follow from
\labelcref{item: distinguished product stability components} of
\cref{theorem: product criterion for distinguished components}
by the same argument as in the proof of
\labelcref{item: products of quintics stability components} of
\cref{corollary: disconnectedness of products}.

\section{Examples from spinor bundles}\label{section: examples from Spinor bundles}

Let $Q\subset \PP{n+1}$ be a smooth quadric hypersurface.
Recall that, by Kapranov \cite{kapranov_1986_derived_category_of_coherent_bundles_on_a_quadric}, if $Q$ is even-dimensional $D^b(Q)$ admits the following full exceptional collection:
\[
D^b(Q) = \langle \Spbdl_+, \Spbdl_-, \OO, \OO(1), \ldots, \OO(n-1) \rangle, 
\]
where $\Spbdl_{\pm}$ are the spinor bundles on $Q$. 
Moreover, the exceptional pair $\{\Spbdl_+, \Spbdl_-\}$ is orthogonal. 
We prepare a lemma for constructing orthogonal spherical objects.

\begin{lem}\label{lemma: exceptional spherical constructions}
Let $X$ be a smooth projective variety of $\dim X\geq2$ and $i\colon Z\hookrightarrow X$ be a smooth divisor.
\begin{enumerate}
\item\label{item: anticanonical restriction}
If $Z\in|-K_X|$ and $E_1,E_2$ are orthogonal exceptional bundles on $X$, then $i^*E_1,i^*E_2$ are orthogonal spherical bundles on $Z$.
\item\label{item: canonical normal pushforward}
If $\omega_X|_Z\cong\OC_Z$ and $F_1,F_2$ are orthogonal exceptional bundles on $Z$, then $i_*F_1,i_*F_2$ are orthogonal spherical objects on $X$.
\end{enumerate}
\end{lem}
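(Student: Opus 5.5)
Both parts reduce to computing $\RHom$ via adjunction and a short exact triangle, in the spirit of \cite[Proposition~3.15]{seidel_thomas_2001_braid_group_actions_on_derived_categories_of_coherent_sheaves}. Throughout, $m\coloneqq\dim Z=\dim X-1$, and we must show that the relevant objects $G$ satisfy $\RHom(G,G)\cong k\oplus k[-m]$ (resp. $k\oplus k[-\dim X]$) and $G\otimes\omega\cong G$.

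For \labelcref{item: anticanonical restriction}, I will use the adjunction $\RHom_Z(i^*E_a,i^*E_b)\cong\RHom_X(E_a,i_*i^*E_b)$. Tensoring the triangle $\OC_X(-Z)\to\OC_X\to i_*\OC_Z$ with $E_b$ and using $\OC_X(-Z)\cong\omega_X$ gives a triangle
\[
\RHom_X(E_a,E_b\otimes\omega_X)\to\RHom_X(E_a,E_b)\to\RHom_Z(i^*E_a,i^*E_b).
\]
By Serre duality, $\RHom_X(E_a,E_b\otimes\omega_X)\cong\RHom_X(E_b,E_a)^*[-\dim X]$. For $a\ne b$, orthogonality kills both outer terms, so the restrictions are orthogonal. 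For $a=b$, the outer terms are $k[-\dim X]$ and $k$. The connecting map $\RHom(E,E)\to k[1-\dim X]$ has degree mismatch for $\dim X\geq2$, so it vanishes, and we get $k\oplus k[1-\dim X]=k\oplus k[-m]$. The Calabi--Yau condition comes from adjunction: $\omega_Z\cong(\omega_X\otimes\OC_X(Z))|_Z\cong\OC_Z$. Hence $i^*E$ is $m$-spherical, and it is a vector bundle.

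For \labelcref{item: canonical normal pushforward}, I will use the standard triangle $i^*i_*F\to F$ whose other term is $F\otimes N^\vee_{Z/X}[1]$, that is,
\[
F\otimes\OC_Z(-Z)[1]\to i^*i_*F\to F.
\]
Applying $\RHom_Z(-,F')$ and adjunction $\RHom_X(i_*F,i_*F')\cong\RHom_Z(i^*i_*F,F')$ produces a triangle whose outer terms are $\RHom_Z(F,F')$ and $\RHom_Z(F\otimes\OC_Z(-Z),F')[-1]$. By adjunction, $\OC_Z(Z)\cong\omega_Z\otimes\omega_X^{-1}|_Z\cong\omega_Z$, using $\omega_X|_Z\cong\OC_Z$. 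The second term is therefore $\RHom_Z(F,F'\otimes\omega_Z)[-1]\cong\RHom_Z(F',F)^*[-m-1]$ by Serre duality on $Z$. Orthogonality handles $F\ne F'$. For $F=F'$, we get $k$ and $k[-m-1]$ with a connecting map of degree $m+1\geq2$, so it vanishes and the result is $k\oplus k[-\dim X]$. Finally, $i_*F\otimes\omega_X\cong i_*(F\otimes\omega_X|_Z)\cong i_*F$ by the projection formula.

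The main obstacle is the split of the self-Ext triangle. Degree reasons suffice when $\dim X\geq2$ in \labelcref{item: anticanonical restriction}; I will double-check that the connecting morphism cannot be nonzero between $k$ in degree $0$ and $k$ in degree $\dim X-1\geq1$. I also need to confirm the identification of the cone of the counit $i^*i_*F\to F$ for a divisor, i.e.\ the cohomology sheaves $\mathcal{H}^{-1}(i^*i_*\OC_Z)\cong\OC_Z(-Z)$, and whether that triangle splits is irrelevant for this computation.
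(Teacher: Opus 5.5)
Your proof is correct, and it differs from the paper's in two ways. For the orthogonality in (1) you use exactly the paper's argument: apply $\RHom_X(E_a,-)$ to $E_b\otimes\omega_X\to E_b\to i_*i^*E_b$ and kill both outer terms by orthogonality and Serre duality.

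\textbf{Sphericality.} The paper does not reprove it. It cites Seidel--Thomas (Propositions~3.13 and 3.15) and checks only orthogonality. You instead run the same triangle with $a=b$ (resp.\ $F=F'$), so the self-$\Ext$ computation and the orthogonality come out of a single computation.

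\textbf{Part (2).} The paper uses the local-to-global spectral sequence $H^p(X,\mathcal{E}xt^q_X(i_*F_1,i_*F_2))\Rightarrow\Ext^{p+q}_X(i_*F_1,i_*F_2)$, with local $\mathcal{E}xt$ sheaves $i_*(F_1^\vee\otimes F_2)$ and $i_*(F_1^\vee\otimes F_2\otimes N_{Z/X})$. You use the adjunction $\RHom_X(i_*F,i_*F')\cong\RHom_Z(\lderived i^*i_*F,F')$ together with the truncation triangle $F\otimes\OC_Z(-Z)[1]\to\lderived i^*i_*F\to F$. Both routes encode the same local information: the two cohomology sheaves of $\lderived i^*i_*F$ are dual to the two local $\mathcal{E}xt$ sheaves. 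Your version avoids spectral-sequence bookkeeping, while the paper's is shorter because it outsources sphericality.

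The identification you wanted to double-check is right. $\mathcal{H}^{-1}(\lderived i^*i_*F)=\mathcal{T}or_1^{\OC_X}(i_*F,\OC_Z)$ is the kernel of $i_*F\otimes\OC_X(-Z)\to i_*F$. This map is multiplication by a local equation of $Z$, hence zero, so the kernel is $i_*(F\otimes\OC_Z(-Z))$.

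One small slip. In (1), the map that must vanish is the first map $k[-\dim X]\to k$ of the triangle, equivalently the connecting map $k[1-\dim X]\to k[1]$. It is not a map $\RHom(E,E)\to k[1-\dim X]$. It vanishes because $\Hom_{D(k)}(k[-\dim X],k)=0$. In any case, every object of $D(k)$ is the direct sum of its shifted cohomology, and the long exact sequence splits because the outer terms sit in different degrees. Likewise in (2), the connecting map $k[-m-1]\to k[1]$ lies in $\Ext^{m+2}_k(k,k)=0$. Neither point affects your conclusions.
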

\begin{proof}
The sphericality assertions are \cite[Propositions~3.13 and 3.15]{seidel_thomas_2001_braid_group_actions_on_derived_categories_of_coherent_sheaves}.
We prove the orthogonality assertions.

(\ref{item: anticanonical restriction}): Applying $\RHom_X(E_1,{-})$ to the triangle
\[
E_2\otimes \omega_X\longrightarrow E_2\longrightarrow i_*i^*E_2, 
\]
the orthogonality and Serre duality give
$\RHom_Z(i^*E_1,i^*E_2)\cong \RHom_X(E_1,i_*i^*E_2) =0.$

(\ref{item: canonical normal pushforward}): The adjunction formula and $\omega_X|_Z\cong\OC_Z$ give $N_{Z/X}\cong \omega_Z$.
Consider the local-to-global Ext spectral sequence
\[
E_2^{p,q}
=H^p\bigl(X,\mathcal{E}xt_X^q(i_*F_1,i_*F_2)\bigr)
\Longrightarrow
\Ext_X^{p+q}(i_*F_1,i_*F_2).
\]
Since $Z$ is a smooth divisor and $F_1,F_2$ are locally free, a local resolution gives
\[
\mathcal{E}xt_X^q(i_*F_1,i_*F_2)
\cong
\begin{cases}
i_*(F_1^\vee\otimes F_2)&q=0,\\
i_*(F_1^\vee\otimes F_2\otimes N_{Z/X})&q=1,\\
0&q\geq2,
\end{cases}
\]
and it shows $E_2^{p,q} = 0$ if $q\neq 0, 1$.
On the other hand, orthogonality and Serre duality give 
$E_2^{p,0}=\Ext_Z^p(F_1,F_2)=0$ and 
$E_2^{p,1}=\Ext_Z^p(F_1,F_2\otimes\omega_Z)
\cong\Ext_Z^{\dim Z-p}(F_2,F_1)^*=0$.
Thus, the $E_2$-page vanishes, and hence $\RHom_X(i_*F_1,i_*F_2)=0$.
\end{proof}

\begin{lem}\label{lemma: connected double cover for Spinor bundles}
Let $n\geq2$ be an even integer and $B\subset\mgrpsch$ a nonempty open subset.
Consider the family
\begin{equation}\label{equation: quadric one parameter family}
\QC=\{x_0^2+\cdots+x_n^2+t x_{n+1}^2=0\}
\subset\PB^{n+1}\times B
\end{equation}
and the double cover of $B$
\begin{equation}\label{equation: connected double cover for Spinor bundles}
\pi\colon T=\{s\in\mgrpsch\mid s^2\in B\}\longrightarrow B,
\quad s\longmapsto s^2.
\end{equation}
Then, there exists a vector bundle $\SC$ on $\QC_T$ such that, for every geometric point $\bar b$ of $B$ with $T_{\bar b}=\{s,-s\}$, the restrictions $\SC|_s,\SC|_{-s}$ are the two spinor bundles on $Q_{\bar b}$.
\end{lem}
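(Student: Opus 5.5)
We construct $\SC$ by an explicit Clifford-module (matrix factorization) presentation over $T$: the spinor bundles on a smooth even-dimensional quadric are the cokernels of the two halves of a matrix factorization of the quadratic form. The obstruction to doing this over $B$ is the square root of the discriminant, which is $\sqrt{t}$ up to a constant. Pulling back to $T$ adjoins $s$ with $s^2=t$, and over $T$ we can write the form as a sum of squares of linear forms:
\[
q_s=x_0^2+\cdots+x_n^2+(s x_{n+1})^2 .
\]
The number of variables $n+2$ is even; put $m=(n+2)/2$. We pair the variables into hyperbolic planes using constants $\sqrt{-1}$ together with the linear form $y\coloneqq s x_{n+1}$:
\[
q_s=\sum_{k=1}^{m} u_k v_k,\qquad u_k=x_{2k-2}+\sqrt{-1}\,x_{2k-1},\quad v_k=x_{2k-2}-\sqrt{-1}\,x_{2k-1},
\]
where in the last pair the roles of $x_{n+1}$ and $x_n$ are played by $y$ and $x_n$. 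Every $u_k,v_k$ is a linear form on $\PB^{n+1}_T$ whose coefficients are regular functions on $T$ (they involve only $s$ and constants). The standard Koszul/Clifford construction on the exterior algebra of $\CB^{m}$ gives square matrices $\varphi_s,\psi_s$ of size $2^{m-1}$ with linear entries in the $u_k,v_k$ satisfying $\varphi_s\psi_s=\psi_s\varphi_s=q_s\cdot\id$. We take
\[
\SC\coloneqq\Cone\bigl(\OC(-1)^{2^{m-1}}\xrightarrow{\varphi_s}\OC^{2^{m-1}}\bigr)\big|_{\QC_T},
\]
suitably twisted to match the normalization of $\Spbdl_\pm$.

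The proof then has three steps. First, $\SC$ is a vector bundle on $\QC_T$. For each geometric point the form $q_s$ is nondegenerate, since $s\neq0$ and $\QC_{\bar b}$ is smooth. On every fiber the cokernel of $\varphi_s$ is a maximal Cohen--Macaulay module, hence locally free on the smooth quadric. By flatness of the cokernel over $T$ (the fibres have constant Hilbert polynomial, or one can use the local criterion), $\SC$ is a vector bundle whose restrictions are these fiber cokernels. Second, by Kapranov/Buchweitz--Eisenbud--Herzog (or Ottaviani's description), on a smooth quadric of even dimension $n$ the two spinor bundles are, up to the fixed twist, exactly $\operatorname{coker}\varphi$ and $\operatorname{coker}\psi$ for the reduced matrix factorization. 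This identifies $\SC|_s$ with one spinor bundle.

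Third, and this is the main obstacle, we must show that $\SC|_{-s}$ is the other spinor bundle, i.e.\ that the substitution $s\mapsto -s$ exchanges $\varphi$ and $\psi$ up to isomorphism. Replacing $s$ by $-s$ replaces $y$ by $-y$, which is a reflection of the quadratic space $(\CB^{n+2},q)$. A reflection has determinant $-1$, so it lies in $\mathrm O\setminus\mathrm{SO}$ and swaps the two families of maximal isotropic subspaces, hence swaps the two half-spin modules. Concretely, in the last hyperbolic pair the map $x_n\pm\sqrt{-1}\,y\mapsto x_n\mp\sqrt{-1}\,y$ exchanges $u_m$ and $v_m$. In the Clifford construction this exchange interchanges the even and odd parts of the exterior algebra, i.e.\ it swaps $\varphi$ and $\psi$ up to conjugation by constant invertible matrices. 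Therefore
\[
\SC|_{-s}\cong\operatorname{coker}\psi_s ,
\]
which is the other spinor bundle. It then remains to double-check, against Kapranov's normalization, that the twists agree, so that the two restrictions are $\Spbdl_+$ and $\Spbdl_-$ and not twists of them; this is a rank and $c_1$ comparison on one fiber.
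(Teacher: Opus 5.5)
Your proposal is correct and is essentially the paper's argument: your forms $u_k,v_k$ are the pullbacks of constant linear forms along the isomorphism $\QC_T\cong Q\times T$, $((x_0:\cdots:x_{n+1}),s)\mapsto((x_0:\cdots:x_n:sx_{n+1}),s)$. So your $\SC$ is the pullback of a fixed (twisted) spinor bundle on $Q$, exactly as in the paper, and your $s\mapsto -s$ step is the paper's observation that $\varphi_{-s}=r\circ\varphi_s$ for a reflection $r$ with $r^*\Spbdl_+\cong\Spbdl_-$. Seeing your matrix factorization as this pullback makes the flatness argument unnecessary; also take $\SC$ to be the cokernel sheaf regarded on $\QC_T$, not the derived restriction of the cone, which would not be a vector bundle.
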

\begin{proof}
Let $Q\coloneqq\{x_0^2+\cdots+x_{n+1}^2=0\}\subset\PB^{n+1}$, and let $\Spbdl_+,\Spbdl_-$ be the two spinor bundles on $Q$.
The relation $t=s^2$ gives an isomorphism
\[
\varphi\colon\QC_T\longrightarrow Q\times T,\quad
((x_0:\cdots:x_{n+1}),s)
\longmapsto((x_0:\cdots:x_n:sx_{n+1}),s).
\]
Let $\pi_Q\colon Q\times T\to Q$ be the projection onto the first factor, and set $\SC=\varphi^*\pi_Q^*\Spbdl_+$.

Consider the reflection $r$ of $Q$ given by
$(x_0:\cdots:x_n:x_{n+1})\longmapsto(x_0:\cdots:x_n:-x_{n+1})$.
Since $r$ exchanges the two families of maximal linear subspaces of $Q$, $r^*\Spbdl_+\cong\Spbdl_-$
(cf. \cite[Definition~1.3]{ottaviani_1988_spinor_bundles_on_quadrics}).
For a geometric point $\bar b$ with $T_{\bar b}=\{s,-s\}$, let $\varphi_s,\varphi_{-s}$ be the isomorphisms on the fibers induced by $\varphi$.
We have $\varphi_{-s}=r\circ\varphi_s$ by definition and hence
\[
\SC|_s\cong\varphi_s^*\Spbdl_+, \text{ and }
\SC|_{-s}\cong\varphi_{-s}^*\Spbdl_+
\cong\varphi_s^*\Spbdl_-.
\]
These are the two spinor bundles on $Q_{\bar b}$.
\end{proof}

The first examples in this setting are Calabi--Yau complete intersections.

\begin{prop}\label{proposition: disconnectedness for anticanonical divisors in quadrics}
Let $n\geq3$ be an odd integer and $Y\subset\PP{n+2}$ be a smooth complete intersection of type $(2,n+1)$ whose defining quadric is smooth.
Put $H=\OC_Y(1)$.
Then, $\Stabnum(Y)$ has infinitely many connected components, and every central charge occurring in $\Stab^\dagger_H(Y)$ has lifts in infinitely many distinct components.
\end{prop}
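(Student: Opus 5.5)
We will apply \cref{theorem: disconnectedness in families} to a family of such complete intersections, with spherical objects obtained by restricting the two spinor bundles of the quadric via \cref{lemma: exceptional spherical constructions}\labelcref{item: anticanonical restriction}. Write $m=n+1$, so the ambient smooth quadric $Q\subset\PP{n+2}$ has even dimension $m$ and $K_Q=\OC_Q(-m)=\OC_Q(-(n+1))$. Thus $Y$ is a smooth member of $|-K_Q|$, of dimension $n$ (odd, consistent with \cref{remark: parity obstruction}). The spinor bundles $\Spbdl_\pm$ on $Q$ form an orthogonal exceptional pair by Kapranov, so $\Spbdl_\pm|_Y$ are mutually orthogonal $n$-spherical bundles on $Y$.

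\textbf{Step 1: the family.} Since all smooth quadrics are projectively equivalent, we may change coordinates so that $Y$'s quadric is $Q_1=\{x_0^2+\cdots+x_{n+1}^2+x_{n+2}^2=0\}$. This is the fiber at $t=1$ of the family $\QC\to\mgrpsch$ of \cref{lemma: connected double cover for Spinor bundles}, applied with its even integer equal to $m$. Let $f_0$ be the defining degree-$(n+1)$ form of $Y$. Consider the family $\XC\subset\QC$ over an open $B\subset\mgrpsch$ containing $1$ cut out by the fixed form $f_0$; we shrink $B$ so that every fiber $\XC_b$ is smooth. Smoothness is an open condition on $t$ that holds at $t=1$.

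Let $T\to B$ be the restriction of the double cover $s\mapsto s^2$. It is connected because $T$ is an open subset of $\mgrpsch$, and it is finite étale of degree $2$ since the base lies in $\mgrpsch$. Set $E\coloneqq\SC|_{\XC_T}$ with $\SC$ as in \cref{lemma: connected double cover for Spinor bundles}. On each geometric fiber the two restrictions $E_{\pm s}$ are the restrictions of the two spinor bundles, hence mutually orthogonal $n$-spherical objects by the computation above.

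\textbf{Step 2: the numerical and stability inputs.} For the $A_i$ we need perfect complexes whose fiber classes span $\Knum(\XC_{\bar b})_\QB$ uniformly. This is the step I expect to be the main obstacle, since $\Knum$ of a Calabi--Yau complete intersection of dimension $n$ may contain classes beyond $H$. By Lefschetz, $H^{2k}(Y,\QB)$ is spanned by $H^k$ except in middle degree, and the middle degree $n$ is odd. Hence $\Knum(Y)_\QB$ is generated by $\OC_Y(i)$ for $0\le i\le n$. Choosing $A_i=\HC^{\otimes i}$ then works on every fiber.

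For the stability condition, \cref{thm:projective_family_stability} gives $\sigma_\bullet\in\Stab(\XC/B)$ induced by $\XC\hookrightarrow\PP{n+2}\times B$. Its fiber component is $\Sigma_b=\Stab^\dagger_H(\XC_b)$.

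\textbf{Step 3: conclusion.} \cref{theorem: disconnectedness in families} applied at $b=1$ with $d=2$ gives pairwise distinct components $\Phi_{\mathbf a,b}\Sigma_b$ for $\mathbf a\in\ZZ(1,-1)$. These autoequivalences act trivially on $\Knum(Y)$, so every central charge of $\Stab^\dagger_H(Y)$ lifts to infinitely many components. Since $\XC_1\cong Y$ compatibly with $H$, the statement follows.
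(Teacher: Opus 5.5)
Your proposal is correct and follows the paper's proof essentially step by step. It uses the same family $\{x_0^2+\cdots+x_{n+1}^2+t\,x_{n+2}^2=0\}\cap\{f=0\}$ over the smooth locus $B\subset\mgrpsch$, the same double cover $s\mapsto s^2$ carrying the restricted spinor bundles (orthogonal and $n$-spherical by \cref{lemma: exceptional spherical constructions}), the same basis $\OC(i)|_{Y_{\bar b}}$ for $0\le i\le n$ of $\Knum(Y_{\bar b})_\QB$ (which you justify via Lefschetz and the oddness of the middle degree, where the paper simply asserts it), and the same induced $\sigma_\bullet$ from $\PP{n+2}\times B$, all fed into \cref{theorem: disconnectedness in families}.
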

\begin{proof}
Let $\QC\to\mgrpsch$ be the family of $(n+1)$-dimensional quadrics defined in \cref{lemma: connected double cover for Spinor bundles}, with fiber $Q_b$ over $b\in\mgrpsch$.
After a change of coordinates, we may write $Y=Q_1\cap\{f=0\}$ for a homogeneous polynomial $f$ of degree $n+1$.
Put $Y_b=Q_b\cap\{f=0\}$ and let $B\subset\mgrpsch$ be the open locus where $Y_b$ is smooth.
Restrict $\QC$ to $B$ and set $\YC=\QC\cap(\{f=0\}\times B)$.
Then $\YC\to B$ is a smooth projective family with $Y_1=Y$.

Let $T\to B$ be the connected finite \'etale double cover and $\SC$ the vector bundle on $\QC_T$ in \cref{lemma: connected double cover for Spinor bundles}.
Let $k_T\colon\YC_T\hookrightarrow\QC_T$ be the inclusion, and put $\EC\coloneqq k_T^*\SC$.
For a geometric point $\bar b$ of $B$, write $T_{\bar b}=\{s,-s\}$.
The bundles $\SC|_s$ and $\SC|_{-s}$ are the two spinor bundles on $Q_{\bar b}$.
Since $Y_{\bar b}$ is an anticanonical divisor in $Q_{\bar b}$, \labelcref{item: anticanonical restriction} of \cref{lemma: exceptional spherical constructions} shows that $\EC_s$ and $\EC_{-s}$ are orthogonal $n$-spherical bundles.
Since the variety $Y_{\bar b}$ is a smooth complete intersection of odd dimension, the classes of $\OC_\YC(i)|_{Y_{\bar b}}$ for $0\leq i\leq n$ form a basis of $\Knum(Y_{\bar b})_\QB$.
By \cref{thm:projective_family_stability}, choose $\sigma_\bullet\in\Stab(\YC/B)$ induced by the embedding $\YC\hookrightarrow\PP{n+2}\times B$.
Therefore, these data satisfy \cref{setup: disconnectedness in families}, with $\Sigma_1=\Stab^\dagger_H(Y)$.
Applying \cref{theorem: disconnectedness in families} to $\YC\to B$, the double cover $T\to B$, and the vector bundle $\EC$ proves both assertions.
\end{proof}

\subsection{Rational weak Fano varieties}\label{subsection: rational weak Fano varieties}
The same spinor family gives examples without a Calabi--Yau hypothesis.

\begin{setup}\label{setup: example rational weak fano}
Let $n\geq3$ be an odd integer and $C\subset\PP{n}$ be a smooth complete intersection of type $(2,n+1)$ whose defining quadric $Q$ is smooth.
Let $\pi\colon X=\Bl_C\PP{n}\to\PP{n}$ be the blow-up, write $\iota\colon R\hookrightarrow X$ and $p\colon R\to C$ for the exceptional divisor and its projection, and put $H=\pi^*\OC_{\PP{n}}(1)$.
Write $j\colon D\hookrightarrow X$ for the strict transform of $Q$.
Since $C$ is a Cartier divisor on $Q$, the restriction $\pi|_D\colon D\to Q$ is an isomorphism.
Let $\Spbdl_+,\Spbdl_-$ be the two spinor bundles on $D\cong Q$, and set $F_\pm=j_*\Spbdl_\pm$.

Put $E_a=\OC_X(aH)$ for $a\in\ZZ$ and
$\BC=\iota_*\bigl(p^*D^b(C)\otimes\iota^*\OC_X(R)\bigr)\subset D^b(X)$.
By Orlov's blow-up formula
\cite[Theorem~4.3]{orlov_1992_projective_bundles_monoidal_transformations_and_derived_categories_of_coherent_sheaves}, $D^b(X)$ admits a semiorthogonal decomposition $\DB_0\coloneqq\langle\BC,E_0,\ldots,E_n\rangle$.
As shown in \cref{lemma: BlCPn is rational weak Fano}, the objects $F_+,F_-$ are orthogonal $n$-spherical objects.
Put $\Psi=T_{F_+}T_{F_-}^{-1}$ and $\DB_m=\Psi^m(\DB_0)$ for $m\in\ZZ$.
\end{setup}

\begin{lem}\label{lemma: BlCPn is rational weak Fano}
In \cref{setup: example rational weak fano}, $X$ is weak Fano and the objects $F_+$ and $F_-$ are orthogonal $n$-spherical objects.
\end{lem}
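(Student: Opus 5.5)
The proof splits into two independent parts: weak Fano-ness of $X$, and the spherical/orthogonality claim for $F_\pm$. The second will be reduced to \labelcref{item: canonical normal pushforward} of \cref{lemma: exceptional spherical constructions}.

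\emph{Weak Fano.} Since $C\subset\PP{n}$ has codimension two, we have $K_X=\pi^*K_{\PP{n}}+R=-(n+1)H+R$. The strict transform of $Q$ is $D\sim 2H-R$. Hence
\[
-K_X=(n+1)H-R=(n-1)H+D .
\]
To see that $-K_X$ is nef, we use that $C$ is cut out scheme-theoretically by the quadric and degree-$(n+1)$ hypersurfaces. Therefore $\OC_X((n+1)H-R)$ is globally generated, being the pullback of $\IC_C(n+1)$, which is generated by $q\cdot(\text{linear forms})^{n-1}$ and $f$. For bigness, $-K_X=(n-1)H+D$ is the sum of a big nef divisor $H$ (with $n-1\geq2$) and an effective divisor, so it is big. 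Alternatively, one computes $(-K_X)^n>0$ using $(-K_X)|_D\cong\OC_Q(n-1)$ together with $H^n=1$.

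\emph{Sphericality and orthogonality.} By adjunction,
\[
\omega_X|_D=\OC_X(-(n-1)H-D)|_D\otimes\ldots
\]
and more simply $K_X+D=-(n+1)H+R+2H-R=-(n-1)H$, so $\omega_D=\OC_Q(-(n-1))$. This is the correct canonical bundle of the $(n-1)$-dimensional quadric $Q$. However, \labelcref{item: canonical normal pushforward} of \cref{lemma: exceptional spherical constructions} needs $\omega_X|_D\cong\OC_D$. We have $\omega_X|_D=\OC_X(-(n+1)H+R)|_D$. Since $R|_D=C\in|\OC_Q(n+1)|$, this gives $\omega_X|_D\cong\OC_Q(-(n+1)+(n+1))=\OC_D$, as required.

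Because $n$ is odd, $Q\cong D$ has even dimension $n-1\geq2$. By Kapranov's collection, $\Spbdl_+$ and $\Spbdl_-$ are orthogonal exceptional bundles on $D$. Applying \labelcref{item: canonical normal pushforward} of \cref{lemma: exceptional spherical constructions} (equivalently \cite[Proposition~3.15]{seidel_thomas_2001_braid_group_actions_on_derived_categories_of_coherent_sheaves}) then shows that $F_\pm=j_*\Spbdl_\pm$ are orthogonal $n$-spherical objects.

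The main point requiring care is the identification $R|_D=C$ as divisors on $D\cong Q$. This holds because $D$ meets $R$ transversally along the section of $R\to C$ given by the normal direction of $C$ in $Q$. Once this is settled, both the triviality of $\omega_X|_D$ and the nefness computation $-K_X|_D=\OC_Q(n+1)-C\cdot\ldots$ reduce to the same linear-equivalence bookkeeping.
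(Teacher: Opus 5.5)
Your proposal is correct and follows essentially the same route as the paper: you write $-K_X=(n+1)H-R=(n-1)H+D$, get nefness from global generation via the surjection $\pi^*\IC_C(n+1)\to\OC_X((n+1)H-R)$, and get bigness from $H$ big plus $D$ effective. You then compute $\omega_X|_D\cong\OC_Q(-n-1)\otimes\OC_Q(C)\cong\OC_D$ using $R|_D=C$, and apply \labelcref{item: canonical normal pushforward} of \cref{lemma: exceptional spherical constructions} to Kapranov's orthogonal pair $\Spbdl_\pm$.

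The only slip is in your optional alternative for bigness. There you claim $(-K_X)|_D\cong\OC_Q(n-1)$, but $\OC_Q(n-1)$ is $-K_D$; in fact $(-K_X)|_D$ is trivial, which is exactly your later computation of $\omega_X|_D$. That remark should be dropped, but the main argument is unaffected.
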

\begin{proof}
The blow-up formula gives $-K_X=(n+1)H-R = D+(n-1)H$.
Since $\IC_C(n+1)$ is globally generated, the natural surjection $\pi^*\IC_C\to\OC_X(-R)$ shows that $\OC_X(-K_X)$ is globally generated. In particular, $-K_X$ is nef.
Since $H$ is big and $D$ is effective, $-K_X$ is big.
Thus, $X$ is weak Fano.

To prove the latter, note that $R|_D$ corresponds to $C$ under $D\cong Q$.
Thus we have $\omega_X|_D \cong\OC_Q(-n-1)\otimes\OC_Q(C) \cong\OC_D$.
Since $\Spbdl_+,\Spbdl_-$ are orthogonal exceptional bundles, \labelcref{item: canonical normal pushforward} of \cref{lemma: exceptional spherical constructions} shows that $F_+,F_-$ are orthogonal $n$-spherical objects.
\end{proof}
\begin{prop}\label{proposition: disconnectedness for weak Fano blowups}
In \cref{setup: example rational weak fano}, the following hold:
\begin{enumerate}
\item\label{item: weak Fano stability components}
$\Stab(X)$ has infinitely many connected components.
\item\label{item: weak Fano central charges}
For any ample line bundle $L$ on $X$, every central charge occurring in $\Stab^\dagger_L(X)$ has lifts in infinitely many distinct connected components of $\Stab(X)$.
\item\label{item: weak Fano braid orbits}
The semiorthogonal decompositions $\DB_m$ belong to pairwise distinct $B_{n+2}$-orbits, and corresponding components have images in $\Knum(X)$ independent of $m$.
\end{enumerate}
\end{prop}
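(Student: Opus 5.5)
The plan is to put $X$ into a one-parameter family over an open $B\subset\mgrpsch$, exactly as in the proof of \cref{proposition: disconnectedness for anticanonical divisors in quadrics}. Part (1) and part (2) then come from \cref{theorem: disconnectedness in families}, and part (3) from \cref{theorem: separation of braid orbits in families} together with \cref{remark: numerical images of the separated decompositions}.

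\textbf{The family.} After a change of coordinates, write $Q=Q_1$ and $C=Q_1\cap\{f=0\}$, where $Q_b$ is the $(n-1)$-dimensional quadric $\{x_0^2+\cdots+x_{n-1}^2+bx_n^2=0\}$. This has even dimension, so \cref{lemma: connected double cover for Spinor bundles} applies with $n-1$ in place of $n$. Put $C_b=Q_b\cap\{f=0\}$ and let $B$ be the open locus where $C_b$ is smooth. Let $\XC\coloneqq\Bl_{\CC}(\PP{n}\times B)\to B$ be the relative blow-up along the family $\CC$ of the $C_b$. It is smooth and projective with fibers $X_b=\Bl_{C_b}\PP{n}$. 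The strict transform $\DC$ of $\QC$ maps isomorphically onto $\QC$.

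On $\XC_T$, with $T\to B$ the connected double cover of \cref{lemma: connected double cover for Spinor bundles}, put $\FC\coloneqq(j_T)_*\SC$. On each geometric fiber, $\FC_s$ and $\FC_{-s}$ are $F_+$ and $F_-$. By \cref{lemma: BlCPn is rational weak Fano} applied fiberwise, they are orthogonal $n$-spherical objects.

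\textbf{The numerical hypothesis.} By Orlov's formula, $\Knum(X_b)_\QB$ is spanned by the classes $[\OC(aH)]$ for $0\le a\le n$ and by the images of $\Knum(C_b)_\QB$. Here $C_b$ is a smooth complete intersection of odd dimension $n-2$. By Lefschetz (or, directly, when $C_b$ is a curve), $\Knum(C_b)_\QB$ is spanned by $\OC_{C_b}(i)$ for $0\le i\le n-2$. Hence the relative objects $\OC_\XC(aH)$ and $\iota_*\bigl(p^*\OC_{\CC}(i)\otimes\OC(\RC)\bigr)$ are perfect on $\XC$ and give the required $A_1,\ldots,A_r$. Choose a relatively very ample line bundle on $\XC$ and take $\sigma_\bullet$ from \cref{thm:projective_family_stability}. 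Then \cref{theorem: disconnectedness in families} gives (1). It also gives (2) for $\Sigma_1=\Stab^\dagger_{L'}(X)$ for that particular $L'$, and this component equals $\Stab^\dagger_L(X)$ for any ample $L$ by the independence statements recalled at the end of \cref{subsection:Stability conditions over a base}.

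\textbf{The braid orbits.} Take the relative Orlov decomposition $\DB=(\BC_\XC,\OC_\XC,\OC_\XC(H),\ldots,\OC_\XC(nH))$. It is $B$-linear, and its components are nonzero on every fiber. For the exceptional object I would take $E=\OC_X(aH)$ with $a$ chosen so that $\RHom(E,F_+)=H^*(Q,\Spbdl_+(-a))\neq0$. With Ottaviani's normalization, $\Spbdl_+(1)$ has nonzero $H^0$, so $a=-1$ works after twisting the collection if needed; otherwise any $a\in[0,n]$ with nonvanishing cohomology will do, and I would check this via the spinor Bott-type vanishing. Since $\Psi^m=\Phi_{(m,-m)}$ and $(m,-m)-(m',-m')\in\ZZ(1,1)$ forces $m=m'$, \cref{theorem: separation of braid orbits in families} separates the orbits of the $\DB_m$. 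Finally, $[F_+]=[F_-]$ because $T$ is connected, so \cref{remark: numerical images of the separated decompositions} gives the statement about numerical images.

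\textbf{Main obstacle.} I expect the bookkeeping for the relative blow-up to be the hardest step. One has to check that Orlov's decomposition holds $B$-linearly on $\Perf(\XC)$ and commutes with base change, and that the strict transform identification $\DC\cong\QC$ holds in the family, so that $\FC$ restricts to $j_*\Spbdl_\pm$ on each fiber. The nonvanishing of $\RHom(E,F_1)$ also needs a concrete cohomology computation for spinor bundles.
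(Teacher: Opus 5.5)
Most of your plan follows the paper's proof: the same family of $(n-1)$-dimensional quadrics over an open $B\subset\mgrpsch$, the same relative blow-up $\XC$, the double cover and spinor family of \cref{lemma: connected double cover for Spinor bundles}, and the same relative basis. Your justification of that basis via the odd dimension of $C_b$ is also the paper's. You then apply \cref{theorem: disconnectedness in families} and \cref{theorem: separation of braid orbits in families} to the vectors $(m,-m)$, as the paper does.

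For part (2) you deviate, legitimately. The paper extends the given $L$ to a relatively ample $\LC$ on $\XC$, which uses $\Pic(X)=\ZZ H\oplus\ZZ R$ and a shrinking of $B$. You instead take any relatively very ample bundle and invoke the independence of $\Stab^\dagger_L(X)$ from the polarization recorded at the end of \cref{subsection:Stability conditions over a base}. That is valid and slightly more economical.

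The gap is hypothesis (3) of \cref{setup: separation of braid orbits in families}, which you never establish.

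\textbf{The choice $a=-1$ is not available.} $\OC_X(-H)$ is not a component of $\DB_0$. Twisting the collection cannot help, because $T_F(G\otimes M)\cong T_{F\otimes M^{-1}}(G)\otimes M$. So separating the orbits of $\Psi^m(\DB_0\otimes M)$ is equivalent to separating the orbits of $\DB_0$ under the twists by the \emph{different} spherical pair $F_\pm\otimes M^{-1}$. That is a different statement. If you also twist the $F_\pm$ to keep the statement equivalent, then $\RHom$ is unchanged.

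\textbf{The vanishing you plan to use points the wrong way.} The spinor vanishing you intend to use is exactly the semiorthogonality in Kapranov's collection. It gives $H^*(Q,\Spbdl_\pm(-a))=0$ for all $0\le a\le n-2$. Nonvanishing only occurs at the top of the range. By Serre duality with $\omega_Q\cong\OC_Q(1-n)$,
\[
H^{n-1}(Q,\Spbdl_+(1-n))\cong H^0(Q,\Spbdl_+^\vee)^*\neq0,
\]
since in Ottaviani's normalization, which is the one compatible with the paper's ordering of Kapranov's collection, $\Spbdl_+^\vee$ is a spinor bundle twisted by $\OC_Q(1)$. Hence $a=n-1$ works.

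\textbf{The paper's argument needs no spinor cohomology.} If $\RHom(\pi^*\OC(a),F_s)=0$ for all $0\le a\le n$, then by adjunction $\rderived\pi_*F_s$ is right orthogonal to the full collection $\OC,\ldots,\OC(n)$ on $\PP{n}$, hence zero. But $\rderived\pi_*F_s$ is the pushforward of a nonzero spinor bundle from $Q$.

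Whichever argument you use, run it on the geometric generic fiber, as the setup requires, not on $X$. This is harmless: it only uses a smooth even-dimensional quadric over an algebraically closed field of characteristic zero.
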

\begin{proof}
\labelcref{item: weak Fano stability components} and \labelcref{item: weak Fano central charges}:
Let $\QC\to\mgrpsch$ be the family of $(n-1)$-dimensional quadrics defined in \cref{lemma: connected double cover for Spinor bundles}.
After a change of coordinates, we may assume that $C=Q_1\cap\{f=0\}$ for a homogeneous polynomial $f$ of degree $n+1$.
Put $C_b=Q_b\cap\{f=0\}$ and let $B\subset\mgrpsch$ be the open locus where $C_b$ is a smooth complete intersection.
We continue to write $\QC$ for its restriction to $B$ and set $\CC=\QC\cap(\{f=0\}\times B)$.
Let
\[
\pi_B\colon\XC=\Bl_\CC(\PP{n}\times B)
\longrightarrow\PP{n}\times B
\]
be the blow-up, with exceptional divisor $\iota_B\colon\RC\hookrightarrow\XC$ and projection $p_B\colon\RC\to\CC$.
The induced morphism $\XC\to B$ is a smooth projective family with $\XC_1=X$.
Write $j_B\colon\DC\hookrightarrow\XC$ for the strict transform of $\QC$.
Since $\CC$ is a Cartier divisor on $\QC$, we have $\DC\cong\QC$ over $B$.

Fix an ample line bundle $L$ on $X$.
Since $\Pic(X)=\ZZ H\oplus\ZZ R$, the line bundle $L$ extends to a line bundle $\LC$ on $\XC$ using $\pi_B^*\OC_{\PP{n}\times B}(1)$ and $\OC_\XC(\RC)$.
After shrinking $B$ around $1$, we may assume that $\LC$ is relatively ample.

Let $T\to B$ and $\SC$ be the double cover and vector bundle on $\QC_T$ given by \cref{lemma: connected double cover for Spinor bundles}.
Using $\DC_T\cong\QC_T$, regard $\SC$ as a vector bundle on $\DC_T$.
Let $j_T\colon\DC_T\hookrightarrow\XC_T$ be the base change of $j_B$ and set $\FC_T=j_{T*}\SC\in\Perf(\XC_T)$.
For every geometric point $\bar b$ of $B$, write $T_{\bar b}=\{s,-s\}$ and let $j_{\bar b}\colon\DC_{\bar b}\hookrightarrow\XC_{\bar b}$ be the base change of $j_B$.
The bundles $\SC|_s$ and $\SC|_{-s}$ are the two spinor bundles on $\DC_{\bar b}\cong Q_{\bar b}$.
By \cref{lemma: BlCPn is rational weak Fano}, the objects
$\FC_T|_s\cong j_{\bar b*}(\SC|_s)$ and
$\FC_T|_{-s}\cong j_{\bar b*}(\SC|_{-s})$
are orthogonal $n$-spherical objects.

Since $n-2$ is odd, the classes of the perfect complexes
\begin{equation}\label{equation: weak fano relative basis}
\begin{gathered}
\pi_B^*\OC_{\PP{n}\times B}(a),
\quad 0\leq a\leq n,\\
\iota_{B*}\bigl(p_B^*\OC_\CC(a)\otimes\iota_B^*\OC_\XC(\RC)\bigr),
\quad 0\leq a\leq n-2,
\end{gathered}
\end{equation}
restrict to a basis of $\Knum(\XC_{\bar b})_{\QB}$ for every geometric point $\bar b$ of $B$.
By \cref{thm:projective_family_stability}, applied to a relative projective embedding defined by a sufficiently large tensor power of $\LC$, we obtain a stability condition $\sigma_\bullet\in\Stab(\XC/B)$.
Together with \eqref{equation: weak fano relative basis}, this gives the data of \cref{setup: disconnectedness in families}, with $\Sigma_1=\Stab^\dagger_L(X)$.
Applying \cref{theorem: disconnectedness in families} to $\XC\to B$, the double cover $T\to B$, and $\FC_T$ proves \labelcref{item: weak Fano stability components} and \labelcref{item: weak Fano central charges}.

\labelcref{item: weak Fano braid orbits}: 
The relative blow-up formula extends $\DB_0$ to a $B$-linear semiorthogonal decomposition of $\Perf(\XC)$, with exceptional components generated relatively by $\pi_B^*\OC_{\PP{n}\times B}(a)$ for $0\leq a\leq n$.
Let $\bar\eta$ be a geometric generic point of $B$ and write $\pi_{\bar\eta}\colon\XC_{\bar\eta}\to\PP{n}_{\bar\eta}$ for the base change of $\pi_B$.
Put $E_{a,\bar\eta}=\pi_{\bar\eta}^*\OC_{\PP{n}_{\bar\eta}}(a)$.
Fix $s\in T_{\bar\eta}$ and put $F_s=\FC_T|_s$.

There is an index $0\leq a\leq n$ such that $\RHom(E_{a,\bar\eta},F_s)\ne0$.
Indeed, if all these complexes vanished, adjunction and generation by
$\OC_{\PP{n}_{\bar\eta}},\ldots,\OC_{\PP{n}_{\bar\eta}}(n)$
would give $\rderived\pi_{\bar\eta*}F_s=0$.
This contradicts the fact that $\rderived\pi_{\bar\eta*}F_s$ is the pushforward of the nonzero spinor bundle $\SC|_s$ under the inclusion $Q_{\bar\eta}\hookrightarrow\PP{n}_{\bar\eta}$.

Therefore, \cref{theorem: separation of braid orbits in families}, applied at $1\in B$ to the vectors $(m,-m)$, shows that the decompositions $\DB_m$ belong to pairwise distinct $B_{n+2}$-orbits.
Finally, \cref{theorem: disconnectedness in families} also shows that $\Psi$ acts trivially on $\Knum(X)$.
Hence corresponding components of $\DB_m$ have images in $\Knum(X)$ independent of $m$.
\end{proof}

\begin{rem}
The weak Fano threefold constructed by Nordskova \cite[Section~3.1]{preprint_anya_2026_a_smooth_projective_counterexample_to_bondalpolishchuks_conjecture} is the blow-up of $\PP{3}$ along a smooth rational curve of degree $6$ on a smooth cubic surface.
When $n=3$, the curve $C$ in \cref{setup: example rational weak fano} has degree $8$ and genus $9$.
Thus, \cref{proposition: disconnectedness for weak Fano blowups} does \emph{not} include Nordskova's example as a special case.
\end{rem}

\subsection{Further examples}\label{subsection: further examples}
Let $Y$ be a del Pezzo Fano threefold of degree $5$ over $\CB$, which has the description of smooth section of $\Gr(2, 5)\subset \PP{9}$ by a linear subspace $\PP{6}\subset \PP{9}$.
In this subsection, we consider the blow-up of $Y$ at a general point.
In this example, the autoequivalences permuting the spherical objects are induced by automorphisms of the blow-up, so the separation criteria apply directly without using their versions for families.
This also gives a counterexample to the converse of \cref{theorem: separation of braid orbits}.

First, we recall some definitions and results concerning $Y$.
Denote $H_Y=\OC_Y(1)$; then we have $\Pic(Y)=\ZZ H_Y$, $H_Y^3=5$, and $-K_Y=2H_Y$.

The construction of Mukai--Umemura, whose details we omit, realizes $Y$ as the closure of the $\PGL_2$-orbit of $[uv(u^4-v^4)]$ in the projective space of binary sextics, with stabilizer the octahedral group $S_4$
\cite{mukai_umemura_1983_minimal_rational_threefolds}.
Moreover, this action identifies $\Aut(Y)$ with $\PGL_2$ by
\cite[Proposition~4.4]{mukai_1988_curves_k3_surfaces_and_fano_3folds_of_genus_leq_10}.
Write $U\cong\PGL_2/S_4$ for the open orbit, fix $p\in U(\CB)$, and put $G=\Aut(Y)_p$.
There are exactly three lines $\ell_1,\ell_2,\ell_3$ through $p$.
Under the equivariant description of the Hilbert scheme of lines, they correspond to the three axes of the octahedron.
Thus, their permutation action is the natural quotient $G\cong S_4\twoheadrightarrow S_3$; see
\cite[Corollary~5.1.5]{kuznetsov_prokhorov_shramov_2018_hilbert_schemes_of_lines_and_conics_and_automorphism_groups_of_fano_threefolds}.

Let $\UC_Y$ be the restriction of the tautological rank-two bundle on $\Gr(2,5)$.
The Grassmannian realization of $Y$ is $\Aut(Y)$-equivariant by
\cite[Lemma~4.5]{mukai_1988_curves_k3_surfaces_and_fano_3folds_of_genus_leq_10}, so $g^*\UC_Y\cong\UC_Y$ for $g\in\Aut(Y)$.
Orlov \cite{orlov_1991_exceptional_set_of_vector_bundles_on_the_variety_v5}
constructed the following full exceptional collection:
\begin{equation*}
D^b(Y)
=\langle\OC_Y,\UC_Y^\vee,\OC_Y(H_Y),\UC_Y^\vee(H_Y)\rangle.
\end{equation*}

\begin{setup}\label{setup: point blowups of del Pezzo threefolds}
Let $\pi\colon X=\Bl_pY\to Y$ be the blow-up with exceptional divisor $\iota\colon R\hookrightarrow X$, and put $H=\pi^*H_Y$.
Then, $D^b(X)$ admits the full exceptional collection
\begin{equation}\label{equation: standard decompositions of point blowups}
\begin{aligned}
\DB\coloneqq\bigl(
\iota_*\iota^*\OC_X(2R),
\iota_*\iota^*\OC_X(R),
\OC_X,
\pi^*\UC_Y^\vee,
\OC_X(H),
\pi^*\UC_Y^\vee\otimes\OC_X(H)
\bigr).
\end{aligned}
\end{equation}

Write $j_i\colon C_i\hookrightarrow X$ for the strict transform of $\ell_i$
and $F_i\coloneqq j_{i*}\OC_{C_i}(-H)$ for $1\leq i\leq3$.
As shown in \cref{lemma: point blowups of del Pezzo threefolds are weak Fano}, $F_1,F_2,F_3$ are mutually orthogonal $3$-spherical objects.
For $\mathbf a=(a_1,a_2,a_3)\in\ZZ^3$ with $a_1+a_2+a_3=0$, put
$\Phi_{\mathbf a}=T_{F_1}^{a_1}T_{F_2}^{a_2}T_{F_3}^{a_3}$ and $\DB_{\mathbf a}=\Phi_{\mathbf a}(\DB)$.

Every element of $G$ lifts to an automorphism of $X$.
We also regard $G$ as a subgroup of $\Aut D^b(X)$ via $g\mapsto(g^{-1})^*$.
Since the lifted automorphisms preserve $H$ and $R$, and $\UC_Y$ is preserved by $\Aut(Y)$, $G$ fixes the isomorphism class of each object in $\DB$.
In particular, $G$ acts trivially on $\Knum(X)$.
Moreover, the action of $G$ on the lines $\ell_i$ induces a transitive action on $\{F_1,F_2,F_3\}$.
Consequently, $[F_1]=[F_2]=[F_3]$ in $\Knum(X)$.
\end{setup}

\begin{lem}\label{lemma: point blowups of del Pezzo threefolds are weak Fano}
In \cref{setup: point blowups of del Pezzo threefolds}, $X$ is weak Fano and the objects $F_1,F_2,F_3$ are mutually orthogonal $3$-spherical objects.
\end{lem}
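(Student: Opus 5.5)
We need to show that $X=\Bl_pY$ is weak Fano, and that the objects $F_i=j_{i*}\OC_{C_i}(-H)$ are mutually orthogonal $3$-spherical objects.

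\emph{Weak Fano.} The blow-up formula gives $-K_X=\pi^*(-K_Y)-2R=2H-2R=2(H-R)$. The linear system $|H-R|$ is the strict transform of the hyperplane sections of $Y\subset\PP{6}$ through $p$. Since $Y$ is an intersection of quadrics (it is a linear section of $\Gr(2,5)$), the linear projection from $p$ is defined by the sections of $\OC_Y(1)\otimes\mathfrak m_p$. Its base locus on $Y$ is just $p$ itself, scheme-theoretically. Hence $\OC_X(H-R)$ is globally generated, and so $-K_X$ is nef. For bigness, compute $(H-R)^3=H^3-R^3=5-1=4>0$, using $R\cong\PP{2}$ with $R|_R=\OC(-1)$, which gives $R^3=1$. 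The mixed terms vanish because $H|_R$ is trivial. Thus $X$ is weak Fano.

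\emph{Normal bundles of the $C_i$.} First, $C_i\cong\ell_i\cong\PP{1}$. Next, $-K_X\cdot C_i=2(H\cdot\ell_i-R\cdot C_i)=2(1-1)=0$, since $\ell_i$ passes through $p$ transversally. Hence $\deg N_{C_i/X}=-K_X\cdot C_i-2=-2$. A line through a general point of $V_5$ has normal bundle $\OC\oplus\OC$; this holds because $p\in U$ is not on the special curve where lines have normal bundle $\OC(1)\oplus\OC(-1)$. Blowing up a point on $\ell_i$ twists both summands of the normal bundle down by one, so
\[
N_{C_i/X}\cong N_{\ell_i/Y}\otimes\OC_{\PP{1}}(-1)\cong\OC_{\PP{1}}(-1)^{\oplus2}.
\]
This follows from the standard fact that blowing up a point on a smooth curve lowers its normal bundle by $\OC(-p)$. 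Since $X$ is not Calabi--Yau, \cite[Lemma~3.4]{seidel_thomas_2001_braid_group_actions_on_derived_categories_of_coherent_sheaves} does not apply verbatim, so we compute directly. By the local-to-global spectral sequence,
\[
\mathcal{E}xt^q\bigl(\OC_{C_i}(-H),\OC_{C_i}(-H)\bigr)\cong\wedge^qN_{C_i/X}.
\]
This gives $\Ext^*=H^*(\OC)\oplus H^*(\OC(-1)^2)\oplus H^*(\OC(-2))[-2]$, which equals $k\oplus k[-3]$. To get $3$-sphericality we also need $F_i\otimes\omega_X\cong F_i$. This holds because $\omega_X|_{C_i}=\OC_{C_i}(K_X\cdot C_i)=\OC_{C_i}$.

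\emph{Orthogonality.} The three lines $\ell_i$ meet only at $p$, and their tangent directions at $p$ are distinct. Therefore the strict transforms $C_i$ meet $R$ at three distinct points and are pairwise disjoint. Sheaves with disjoint supports satisfy $\RHom=0$ in both directions.

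The main obstacle is justifying $N_{\ell_i/Y}\cong\OC^{\oplus2}$ for the three lines through a point of the open orbit. The cleanest route is to cite \cite{kuznetsov_prokhorov_shramov_2018_hilbert_schemes_of_lines_and_conics_and_automorphism_groups_of_fano_threefolds}: lines of type $(1,-1)$ sweep out the tangent surface to the rational normal sextic, which is $\PGL_2$-invariant and hence disjoint from $U$. A second point to check is that the tangent directions of the $\ell_i$ at $p$ are distinct, which follows since lines through a point are determined by their direction. Global generation of $H-R$, via $Y$ being cut out by quadrics, is routine.
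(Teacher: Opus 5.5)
Your proof is correct and follows the paper's route. You write $-K_X=2(H-R)$ with $H-R$ globally generated and of positive top self-intersection. You get $N_{C_i/X}\cong N_{\ell_i/Y}\otimes\OC_{\ell_i}(-p)\cong\OC_{\PP{1}}(-1)^{\oplus2}$ from the fact that lines through a point of the open orbit have trivial normal bundle. Orthogonality then comes from disjointness of the strict transforms.

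The differences are cosmetic. Your $(H-R)^3=4$ is the paper's $(-K_X)^3=32$. Your direct local-to-global computation of $\Ext^*(F_i,F_i)$, together with $\omega_X|_{C_i}\cong\OC_{C_i}$, replaces the paper's citation of Seidel--Thomas. The paper does invoke that lemma here once it notes $\omega_X|_{C_i}$ is trivial, so your caveat that it "does not apply verbatim" is unnecessary but harmless.
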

\begin{proof}
Since $H-R$ is globally generated, $-K_X=2(H-R)$ is nef.
Moreover, $(-K_X)^3=32>0$, so $X$ is weak Fano.

Since $p\in U$, the lines $\ell_i$ have normal bundles
$N_{\ell_i/Y}\cong\OC_{\PP{1}}^{\oplus2}$ by
\cite[Corollary~5.1.5]{kuznetsov_prokhorov_shramov_2018_hilbert_schemes_of_lines_and_conics_and_automorphism_groups_of_fano_threefolds}.
Therefore, we obtain
\[
N_{C_i/X}
\cong N_{\ell_i/Y}\otimes\OC_{\ell_i}(-p)
\cong\OC_{\PP{1}}(-1)^{\oplus2}.
\]
Also, $\omega_X|_{C_i}\cong\OC_{C_i}$ because $(H-R)\cdot C_i=0$.
Hence $F_i$ is $3$-spherical by
\cite[Lemma~3.4]{seidel_thomas_2001_braid_group_actions_on_derived_categories_of_coherent_sheaves}.
Distinct lines through $p$ meet only at $p$ and have distinct tangent directions, so their strict transforms $C_i$ are disjoint.
This gives the orthogonality of $F_1,F_2$, and $F_3$.
\end{proof}

\begin{prop}\label{proposition: disconnectedness for point blowups of del Pezzo threefolds}
In \cref{setup: point blowups of del Pezzo threefolds}, the following hold:
\begin{enumerate}
\item\label{item: point blowup stability components}
$\Stab(X)$ has infinitely many connected components.
\item\label{item: point blowup central charges}
For any ample line bundle $L$ on $X$, every central charge occurring in $\Stab^\dagger_L(X)$ has lifts in infinitely many distinct connected components of $\Stab(X)$.
\item\label{item: point blowup braid orbits}
The full exceptional collections $\DB_{\mathbf a}$ belong to pairwise distinct $B_6$-orbits.
Moreover, the images in $\Knum(X)$ of the corresponding components are independent of $\mathbf a$.
\end{enumerate}
\end{prop}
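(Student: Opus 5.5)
The plan is to apply the absolute criteria of \cref{section: disconnectedness in families} and \cref{section: separating braid group orbits of semiorthogonal decompositions} directly, using the finite group $G\subset\Aut D^b(X)$ of \cref{setup: point blowups of del Pezzo threefolds}. The families versions are not needed.

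For \labelcref{item: point blowup stability components} and \labelcref{item: point blowup central charges}, fix an ample line bundle $L$ on $X$ and put $\Sigma=\Stab^\dagger_L(X)$. We need some $\sigma_0\in\Sigma$ fixed by every $g\in G$. Since each lifted automorphism $g$ of $X$ preserves $H$ and $R$, it satisfies $g^*L\cong L$. Hence $g$ extends to a projective transformation of the embedding $h\colon X\hookrightarrow\PP{N}$ given by a large power of $L$, and it does so uniformly for the finite group $G$, via a linearization after replacing $L$ by a power. By \cref{lemma: projective automorphisms and stability conditions}, $\PGL_{N+1}(\CB)$ fixes $\tau^{\PP{N}}_{\xi,\theta}$. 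As in the proof of \cref{theorem: disconnectedness for the Dwork pencil}, equivariance of $h$ then gives $(g^{-1})^*\sigma_0=\sigma_0$ for $\sigma_0\coloneqq h^\sharp\tau^{\PP{N}}_{\xi,\theta}\in\Sigma$.

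By \cref{setup: point blowups of del Pezzo threefolds}, $G$ acts trivially on $\Knum(X)$ and transitively on $\{F_1,F_2,F_3\}$. By \cref{lemma: point blowups of del Pezzo threefolds are weak Fano}, these are mutually orthogonal $3$-spherical objects. Choose $g_2,g_3\in G$ with $g_i(F_1)\cong F_i$. Then \cref{corollary: disconnectedness from a semilinear equivalence}, with $k=\CB$ and linear $g_i$, shows that the components $\Phi_{\mathbf a}\Sigma$ with $\sum a_i=0$ are pairwise distinct. It also shows that every central charge of $\Sigma$ lifts to each of them, since $\Phi_{\mathbf a}$ acts trivially on $\Knum(X)$. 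This proves \labelcref{item: point blowup stability components} and \labelcref{item: point blowup central charges}.

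For \labelcref{item: point blowup braid orbits}, we apply \cref{theorem: separation of braid orbits} to the decomposition $\DB$ in \eqref{equation: standard decompositions of point blowups}, which has six exceptional components. Hypothesis (2) there holds because $G$ fixes each object of $\DB$, so it preserves each component, and it permutes the $F_i$ transitively. It remains to find an exceptional component $\langle E\rangle$ with $\RHom(E,F_1)\neq0$, and the simplest choice is $E=\OC_X$. We compute
\[
\RHom(\OC_X,F_1)=R\Gamma(C_1,\OC_{C_1}(-H))=R\Gamma(\PP{1},\OC_{\PP{1}}(-1))=0,
\]
since $H\cdot C_1=1$, so this choice fails. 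We therefore use $E=\OC_X(H)$ instead:
\[
\RHom(\OC_X(H),F_1)=R\Gamma(\OC_{\PP{1}}(-2))\cong\CB[-1]\neq0.
\]
The theorem then gives that $\DB_{\mathbf a}$ and $\DB_{\mathbf b}$ lie in the same $B_6$-orbit only if $\mathbf a-\mathbf b\in\ZZ(1,1,1)$. Since $\sum a_i=\sum b_i=0$, this forces $\mathbf a=\mathbf b$. The numerical statement follows from \cref{remark: numerical images of the separated decompositions}, because $[F_1]=[F_2]=[F_3]$.

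The step that needs the most care is producing a $G$-fixed stability condition in $\Stab^\dagger_L(X)$ for an arbitrary ample $L$. This requires $G$ to act linearly on $H^0(X,L^{\otimes m})$ compatibly with the embedding. Projective linearity suffices for \cref{lemma: projective automorphisms and stability conditions}, and $g^*L\cong L$ gives it. If that becomes awkward, one can argue instead that $g$ maps the construction of \cref{thm:projective_family_stability} for $h$ to the same construction for $h\circ g$. The independence of $\Stab^\dagger$ from the embedding, together with the explicit form of $\tau_{\xi,\theta}$, then shows that $g^*\sigma_0$ equals $\sigma_0$.
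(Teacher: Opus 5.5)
Your proposal is correct and follows the paper's proof essentially step by step. For (1) and (2), you obtain a $G$-fixed $\sigma_0=h^\sharp\tau$ from the $G$-equivariant embedding given by $|L^{\otimes q}|$ and \cref{lemma: projective automorphisms and stability conditions}, and then apply \cref{corollary: disconnectedness from a semilinear equivalence}. For (3), you apply \cref{theorem: separation of braid orbits} with $E=\OC_X(H)$ and finish with \cref{remark: numerical images of the separated decompositions}.

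The only difference is cosmetic: you run the corollary with all three $F_i$ ($d=3$), while the paper uses just $F_1,F_2$ and a single $g$. As you note yourself, projective equivariance suffices and no linearization is needed.
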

\begin{proof}
\labelcref{item: point blowup stability components,item: point blowup central charges}:
Fix an ample line bundle $L$ on $X$.
We verify the assumptions of \cref{corollary: disconnectedness from a semilinear equivalence} for the component $\Stab^\dagger_L(X)$.
The objects $F_1,F_2$ are orthogonal $3$-spherical objects by \cref{lemma: point blowups of del Pezzo threefolds are weak Fano}.
By \cref{setup: point blowups of del Pezzo threefolds}, there exists $g\in G$ such that $g(F_1)\cong F_2$, and $g$ acts trivially on $\Knum(X)$.
It remains to find $\sigma_0\in\Stab^\dagger_L(X)$ fixed by $g$.

Since $G$ preserves $H$ and $R$, and $\Pic(X)=\ZZ H\oplus\ZZ R$, it preserves $L$.
For a sufficiently large integer $q$, the complete linear system $|L^{\otimes q}|$ therefore defines a $G$-equivariant closed embedding $h\colon X\hookrightarrow\PP{N}$, where $G$ acts on $\PP{N}$
by projective transformations.
By the construction in
\cref{thm:projective_family_stability},
we can choose $\tau\in\Stab(\PP{N})$ such that
$\sigma_0\coloneqq h^\sharp\tau\in\Stab^\dagger_L(X)$.
By \cref{lemma: projective automorphisms and stability conditions},
$\tau$ is fixed by $\PGL_{N+1}(\CB)$.
The equivariance of $h$ and the definition of the induced stability
condition then imply $g(\sigma_0)=\sigma_0$.
Thus, \cref{corollary: disconnectedness from a semilinear equivalence}
applies and proves
\labelcref{item: point blowup stability components,item: point blowup central charges}.

\labelcref{item: point blowup braid orbits}:
We only need to check \labelcref{item: non-rothogonality condition of exceptional object} of \cref{setup: separation of braid orbits} since the group $G$ preserves each component of $\DB$ and acts transitively on $\{F_1,F_2,F_3\}$.
The line bundle $\OC_X(H)$ satisfies this condition. Indeed, we have 
\[
\RHom_X(\OC_X(H),F_i)
\cong\rderived\Gamma(\PP{1},\OC_{\PP{1}}(-2))
\cong\CB[-1]\ne0.
\]
Thus, the data in \cref{setup: point blowups of del Pezzo threefolds} satisfy the assumptions of \cref{setup: separation of braid orbits}.
By \cref{theorem: separation of braid orbits}, if $\DB_{\mathbf a}$ and $\DB_{\mathbf b}$ belong to the same $B_6$-orbit, then
$\mathbf a-\mathbf b\in\ZZ(1,1,1)$.
Since both vectors have sum zero, this forces $\mathbf a=\mathbf b$.
Finally, 
since $G$ acts transitively on $\{F_1,F_2,F_3\}$, we also have
$[F_1]=[F_2]=[F_3]$ in $\Knum(X)$.
Thus, \cref{remark: numerical images of the separated decompositions}
gives the equality of the corresponding numerical images.
\end{proof}

\begin{rem}\label{remark: counter example of the converse of separation theorem}
The example in this subsection shows that the converse of \cref{theorem: separation of braid orbits} does not hold in general.

Let $G'\subset G$ be the inverse image of the stabilizer of $3$ under the surjection $G\to S_3$.
Then $G'$ acts transitively on $\{F_1,F_2\}$ and preserves each component of $\DB$.
The nonvanishing verified in the proof of
\cref{proposition: disconnectedness for point blowups of del Pezzo threefolds}
shows that $\DB$, $F_1,F_2$, and $G'$ satisfy
\cref{setup: separation of braid orbits} with $d=2$.
Consider the decompositions
$\DB$ and $(T_{F_1}T_{F_2})(\DB)$.
For the pair $F_1,F_2$, their exponent vectors are $(0,0)$ and $(1,1)$, whose difference is diagonal.
Nevertheless, these decompositions belong to distinct $B_6$-orbits.
Indeed, this follows from \cref{theorem: separation of braid orbits} applied to the triple
$F_1,F_2,F_3$ and the group $G$, since $(1,1,0)\notin\ZZ(1,1,1)$.
\end{rem}

\bibliographystyle{amsalpha}
\bibliography{converted_bibtex_tyoshida}
\end{document}